\documentclass[reqno]{amsart}
\usepackage{amssymb,  psfrag, multicol, float, graphicx, amsthm, mathrsfs, mathtools, bbm}
\usepackage[labelfont=bf]{caption}

\usepackage[all,cmtip]{xy}

\numberwithin{equation}{section}

\usepackage{xcolor}
\usepackage{pdfpages, enumerate}
\usepackage[numbers, sort]{natbib}
\usepackage[mathscr]{euscript}
\usepackage{soul}
\usepackage{comment}

\usepackage[colorlinks=true, linkcolor=teal, citecolor=teal]{hyperref}

\newcommand{\gx}[1]{\textcolor{magenta}{#1}}

\newtheorem{thm}{Theorem}[section]
\newtheorem{lemma}[thm]{Lemma}

\newtheorem{cor}[thm]{Corollary}
\newtheorem{prop}[thm]{Proposition}

\theoremstyle{remark}
\newtheorem{remark}[thm]{Remark}

\newtheorem{defn}[thm]{Definition}
\newtheorem{example}[thm]{Example}

\newcommand{\F}{{\mathbb{F}}}
\newcommand{\R}{{\mathbb{R}}}
\newcommand{\Z}{{\mathbb{Z}}}

\newcommand{\Q}{{\mathbb{Q}}}
\newcommand{\D}{{\mathbb{D}}}
\newcommand{\bK}{{\mathbb{K}}}

\newcommand{\wh}[1]{\widehat{#1}}
\newcommand{\wt}[1]{\widetilde{#1}}

\newcommand{\ol}[1]{\overline{#1}}

\newcommand{\zp}{{\mathbb{Z}/p}}

\newcommand{\rp}{{\mathcal{R}_p}}
\newcommand{\kzp}{{\mathcal{K}_{0, p}}}
\newcommand{\kp}{{\mathcal{K}_p}}

\newcommand{\fp}{{{\mathbb F}_p}}
\newcommand{\beqn}{\begin{equation*}}
\newcommand{\eeqn}{\end{equation*}}

\newcommand{\pst}{[\![t]\!]} 

\newcommand{\mb}{\mathbb}
\newcommand{\mc}{\mathcal}
\newcommand{\ms}{\mathscr}

\newcommand{\ov}{\overline}

\newcommand{\beq}{\begin{equation}}
\newcommand{\eeq}{\end{equation}}

\newcommand{\om}{\omega}

\newcommand{\eps}{\epsilon}

\newcommand{\cO}{\mathcal{O}}

\newcommand{\fix}{\mathrm{Fix}}

\def\mrm#1{{\mathrm{#1}}}

\def\cl#1{{\mathcal{#1}}}

\def\til#1{{\widetilde{#1}}}

\newcommand{\brat}[1]{{\left< #1 \right>}}

\newcommand{\loc}{{\rm loc}}
\newcommand{\fst}{{\it FSt}}
\newcommand{\qst}{{\it QSt}}
\newcommand{\pss}{{\it PSS}}
\newcommand{\ssp}{{\it SSP}}
\newcommand{\inv}{{\rm Inv}}

\DeclareMathOperator{\rank}{\mathrm{rank}}

\DeclareMathOperator{\Ham}{\mathrm{Ham}}

\DeclareMathOperator{\id}{\mathrm{id}}
\DeclareMathOperator{\spec}{\mathrm{Spec}}

\DeclareMathOperator{\CZ}{\mathrm{CZ}}

\def\H2{H^{(2)}}

\newif\ifmoditem
\newcommand{\setupmodenumerate}{%
  \global\moditemfalse
  \let\origmakelabel\makelabel
  \def\moditem##1{\global\moditemtrue\def\mesymbol{##1}\item}%
  \def\makelabel##1{%
    \origmakelabel{##1\ifmoditem\rlap{\mesymbol}\fi\enspace}%
    \global\moditemfalse}%
}

\usepackage{float}
\begin{document}
	
\title{Quantum Steenrod powers and Hamiltonian maps}	

\author{Shaoyun Bai}
\address{MIT, 77 Massachusetts Avenue Cambridge, MA 02139, USA}
\email{shaoyunb@mit.edu}

\author{Egor Shelukhin}
\address{Department of Mathematics, University of Montreal, Canada}
\email{egorshel@gmail.com}

\author{Nicholas Wilkins}
\email{nick.mick.wilk@gmail.com}

\author{Guangbo Xu}
\address{Department of Mathematics, Rutgers University, Hill Center--Busch Campus, 110 Frelinghuysen Road, Piscataway, NJ 08854-8019, USA}
\email{guangbo.xu@rutgers.edu} 
	
\begin{abstract}
We prove a series of new results in Hamiltonian dynamics on a general closed symplectic manifold
$(M, \omega)$, including:

1. If $M$ admits a Hamiltonian diffeomorphism which is either a pseudo-rotation or has
finite order, then $M$ is geometrically uniruled. This resolves a variant of Problem 24 in
McDuff–Salamon’s list which predicts an obstruction to the existence of such special Hamiltonian diffeomorphisms in terms of genus zero numerical invariants.

2. If a Hamiltonian possesses a periodic orbit in a non-torsion homology class, then it has infinitely many simple periodic points. The same conclusion holds if the manifold is not geometrically uniruled and the diffeomorphism is minimal for rational Floer homology. These two general results complement the known cases of the Hofer–Zehnder conjecture.

3. If a Hamiltonian diffeomorphism possesses a symplectically degenerate maximum, then it has infinitely many simple periodic points. This resolves an open question which stems from the work of Ginzburg and G\"urel.

We also establish new cases of the generic Conley conjecture: infinitely many periodic points for generic Hamiltonian diffeomorphisms.

The proofs rely on a systematic application of the integral Hamiltonian Floer theory package developed by the first and fourth author, a K\"unneth isomorphism in equivariant Floer homology, and new quantitative analysis of quantum power maps, which is of independent interest.
\end{abstract}

\maketitle

\setcounter{tocdepth}{1}
\tableofcontents



\section{Introduction}

\subsection{Overview}
In recent years there has been active research into equivariant Floer theory for symplectomorphisms, which takes into account the action of a cyclic group on the loop space of a symplectic manifold via discrete rotation. This theory has its origins in the work of Fukaya \cite{fukaya}, who first suggested a quantum version of Steenrod square operations. This theory has applications to Gromov--Witten invariants of symplectic manifolds \cite{wilk,wilk2,Seidel-formal,covariant-constant, jaehee, jae-hee-lee,  chen2024exponential, chen2024quantum, seidel2025p,bai20253d, bai2026arithmetic} as well as to the topological, dynamical, and algebraic properties of groups of Hamiltonian diffeomorphisms and, more generally, of symplectomorphisms \cite{seidel,SZhao-pants,S-HZ,CGG2,S-PRQS,covariant-constant,S-PRQSR,AS-torsion,Sugimoto, Allais1, Allais2,BaiXu, AtallahLou, shelukhin2024, cineli2024closed}. In this paper we continue the Floer-theoretic study of the rotational $\Z/p$-symmetry on loops and deduce applications to Hamiltonian dynamics on \emph{all} closed symplectic manifolds using the foundational package \cite{Bai_Xu_2025} developed by the first and fourth author.

Extending the results of \cite{wilk,CGG2,S-PRQS,S-PRQSR,AS-torsion} to arbitrary primes $p$ and to arbitrary closed symplectic manifolds, in this paper we provide new obstructions to the existence of Hamiltonian pseudo-rotations and Hamiltonian diffeomorphisms of finite order. Along the way, we establish a new Kunneth formula for equivariant power operations in Floer theory and quantum cohomology, and prove a new divisibility bound in local equivariant Floer cohomology. We also prove the existence of infinitely many periodic points for generic Hamiltonian diffeomorphisms for certain symplectic manifolds beyond already known cases
(see \cite{GG-generic, Sugimoto-generic}). Furthermore, we prove a new result on the existence of infinitely many periodic orbits of a Hamiltonian diffeomorphism. It is of complementary nature to that in \cite{S-HZ} and provides further evidence for the Chance--McDuff conjecture according to which the existence of a Hamiltonian diffeomorphism with finitely many periodic points on a closed symplectic manifold must imply the non-vanishing of a Gromov--Witten invariant, or more generally, the existence of a non-constant pseudo-holomorphic sphere. Finally, extending the results of \cite{Sugimoto,GG-ai} to full generality, we prove the existence of infinitely many periodic points of a Hamiltonian diffeomorphism which either possesses a periodic orbit in a non-torsion homology class with non-vanishing local Floer cohomology or a symplectically degenerate maximum, for \emph{any} closed symplectic manifold. This completes two directions of research initiated in the work of Ginzburg and G\"urel.

\subsection{Main results}

Let us now discuss our main results in more detail and provide contexts in the literature. Let $(M, \omega)$ be a closed symplectic manifold. For any prime $p$, let ${\mb F}_p$ be the finite field of $p$ elements.

\subsubsection{Hamiltonian pseudo-rotations and Hamiltonian torsions}

The first main result 
deals with Hamiltonian pseudo-rotations: those Hamiltonian diffeomorphisms that have the homologically minimal number of periodic points of all periods from the perspective of the Arnol'd conjecture (i.e. the total Betti number). Let $\phi$ be a Hamiltonian diffeomorphism with a finite set $\mathrm{Fix}_c(\phi)$ of contractible fixed points $x$--those for which the loop $\{ \phi^t_H x \}$ is contractible for every Hamiltonian $H$ with time-one map $\phi^1_H = \phi$. Given a field $\bK$, following \cite{S-HZ}, we define the homological count of contractible fixed points of $\phi$ over $\bK$ as follows: 
\beqn
N(\phi;\bK) = \sum_{x \in \mathrm{Fix}_c(\phi)} \dim_{\mb K} HF^{\loc}(\phi,x; \bK),
\eeqn
where $HF^{\loc}(\phi,x; \bK)$ denotes the local Floer cohomology with $\bK$-coefficients.

\begin{defn}\label{def: pr}
A Hamiltonian diffeomorphism $\phi \in \Ham(M,\om)$ of a closed symplectic manifold $(M,\om)$ is called an {\bf $\F_p$ Hamiltonian pseudo-rotation} if it has finitely many periodic points of periods being powers of $p$ and for all $k \geq 1$ there holds 
\beqn
N(\phi^{p^k}; \F_p) = \dim_{\F_p} H^*(M;\F_p).
\eeqn
We call $\phi$ a {\bf pseudo-rotation} if it has finitely many periodic points, and for every period $m \geq 1$, over every field $\bK$ we have
\beqn
N(\phi^m;\bK) = \dim_{\bK} H^*(M;\bK)
\eeqn
for all such iterations. 
\end{defn}

It is believed (such as in the well-known Chance--McDuff conjecture) that the existence of Hamiltonian pseudo-rotations implies the abundance of holomorphic spheres. Such a connection was established in terms of the quantum Steenrod square by  
\cite{S-PRQS,CGG2,S-PRQSR} and 
was recently applied to a new example of the monotone $6$-point blowup of $\mb{CP}^2$ in \cite{covariant-constant}. Indeed, it was proven in \cite{S-PRQS, S-PRQSR, CGG2} that if $(M,\om)$ is a spherically monotone symplectic manifold and admits an $\F_2$ Hamiltonian pseudo-rotation, then it is uniruled in the sense of quantum Steenrod operations. Namely, the quantum Steenrod square $\qst_2(\mu)$ of the counit $\mu \in H^{2n}(M;\F_2)$ is deformed by holomorphic spheres in the sense that 
\beqn
\qst_2(\mu) \neq St_2(\mu),
\eeqn
where $St_2$ denotes the total Steenrod power $St_p$ for $p=2$. It is easy to see 
that this implies that $(M,\om)$ is geometrically uniruled: for every $\om$-compatible almost complex structure $J,$ and every point $x \in M,$ there exists a non-constant $J$-holomorphic sphere passing through $x.$ 

Our result below, which was proved in Section \ref{subsection_proof_uniruled}, generalizes this criterion to all primes $p$ in terms of quantum Steenrod $p$-th powers for arbitrary closed symplectic manifolds. In the following discussion, we denote by $QSt_p$ the $p$-th quantum Steenrod power as defined in \cite{wilk,Seidel-formal,covariant-constant,Bai_Xu_2025}.

\begin{thm}[Pseudo-rotation $\Rightarrow$ uniruledness]\label{thm: pr}
Let $p$ be a prime. Let $(M,\om)$ be a closed symplectic manifold which admits an $\F_p$ Hamiltonian pseudo-rotation. Then it is $\F_p$ Steenrod uniruled. Namely, the quantum Steenrod $p$-th power $\qst_p(\mu)$ of the counit $\mu \in H^{2n}(M;\F_p)$ is deformed by holomorphic spheres in the sense that \[ \qst_p(\mu) \neq St_p(\mu).\]   
\end{thm}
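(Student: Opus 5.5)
We argue by contradiction, following the strategy of \cite{S-PRQS,S-PRQSR,CGG2} but with $\Z/p$-equivariant Floer theory over $\F_p$ and the integral Floer package of \cite{Bai_Xu_2025}. Suppose $\phi$ is an $\F_p$ pseudo-rotation and $\qst_p(\mu) = St_p(\mu)$. We compare two computations of the $\Z/p$-equivariant Floer cohomology $HF_{\Z/p}(\phi^{p^k})$ of the $p$-th iterate, with $\F_p$ coefficients and suitable Novikov completion.

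\emph{Global side.} The equivariant pair-of-pants (quantum $p$-th power) map $\mathcal{P}: HF(\phi^{p^{k-1}}) \to HF_{\Z/p}(\phi^{p^k})$ is intertwined by PSS with $\qst_p$ on $QH^*(M)$. We apply it to the class corresponding to $\mu$, which has spectral invariant $c(\mu;\phi^{p^{k-1}})$. The hypothesis $\qst_p(\mu) = St_p(\mu) = \mu^{\otimes p}\otimes(\text{unit})$, after inverting the equivariant parameter $u$ (Tate localization), says the image is the purely classical class. Its equivariant spectral invariant would then equal $p\cdot c(\mu;\phi^{p^{k-1}})$, and the same holds for the point class. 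The key quantitative input is the sharp estimate that the equivariant spectral invariant of $\mathcal{P}(a)$, after multiplying by a power of $u$ matching the degree shift $(p-1)\deg$, is exactly $p\,c(a)$. This is the analogue of the $p=2$ estimate in \cite{S-PRQS}; here it comes from the quantitative analysis of quantum power maps advertised in the abstract.

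\emph{Local side.} Since $\phi$ is a pseudo-rotation, for all $k$ the Floer complex of $\phi^{p^k}$ has no differential over $\F_p$. Each local group $HF^{\loc}(\phi^{p^k},x)$ then contributes exactly one generator to a basis of $HF$, and iterating preserves this because $N(\phi^{p^k})=\dim H^*(M)$. The local equivariant Floer cohomology of the iterated orbit $x^{(p)}$ is computed by a local Künneth/Smith-type isomorphism, $HF^{\loc}_{\Z/p}(\phi^{p},x^{(p)})[u^{-1}] \cong HF^{\loc}(\phi,x)\otimes\F_p((u))$, via the local $p$-th power map. We combine this with the new divisibility bound in local equivariant Floer cohomology: the local power map sends the generator to $u^{m}\cdot(\text{generator})$, with $m$ determined by the mean index. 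The conclusion is that the equivariant class of the point built from the $p$-th powers of the fixed-point generators has spectral invariant $p\,c(\mu;\phi)$ only with an index/action mismatch.

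\emph{Contradiction and main obstacle.} Pseudo-rotations have all mean indices of fixed points approximating $0$ or $2n$ in a controlled way (Ginzburg–Gürel style: $c(\mu,\phi^{p^k}) - c(1,\phi^{p^k})$ is bounded, the spectral norm is small along the subsequence $p^k$). Using $\qst_p(\mu)=St_p(\mu)$, the actions satisfy $c(\mu;\phi^{p^k}) = p\,c(\mu;\phi^{p^{k-1}})$ exactly for all $k$. For a pseudo-rotation, the action spectrum changes by iterating fixed points, which contradicts a degree count. The $u$-power from the divisibility bound must equal $(p-1)n$ for the point class, and this forces the mean index of the carrier orbit to be $2n$ and $0$ simultaneously. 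A nontrivial sphere correction to $\qst_p(\mu)$ is exactly what would absorb this mismatch. I expect the main obstacle to be the sharp quantitative statement for the equivariant spectral invariant of the quantum power map, i.e.\ excluding $\F_p$-torsion and energy loss in the integral Floer package over general (non-monotone) $M$. There, virtual techniques make the chain-level Künneth identification and the filtration compatibility delicate. I would first establish it for the Tate-localized complex, where the Künneth isomorphism becomes a filtered quasi-isomorphism.
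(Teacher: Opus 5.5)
There is a genuine gap. You assemble the right global ingredients: PSS/SSP intertwining $\qst_p$ with the equivariant pants product, the filtered Tate isomorphism, and vanishing differentials for pseudo-rotations. But the proposal never reaches an actual contradiction, and the two steps that carry the proof are missing or misstated.

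\textbf{Carrier persistence.} The hypothesis is $\qst_p(\mu)=St_p(\mu)=t^{n(p-1)}\mu$, not ``$\mu^{\otimes p}\otimes$ unit''. From it and the filtered Tate isomorphism you only get $c(\mu,H^{(p^k)})\ge \widehat c(\mu_{\zp},H^{(p^k)})=p\,c(\mu,H^{(p^{k-1})})$, not equality. The Tate isomorphism is already supplied by \cite[Theorem M]{Bai_Xu_2025}, so it is not the real obstacle. The reason equality fails is that the Tate spectral invariant of $\mu_{\zp}$ may be realized by a component of positive $(t,\theta)$-degree, which says nothing about the non-equivariant class. What the argument needs is a single capped orbit $\ov{x}$ with $HF^{\loc,2n}(H^{(p^k)},\ov{x}^{(p^k)};\fp)\neq 0$ carrying $c(\mu,H^{(p^k)})=p^k c(\mu,H)$ for all $k$. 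The paper obtains this in four steps:
\begin{enumerate}
\item It passes to an iterate $\phi^{p^{k_0}}$ so all fixed points are $p$-admissible. This gives $HF^{\loc}_{\zp}\cong HF^{\loc}\otimes\rp$ in the Borel sense, which is stronger than your Tate-local statement and is needed to compare $(t,\theta)$-degree-zero components with non-equivariant classes.
\item It uses homologically perturbed complexes $C(H^{(p^k)})$ and $C_{\zp}(H^{(p^k)})$ with vanishing differentials.
\item It splits $\wt P=\wt P^{\loc}+\wt P^{\rm big}$, where $\wt P^{\rm big}$ raises action by $\epsilon_H$.
\item It proves a divisibility lemma: at a $p$-admissible, possibly \emph{degenerate}, isolated fixed point the local power map has leading $(t,\theta)$-degree at most $2n(p-1)$.
\end{enumerate}
Every term of $t^{n(p-1)}\mu_{\zp}$ has degree $\ge 2n(p-1)$, so the action-realizing term must be $t^{n(p-1)}\mu_{\zp,i_0}(0,0)$, whose action matches that of the non-equivariant component. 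Your version of divisibility (``generator $\mapsto t^m\cdot$generator with $m$ fixed by the mean index'') is only the nondegenerate statement. Fixed points of a general pseudo-rotation are degenerate and their local cohomology need not be one-dimensional, so ``each local group contributes exactly one generator'' is false. The paper handles this with a K\"unneth reduction to a nondegenerate $\times$ totally degenerate normal form, plus the classical degree bound for $St_p$ on the degenerate factor.

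\textbf{Endgame.} Once such a carrier persists, two facts force $\Delta(\ov{x})=0$: $HF^{\loc}(H^{(p^k)},\ov{x}^{(p^k)})$ is supported in degrees $[-\Delta,-\Delta+2n]$, and $\Delta(\ov{x}^{(p^k)})=p^k\Delta(\ov{x})$. Hence $\ov{x}$ and all its $p^k$-iterates are symplectically degenerate maxima. This is not a contradiction in itself; nothing forces the mean index to be ``$2n$ and $0$ simultaneously''. Your assertions that mean indices approximate $0$ or $2n$, or that the spectral norm is small along $p^k$, are unproved and play no role. The missing input is a separate dynamical lemma: an $\fp$ pseudo-rotation has no symplectically degenerate maximum among its $p^k$-iterates. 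The paper proves it by combining two facts. First, by the Ginzburg--Hein result, an SDM forces, for all large $r$, a nontrivial connecting homomorphism in filtered Floer cohomology of $H^{(r)}$ within an arbitrarily short action window around $r\,\mathcal{A}_H(\ov{x})$. Second, the differential of $C(H^{(p^k)})$ vanishes, and hence so does that of every action-window subquotient. Without the carrier-persistence argument and this SDM exclusion, the proposal does not close.
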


Extending the main result in \cite{AS-torsion}, similar ideas in the proof of Theorem \ref{thm: pr} allow us to prove the following result regarding Hamiltonian torsion, which provides an answer to \cite[Chapter 14, Problem 24]{mcduff-salamon-intro} in full generality (see proof in Section \ref{subsection_proof_no_torsion}).

\begin{thm}[Hamiltonian torsion $\Rightarrow$ uniruledness]\label{thm_torsion}
Let $(M,\om)$ be a compact symplectic manifold, which admits a nontrivial Hamiltonian action by a finite group $G$. Then $(M, \omega)$ is $\F_p$ Steenrod uniruled for any prime $p$ which does not divide $|G|$.
\end{thm}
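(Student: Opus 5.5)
Since $G$ acts nontrivially, there is $g\in G$ with $\phi = g$ acting as a nontrivial Hamiltonian diffeomorphism of some finite order $m>1$, and $m$ divides $|G|$. So $p \nmid m$. After replacing $\phi$ by a suitable power, I may assume $m=q$ is a prime different from $p$. I then argue by contradiction: assume $\qst_p(\mu) = St_p(\mu)$. The plan is to run the pseudo-rotation argument of Theorem~\ref{thm: pr}, using the $p$-th power rather than the $q$-th power. The key observation is that $\phi^{p^k}$ has order $q$ and generates the same cyclic group as $\phi$, since $p$ is invertible mod $q$.

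\textbf{Step 1 (fixed points).} $\mathrm{Fix}(\phi)$ is a union of symplectic submanifolds $F_j$, because $\phi$ is an isometry of an invariant metric. It is a proper subset since $\phi\neq \id$, and it is nonempty and contains contractible orbits (Floer homology / Arnold). The same holds for $\mathrm{Fix}(\phi^{p^k})=\mathrm{Fix}(\phi)$. Because $\phi$ is a symplectic isometry of finite order, it is the time-one map of a Hamiltonian loop-like path. Its Floer complex is Morse--Bott along the $F_j$, and each local Floer cohomology $HF^{\loc}(\phi,F_j)$ is $H^*(F_j)$ up to shift. In particular $\sum_j \dim H^*(F_j;\F_p) \ge \dim H^*(M;\F_p)$ by the spectral sequence. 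This is Smith-type theory for $\Z/q$ with $\F_p$ coefficients, $p\ne q$, which allows strict inequality.

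\textbf{Step 2 (equivariant comparison).} I would consider the $\Z/p$-equivariant Floer cohomology of $\phi^p$ and the quantum power map / local equivariant coproduct $HF(\phi)\to HF_{\Z/p}(\phi^p)$, the Floer-theoretic analogue of $St_p$. Under the assumption $\qst_p(\mu)=St_p(\mu)$, the Kunneth isomorphism and the divisibility bound in local equivariant Floer cohomology used for Theorem~\ref{thm: pr} force the following: the spectral invariant $c(\mu,\phi^{p^k})$ grows like $p^k c(\mu,\phi)$ up to a bounded error, in the $u$-localized sense. The action values of $\phi^{p^k}$, however, are those of the finitely many $F_j$, taken with the $p^k$-multiplied actions modulo the rotation period. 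Since $\phi^q=\id$, I would normalize $H$ so that $\phi^q$ is generated by a loop with action spectrum controlled. The spectrum of $\phi^{p^k}$ depends only on $p^k \bmod q$, up to the action of the loop $\phi^q$, which is a fixed shift. This gives a bounded spectrum over $k$, contradicting the forced growth unless $c(\mu,\phi)$ equals a specific value. Equivalently, I would use the pseudo-rotation-type argument: for $k$ with $p^k\equiv 1 \bmod q$, we get $\phi^{p^k}=\phi$. The equivariant $p$-power map then relates $\phi$ to itself with action multiplied by $p^k$ modulo the loop shift, and the equality $\qst_p(\mu)=St_p(\mu)$ makes this relation an isomorphism on local groups of degree type $\mu$. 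That yields an action identity $p^k\, \mathcal A = \mathcal A + (\text{loop term})$ which fails for large such $k$ unless $\phi$ is the identity near the fixed locus, i.e.\ everywhere.

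\textbf{Step 3 (conclusion).} I conclude $\phi=\id$, a contradiction. The main obstacle is Step 2: making the action/index bookkeeping rigorous in the Morse--Bott setting for a degenerate finite-order map. This requires handling the Hamiltonian loop $\phi^q$ (its Seidel-element shift) over $\F_p$, and showing that triviality of $\qst_p(\mu)-St_p(\mu)$ genuinely forces the local equivariant map at the fixed components to be injective. I would first try to perturb $\phi$ to a nondegenerate $\Z/q$-invariant map and apply the divisibility bound of the local equivariant Floer cohomology directly to the $p^k$-iterates, exactly as in the proof of Theorem~\ref{thm: pr}.
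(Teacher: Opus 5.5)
Your setup matches the paper: you pass to an element $\phi$ of prime order $q\neq p$, assume $\qst_p(\mu)=St_p(\mu)$, and iterate by $p^k$, noting $\langle\phi^{p^k}\rangle=\langle\phi\rangle$. However, Step~2 contains no valid contradiction, because actions cannot supply one. For every $m$ one has $\mathcal{A}_{H^{(m)}}(\bar x^{(m)})=m\,\mathcal{A}_H(\bar x)$, so the growth $c(\mu,H^{(p^k)})=p^k\,c(\mu,H)$ is consistent with everything; it is exactly what the paper proves (Claims B and C in Proposition~\ref{prop_carrier_iteration}). Write $p^k=aq+r$. The path $H^{(p^k)}$ is homotopic to the loop $\{\phi_H^{qt}\}$ traversed $a=\lfloor p^k/q\rfloor$ times, followed by $H^{(r)}$. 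So the loop enters $a$ times (as a power of its Seidel element), not as a fixed shift, and the spectral data need not stay bounded; where it does, you only get $c(\mu,H)=0$. Likewise, for $p^k\equiv 1 \bmod q$ your identity $p^k\mathcal{A}=\mathcal{A}+(\text{loop term})$ holds identically, since the loop term is $\tfrac{p^k-1}{q}\,\mathcal{A}_{H^{(q)}}(\bar x^{(q)})=(p^k-1)\,\mathcal{A}_H(\bar x)$. Nothing forces $\phi=\id$.

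Three ingredients are missing.

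\emph{Perfectness.} The complexes $C(H^{(p^k)})$ and $C_{\mathbb{Z}/p}(H^{(p^k)})$ built from the Morse--Bott local groups have vanishing differential (Proposition~\ref{propc17}). This follows from the quantitative Smith inequality over $\mathbb{F}_p$, $\beta_{\mathrm{tot}}(\phi^{ip^k})\ge p^k\beta_{\mathrm{tot}}(\phi^i)$. The left side is bounded because $\phi^{ip^k}$ runs through the finitely many powers of $\phi$, so $\beta_{\mathrm{tot}}(\phi^i)=0$; this is where your cyclic-group observation is actually used. It gives $\sum_j\dim H^*(F_j;\mathbb{F}_p)=\dim H^*(M;\mathbb{F}_p)$, contrary to your remark that strict inequality may occur. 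Without it you cannot split $\pss(\mu)$ and $\mu_{\mathbb{Z}/p}$ into local pieces at every iterate.

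\emph{The carrier argument.} You need the argument of Proposition~\ref{prop_carrier_iteration}, with the divisibility bound supplied by identifying the local equivariant pants product at a fixed component $\mathcal{F}$ with the classical $St_p$ on $H^*(\mathcal{F})$ (Proposition~\ref{propc13}). This produces a single capped component $\bar{\mathcal F}$ with $HF^{\loc,2n}(H^{(p^k)},\bar{\mathcal F}^{(p^k)};\mathbb{F}_p)\neq 0$ for all large $k$.

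\emph{The contradiction comes from grading, not action.} That local group is supported in degrees $[-p^k\Delta,\,-p^k\Delta+2n]$. A class in the fixed degree $2n$ for all large $k$ therefore forces $\Delta=0$ with a top-degree class. This is an SDM-type configuration, excluded as in the proof of Theorem~\ref{thm: pr}.
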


\begin{remark}
In other words, we prove that the Gromov--Witten type invariant coming from the ``quantum part" of $\qst_p(\mu)$ is nontrivial. This is arguably not the only form of answer to \cite[Chapter 14, Problem 24]{mcduff-salamon-intro}, but we believe that the condition $\qst_p(\mu) \neq St_p(\mu)$ should imply the non-triviality of certain ordinary Gromov--Witten invariants, possibly with gravitational descendants. However, establishing such a statement is more in the realm of enumerative geometry, and Theorem \ref{thm_torsion} should reflect the essence of the interaction between Hamiltonian torsion and abundance of rational curves. See \cite{Rezchikov-gw} for related work in this direction.
\end{remark}

We can also deduce the following result concerning finite group actions via Hamiltonian diffeomorphisms, extending \cite[Theorem H]{AS-torsion} to full generality.

\begin{thm}[Contractibility of fixed points of finite-order Hamiltonian diffeomorphisms]\label{thm:contractible}
    Let $(M, \omega)$ be a compact symplectic manifold. If $\phi$ is a Hamiltonian diffeomorphism of $(M, \omega)$ of finite order, then all the fixed points of $\phi$ are contractible.
\end{thm}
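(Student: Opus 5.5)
We argue by contradiction, following the strategy of \cite[Theorem H]{AS-torsion} but replacing the semipositive Floer package by the integral Hamiltonian Floer theory of \cite{Bai_Xu_2025}.

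\emph{Reductions.} Let $\phi$ have order $m$ and let $x$ be a fixed point. Replacing $\phi$ by a suitable power does not change the free homotopy class of the orbit of $x$ in any essential way. Since $\phi^{m/q}$ fixes $x$ and its orbit class is $m/q$ times that of $x$, we may reduce to the case where $m = q$ is prime, or at least to a cyclic group of prime-power order. Suppose the loop $\gamma_x(t) = \phi^t_H x$ is non-contractible for some (equivalently every) $H$ generating $\phi$. Its class $a \in \pi_1$, or rather its free homotopy class, is then nontrivial. The $m$-th iterate $\phi^m = \id$ is generated by the loop $\{\phi^t_H\}_{t\in[0,m]}$, and the orbit of $x$ under this loop has class $ma$. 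Loops of Hamiltonian diffeomorphisms have contractible orbits, by the Floer-theoretic argument: the Seidel element, or equivalently the fact that the Arnold conjecture forces contractible orbits. Hence $ma$ is trivial, so $a$ is a torsion free homotopy class.

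\emph{Key step.} Choose a prime $p$ not dividing $m$. Consider the Floer cohomology $HF^*(\phi; a; \F_p)$ in the component of the free loop space of class $a$. Since $a \neq 0$, this group vanishes: a small Hamiltonian perturbation in class $a$ has no orbits, so filtered continuation shows it is zero. On the other hand, $\phi$ generates a $\Z/m$ action by a Hamiltonian loop of finite order. The fixed points of $\phi$ in class $a$ come in a clean Morse–Bott family $\fix(\phi)_a$. Being fixed sets of a finite-order symplectic map, these are symplectic submanifolds, and the local Floer cohomology of each component is its (shifted) cohomology. One then needs a spectral-sequence or filtration argument showing that $HF^*(\phi;a)$ is computed by $\bigoplus H^*(\fix(\phi)_a;\F_p)$, up to a Novikov shift. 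This is the step that fails in general, and it is where the $p$-power structure enters.

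\emph{Equivariant input.} Use the $\Z/p$-equivariant (in fact $\Z/m$-iteration) structure. The action filtration of $\phi^{k}$, for $k \equiv 1 \bmod m$, is identified with that of $\phi$ via the Hamiltonian loop. Combining the Kunneth isomorphism and the Smith-type inequality for $\Z/p$-equivariant Floer cohomology of $\phi^p$ versus $\phi$, we have
\[
\dim HF^{\loc}(\phi^{p}, x) \geq \dim HF^{\loc}(\phi, x),
\]
with the equality forced because $\phi^p$ generates the same group when $p \nmid m$. This yields a nonzero class in the Tate-localized $\Z/p$-equivariant Floer cohomology of $\phi^p$ in class $pa$. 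That cohomology is isomorphic, via the quantum Steenrod/power-map isomorphism of \cite{Bai_Xu_2025} localized by the equivariant parameter, to Floer cohomology of $\phi$ in class $a$ tensored with $\F_p((u))$, which vanishes. This contradiction shows $a = 0$.

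\emph{Main obstacle.} The hardest point is the isomorphism between localized equivariant Floer cohomology of $\phi^p$ in class $pa$ and that of $\phi$ in class $a$, together with the nonvanishing of the local contribution for general closed $(M,\omega)$. I would first try to establish it via the Kunneth formula for equivariant power operations, using the integral virtual counts to keep $\F_p$ coefficients valid. In the degenerate, infinite-$\pi_1$ setting, I would also need to check that the filtration spectral sequence converges for non-contractible classes.
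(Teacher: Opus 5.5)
\textbf{There is a genuine gap at the center of your argument.} Nothing in it shows that the local groups $HF^{\loc}(\phi,\mathcal F)\cong H^*(\mathcal F;\F_p)$ of the fixed components in class $a$ survive to global cohomology. You flag this yourself in the key step, but the ``equivariant input'' does not supply it.

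\begin{itemize}
\item Your inequality $\dim HF^{\loc}(\phi^p,x)\ge\dim HF^{\loc}(\phi,x)$ is purely local. For $p\nmid m$ it is a tautological equality, since $\mathrm{Fix}(\phi^p)=\mathrm{Fix}(\phi)$.
\item ``This yields a nonzero class in the Tate-localized cohomology'' is a non sequitur. By the localization isomorphism, that Tate group is a Frobenius twist of $HF(\phi;a)\otimes\mathcal K_p\otimes E(\theta)=0$. The local classes can be cancelled by the differential exactly as they are in $HF(\phi;a)$, so no contradiction is reached.
\item The isomorphism you call the main obstacle is not where the difficulty lies.
\end{itemize}

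If you want to keep $p\nmid m$, the missing mechanism is quantitative. The total bar-length Smith inequality $\beta_{\rm tot}(\phi^{p^k};p^ka)\ge p^k\beta_{\rm tot}(\phi;a)$ holds for all $k$, and only finitely many pairs $(\phi^{p^k},p^ka)$ occur. Together these force $\beta_{\rm tot}(\phi;a)=0$, i.e.\ the Morse--Bott homologically perturbed complex in class $a$ has zero differential. Then $HF(\phi;a)=0$ forces $\mathrm{Fix}(\phi)_a=\emptyset$.

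This is the argument of Proposition~\ref{propc17} transplanted to class $a$. However, you showed $ma=0$, so $a$ is torsion. That is exactly the case not covered by the noncontractible Floer package quoted from \cite{Bai_Xu_2025}, which is stated for classes nonzero in $H_1(M;\Z)_{\rm free}$. You would therefore also have to build Floer theory, the local PSS map, and the Tate localization in torsion free homotopy classes.

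Separately, your reduction step is wrong as written. Passing to $\phi^{m/q}$ replaces $a$ by $(m/q)a$, which may well be trivial. Getting from prime-power order to general order is a genuine induction, not a formality.

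\textbf{How the paper proceeds.} It avoids noncontractible Floer theory altogether and takes $p$ \emph{dividing} the order. Suppose $\phi^{p^k}=\mathrm{Id}$.
\begin{itemize}
\item The complex of Proposition~\ref{homological_perturbation_Morse_Bott}, whose homology is $H^*(M)$, together with the local PSS isomorphism (Proposition~\ref{propc9}), gives $\sum_{\mathcal F\ \mathrm{contractible}}\dim H^*(\mathcal F;\F_p)\ge\dim H^*(M;\F_p)$. This uses only that a complex has rank at least that of its homology.
\item The classical Smith inequality, iterated, gives $\sum_{\mathrm{all}\ \mathcal F}\dim H^*(\mathcal F;\F_p)\le\dim H^*(M;\F_p)$.
\item A noncontractible component would contribute positively to the second sum but not to the first, which is a contradiction.
\end{itemize}
General order is then handled by induction on the number of prime factors, following \cite[Section 5.10]{AS-torsion}.
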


\subsubsection{Results towards the generic Conley conjecture}

Hamiltonian pseudo-rotation and Hamiltonian torsion both represent the rare case when a Hamiltonian diffeomorphism has only finitely many periodic points. In fact, the so-called ``generic Conley conjecture'' asserts that on any symplectic manifold, generic Hamiltonian diffeomorphisms should have infinitely many periodic points. The method used to prove Theorem \ref{thm: pr} and Theorem \ref{thm_torsion} can be modified to prove the following theorem and two corollaries in the scope of this conjecture. We remind the reader that by the Birkhoff--Moser theorem \cite{Moser}, the potential failure of the generic Conley conjecture could come from Hamiltonian diffeomorphisms with only hyperbolic fixed points. Hence the main argument (see Subsection \ref{subsection_proof_hyperbolic}) is to use quantum Steenrod operations to rule out such transformations.


\begin{thm}\label{thm: hyperbolic}
Let $(M,\om)$ be a closed symplectic manifold and $\phi \in \Ham(M,\om)$. 
Suppose $\phi$ has finitely many periodic points, all of which are hyperbolic. Then for any prime $p$, and a class $u \in H^r(M;\F_p)$ with $r>n$ satisfying 
\begin{equation}\label{eq: triv St}
St_p(u) = c_p t^{r(p-1)/2}u,
\end{equation}
for some $c_p \in \fp\setminus \{0\}$, we have $\qst_p(u) \neq St_p(u).$  
\end{thm}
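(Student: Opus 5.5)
We argue by contradiction: assume $\qst_p(u) = St_p(u)$ and derive a contradiction from the equivariant Floer package of \cite{Bai_Xu_2025} together with local computations at hyperbolic orbits.

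\textbf{Step 1: the equivariant $p$-th power map.} We use the $\Z/p$-equivariant Floer cohomology of $\phi^p$ and the quantum power map
\beqn
\mathcal{P}: HF^*(\phi;\F_p) \to HF^*_{\Z/p}(\phi^p;\F_p) \cong HF^*(\phi^p;\F_p)\otimes \F_p[\![t]\!][\theta]/(\theta^2),
\eeqn
localized in $t$. Under the PSS isomorphisms it intertwines with $\qst_p$ on $QH^*(M;\F_p)$; this is the Kunneth-type compatibility the introduction mentions. We take the class $a = \pss(u) \in HF^*(\phi;\F_p)$. The hypothesis \eqref{eq: triv St}, together with the assumption $\qst_p(u) = St_p(u)$, says that
\beqn
\mathcal{P}(a) = c_p\, t^{r(p-1)/2}\, \pss^{(p)}(u)
\eeqn
up to the standard identification. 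In particular $\mathcal{P}(a)$ has a nonzero component of $t$-degree exactly $r(p-1)/2$ and no $\theta$-part.

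\textbf{Step 2: spectral (action) bookkeeping.} Since all periodic points are hyperbolic and there are finitely many, each $\phi^{p^k}$ is nondegenerate. The iterates of a hyperbolic orbit $x$ have $HF^{\loc}(\phi^{p},x^p)$ one-dimensional, in degree $p\cdot\CZ(x)$ up to a uniform shift, because hyperbolic orbits have no mean-index defect and are "good" or "bad" by a sign. We compare spectral invariants: $c(\mathcal{P}(a), \phi^p) \le p\, c(a,\phi)$ and the local equivariant cohomology at an iterated orbit $x^{(p)}$. For hyperbolic $x$, $HF^{\loc}_{\Z/p}(\phi^p, x^p) \cong HF^{\loc}(\phi,x)\otimes \F_p(\!(t)\!)[\theta]$. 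The local power map $\mathcal{P}_x$ is multiplication by $t^{(p-1)\CZ(x)/2}$ (up to unit and the $\theta$ twist), with no higher-$t$ divisibility. Therefore the $t$-order of $\mathcal{P}(a)$ is governed by the indices $\CZ(x)$ of the orbits carrying the cycle representing $a$.

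\textbf{Step 3: the contradiction via degree/index.} A cycle representing $a$ lives on orbits of index $r$ (shifted). Its image under $\mathcal{P}$, computed on the chain level, is a sum over $p$-th iterates plus contributions from non-iterated $p$-periodic orbits. The latter come in free $\Z/p$-orbits and hence are $t$-torsion after localization, so they vanish. The iterate contributions carry $t$-weight $(p-1)\CZ(x)/2$. We need this to be incompatible with $t^{r(p-1)/2}$ times a class in degree $pr$ when $r>n$. Concretely, since $\phi$ has finitely many periodic points, the iterated orbit $x^p$ has index $p\,\CZ(x)$, lying in $[p(r)-n(p-1)\ldots]$. The resulting mismatch of $n(p-1)$ between the mean-index normalization and the cohomological grading forces the $t$-exponent of the iterate contribution to differ from $r(p-1)/2$. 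The hypothesis $r>n$ is exactly what makes the needed exponent exceed what the local hyperbolic computation permits, after accounting for the degree-$2n$ shift in $HF$ grading versus $H^*(M)$. The classical Steenrod term $St_p$ of a class in degree $r>n$ is then seen to be realized only by quantum corrections.

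\textbf{Main obstacle.} The hardest step is the local computation in Step 2 for hyperbolic orbits with integral/$\F_p$ coefficients on a general (non-monotone) $M$. This requires the global Kuranishi package of \cite{Bai_Xu_2025}, and controlling the sign (bad orbits when $p=2$ or negative hyperbolic) and the exact $t$-power. I would first do it for a model linear hyperbolic map, where the equivariant local complex is computed by the Tate construction on a one-dimensional representation. I would then extend by a homotopy/continuation argument to general hyperbolic orbits, using the isolation of iterates guaranteed by hyperbolicity.
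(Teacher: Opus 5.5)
\textbf{Verdict: there is a genuine gap.} The local exponent in Step~2 is wrong, and Step~3 lacks the action-filtration argument that actually produces the contradiction.

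\textbf{The local formula.} For a hyperbolic capped orbit one has $\CZ(\ov{x}^{(p)}) = p\,\CZ(\ov{x})$. With the grading $|\ov{x}| = \CZ(\ov{x})+n$ this gives $p|\ov{x}| - |\ov{x}^{(p)}| = n(p-1)$ for \emph{every} orbit. The local equivariant pants product preserves degree and is invertible (Theorem~\ref{thm_local_pants}), so $\fst^{\loc}_{p,x}(\ov{x}) = c_x\, t^{n(p-1)/2}\,\ov{x}^{(p)}$ with $c_x \neq 0$. The exponent is $n(p-1)/2$, independent of $\CZ(x)$. Your exponent $(p-1)\CZ(x)/2$ is negative on the orbits carrying $\pss(1)$, which have $\CZ = -n$. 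So the step you flag as the main obstacle is a two-line degree count using results already in the paper. No model computation for linear hyperbolic maps is needed.

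\textbf{Step~3.} Degree bookkeeping cannot give a contradiction. Every map involved preserves degree, and for $|\ov{x}| = r$ the term $t^{n(p-1)/2}\ov{x}^{(p)}$ has degree $n(p-1) + p(r-n) + n = pr$, exactly the degree of $t^{r(p-1)/2}u$. As written, ``the mismatch of $n(p-1)$ forces the exponent to differ'' has no content. The missing idea is an action-filtration argument:
\begin{enumerate}
\item Pass to an iterate $\psi=\phi^{k_0}$ whose periodic points are all fixed points and whose capped orbits all have Conley--Zehnder index of one parity. Then the Floer differential vanishes and $\pss(u)=\sum_x a_x\ov{x}$. This also removes the simple $p$-periodic orbits, so your Tate-localization step is unnecessary.
\item Split $\fst_p = P^{\loc}+P^{\rm big}$, where $P^{\rm big}$ raises action by a definite amount by crossing energy.
\item Choose a spectral carrier $\ov{x}$, i.e.\ $\mathcal{A}_H(\ov{x}) = c(u,H)$. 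At the minimal action level $p\,c(u,H)$, the part of $\fst_p(\pss(u))$ along $\ov{x}^{(p)}$ is exactly $c_x a_x^p\, t^{n(p-1)/2}\ov{x}^{(p)}$. Nothing can cancel it, and its $(t,\theta)$-degree is $n(p-1)$.
\item On the other side, every component of $c\,t^{r(p-1)/2}\ssp_{\zp}^{-1}(u)$ has $(t,\theta)$-degree at least $r(p-1)$. This is only a lower bound: $\ssp_{\zp}^{-1}(u)$ has higher-order terms, contrary to your claim of ``$t$-degree exactly $r(p-1)/2$ and no $\theta$-part''.
\item Comparing gives $n(p-1)\ge r(p-1)$, contradicting $r>n$. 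This is the only place the hypothesis $r>n$ enters.
\end{enumerate}

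A minor point: the compatibility you use in Step~1 is Theorem~\ref{thm214}, $\qst_p = \ssp_{\zp}\circ\fst_p\circ\pss$, not a Kunneth property.
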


The conceptual reason for having Theorem \ref{thm: hyperbolic} is that the hyperbolicity of periodic points puts a strong restriction (via grading) on the local version of the equivariant pair-of-pants product, which further constrains $\qst_p$.


\begin{cor}[Generic Conley conjecture from $\qst_p$]\label{cor: generic}
Let $(M,\om)$ be a closed symplectic manifold such that there exists a prime $p$ and a class $u \in H^r(M;\F_p)$ with $r>n$ such that 
\beqn
\qst_p(u) = St_p(u) = c_p t^{r(p-1)/2}u
\eeqn
for $0 \neq c_p \in \F_p$. Then any $C^2$-generic  Hamiltonian diffeomorphism $\phi \in \Ham(M,\om)$ has infinitely many periodic points.
\end{cor}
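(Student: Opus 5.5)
This follows from Theorem \ref{thm: hyperbolic} and a genericity argument; the plan is to argue by contradiction. Fix the prime $p$ and the class $u \in H^r(M;\F_p)$, $r>n$, from the hypothesis. Suppose a $C^2$-generic $\phi$ has only finitely many periodic points. We want to conclude that all of them are hyperbolic. Then Theorem \ref{thm: hyperbolic} gives $\qst_p(u)\neq St_p(u)$, which contradicts the hypothesis $\qst_p(u)=St_p(u)=c_p t^{r(p-1)/2}u$.

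The genericity class has to be chosen so that the dichotomy "infinitely many periodic points, or all periodic points hyperbolic" holds. I would take the standard residual set $\mathcal{G}\subset \Ham(M,\om)$, in the $C^2$ (or $C^\infty$) topology. It consists of those $\phi$ such that, for every $k\ge 1$, every $k$-periodic point is nondegenerate and its linearized return map satisfies the nonresonance conditions of the Birkhoff normal form. This set is residual: for each fixed $k$ the condition is open and dense by a Kupka--Smale type transversality argument, and the intersection over $k$ is residual.

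For $\phi\in\mathcal G$, suppose some periodic point $x$ of period $k$ is not hyperbolic. Then $d\phi^k(x)$ has an eigenvalue on the unit circle, so $x$ has a nontrivial elliptic component. Genericity also gives a nondegenerate twist in the Birkhoff normal form on that component. The Birkhoff--Moser theorem \cite{Moser} then produces infinitely many periodic points accumulating at $x$, in the elliptic directions. This contradicts finiteness, so every periodic point of $\phi$ is hyperbolic.

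With that established, all hypotheses of Theorem \ref{thm: hyperbolic} hold for $\phi$, and we reach the contradiction described above.

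The main obstacle is the dynamical step: making sure Birkhoff--Moser applies in the partially elliptic (mixed) case in higher dimensions, and that the twist condition is genuinely $C^2$-generic. The twist is a condition on the third-order jet, which a $C^2$-small perturbation of the Hamiltonian can only achieve through the generating function. I would first cite the standard generic statement, as used in \cite{GG-generic, Sugimoto-generic}, that for $C^\infty$-generic (and, by the usual openness and density argument, $C^2$-generic) Hamiltonian diffeomorphisms, a non-hyperbolic periodic point forces infinitely many periodic points. Then I would simply combine it with Theorem \ref{thm: hyperbolic}.
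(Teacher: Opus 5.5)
Your proposal is correct and follows the same route as the paper. You invoke the standard $C^2$-generic dichotomy from \cite{GG-generic, Sugimoto-generic}, via the Birkhoff--Lewis--Moser theorem \cite{Moser}: a generic $\phi$ either has infinitely many periodic points or has only hyperbolic ones, and Theorem \ref{thm: hyperbolic} then contradicts $\qst_p(u)=St_p(u)$. Your sketch of the dichotomy goes beyond what the paper writes, but, as you acknowledge, the argument rests on citing that standard generic result, exactly as the paper does.
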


\begin{remark}\label{rmk: triv St}
Note that \eqref{eq: triv St} holds whenever $2(p-1)+r > 2n$ and $\beta_p(u) = 0$ where $\beta_p: H^r(M;\F_p) \to H^{r+1}(M;\F_p)$ is the Bockstein operator. However, it also holds in certain other cases, such as the setting of the proof of Corollary \ref{cor: generic new example} below.
\end{remark}

The rationale of Corollary \ref{cor: generic} is to provide new examples of manifolds for which generic Conley conjecture holds, which are also not covered by existing results. Indeed, by \cite{GG-generic, Sugimoto-generic}, generic Conley conjecture holds for any symplectic manifold $M$ for which one of the following three conditions fails to hold.
\begin{enumerate}

\item The minimal Chern number is $1$.

\item The dimension of $M$ is divisible by 4.

\item The odd rational cohomology $H^{\rm odd}(M; \Q)$ vanishes.\footnote{
(We thank Semon Rezchikov for his contribution to this observation.) By modifying the arguments of \cite{GG-generic, Sugimoto-generic} using the general Floer package of \cite{Bai_Xu_2025}, this condition can be replaced by an integral version, namely, the generic Conley conjecture holds for $M$ unless $H^*(M; {\mb Z})$ is torsion free and $H^{\rm odd}(M; {\mb Z}) = 0$. } 
\end{enumerate}
An explicit example beyond the existing cases is designed as follows. Let $X = {\rm Bl}_6(\mb{CP}^2)$ and $\mb{CP}^2$ be equipped with monotone symplectic forms such that $M = X \times X \times \mb{CP}^2$ is also monotone. Then $M$ satisfies all the above three conditions.



\begin{cor}\label{cor: generic new example}
$M$ admits no hyperbolic pseudo-rotations, and any $C^2$-generic Hamiltonian diffeomorphism $\phi \in \Ham(M,\om)$ has infinitely many periodic points.
\end{cor}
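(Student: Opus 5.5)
Corollary \ref{cor: generic new example} reduces to Theorem \ref{thm: hyperbolic} and Corollary \ref{cor: generic} once we exhibit a prime $p$ and a class $u \in H^r(M;\F_p)$ with $r>n$ such that
\[
\qst_p(u) = St_p(u) = c_p t^{r(p-1)/2}u, \qquad c_p \neq 0 .
\]
Here $\dim_{\R} M = 12$, so $n=6$. Granting this, Corollary \ref{cor: generic} gives infinitely many periodic points for $C^2$-generic $\phi$. For the first claim, a hyperbolic pseudo-rotation has finitely many periodic points, all hyperbolic. Theorem \ref{thm: hyperbolic} would then force $\qst_p(u)\neq St_p(u)$, a contradiction.

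\textbf{Choice of $u$.} Since $X$ and $\mb{CP}^2$ have torsion-free cohomology concentrated in even degrees, the Künneth formula identifies $H^*(M;\F_p)$ with the tensor product of the factors. I would take $u$ to be a product class of the form
\[
u = \mathrm{pt}_X \otimes \mathrm{pt}_X \otimes h^j \qquad\text{or}\qquad u = \mathrm{pt}_X\otimes a \otimes b ,
\]
of degree $r\ge 7$. The Cartan formula for $St_p$ reduces the classical condition to the factors, and on each factor one can compute $St_p$ by hand. Degree reasons give $St_p(\mathrm{pt}) = \mathrm{pt}\cdot t^{2(p-1)/2}$ up to a unit, and $St_p(h) = h^p + (\text{unit})\, t^{p-1}h$, where $h^p=0$ for $p\ge 3$ on $\mb{CP}^2$. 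Similarly, degree-$2$ classes on $X$ satisfy $St_p(a) = (\text{unit})\, t^{p-1}a$, because $a^p=0$ for $p\ge 3$. So \eqref{eq: triv St} holds for every product of such classes with $p$ odd, say $p=3$. Alternatively Remark \ref{rmk: triv St} applies directly, since $2(p-1)+r>12$ once $p$ is large.

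\textbf{The quantum condition.} The real content is $\qst_p(u)=St_p(u)$. For this I would use the Künneth isomorphism for quantum Steenrod powers established in this paper, $\qst_p^{M_1\times M_2}=\qst_p^{M_1}\otimes \qst_p^{M_2}$, which reduces the question to each factor with a suitable Novikov variable bookkeeping. On $\mb{CP}^2$ and on $X$ one then needs $\qst_p(v)=St_p(v)$ for the chosen factor classes. For the point class this fails on uniruled manifolds, and both factors are uniruled. So I would not use the point class on every factor. I expect the paper's choice is to exploit the covariant-constancy computations for ${\rm Bl}_6(\mb{CP}^2)$ in \cite{covariant-constant}, which identify classes on $X$ whose quantum Steenrod power has no quantum correction, together with a class on $\mb{CP}^2$ whose quantum corrections vanish mod $p$ or by degree.

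\textbf{Main obstacle.} The hardest step is finding such a product class: of degree $>6$, with all quantum corrections to $\qst_p$ vanishing, yet still satisfying \eqref{eq: triv St}. My first attempt would be grading: monotonicity fixes the degree shift of each quantum term by multiples of $2N$, with $N=1$ here. I would then check, via the known quantum cohomology of $X$ (in particular the vanishing of suitable $p$-fold equivariant counts of lines and conics mod $p$, as in \cite{covariant-constant}), that the corrections cancel.
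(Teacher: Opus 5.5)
The reduction is the right one: exhibit $u$ with $\qst_p(u)=St_p(u)=c_p t^{r(p-1)/2}u$ and $r>6$ via Theorem \ref{thm: Kunneth}, then invoke Theorem \ref{thm: hyperbolic} and Corollary \ref{cor: generic}. But your proposal never produces such a $u$. The step you call the main obstacle fails because of a false premise.

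You assert that $\qst_p(\mathrm{pt})=St_p(\mathrm{pt})$ must fail on uniruled manifolds. Only the reverse implication holds: $\qst_p(\mu)\neq St_p(\mu)$ forces geometric uniruledness, but a uniruled manifold can still have $\qst_p(\mu)=St_p(\mu)$. $X=\mathrm{Bl}_6(\mb{CP}^2)$ is exactly such an example. By \cite[Theorem 1.7]{covariant-constant}, the point class $\mu\in H^4(X;\F_2)$ satisfies $\qst_2(\mu)=\theta^4\mu=St_2(\mu)$. So your own first candidate $\mathrm{pt}_X\otimes\mathrm{pt}_X\otimes h^j$ with $j=0$ was the right class, and you discarded it.

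The missing step is as follows. Work at $p=2$ (Remark \ref{rem:even-prime}, so $t=\theta^2$) and set $u=\kappa(\mu\otimes\mu\otimes 1)\in H^8(M;\F_2)$. Apply Theorem \ref{thm: Kunneth} twice, together with $\qst_2(1)=1$ on $\mb{CP}^2$, to get $\qst_2(u)=\theta^8u=St_2(u)$. This is \eqref{eq: triv St} with $r=8>6=n$.

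Your plan instead fixes an odd prime such as $p=3$. No computation of the quantum corrections on $X$ is available for odd primes. Your fallback via grading also cannot work. The minimal Chern number of $M$ is $1$, which is one of the reasons $M$ is not covered by the previously known cases. A term $q^A$ with $c_1(A)=1$ can therefore always be offset by lowering the $(t,\theta)$-degree by $2$, so degree bookkeeping alone does not eliminate quantum terms.

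A minor point: for the degree-$4$ class $\mathrm{pt}$, the classical term is $t^{2(p-1)}\mathrm{pt}$, not $t^{p-1}\mathrm{pt}$.
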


\begin{remark}\label{rem:even-prime}
In this paper, many technical constructions are presented only for odd primes $p$. However, Corollary \ref{cor: generic new example} uses quantum Steenrod square. Indeed, all the constructions for $p>2$ work perfectly well for $p=2$, with the only difference being that $H^*(B \mathbb{Z}/p; {\mb F}_p)$ has different ring structures when $p=2$ or $p>2$. In our notations, the further relation we need to impose for the generators is that $t = \theta^2$.
\end{remark}

The proof of this corollary relies on the following technical result in the form of a version of the Kunneth isomorphism for equivariant cohomology, and a ``splitting" of $\qst_p$ over this Kunneth isomorphism. For brevity we denote $\rp = H^*(B \mathbb{Z}/p; {\mb F}_p).$ Also, denote by $\Lambda^{\rm univ}$ the universal Novikov field over $\F_p$.
Denote by $\kappa$ the Kunneth map on cohomology.

\begin{thm}[Kunneth for $\qst_p$]\label{thm: Kunneth}
Let $X_1$ and $X_2$ be monotone symplectic manifolds such that $X_1 \times X_2$ is also monotone. Then for each prime $p$, the following diagram commutes up to the Koszul sign.
\beq\label{equation:kunneth-diagram-to-prove}
\vcenter{
\xymatrix{ H^*(X_1) \otimes H^*(X_2)
\ar[rrr]^-{\qst_p^{X_1} \otimes \qst_p^{X_2} }
\ar[d]_{\kappa}
&
&
&
H^*(X_1;\Lambda_\rp^{\rm univ}) \otimes H^*(X_2; \Lambda_\rp^{\rm univ}) \ar@{->}^-{\kappa}[d]
\\
H^*(X_1 \times X_2 )
\ar[rrr]_{\qst_p^{X_1\times X_2}}
&
&
&
H^*(X_1 \times X_2;\Lambda_\rp^{\rm univ})
}}
\eeq
Here $\kappa$ is the ordinary Kunneth map. More precisely, if $a_1 \in H^*(X_1)$, $a_2 \in H^*(X_2)$, then 
\beqn
\kappa \left( \qst_p^{X_1}(a_1) \otimes \qst_p^{X_2}(a_2) \right) = (-1)^{|a_1||a_2|p(p-1)/2} \qst_{p}^{X_1 \times X_2} (\kappa(a_1 \otimes a_2)).
\eeqn
\end{thm}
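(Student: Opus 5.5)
The plan is to prove the Kunneth compatibility at the level of moduli spaces defining $\qst_p$, using product almost complex structures and a product choice of equivariant perturbation data, so that the relevant moduli spaces for $X_1\times X_2$ split as fibre products over the parameter space $S^\infty$ (or a finite-dimensional approximation $S^{2N+1}$ with its free $\Z/p$-action).

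\textbf{Setup.} Recall that $\qst_p(a)$ counts $\Z/p$-equivariant families, parametrized by $w \in S^{2N+1}$, of genus zero curves with $p$ marked points at roots of unity, where incidence with a cycle Poincar\'e dual to $a$ is imposed at each of these points, together with one output point at $0$. The output is paired against equivariant cochains of $B\Z/p$, which yields the coefficients in $\rp=\F_p[\![t]\!]\langle\theta\rangle$.

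\textbf{Splitting the moduli spaces.} For $X_1\times X_2$ we choose the split almost complex structure $J_1\times J_2$, with $w$-dependence taken componentwise. A $J_1\times J_2$-holomorphic sphere in class $(A_1,A_2)$ is a pair of spheres. By monotonicity of $X_1$, $X_2$ and of $X_1\times X_2$, only finitely many classes contribute in each degree and energy, and regularity can be attained with $w$-dependent split data: the linearized operator is a direct sum, so transversality for each factor gives transversality for the product. The parametrized moduli space $\mathcal M^{X_1\times X_2}_w(A_1,A_2;a_1\times a_2)$ is then the fibre product over $S^{2N+1}$ of $\mathcal M^{X_1}_w(A_1;a_1)$ and $\mathcal M^{X_2}_w(A_2;a_2)$. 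This gives, equivariantly,
\[
\qst_p^{X_1\times X_2}(a_1\times a_2)=\Delta^*\bigl(\qst^{X_1}_{p,\mathrm{fam}}(a_1)\times \qst^{X_2}_{p,\mathrm{fam}}(a_2)\bigr),
\]
where $\qst_{p,\mathrm{fam}}$ is the class before integrating out the parameter space and $\Delta$ is the diagonal of $B\Z/p$. Pulling back along this diagonal is exactly the cup product in $\rp$, and this is what the Kunneth map $\kappa$ with $\rp$-linear extension computes.

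\textbf{Main obstacle: signs and orientations.} The sign $(-1)^{|a_1||a_2|p(p-1)/2}$ comes from reordering the $p$-fold tensor factors. The input $(a_1\times a_2)^{\otimes p}$ must be identified with $a_1^{\otimes p}\otimes a_2^{\otimes p}$, and the resulting permutation has Koszul sign $(-1)^{|a_1||a_2|\binom{p}{2}}$. This should be checked on the classical term, where it is the known Cartan/Kunneth formula $St_p(a_1\times a_2)=\pm St_p(a_1)\times St_p(a_2)$, and then propagated using the orientation conventions for fibre products of Kuranishi charts or of the global charts of \cite{Bai_Xu_2025}. Because both sides use the same conventions and the moduli spaces decompose identically, the quantum corrections inherit the same sign.

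\textbf{Further checks.} It remains to verify compatibility with the Novikov variables, which is immediate since $\omega(A_1,A_2)=\omega_1(A_1)+\omega_2(A_2)$. It also remains to check that the virtual techniques produce split perturbations. Where transversality fails, for example at multiple covers, I would first use the integral package's framework, where products of global Kuranishi charts are again global charts. The key point there is that the FOP/integral perturbations can be chosen product-type and $\Z/p$-equivariant, because $\Z/p$ acts freely on $S^\infty$.
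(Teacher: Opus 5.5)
Your route differs from the paper's, and it can be made to work, but not as written. The paper stays in the Morse model for $B\zp$, where the product on $\rp$ is computed by $Y$-shaped flow trees in $S^\infty$, and builds a one-parameter cobordism. The inhomogeneous term is read off at distance $|\rho|$ along the incoming legs of the tree for $\rho\le 0$ (split data, two independent parameters), and along the outgoing leg for $\rho\ge 0$ (generic data on $X_1\times X_2$, one parameter). As $\rho\to-\infty$ the configurations break into two independent $\qst_p$ configurations followed by the tree acting on outputs. As $\rho\to+\infty$ they break into the tree acting on the $p$-fold inputs followed by $\qst_p^{X_1\times X_2}$. Lemma \ref{Lemma_Kunneth_qF} then supplies the quasi-Frobenius step and the Koszul sign, whose source you identified correctly.

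Your fibre product over a shared parameter appears in the paper only at the slice $\rho=0$, where the parameter is read off at the vertex of the tree. You replace the cobordism by the identity ``$\Delta^*$ of a cross product is the cup product''. That identity is automatic only once $\qst_p$ is realized as a family class, e.g.\ as the Poincar\'e dual of a pseudocycle $\mathcal M^{\rm fam}\to X\times S^{2N+1}/\zp$ (in degrees $\le 2N$). There a transverse fibre product over the lens space does represent the diagonal pullback. You would then still have to show that this family class agrees with the cellular or Morse counts that actually define $\qst_p$. Inside a fixed chain model, the shared-parameter count over a single cell $\Delta_i$, or along a single gradient line of $g$, is an intersection number inside that parameter cell, not a product of counts. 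Converting it into one needs a chain-level diagonal approximation plus a homotopy, which is exactly what the paper's $\rho$-family supplies. Your version is more conceptual. The paper's keeps the two factors' parameters independent away from $\rho=0$, so product transversality is automatic there.

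One step is wrong as stated. With a shared parameter $w$, the linearization of the parametrized product moduli space is $(\dot w,\xi_1,\xi_2)\mapsto(D_{u_1}\xi_1+\partial_w\nu_1\cdot\dot w,\ D_{u_2}\xi_2+\partial_w\nu_2\cdot\dot w)$, which is not a direct sum. Where some $D_{u_i}$ has cokernel that is killed only by varying $w$, the two factors compete for the same $\dot w$. So regularity of each parametrized factor does not give regularity of the fibre product. What you need is that the projections of the two parametrized moduli spaces to $S^{2N+1}$ are transverse. You can arrange this by choosing $\nu_{X_2}$ generic relative to $\nu_{X_1}$: the universal moduli space submerses onto the parameter space, and freeness of the $\zp$-action permits equivariant choices. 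This is also how the paper makes $\mathcal M_0$ regular.

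Finally, drop the fallback to virtual perturbations. In the monotone case, the domain-dependent perturbation on $\Sigma_p$ together with the index count already handles multiple covers and bubbling. Moreover, your claim that product-type FOP perturbations are admissible is exactly what the paper's remark after the theorem flags as an open problem, since stable maps to $X_1\times X_2$ do not split as products once bubbles appear.
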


We will use this, as well as the related result Proposition \ref{prop55} for the equivariant pair-of-pants, to perform calculations such as Corollary \ref{cor: generic new example}.

\begin{remark}
Note that we prove Theorem \ref{thm: Kunneth} only in the monotone setting, even though our discussions about quantum Steenrod operations are concerned with general symplectic manifolds elsewhere in the paper. We expect that proving Theorem \ref{thm: Kunneth} in general is a subtle problem. A related result, the Kunneth theorem for quantum cohomology in the symplectic setting \cite{hirschi-swaminathan}, demonstrates part of the difficulties: the moduli spaces of stable maps of a product can be different from the product of the respective moduli spaces, which makes the comparison between the virtual fundamental cycles a nontrivial problem. The interaction between such differences and the perturbation scheme in \cite{BaiXu-FPO1} needs to be sorted out in order to find the generalization in our setting. Nevertheless, Theorem \ref{thm: Kunneth} suffices for the applications in this paper.
\end{remark}

\subsubsection{Generalizations of the Hofer--Zehnder conjecture}

Next, we provide a new criterion for the existence of infinitely many periodic points of a Hamiltonian diffeomorphism, which is quite different from other criteria in the literature (see Section \ref{subsection_proof_anti_HZ} for the proof).

\begin{thm}[``Reversed" Hofer--Zehnder conjecture]\label{thm: infinitely many}
Let $(M,\om)$ be a closed symplectic manifold such that $\qst_p(\mu) = St_p(\mu)$ for 
infinitely many primes $p$. Suppose $\phi \in \Ham(M,\om)$ has finitely many contractible fixed points and $N(\phi;\Q) = \dim_{\Q} H^*(M;\Q)$. Then $\phi$ has infinitely many periodic points.
\end{thm}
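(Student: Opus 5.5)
We argue by contradiction. Suppose $\phi$ has only finitely many periodic points. The plan is to show that, for all but finitely many primes $p$, $\phi$ is then forced to be an $\F_p$ Hamiltonian pseudo-rotation. Theorem \ref{thm: pr} then gives $\qst_p(\mu)\neq St_p(\mu)$ for all such $p$, which contradicts the hypothesis that $\qst_p(\mu)=St_p(\mu)$ for infinitely many primes.

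\textbf{Step 1: pass from $\Q$ to $\F_p$ at period one.} The local Floer complexes $CF^{\loc}(\phi,x;\Z)$ at the finitely many contractible fixed points are finitely generated over $\Z$; here we use the integral package of \cite{Bai_Xu_2025}. The same holds for $H^*(M;\Z)$. So there is a finite set $S$ of primes outside of which none of these groups has $p$-torsion. For $p\notin S$ we get $\dim_{\F_p}HF^{\loc}(\phi,x;\F_p)=\dim_\Q HF^{\loc}(\phi,x;\Q)$ and $\dim_{\F_p}H^*(M;\F_p)=\dim_\Q H^*(M;\Q)$. Hence $N(\phi;\F_p)=\dim_{\F_p}H^*(M;\F_p)$ for $p\notin S$.

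\textbf{Step 2: control the iterates $\phi^{p^k}$.} This step needs more care, since $S$ could a priori depend on the period. We exploit finiteness of periodic points. There are finitely many fixed points $x$, and each $\phi^m$ has only these as fixed points up to the finitely many periodic orbits. Discard the primes $p$ dividing the periods of the finitely many non-fixed periodic orbits. For the remaining $p$, every contractible fixed point of $\phi^{p^k}$ is an iterate of a fixed point $x$ of $\phi$.

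Next, exclude the primes $p$ for which some eigenvalue of $d\phi_x$ is a nontrivial $p$-th root of unity; only finitely many primes are excluded. Then $p^k$ is an admissible iteration in the sense of Ginzburg--Gürel. The local Floer cohomology is preserved under admissible iterations, up to a degree shift. This persistence should hold with $\F_p$ coefficients, or at least give $\dim HF^{\loc}(\phi^{p^k},x^{p^k};\F_p)=\dim HF^{\loc}(\phi,x;\F_p)$.

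Contractibility is preserved under iteration. Conversely, if $x^{p^k}$ is contractible then $x$ has torsion class of order a power of $p$ in $\pi_1$-type homology. I would exclude the finitely many primes dividing the torsion orders here too, so that the contractible fixed points of $\phi^{p^k}$ are exactly the iterates of the contractible fixed points of $\phi$. It follows that $N(\phi^{p^k};\F_p)=N(\phi;\F_p)=\dim H^*(M;\F_p)$ for every $k$.

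\textbf{Step 3: conclude.} For all but finitely many $p$, $\phi$ is an $\F_p$ pseudo-rotation. Finiteness of the periodic points of period $p^k$ is assumed. Theorem \ref{thm: pr} then yields the contradiction.

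\textbf{Main obstacle.} I expect the hardest step to be the persistence of local Floer cohomology under admissible iteration with $\F_p$ coefficients, where $p$ is itself the iteration. Over $\Q$ this is standard. Over $\F_p$ one must rule out torsion phenomena in the local model, namely the generating function or normal form at a degenerate fixed point. I would first try the Ginzburg--Gürel argument via local generating functions and the Shub--Sullivan type splitting of $d\phi_x$ into its unit-root-free part. I would check that it is coefficient-independent once no eigenvalue is a $p$-th root of unity. Failing that, I would instead bound $\dim HF^{\loc}(\phi^{p},x^{p};\F_p)$ from below by Smith-type inequalities for the $\Z/p$-equivariant local theory. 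I would then combine this with the global inequality $N(\phi^{p^k};\F_p)\ge\dim H^*(M;\F_p)$ to force equality.
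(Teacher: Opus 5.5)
Your proof is correct, but it takes a different route from the paper's.

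\textbf{Your route.} You reduce directly to Theorem~\ref{thm: pr}. Assuming finitely many periodic points, you build a finite set of bad primes from:
\begin{itemize}
\item torsion in $HF^{\loc}(\phi,x;\Z)$ and in $H^*(M;\Z)$;
\item root-of-unity eigenvalues of $d\phi_x$;
\item minimal periods of the non-fixed periodic points;
\item orders of the torsion free homotopy classes of non-contractible fixed points.
\end{itemize}
Outside this set, persistence gives $N(\phi^{p^k};\F_p)=\dim H^*(M;\F_p)$ for every $k\ge 0$, so $\phi$ is an $\F_p$ pseudo-rotation. Any of the infinitely many primes with $\qst_p(\mu)=St_p(\mu)$ lying outside the set then contradicts Theorem~\ref{thm: pr}.

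\textbf{The paper's route.} The paper never passes through pseudo-rotations. It uses only the single iterate $\phi^p$ for large primes $p$, where ${\rm Fix}_c(\phi^p)={\rm Fix}_c(\phi)$ and \cite{GG-local-gap} gives $N(\phi^p;\F_p)=\dim H^*(M;\F_p)$. It then:
\begin{itemize}
\item reruns the exclusion of symplectically degenerate maxima (Lemma~\ref{lemma47});
\item runs a one-step version of the carrier argument of Proposition~\ref{prop_carrier_iteration} (Lemma~\ref{lemma48}), with a pigeonhole over primes replacing the induction $p^k\to p^{k+1}$;
\item derives the contradiction from $\Delta(\ov{x}^{(p)})=p\,\Delta(\ov{x})$ as $p\to\infty$ through primes, instead of $p^k\to\infty$.
\end{itemize}

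\textbf{What each buys.} Your argument is shorter and fully modular, but it needs control of all prime-power iterates $\phi^{p^k}$. Its contradiction only places the new periodic points somewhere among the periods $p^j$, $j\ge 1$. The paper's argument touches only $\phi$ and $\phi^p$. That is what yields the sharper remark after the theorem: under a non-degeneracy assumption, there is a simple $p$-periodic point for infinitely many primes $p$.

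\textbf{Small fix.} For every $p^k$ to be admissible you must exclude the primes dividing the order of any root-of-unity eigenvalue of $d\phi_x$. Excluding only those $p$ for which some eigenvalue is a nontrivial $p$-th root of unity is not enough: an eigenvalue of order $p^2$ is not a $p$-th root of unity, yet it makes $p^2$ non-admissible. The corrected exceptional set is still finite.

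\textbf{On your main obstacle.} The persistence step you flag is exactly \cite[Theorem 1.1]{GG-local-gap}, which the paper itself invokes with $\F_p$ coefficients. Its proof is independent of coefficients: it splits the germ into a nondegenerate factor and a totally degenerate factor, then uses local Morse theory. Your Smith-inequality fallback would not have worked in any case. Both the local Smith inequality and the global Arnold-type inequality are lower bounds, on $\dim HF^{\loc}(\phi^p,x;\F_p)$ and on $N(\phi^p;\F_p)$ respectively, so together they cannot force equality.
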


\begin{remark}
The proof shows that under an additional non-degeneracy assumption on the fixed points of $\phi$ (for instance if none of them is a symplectically degenerate maximum), there exist infinitely many primes $p$ such that $\phi$ possesses a simple $p$-periodic point.
\end{remark}

Using a standard Gromov compactness argument, Theorem \ref{thm: infinitely many} shows that a Hamiltonian diffeomorphism $\phi$ of a closed symplectic manifold which is not geometrically uniruled, having the minimal possible number of fixed points (corresponding to contractible orbits as constrained by the Arnol'd conjecture) must in fact have infinitely many periodic points. This result is interesting in comparison with prior results in this direction. For instance, the Hofer--Zehnder conjecture, proved in \cite{S-HZ} for closed spherically monotone manifolds with semi-simple quantum cohomology (see also \cite{AtallahLou} for the generalization to weakly monotone symplectic manifolds, as well as \cite{Allais2, BaiXu} for weighted projective spaces and all toric varieties respectively), asserts the existence of infinitely many periodic points under the contrary assumption that $\phi$ has strictly more fixed points than necessary.

\begin{remark}
    Using the integral Gromov--Witten-type invariants defined in \cite{BaiXu-FPO1}, one can define an analog of quantum cohomology ring, whose Floer-theoretic counterpart is discussed in detail in \cite{Bai_Xu_2025}. It is straightforward to adapt the methods from \cite{S-HZ,AtallahLou} to prove that under the assumption that this quantum cohomology ring is semi-simple, the Hofer--Zehnder conjecture as discussed above holds.
\end{remark}

Our final applications are concerned with two sufficient conditions for the existence of infinitely many periodic points. 

The first sufficient condition has its roots in the Hofer--Zehnder conjecture, in the way that noncontractible $1$-periodic orbits of a Hamiltonian should be considered as more than necessary in view of the Arnol'd conjecture. For a Hamiltonian $H$ on $M$, if $x: S^1 \to M$ is a noncontractible 1-periodic orbit of $H$, then one can also define the local Floer cohomology $HF^\loc(H, x)$ (over ${\mb Z}$ or any field). The details for the definition can be found in \cite[Section 2.3]{Sugimoto}.

\begin{thm}[Non-torsion orbit in $H_1$ $\Rightarrow$ Conley conjecture]\label{thm_noncontractible}
Let $(M, \omega)$ be a compact symplectic manifold and $H: S^1 \times M \to \mathbb{R}$ be a smooth Hamiltonian. If there exists a $1$-periodic orbit with underlying homology class $0 \neq \gamma \in H_1(M;\Z)\setminus {\rm Torsion}$ which has non-vanishing local Floer cohomology, then for any sufficiently large prime number $p$, the time-one map $\phi = \phi_H^1$ must admit a simple periodic point in the class $p \gamma$ of period $p$ or $p'$, where $p'$ is the next prime number after $p$. In particular, $\phi$ has infinitely many periodic points.
\end{thm}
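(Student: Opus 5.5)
The plan is to follow Sugimoto's strategy for the noncontractible Conley-type result, upgrading it with the integral Floer package of \cite{Bai_Xu_2025} and the local $\Z/p$-equivariant Floer theory.

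\emph{Setup.} Let $x$ be the given orbit in the class $\gamma$, with $HF^{\loc}(H,x;\Z)\neq 0$. By universal coefficients, $HF^{\loc}(H,x;\F_p)\neq 0$ for all sufficiently large primes $p$: a nonzero finitely generated abelian group has nonzero reduction or nonzero $\mathrm{Tor}$ term mod $p$ for all but finitely many $p$.

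Argue by contradiction. Suppose that for some large prime $p$, $\phi$ has no simple periodic point in the class $p\gamma$ of period $p$. Since $\gamma$ is non-torsion, orbits in the class $p\gamma$ of period $p$ are either simple or $p$-th iterates of $1$-periodic orbits in the class $\gamma$. Moreover, a $1$-periodic orbit of class $\gamma'$ iterates to class $p\gamma'$, and $p\gamma'=p\gamma$ forces $\gamma'=\gamma$ because the difference is torsion killed by $p$. Passing to $H_1(M;\Z)/\mathrm{Torsion}$ and using finiteness of fixed points (otherwise we are done), the $p$-periodic orbits in class $p\gamma$ are exactly the iterates $y^{(p)}$ of $1$-periodic orbits $y$ in class $\gamma$.

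\emph{Floer homology vanishes in the class.} The filtered Floer cohomology $HF(H^{(p)};p\gamma)$ in a non-torsion class vanishes, since $\phi^p$ is isotopic through Hamiltonians to a map with no orbits in that class. It is computed by a spectral sequence from the local groups $HF^{\loc}(H^{(p)},y^{(p)};\F_p)$. Use the $\Z/p$-equivariant local Floer theory of the iterated orbits. By the Smith-type inequality / local Kaledin-type isomorphism (localization theorem), after inverting $t$,
\[
HF^{\loc}_{\Z/p}(H^{(p)},y^{(p)};\F_p)[t^{-1}] \cong HF^{\loc}(H,y;\F_p)\otimes \rp[t^{-1}].
\]
Globally, the equivariant Floer cohomology in class $p\gamma$ also vanishes. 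For the global localized group one would similarly get $HF(H;\gamma)\otimes\rp[t^{-1}]$, which is also zero, so this alone gives no contradiction.

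\emph{Action and index bookkeeping.} The contradiction must come from the action filtration, as in Sugimoto. Choose the action window around $p\,\mathcal A(x)$ and compare with the period $p'$. The iterated orbits have actions $p\,\mathcal A(y)$, which are separated by gaps that scale with $p$. The class $p\gamma$ is non-torsion, so one can choose recapping classes such that orbits in classes $p\gamma$ and $p'\gamma$ are not related by $\pi_2$-recapping in a way that spoils the separation. The local equivariant class of $x^{(p)}$ then survives in the $t$-localized filtered equivariant Floer cohomology unless it is cancelled by a simple orbit. Here one uses the divisibility bound in local equivariant Floer cohomology from this paper: differentials between iterated orbits are divisible by powers of $t$ bounded independently of $p$, while the index shift is $p$ times the mean index.

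The case in which the mean indices conspire, giving a potential cancellation between $x^{(p)}$ and another iterate $y^{(p)}$ of equal action and index, is handled by passing to $p'$. The two primes give incompatible constraints unless a simple orbit exists in class $p\gamma$ or $p'\gamma$. This explains the ``$p$ or $p'$'' in the statement.

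\emph{Main obstacle.} The hardest step is making the local-to-global comparison work in full generality with virtual techniques. This requires the equivariant action-filtered spectral sequence over $\F_p$ with the integral package, and showing that the equivariant differential between iterates cannot kill the localized class of $x^{(p)}$. I would first try to adapt Sugimoto's argument verbatim, replacing his semipositivity assumptions with \cite{Bai_Xu_2025}. The divisibility/filtration estimate would serve as the key new input.
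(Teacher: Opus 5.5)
Your reduction steps are fine: universal coefficients, and the observation that if $\phi$ has no simple $p$-periodic point in class $p\gamma$ then every orbit of $H^{(p)}$ in that class is an iterate $y^{(p)}$. But the quantitative core is missing, and the mechanism you sketch would not work.

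The assertion that the local class of $x^{(p)}$ ``survives unless cancelled by a simple orbit'' is false as stated. Floer cohomology in the non-torsion class $p\gamma$ vanishes, so that class is \emph{always} cancelled, by another iterate. What matters is the action distance at which the cancellation occurs. Your proxy for it, action gaps scaling with $p$, fails. With $\gamma$-cappings taken modulo $\Pi$, the spectrum in class $p\gamma$ is $p\,\mathcal{A}(\bar y)+\omega(\Pi)$. Its gaps do not scale with $p$, and it is dense when $\omega$ is irrational on $H_2$.

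The invariant that does scale is the bar-length. Since $\wh{\fst}_{p,\gamma}$ is a filtered homotopy equivalence of Tate complexes \cite[Theorem 1.11]{Bai_Xu_2025}, and the quasi-Frobenius map rescales filtrations by $p$, the Tate bar-lengths of $H^{(p)}$ in class $p\gamma$ are $p\beta_{i,\gamma}(H)$, each appearing twice. Under the hypothesis that all orbits in class $p\gamma$ are iterates, the ordinary minimal bar-length dominates the Tate one. This gives \eqref{eqn:minimal-bar-scale}: with $2\epsilon\le\min_i\beta_{i,\gamma}(H)$, every bar in class $p\gamma$ has length at least $2p\epsilon$.

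Two of your other ingredients do not apply here. The divisibility lemma is irrelevant: it bounds the leading $(t,\theta)$-degree of the local pants product by $2n(p-1)$, which grows with $p$, and it says nothing about differentials between iterates. Index or mean-index bookkeeping is unavailable, because the noncontractible theory is only $\Z/2$-graded.

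You have also misread the role of $p'$. The theorem produces a simple orbit in class $p\gamma$ of period $p$ or $p'$, not one in class $p'\gamma$. The mechanism is a Hofer-distance comparison, which runs as follows.
\begin{enumerate}
\item Normalize $\mathcal{A}(\bar x^{(p)})=0$. Persistence of local Floer cohomology (Lemma \ref{lemma411}, via Ginzburg--G\"urel) gives $HF^{\loc}(H^{(p)},\bar x^{(p)})\neq 0$. Your $t$-localized local isomorphism combined with a Smith-type inequality could substitute for this step.
\item Together with the long-bar bound, this makes the natural map $HF(H_p,p\gamma)^{[-\delta,p\epsilon]}\to HF(H_p,p\gamma)^{[-p\epsilon,\delta]}$ nonzero.
\item Continuation maps in class $p\gamma$ between a perturbation $H_p$ of $H^{(p)}$ and a perturbation $H_{p'}$ of $H^{(p')}$ shift action by roughly $(p'-p)\|H\|$. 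By prime gaps this is $o(p)\ll p\epsilon$, so the nonzero map factors through a filtered group of $H_{p'}$ in class $p\gamma$, which must therefore be nonzero.
\item Hence $\phi$ has a $p'$-periodic point in class $p\gamma$. It is automatically simple, since $\gcd(p,p')=1$ and $\gamma$ is non-torsion.
\end{enumerate}
Without the bar-length scaling and this continuation comparison, your ``incompatible constraints from two primes'' has no content.
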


Theorem \ref{thm_noncontractible} extends the main result in \cite{Sugimoto} to full generality, establishing a form of the Hofer--Zehnder conjecture for general symplectic manifolds. For deducing infinitely many periodic orbits from noncontractible Hamiltonian orbits, see also \cite{gurel-noncontractible, gg2016}.

The second sufficient condition closes up the search of infinitely many Hamiltonian periodic orbits from the point of view of symplectically degenerate maxima, which was a phenomenon introduced to symplectic dynamics by Hingston \cite{Hingston_2009} and explored by Ginzburg in his proof of the Conley conjecture for symplectically aspherical manifolds \cite{Ginzburg-CC} as well as in numerous further works. Recall that an isolated capped 1-periodic orbit $\overline{x}$ of a Hamiltonian $H$ is called a symplectically degenerate maximum if the mean index $\Delta(\overline{x}, H) = 0$ and the local Floer cohomology satisfies $HF^{\loc, 2n} (H, \overline{x}; {\mb Q}) \neq 0$ (here we use the cohomological grading $n+\CZ$ to grade the generators so that the index of a nondegenerate maximum of a $C^2$-small Morse function is $2n$). 

Finally, the following theorem extends \cite[Theorem 1.18]{GG-ai}, \cite[Theorem 1.2]{Hein-CCCY} to full generality, solving a question which stems from \cite[Remark 1.19]{GG-ai}. It is proved in Section \ref{proof_thm_sdm}. The possibility of working with different primes allows us to prove the stronger result on infinitely many periodic points: the number of periodic points of period $\leq k$ grows at least as $k^2/\log(k)$, which is the bound from the prime number theorem.

\begin{thm}[SDM $\Rightarrow$ Conley conjecture]\label{thm: sdm}
Let $(M,\om)$ be any compact symplectic manifold. If a Hamiltonian $H$ has a capped 1-periodic orbit which is a symplectically degenerate maximum, then its time-1 map $\phi$ has infinitely many periodic points.
\end{thm}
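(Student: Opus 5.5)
We argue by contradiction: assume $\phi = \phi_H^1$ has only finitely many periodic points, and let $\bar x$ be the capped orbit that is a symplectically degenerate maximum (SDM). In the Ginzburg--Gürel scheme, an SDM is used to show that for suitable iterations $k$ the filtered Floer homology of $H^{\natural k}$ has a nonzero class near action $k\mathcal{A}_H(\bar x)$ in degree $2n+1$ (or $2n$). On a general manifold this class need not be matched by any global class, so the standard argument only works under some asphericity or negative-monotonicity assumption. We replace this step with the equivariant, prime-by-prime machinery of the present paper.

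The plan is as follows. First, we use the finiteness assumption to find infinitely many primes $p$ such that $\bar x^p$ is isolated and every $p$-periodic point is an iterate of a fixed point. Next, we use the integral Floer package of \cite{Bai_Xu_2025} to define local Floer cohomology over $\F_p$ and the local $\Z/p$-equivariant Floer cohomology $HF^{\loc}_{\Z/p}(H^{\natural p},\bar x^p;\F_p)$. We then invoke the local Smith-type isomorphism, which the paper presumably establishes as its ``divisibility bound in local equivariant Floer cohomology''. After inverting $t$, it identifies $HF^{\loc}_{\Z/p}(\bar x^p)$ with $HF^{\loc}(\bar x)\otimes \rp[t^{-1}]$, and under this identification the local equivariant $p$-th power map $\mathcal{P}^{\loc}$ sends a generator to a nonzero multiple of $t^{N}$ times a class in the local cohomology of $\bar x^p$.

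For an SDM, the local Floer cohomology of $\bar x$ is supported in degree $2n$ with mean index $0$. The key input is a uniform bound on the power of $t$ in this local $p$-th power, uniform in $p$. Comparing gradings gives the following: the local class $[\bar x]$ in degree $2n$ must map to degree $2np$ shifted by the $t$-degree. The index of $\bar x^p$ stays in $[2n-n,2n+n]$ because the mean index is $0$. Hence the $t$-exponent is forced to be approximately $n(p-1)$, up to an error of order $n$.

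We then globalize. The global quantum Steenrod power $\qst_p$, realized on Floer cohomology as the equivariant pair-of-pants or $p$-th power map $HF(H)\to HF_{\Z/p}(H^{\natural p})$, is compatible with action filtrations. Taking the SDM class, which is the generator carrying the local maximum and has action close to the spectral invariant $c(\mu,H)$, we compare two computations:
\begin{enumerate}[(i)]
\item $\qst_p(\mu)=St_p(\mu)+(\text{terms with Novikov weight})$, with $St_p(\mu)=t^{n(p-1)}\mu$ up to a unit;
\item the action window, in which the SDM forces the spectral invariant $c(\mu,H^{\natural p})$ to equal $p\,c(\mu,H)$ exactly.
\end{enumerate}
Since $\phi$ has finitely many periodic points, only finitely many action values occur, and the spectrum is rigid. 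With two consecutive primes $p,p'$, as in Theorem \ref{thm_noncontractible}, the equality $c(\mu,H^{\natural p})=p\,c(\mu,H)$ combined with the SDM's local perturbation produces a class at action strictly below $p\,\mathcal{A}(\bar x)$ but above all other critical values for large $p$. The gap is of order $1/p$ beyond the discreteness of the spectrum. This class must be carried by a new simple $p$- or $p'$-periodic orbit, which contradicts finiteness. Because $p$ ranges over infinitely many primes, we obtain infinitely many periodic points; the prime number theorem then gives the stated growth rate.

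The main obstacle is the uniform control of the local equivariant power map at a degenerate orbit, which must hold without any monotonicity assumption. Concretely, we must show that the Novikov or quantum corrections to $\qst_p$ cannot cancel the local contribution of the SDM. My first attempt would use the local Smith inequality together with a filtration argument. Quantum corrections shift action by at least the minimal positive $\omega$-area, whereas the SDM local classes sit in an action window of width $o(1)$ after the Ginzburg perturbation. Hence, for $p$ large, the corrections do not interact with the local classes, even though each individual quantum correction is uncontrolled.
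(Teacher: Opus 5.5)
Your proposal has a genuine gap at the globalization step, and the route you sketch does not close.

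\textbf{Why the globalization fails.}
\begin{itemize}
\item The theorem carries no hypothesis on $\qst_p(\mu)$, so writing $\qst_p(\mu)=St_p(\mu)+(\text{Novikov terms})$ gives no constraint. The carrier argument of Proposition \ref{prop_carrier_iteration} works only because $\qst_p(\mu)=St_p(\mu)$ holds exactly there.
\item There is no reason for the SDM to carry $c(\mu,H)$. A Hamiltonian can have several degenerate maxima at different action levels, and at most one of them is a carrier. Nor is there any reason for the SDM to force $c(\mu,H^{(p)})=p\,c(\mu,H)$.
\item Three of your claims fail when $[\omega]$ is irrational on $\pi_2(M)$: that only finitely many action values occur, that the spectrum is discrete, and that corrections shift action by at least a minimal positive area. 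In that case $\mathrm{Spec}(H^{(p)})=\{p\mathcal{A}_H(\bar y)+\omega(A)\}$ is dense. This is exactly the obstruction that defeats the classical Ginzburg--G\"urel argument on general manifolds.
\item The consecutive-primes trick of Theorem \ref{thm_noncontractible} depends on the non-torsion class $p\gamma$ to force a detected orbit to be simple. For contractible orbits, a class detected in a small action window can be carried by a recapped iterate of a fixed point. So nothing forces a new periodic point.
\item Your local grading computation, giving $t$-exponent $n(p-1)$, is just what a local maximum looks like and produces no contradiction by itself. Also, the divisibility lemma bounds the leading $(t,\theta)$-degree; it is not a Smith-type isomorphism, and the proof of this theorem does not use it.
\end{itemize}

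\textbf{The missing idea: compare bar lengths, not action values.}
\begin{itemize}
\item By Lemma \ref{lemma:SDM} (Ginzburg/Hein), an SDM forces, for each small $\epsilon$ and all $r\ge r_\epsilon$, a nonzero connecting map between the windows $(rc+\delta,rc+\epsilon)$ and $(rc-\delta,rc+\delta)$ of $C(H^{(r)})$. That is a finite bar of length $<2\delta+\epsilon$.
\item Fix one odd prime $p$. If the differential of $C(H^{(p^k)})$ vanished for every $k$, then $\phi$ would be an $\F_p$ pseudo-rotation. Lemma \ref{lma:sdm} excludes this, via Corollary \ref{cor32}.
\item So after replacing $H$ by some iterate $H^{(p^{k_0})}$ (an iterate of an SDM is still an SDM), you have $\beta_1(H)>0$.
\item Tate localization of the equivariant pants product gives $\widehat\beta_1(H^{(p)})=p\,\beta_1(H)$ (Corollary \ref{cor35}). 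Lemma \ref{lemma37} gives $\beta_1(H^{(p)})\ge\widehat\beta_1(H^{(p)})$.
\item Hence $\beta_1(H^{(p^k)})\ge p^k\beta_1(H)\to\infty$, which contradicts the forced short bar at $r=p^k$ for large $k$.
\end{itemize}
This argument uses only bar lengths, which are insensitive to density of the spectrum. It needs neither $\mu$ nor $\qst_p$, and it works with a single prime rather than infinitely many.
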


\subsection{Methods}
On the foundational side, fitting into the theme of recent extensive investigations, our results use operations in equivariant Floer theory. Historically, the earliest example of a power operation in Floer theory goes back to Fukaya, \cite{fukaya}, where he describes the construction of a version of the Steenrod square on quantum cohomology. This construction was generalized and explored in the third author's work \cite{wilk} in relation with the Cartan and Adem relations. In the context of fixed-point Floer cohomology of exact symplectomorphisms of Liouville manifolds, a power operation was introduced by Seidel in \cite{seidel}. The third author has shown an isomorphism between the last two operations in \cite{wilk2}. The construction of \cite{seidel} was extended to odd primes in \cite{SZhao-pants,S-HZ}, and that of \cite{wilk} was extended to odd primes in \cite{Seidel-formal,covariant-constant}. The constructions mentioned so far in this paragraph are in the monotone or weakly monotone setting.

The crucial technical input of this paper is the foundational package of integral Hamiltonian Floer theory  from \cite{Bai_Xu_2025} built by the first and fourth author. In \cite{Bai_Xu_2025}, the first and fourth author constructed quantum Steenrod operations and power operations on Hamiltonian Floer cohomology for arbitrary closed symplectic manifolds. It is also shown in \cite{Bai_Xu_2025} that these two operations are compatible (via a PSS type map) with each other. The construction is based on mod $p$ reduction of the integral virtual counts of pseudo-holomorphic curves initiated in \cite{BaiXu-FPO1}, which is genuinely different from the ${\mb Q}$-valued counts under the classical framework \cite{Fukaya_Ono,Liu_Tian_Floer,HWZ_book, Pardon_virtual} in general. Such definition forms the base point of all the applications in this paper. Furthermore, quantitatively,
the virtual framework in \cite{Bai_Xu_2025} has a certain advantage over the traditional approach. Indeed, the traditional transversality argument requires inhomogeneous perturbations of the Floer equation, which alter the quantitative feature of solutions (by an arbitrarily small but nonzero amount). In the equivariant setting where one typically has an infinite-dimensional parameter space, the accumulations of these small defects become cumbersome to control. Such an issue does not arise when defining chain-level operations using the techniques in \cite{Bai_Xu_2025} because transversality of moduli spaces is achieved by abstract perturbations, which do not interfere with the energy estimates of Floer-type equations. This is why our results have maximum level of generality, especially for those applications that remove technical assumptions like monotonicity, and why the arguments presented in this paper are largely algebraic based on the black box built in \cite{Bai_Xu_2025}.

Nevertheless, many of our arguments go well beyond directly plugging known arguments into the package \cite{Bai_Xu_2025}. There are new conceptual inputs except for the technical advancements. Most notably, to deal with degenerate isolated periodic points of Hamiltonian diffeomorphisms, we establish a divisibility result for the local equivariant pants product which is essential for many applications. Moreover, our arguments showcase how to fruitfully combine equivariant cohomology, filtered Floer theory, and the compatibility between quantum and Floer-theoretic operations to deduce strong constraints on Hamiltonian diffeomorphisms. For instance, our proof of Theorem \ref{thm: sdm} consists in a novel application of the equivariant pants product combined with considerations of filtered Floer theory. We expect the methods developed in this paper to be useful in other contexts.

\subsection{Outline of the paper}
In Section \ref{sec:prelim}, we recall the necessary preliminary knowledge about Floer theory and quantum Steenrod operations that will be used in the paper, with an emphasis on highlighting some formal aspects of the theory built in \cite{Bai_Xu_2025} and their connection with the more classical approach. In Section \ref{section_hp}, we apply the homological perturbation method to extend many useful constructions from the nondegenerate case to the isolated but possibly degenerate case. In Section \ref{sec-proofsappl}, we prove our main results modulo some technical proofs deferred to later sections. In Section \ref{sec:equivariant-kunneth}, we establish Kunneth type results for both the quantum Steenrod operations and local equivariant pair-of-pants product, which are used for proving Corollary \ref{cor: generic new example} and the divisibility result of local equivariant pants product respectively. This paper has three appendices. In Appendix \ref{sec:alternative-qst}, we discuss the agreement of quantum Steenrod operations constructed using different frameworks. In Appendix \ref{app: hp}, we provide the proofs of various homological perturbation results that are used in Section \ref{section_hp}. In Appendix \ref{app:MB}, we provide technical details of homological perturbation in the Morse--Bott setting that are used for discussing finite-order Hamiltonian diffeomorphisms.

\subsection*{Acknowledgments}
We thank Paul Seidel for numerous useful discussions and for suggesting a first version of Theorem \ref{thm: infinitely many}, as well as Leonid Polterovich for numerous inspiring discussions. S.B. was supported by the NSF standard grant DMS-2404843 and CAREER grant DMS-2540393. E.S. was supported by an NSERC Discovery Grant, an FRQNT Teams Grant, the Fondation Courtois, and a Sloan Research Fellowship. N.W. was supported initially by a Heilbronn Research Fellowship, then by the Simons Foundation through award 652299, and finally a postdoctoral fellowship at the Max Planck Institute for Mathematics in Bonn. G.X. was supported by the NSF grant DMS-2345030 and DMS-2506403, and Simons Foundation Travel Grant.

\section{Preliminaries}\label{sec:prelim}\label{section2}

We start by recalling some basic terminology and setting up our conventions. Let $(M, \omega)$ be a closed symplectic manifold with first Chern class $c_1$. First, set \[\Gamma = \pi_2(M)/\ker([\om]) \cap \ker(c_1).\] Let $\bK$ be a commutative unital ring. Set
\beq\label{eqn_Novikov_ring}
\Lambda_{\bK}^{\Gamma} = \Big\{  \sum_{i = 1}^\infty a_i q^{A_i}\ |\ a_i \in {\mb K},\ A_i \in \Gamma,\ \lim_{i \to \infty} \omega(A_i) = +\infty \Big\}.
\eeq
It will also be useful to consider the universal Novikov ring
\beqn
\Lambda^{\mrm{univ}}_{\bK} = \Big\{ \sum_{i =1}^\infty a_i q^{\lambda_i}\ |\ a_i \in {\mb K},\ \lambda_i \in {\mb R},\ \lim_{i \to \infty} \lambda_i = + \infty \Big\}
\eeqn
These rings are fields when $\bK$ is a field. Moreover, $\Lambda_{\mb K}^\Gamma$ resp. $\Lambda_{\mb K}^{\rm univ}$ contains the subring $\Lambda_{0, {\mb K}}^\Gamma$ resp. $\Lambda_{0, {\mb K}}^{\rm univ}$ of sums as above with $\omega(A_i) \geq 0$ resp. $\lambda_i \geq 0$ for all $i$. There exists a ring homomorphism
\begin{align}\label{Lambda_base_change}
&\ \Lambda_{\mb K}^{\Gamma} \to \Lambda_{\mb K}^{\rm univ}\ &\ 
    \sum_{A_i \in \Gamma} a_i q^{A_i} \mapsto \sum a_i q^{\omega(A_i)},
    \end{align}
by which we view $\Lambda_{\mb K}^{\rm univ}$ as a $\Lambda_{\mb K}^{\Gamma}$-module.

As we primarily use a ground field $\fp$ for $p$ being a prime number, when we drop the ground ring from notations, we always mean the Novikov ring/field over $\fp$. Namely
\begin{align*}
&\ \Lambda^\Gamma:= \Lambda_\fp^\Gamma,\ &\ \Lambda^{\rm univ}:= \Lambda^{\rm univ}_{\fp}.
\end{align*}

As different coefficient rings/fields will be used, one needs to be sensitive about taking tensor products. As a convention, when we write $A \otimes B$ without referring to the coefficient ring/field, one always means the tensor product as abelian groups. Notice that if $A$ and $B$ are vector spaces over $\fp$, $A \otimes B$ coincides with the vector space tensor product $A \otimes_{\fp} B$.

\subsection{Floer theory}
\label{subsec:floer-theory}

For our applications, we use cohomological operations in Floer theory. Therefore, we use Floer cohomology throughout our discussion.

\subsubsection{Conventions}

We set up notations for Hamiltonians. Let $(M, \omega)$ be a compact symplectic manifold. For any function $H$ on $M$, we would like to choose a convention for the associated Hamiltonian vector field $X_H$ (different conventions differ by a sign). Then for any 1-periodic Hamiltonian $H$, let ${\mc O}(H)$ be the set of contractible 1-periodic orbits and $\tilde {\mc O}(H)$ be the set of capped 1-periodic orbits which has a free $\Gamma$-action with $\tilde {\mc O}(H)/\Gamma \cong {\mc O}(H)$. Let $x$ resp. $\ov{x}$ denote a typical element of ${\mc O}(H)$ resp. $\tilde {\mc O}(H)$. 

One would also like to define a symplectic action functional ${\mc A}_H$ on the space of capped loops whose critical point set coincides with $\tilde {\mc O}(H)$, and, if a capped orbit $\ov{x} \in \tilde {\mc O}(H)$ is nondegenerate, a cohomological degree $|\ov{x}| \in {\mb Z}$. Without explicitly describing our conventions, one can choose them to satisfy the following conditions.
\begin{enumerate}
    \item Differential of the Floer cochain complex increases the action and the degree.

    \item The Floer cochain group 
    \beqn
    CF(H; \Lambda_{\mb K}^\Gamma):= \Big\{ \sum_{i=1}^\infty a_i \ov{x}_i\ |\ a_i \in {\mb K},\ \ov{x}_i \in \tilde {\mc O}(H),\ \lim_{i\to \infty} {\mc A}_H(\ov{x}_i) = +\infty \Big\}
    \eeqn
    together with $-{\mc A}_H$ is a ${\mb Z}$-graded non-archimedean normed space over $\Lambda_{\mb K}^\Gamma$, where for $A \in \Gamma$, $q^A \in \Lambda_{\mb K}^\Gamma$ has grading $2c_1(A)$ and valuation $\omega(A)$.

    \item The variable of the differential corresponds to the ``input,'' or the negative end of the domain ${\mb R}\times S^1$ for the Floer cylinder.

    \item If $H$ is a $C^2$-small Morse function, the minimum (resp. maximum) of $H$, when equipped with the trivial capping, has cohomological degree $0$ (resp. $2n$). An ascending flow line of $H$ is also a Floer cylinder.
\end{enumerate}

With such conventions fixed, there are the following notations. The {\bf action spectrum} of a Hamiltonian $H$ is the set of critical values of ${\mc A}_H$:
\beqn
{\rm Spec}(H):= {\mc A}_H( \tilde {\mc O}(H)) \subset {\mb R}.
\eeqn
On the other hand, for a possibly degenerate capped orbit $\ov{x}\in \tilde {\mc O}(H)$, there is a notion of {\bf mean index}:
\beqn
\Delta(H, \ov{x}) \in {\mb R}
\eeqn
In the nondegenerate case, one has
\beqn
\Delta(H,\ol{x}) = \lim_{k \rightarrow \infty} \tfrac{1}{k} \mathrm{CZ}(H^{(k)}, \ol{x}^{(k)})
\eeqn
where $H^{(k)}(t, x):= k H(kt, x)$ is called the $k$-th iteration of $H$.


\subsubsection{Floer complex}

If $(M, \omega)$ is a general compact symplectic manifold, the Gromov--Floer compactification of moduli spaces of Floer trajectories fail to be manifolds even for generic choices of $(H,J)$ due to the presence of nontrivial automorphisms and thus the failure of transversality. Also, the traditional approach to moduli problems of $J$-holomorphic maps necessarily introduces rational numbers in the enumeration (see \cite{Fukaya_Ono} \cite{Liu_Tian_Floer} \cite{Pardon_virtual}). Nevertheless, in \cite{Bai_Xu_2025}, based on \cite{BaiXu-FPO1, Bai_Xu_Arnold} and \cite{Fukaya_Ono_integer}, the following construction is realized.

\begin{thm}\label{thm:Floer-cochain}(\cite[Theorem A, Theorem B]{Bai_Xu_2025})
Given a nondegenerate $H$ and a compatible almost complex structure $J$, there exists a cochain complex \beqn
CF(H, J, \Xi; \Lambda_{\mb Z}^\Gamma)
\eeqn
of free graded $\Lambda_{\mb Z}^\Gamma$-modules, generated by capped 1-periodic orbits of $H$, whose differential depends on a system of choices $\Xi$. 
     
Moreover, for any two choices of almost complex structures $J_1$ and $J_2$, and two choices of the auxiliary data $\Xi_1$ and $\Xi_2$, there exists a chain homotopy equivalence
\beq\label{continuation_map}
     \Phi^{12}: CF(H, J_1, \Xi_1; \Lambda_{\mb Z}^\Gamma) \to CF(H, J_2, \Xi_2; \Lambda_{\mb Z}^\Gamma)
     \eeq
     whose homotopy class is independent of choices.
\end{thm}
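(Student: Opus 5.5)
The construction of $CF(H, J, \Xi; \Lambda_{\mb Z}^\Gamma)$ will follow the standard virtual route, with one change: every count is made integral rather than rational. Everything is taken from the framework of \cite{BaiXu-FPO1, Bai_Xu_Arnold} and \cite{Fukaya_Ono_integer}.

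\textbf{Step 1: Kuranishi charts.} For each pair of capped orbits $\ov{x}, \ov{y}$, consider the Gromov--Floer compactified moduli space $\ov{\mc M}(\ov{x}, \ov{y})$ of Floer trajectories modulo $\mb R$-translation. Each such space is compact, and they are stratified by broken configurations with sphere bubbles. Following \cite{BaiXu-FPO1}, I would equip each one with a global (or at least coherent) Kuranishi chart $(V, E, \Gamma_{\rm iso}, s)$. These charts should be compatible with the boundary and corner structure given by concatenation of broken trajectories.

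\textbf{Step 2: FOP perturbations and integral counts.} This is the integrality input. Rather than taking multivalued multisections, which would introduce denominators $1/|\Gamma_{\rm iso}|$, I would use the Fukaya--Ono--Parker (FOP) normal-form perturbation scheme on the normally complex stratification of the isotropy data. This produces single-valued, strongly transverse perturbations whose zero loci in the free part carry integral fundamental chains. Here one uses that Floer and sphere moduli spaces are normally complex: isotropy groups act complex-linearly on normal directions, so the strata of points with nontrivial stabilizers have codimension at least $2$. These strata can therefore be avoided in index $\le 1$.

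This is exactly where the system of choices $\Xi$ enters. $\Xi$ consists of the charts, the stratification-adapted normal forms, and the perturbations, chosen inductively in energy so that they are compatible with fiber products at broken boundaries. The coefficient of $\ov{y}$ in $d\ov{x}$ is the signed count of the $0$-dimensional perturbed zero loci. This count is finite below any energy level by Gromov compactness, which gives convergence in $\Lambda_{\mb Z}^\Gamma$. The relation $d^2 = 0$ follows from the boundary of the $1$-dimensional perturbed moduli spaces, which consists exactly of products of $0$-dimensional ones. The $\Gamma$-action on cappings makes the complex a free graded $\Lambda^\Gamma_{\mb Z}$-module.

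\textbf{Step 3: Continuation maps.} For two choices $(J_1, \Xi_1)$ and $(J_2, \Xi_2)$, I would use a path of almost complex structures and consider the continuation moduli spaces. With the same Hamiltonian $H$ these have no energy loss, so the map is filtration-preserving. Choosing FOP perturbations extending $\Xi_1$ and $\Xi_2$ on the ends defines $\Phi^{12}$. Boundary analysis of the $1$-dimensional spaces shows that $\Phi^{12}$ is a chain map.

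Homotopies of paths, and homotopies between the perturbation data, are treated with a parametrized version of the same construction. This shows that the homotopy class of $\Phi^{12}$ is independent of choices, and that $\Phi^{21}\circ\Phi^{12} \simeq \Phi^{11} \simeq \id$. The last equivalence holds because the constant path, with $\mb R$-invariant data, gives the identity map.

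\textbf{Main obstacle.} I expect the difficulty to be Step 2. The FOP perturbations must be constructed coherently across all boundary strata, including at nodal sphere bubbles where the normal complex structure must be matched under gluing, while keeping transversality integral in parametrized families. I would set this up as an induction on energy and on the depth of the corner stratification, extending perturbations from the boundary using the normal-form relative extension lemma.
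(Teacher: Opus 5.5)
Your outline follows the same route as the paper. The paper does not reprove this statement; it imports it from \cite{Bai_Xu_2025}, where the complex and the continuation equivalences are built exactly via FOP perturbations on compatible (global) Kuranishi charts of the Floer moduli spaces, with $\Xi$ recording these choices. One correction to your Step 2: normal complexity does not let the perturbed zero loci avoid the nontrivial-isotropy strata. On a fixed locus $V^{\Gamma_{\rm iso}}$ an equivariant section takes values in $E^{\Gamma_{\rm iso}}$, so its zeros there are governed by the invariant index, and they can even have excess dimension (e.g.\ from multiply covered spheres of negative Chern number). What FOP transversality actually gives is that the zeros with trivial isotropy form a manifold whose closure adds only strata of codimension at least $2$, and the integral count uses this free part alone.
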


In particular, for any choice $(J, \Xi)$, the cohomology of $CF(H, J, \Xi; \Lambda_{\mb Z}^\Gamma)$ is independent of $(J, \Xi)$, and we denote  the cohomology by $HF(H; \Lambda_{\mb Z}^\Gamma)$. When the choice $(J, \Xi)$ is irrelevant to the discussion, we often abbreviate the complex by $CF(H; \Lambda_{\mb Z}^\Gamma)$. 

We often need to switch to coefficients in the (universal) Novikov ring/field with coefficients in a base field ${\mb K}$ (often ${\mb K} = \fp$). Denote
\begin{align*}
&\ CF(H; \Lambda_{\mb K}^\Gamma):= CF(H; \Lambda_{\mb Z}^\Gamma) \underset{\Lambda_{\mb Z}^\Gamma} {\otimes} \Lambda_{\mb K}^\Gamma,\ &\ CF(H; \Lambda_{\mb K}^{\rm univ}):= CF(H; \Lambda_{\mb Z}^\Gamma) \underset{\Lambda_{\mb Z}^\Gamma}{\otimes} \Lambda_{\mb K}^{\rm univ}
\end{align*}
and denote the corresponding Floer cohomology by $HF(H; \Lambda_{\mb K}^\Gamma)$ or $HF(H; \Lambda_{\mb K}^{\rm univ})$.

From now on, we will no longer use integer coefficients unless explicitly saying so. 

\begin{remark}
We refer the reader to the introduction of \cite{Bai_Xu_2025} for a complete exposition as we will mostly use formal properties of Hamiltonian Floer theory in this paper. Strictly speaking, the above statement is not exactly the same as the original statement in \emph{loc. cit.}, and we spell out the difference and discuss how to adapt it to the above setting. First, Theorem \ref{thm:Floer-cochain} is for Floer \emph{cohomology} rather than Floer homology, the latter of which is used in \cite[Theorem A]{Bai_Xu_2025}. 
Second, \cite[Theorem A]{Bai_Xu_2025} is stated for continuation maps which also allow the Hamiltonian $H$ to vary, and we state Theorem \ref{thm:Floer-cochain} in the above form as the theme of this paper is concerned with Hamiltonian dynamics. For all our discussions later on, similar comparisons need to be carried out in order to adapt the results from \cite{Bai_Xu_2025} to our setting. We will take this as an implicit assumption as the modifications are cosmetic.
\end{remark}

\subsubsection{Barcodes and bar-length spectrum}\label{subsubsec:bar}

By \cite[Theorem K]{Bai_Xu_2025}, the differential on $CF(H; \Lambda_{\mb K}^\Gamma)$ increases the action of critical points of $\mathcal{A}_H$ and the continuation map $\Phi^{12}$ (see \eqref{continuation_map}) respects the action filtration on $CF(H; \Lambda_{\mb K}^\Gamma)$. Therefore, we can look at Floer cohomology with action filtrations as in the classical setting.

A few more notations need to be specified. 
Let ${\mc A}_H: CF(H; \Lambda_{\mb K}^\Gamma) \to {\mb R} \cup \{+\infty\}$ be the energy filtration induced from the symplectic action. For any $a \in {\mb R}$, 
write
\beqn
CF(H; \Lambda_{\mb K}^\Gamma)^{>a} := \cl A_H^{-1}((a, +\infty]).
\eeqn
Then the filtered version of Floer cohomology, denoted $HF(H; \Lambda_{\mb K}^\Gamma)^{>a}$, is the cohomology of $CF(H; \Lambda_{\mb K}^\Gamma)^{>a}$, which only depends on $H$. More generally, given some interval $I=  (a,b) \subset \mathbb{R}$ such that $a,b \notin \mathrm{Spec}(H)$, one similarly defines 
\beqn
CF(H; \Lambda_{\mb K}^\Gamma)^I :=  CF(H; \Lambda_{\mb K}^\Gamma)^{>a} / CF(H; \Lambda_{\mb K}^\Gamma)^{>b},
\eeqn
and its associated cohomology by $HF(H)^I$.

Note that for all $a\in {\mb R}$ and $a\geq b$  there are maps 
\begin{align*}
&\ \iota_a: HF(H; \Lambda_{\mb K}^\Gamma)^{>a}  \rightarrow HF(H; \Lambda_{\mb K}^\Gamma),\ &\ \iota_{a,b}: HF(H; \Lambda_{\mb K}^\Gamma)^{>a} \to HF(H; \Lambda_{{\mb K}}^\Gamma)^{>b}.
\end{align*}

Important quantitative information of Hamiltonian diffeomorphisms is contained in the {\bf Floer barcodes}. In the current generality, one cannot apply the usual approach of defining barcodes via persistence modules. Instead, we follow the approach of Usher--Zhang \cite{usher-zhang}. Converting to the universal Novikov field $\Lambda_{\mb K}^{\rm univ}$, $CF(H; \Lambda_{\mb K}^{\rm univ})$ is a Floer-type complex (where the ${\mb Z}$-grading is collapsed) in the sense of \cite[Definition 4.1]{usher-zhang}, hence has well-defined barcodes. While the number of infinite bars coincides with the dimension of the cohomology over $\Lambda_{\mb K}^{\rm univ}$, the lengths of finite bars can be arranged in the non-decreasing order, which is called the {\bf bar-length spectrum} of $CF(H; \Lambda_{\mb K}^{\rm univ})$
\beqn
0 < \beta_1(H) \leq \cdots \leq \beta_K (H).
\eeqn
One can prove that the barcode spectrum only depends on the time-one map $\phi = \phi_{H}^1$ generated by $H$; hence we also denote it by 
\beqn
0 < \beta_1(\phi) \leq \cdots \leq \beta_K(\phi).
\eeqn

The bar-length spectra can also be defined in a different way. Such an idea originated from the Lagrangian Floer setting in \cite{FOOO-ring}. Suppose ${\mc O}(H) = \{ x_1, \ldots, x_N\}$. Choose their cappings $\ov{x}_1, \ldots, \ov{x}_N \in \wt{\mc O}(H)$. Then the free $\Lambda_{0, {\mb K}}^{\rm univ}$-module
\beq\label{canonical_submodule}
CF(H; \Lambda_{0, {\mb K}}^{\rm univ}) := {\rm span}_{\Lambda_{0, {\mb K}}^{\rm univ}} \Big\{ q^{-\cl A_H(\overline{x_1})} \overline{x_1}, \dots, q^{-\cl A_H(\overline{x_N})} \overline{x_N}  \Big\} \subseteq CF(H; \Lambda_{\mb K}^{\rm univ}).
\eeq
is canonically associated to $H$. The corresponding Floer cohomology $HF(H; \Lambda_{0, {\mb K}}^{\rm univ})$ splits as $\Lambda_{0, {\mb K}}^{\rm univ}$-modules into the free and torsion parts
\beqn
HF(H;\Lambda_{0, {\mb K}}^{\rm univ}) \cong \mathcal{F} \oplus \mathcal{T} \cong {\mc F} \oplus  \bigoplus_{1 \leq j \leq K} \Lambda_{0, {\mb K}}^{\rm univ} / q^{\beta_j(H)} \Lambda_{0, {\mb K}}^{\rm univ}
\eeqn
with $0 < \beta_1(H) \leq \cdots \leq \beta_K(H)$. Then \cite[Theorem 4.13]{usher-zhang} (see also \cite[Lemma 9]{S-HZ}) shows that $\beta_1(H) \leq \cdots \leq \beta_K(H)$ also gives rise to the bar-length spectrum of $\phi$. 

For our applications, we use the following two invariants constructed from the bar-length spectra. The first is $\beta_1(\phi)$, the length of the shortest finite bar. The second is
$$
\beta_{\mathrm{tot}} = \beta_1(\phi) + \cdots + \beta_{K}(\phi),
$$
called the {\bf total bar-length}. 

\subsubsection{Spectral invariants}
One can define important invariants of filtered Floer cohomology, known as spectral invariants. 
Recall that by \cite[Theorem A and B]{Bai_Xu_2025}, the Floer cohomology $HF(M, \omega; \Lambda_{\mb K}^\Gamma)$ is a well-defined symplectic invariant and there is a canonical isomorphism $HF(M, \omega; \Lambda_{\mb K}^\Gamma) \cong HF(H; \Lambda_{\mb K}^\Gamma)$.

\begin{defn}[Spectral invariants]\label{defn:spec-inv}
Suppose $\alpha \in HF(M; \Lambda^\Gamma_{\mb K})$. We define the {\bf spectral invariant} for a nondegenerate Hamiltonian $H$ by
\beqn
c(\alpha,H) := \text{sup} \Big\{ a \in \mathbb{R} \ | \ \alpha \in \text{im}(\iota_a: HF(H; \Lambda_{\mb K}^\Gamma)^{>a} \to HF(M; \Lambda_{\mb K}^\Gamma)) \Big\}.
\eeqn
\end{defn}

By \cite[Theorem L]{Bai_Xu_2025}, the spectral invariants satisfy the usual properties verified in other settings. In particular, the Lipschitz continuity allows us to extend $c(\alpha, H)$ to degenerate $H$.

\subsubsection{Local Floer cohomology}
\label{subsec:local-floer}
By ``local", we mean the following: suppose we fix some Hamiltonian $H$, and consider an isolated Hamiltonian orbit $x$ of $H$ (not necessarily non-degenerate). If we restrict to a small neighborhood $U$ of $x$, and perturb $H$ over that small neighborhood, then there is a well-defined notion of (${\mb Z}/2$-graded) local Floer cohomology of $H$ at $x$ with integer coefficients $HF^{\loc}(H,x; {\mb Z})$. If $\phi = \phi_{H}^1$ is the associated Hamiltonian diffeomorphism, we also use the notation $HF^{\loc}(\phi,x; {\mb Z})$. The idea is that one is conducting Floer cohomology ``near" $x$, by locally perturbing $H$ so that $x$ splits into some collection of non-degenerate orbits. We will give some details, but leave the bulk of the technical description to, e.g., \cite[Section 3.2]{Ginzburg-CC}.

More formally, we choose some non-degenerate $C^2$-small perturbation $G$ of $H$, such that $G = H$ outside of some small isolating neighbourhood (i.e. there are no other critical points of $H$ within) of $x$. This perturbation from $H$ to $G$ ``splits'' $x$ into nondegenerate 1-periodic orbits $x_1,\dots,x_j\in {\mc O}(G)$. Moreover, any capping $\ov{x}$ of $x$ canonically induces cappings $\ov{x}_1, \ldots, \ov{x}_j$. 
Then define the chain complex
\beqn
CF^{\loc}(H,G,\ov{x}; {\mb Z}) := {\mb Z} \langle \ov{x}_1, \cdots, \ov{x}_j \rangle.
\eeqn
One observes that if $G$ and $H$ are sufficiently close, then action bounds ensure the moduli space of flow lines for $G$ between any $x_{i_1}$ and $x_{i_2}$ which are contained in this isolating neighborhood is compact up to breaking.  
Hence there is a well defined notion of $HF^{\loc}(H,G,\ov{x}; {\mb Z})$. One can also show (by a standard continuation map argument) that this is independent of the choice of $G$ (as long as they remain close to $H$). Hence, we label the resulting local Floer cohomology $HF^{\loc}(H,\ov{x}; {\mb Z})$. Regarded as a ${\mb Z}/2$-graded module, $HF^{\loc}(H, \ov{x}; {\mb Z})$ is independent of the capping and only depends $x$ as a fixed point of the time-one map $\phi$. We denote this by
\beqn
HF^{\rm loc}(\phi, x; {\mb Z}).
\eeqn
By tensoring with ${\mb K}$ for any base field ${\mb K}$ on the chain complexes, one obtains the resulting local Floer cohomology with ${\mb K}$-coefficients. 

On the other hand, we can define $CF^{\rm loc}(H, G, x; \Lambda_{\mb K}^\Gamma)$ to be the action-completed direct sum
\beqn
\widehat{\bigoplus_{\ov{x}}} CF^{\rm loc}(H, G, \ov{x}; {\mb K})
\eeqn
over all cappings $\ov{x}$ of $x$, which is a module over $\Lambda_{\mb K}^\Gamma$; its homology is denoted by 
\beqn
HF^{\rm loc}(H, x; \Lambda_{\mb K}^\Gamma)
\eeqn
which has a well-defined ${\mb Z}$-grading.

We explain how the usual approach to local Floer cohomology is compatible with the Hamiltonian Floer theory construction in \cite{Bai_Xu_2025}. For more details, we refer the reader to the proof of \cite[Theorem G]{Bai_Xu_2025} (which compares the virtual approach therein with the classical approach for semipositive symplectic manifolds). It is known that for any $\omega$-compatible $J$, there exists an $\hbar > 0$ such that for any non-constant $J$-holomorphic sphere $v: S^2 \to M$, its energy satisfies $E(v) \geq \hbar$ (see, e.g., \cite[Proposition 4.1.4]{McDuff-Salamon}). Therefore, for a sufficiently small perturbation $G$, the moduli spaces of Floer trajectories defining $CF^{\loc}(H,G,x)$ do not have sphere bubbles in their compactification. Furthermore, the classical approach to Hamiltonian Floer theory ensures that for a generic choice of $G$, the relevant moduli spaces are cut out transversely. In this setting, running through the technical construction from \cite{Bai_Xu_2025}, one sees that one can take the FOP perturbations over the flow categories to be the unperturbed Kuranishi sections, whose zero loci are exactly these transverse moduli spaces. Therefore, when discussing local Floer cohomology, we do not distinguish the two approaches.

\subsubsection{Filtered Floer cohomology for degenerate Hamiltonians}
\label{subsec:filtered-floer}

We have discussed filtered Floer cohomology for nondegenerate Hamiltonians. For degenerate Hamiltonians, we use the work of Hein \cite{Hein-CCCY} to define their filtered Floer cohomology as follows.

Suppose we are given a degenerate Hamiltonian $H$ on 
a 
symplectic manifold $(M, \omega)$, and $a < b \notin \text{Spec}(H)$. For two Hamiltonians $K, K': S^1 \times M \rightarrow \mathbb{R}$, define
\beqn
\begin{split}
    K \leq K'\ &\ {\rm if}\ K(t, x) \leq K'(t, x)\ \forall (t, x) \in S^1 \times M,\\
    K < K'\ &\ {\rm if}\ K(t, x) < K'(t, x)\ \forall (t, x) \in S^1 \times M
    \end{split}
\eeqn
Further, denote by $P_{a, b}(H)$ the set of Hamiltonians $K$ satisfying 
\begin{itemize}
    \item $K$ is nondegenerate,
    \item $K < H$ \footnote{In \cite{Hein-CCCY} it was taken $K \leq H$. However,  proofs in \cite{Hein-CCCY}, for example, that of \cite[Corollary 3.1]{Hein-CCCY}, only work if we use the condition $K<H$. In fact, it is hard to find a cofinal sequence if we use $K \leq H$.}, and
    \item $a, b \notin {\rm Spec}(K)$.
 
    \end{itemize}
    $P_{a, b}(H)$ has a partial order defined by $K \leq K'$ pointwise. Then by \cite[Theorem B, Theorem K]{Bai_Xu_2025}, there is a well-defined continuation map 
    \beqn
    HF(K; \Lambda_{\mb K}^\Gamma)^{(a,b)} \rightarrow HF(K'; \Lambda_{\mb K}^\Gamma)^{(a,b)}
    \eeqn
induced from any monotone homotopy from $K$ to $K'$. Therefore, we can define 
\beq\label{eq: filtered-irrational} 
HF(H; \Lambda_{\mb K}^\Gamma)^{(a,b)} := \varinjlim_{K \in P_{a, b}(H)} HF(K; \Lambda_{\mb K}^\Gamma)^{(a,b)}.
\eeq
The details are provided in the cited \cite[Section 2.3]{Hein-CCCY}, which only uses formal properties of (filtered) continuation maps. In particular, one can show that this definition reduces to the previous definition in the case of $H$ nondegenerate or $(M, \omega)$ rational, so we will take \eqref{eq: filtered-irrational} to be the general definition.

\subsection{Equivariant cohomology}
\label{subsec:equivariant-cohomology}

In this subsection, we recall some classical topological and algebraic constructions of $\Z/p$-equivariant cohomology.\footnote{The algebraic constructions presented here all have a characteristic $2$ version, which takes slightly different forms. }

\subsubsection{The algebraic Borel construction}\label{subsubsec:algebraic-Borel}

We first describe the algebraic way of defining equivariant cohomology from a $\zp$-action on a complex. Note that for any odd prime $p$, 
\beqn
H^*(B \mathbb{Z}/p;\mathbb{F}_p) \cong \mathbb{F}_p[\![t]\!][\theta]/(\theta^2),
\eeqn
where $t$ has degree $2$ and $\theta$ has degree $1$. In this paper, the following notations will be used frequently. Denote
\begin{align*}
&\ \kzp:= \fp [\![t]\!],\ &\ \kp:= \fp (\!(t)\!);
\end{align*}
\begin{align*}
&\ E(\theta):= \fp[\theta]/\langle \theta^2 \rangle,\ &\ \rp:= \fp[\![t]\!]\otimes E(\theta),
\end{align*}
so we have
\beqn
H^*(B \mathbb{Z}/p;\mathbb{F}_p) \cong \rp.
\eeqn

\begin{defn}[Algebraic Borel Construction] Let $(C, d)$ be a cochain complex over $\fp$ with a $\zp$-action generated by a chain map $\iota: C \to C$. The {\bf Borel equivariant cochain complex} associated to $C$ and $\iota$ is the  graded cochain complex 
\begin{equation}\label{eq:equivariant-cohomology}
C_{\zp}:= C \otimes
 \rp,
\end{equation}
whose generators are of the form $x t^\alpha \theta^\beta$ for $x \in C$, $\alpha \geq 0$ and $\beta = 0, 1$. The  equivariant differential $d_{\mathbb{Z}/p}$ on $C_{\zp}$ is $t$-linear and determined by the following rules \begin{equation} \label{equation:equivariant-differential}\begin{array}{lclll} d_{\mathbb{Z}/p}(x \otimes t^i) & = & (dx)  \otimes  t^i & + & (-1)^{|x|}(\iota x - x) \otimes   \theta t^i \\ d_{\mathbb{Z}/p}(x \otimes \theta t^i) & = & (dx)  \otimes  \theta t^i & + & (-1)^{|x|} (1+\iota + \cdots + \iota^{p-1}) x \otimes  t^{i+1}.\end{array}\end{equation}
The equivariant cohomology $H_{\zp}(C)$ is the cohomology of $(C_{\zp}, d_{\zp})$.
\end{defn}

\begin{remark}
Notice that the equivariant differential is not linear in $\theta$; hence $C_{\zp}$ is only a complex of $\kzp$-modules. However, the cohomology of $C_{\zp}$ is a module over $\rp$. Indeed, a cochain map $\tilde\theta: C_{\zp}^* \to C_{\zp}^{*+1}$ is defined in \cite[Section 2d]{covariant-constant} by 
\beqn
\begin{split}
\tilde\theta( xt^k) = &\ (-1)^{|x|} xt^k \theta,\\
\tilde\theta( xt^k \theta) = &\ (-1)^{|x|} \Big( \iota x + 2 \iota^2 x + \cdots + (p-1) \iota^{p-1} x \Big) t^{k+1}.
\end{split}
\eeqn
On cohomology level it induces a well-defined ``multiplication by $\theta$'' map and hence $H_{\zp}(C)$ becomes an $\rp$-module.
\end{remark}

There are two special cases we will use in this paper. 

\begin{example}
If $C$ is equipped with the trivial $\zp$-action, then the equivariant differential $\rp$-linearly extends the differential on $C$ and the equivariant cohomology is 
\beqn
H_{\zp}(C) \cong H(C) \otimes \rp.
\eeqn
Notice that this is an isomorphism of $\rp$-modules.
\end{example}

\begin{example}
For a general cochain complex $C$, the $p$-fold tensor power $C^{\otimes p} = C \otimes \dots \otimes C$ has a natural $\zp$-action by cyclically permuting the factors, resulting in an equivariant cochain complex with underlying space
\beqn
C^{\otimes p}_{\zp}:= C^{\otimes p} \otimes \rp.
\eeqn
\end{example}

The case of complexes over Novikov ring/fields deserves more careful treatment. We work over the universal Novikov field abbreviated as $\Lambda = \Lambda^{\rm univ}$ where we suppress the base field ${\mb K} = {\mb F}_p$ from the notation; the cases with other versions of Novikov rings are similar. Let $C$ be a complex of modules over $\Lambda$ equipped with a filtration
\beqn
{\mc A}_C: C \setminus \{0\}  \to {\mb R}
\eeqn
satisfying
\beqn
{\mc A}_C(q^\lambda x) = {\mc A}_C(x) + \lambda.
\eeqn
If $(C, {\mc A}_C)$ and $(D, {\mc A}_D)$ are two such complexes, then the naive tensor product $C \otimes D$ over $\Lambda$ has a completion 
\beqn
C \widehat{\otimes}_{\Lambda} D
\eeqn
with respect to ${\mc A}_C + {\mc A}_D$, which is again a $\Lambda$-module. Notice that when both $C$ and $D$ are finitely generated, the completed tensor product agrees with the algebraic one; this is the only case we consider in this paper. Then 
\beqn
C^{\otimes p}:= \overbrace{ C \underset{\Lambda}{\otimes} \cdots \underset{\Lambda}{\otimes} C}^{p}
\eeqn
denotes the $p$-fold (completed) tensor product of $C$, which is still a $\Lambda$-module with a filtration ${\mc A}_C^{{\otimes} p}$. Moreover, we can form the tensor product over
\beqn
C^{\otimes p}  {\otimes} \rp = C^{{\otimes} p}  {\otimes} \fp \pst  {\otimes} E(\theta)
\eeqn
which is a module over the (algebraic) tensor product $\Lambda \otimes \fp \pst = \Lambda \otimes \kzp$. The tensor product $\Lambda \otimes \fp \pst$ can be completed to 
\beqn
\Lambda_{\kzp} = \Lambda_{\fp\pst}.\footnote{There are many reasons for choosing this completion over $\Lambda \pst$. For example, there is still a valuation $\nu: \Lambda_{{\mb F}_p\pst} \to {\mb R} \cup \{+\infty\}$.}
\eeqn
Then define
\beqn
C^{{\otimes} p}_{\zp}:= \left( C^{{\otimes}p}  \underset{\fp}{\otimes} \rp \right) \underset{ \Lambda \otimes \kzp}{\otimes} \Lambda_{\kzp}
\eeqn
which has a $\Lambda_{\kzp}$-linear differential $d_{\zp}$ which takes the same form as \eqref{equation:equivariant-differential}. Finally, the corresponding definition of $C^{\otimes p}_{\zp}$ for the case that $C$ is a finitely generated free $\Lambda_0^{\rm univ}$-module is the same, which gives a complex of modules over $\Lambda_{0, \kzp}^{\rm univ}$. The two versions of equivariant cohomology will be distinguished in notations as 
\begin{align*}
&\ H_{\zp}( C^{\otimes p}; \Lambda_{\kzp}),\ &\ H_{\zp}(C^{\otimes p}; \Lambda_{0, \kzp}^{\rm univ}).
\end{align*}

Now we introduce the so-called {\bf quasi-Frobenius map}. We explain the case without Novikov coefficients. Let $C$ be a complex of $\fp$-modules with the trivial $\zp$-action. For any cocycle $x \in C$, the $p$-fold power $x^{\otimes p}$ is an equivariant cocycle in $C_{\zp}^{\otimes p}$. The association $x \mapsto x^{\otimes p}$ induces a well-defined map
\beqn
qF: H(C) \to H_{\zp}(C^{\otimes p})
\eeqn
(see \cite[Lemma 2.5]{covariant-constant}). The map is only additive after inverting the variable $t$. In other words, it is additive after turning to the Tate version of equivariant cohomology, which is introduced right below.

\subsubsection{Tate cohomology}
To discuss equivariant localizations, we work with the Tate construction as follows. As above, consider a cochain complex $C$ over $\mathbb{F}_p$, equipped with a $\mathbb{Z}/p$-action generated by $\iota: C \to C$. Algebraically, the Tate complex is the graded cochain complex
\beqn
\widehat{C}_{\Z/p} := C_{\Z/p} \underset{\kzp}{\otimes} \kp \cong C_{\zp}[t^{-1}]
\eeqn
with $C_{\Z/p}$ defined in \eqref{eq:equivariant-cohomology}, equipped with the differential which is the $\kp$-linear extension of \eqref{equation:equivariant-differential}. We denote the cohomology by $\widehat{H}_{\Z / p}(C)$. 

When $C$ is a finitely generated module over Novikov rings, the Tate cohomology is defined similarly. For example, suppose $C$ is defined over the universal Novikov ring $ \Lambda^{\rm univ}$. Then one defines
\beqn
\wh C_{\zp}: = C \underset{\Lambda^{\rm univ}}{\otimes} \Lambda_{\kp}^{\rm univ}
\eeqn
which contains the Borel version $C_{\zp}$. The differential $d_{\zp}$ extends to a $\Lambda_{\kp}^{\rm univ}$-linear one and leads to the Tate cohomology $\wh{H}_{\zp}(C; \Lambda_{\kp}^{\rm univ})$.

Regarding applications in Hamiltonian dynamics, one considers the quasi-Frobenius map for modules over the universal Novikov ring $\Lambda_0^{\rm univ}$. 

\begin{lemma}\label{lemma:quasi-Fr}(\cite[Section 7]{S-HZ})
Let $C$ be a cochain complex of finitely generated free $\Lambda_0^{\rm univ}$-modules. Then the quasi-Frobenius map $qF$ induces an isomorphism of $\Lambda_{0, \kp}^{\rm univ}$-modules
\begin{equation}\label{eqn:Tate-power-iso}
  qF:  r_p^* \widehat{H}_{\Z/p}(C; \Lambda_{0, \kp}^{\rm univ}) \to \widehat{H}_{\Z/p}(C^{\otimes p}; \Lambda_{0, \kp}^{\rm univ} )
\end{equation}
where the left-hand-side of \eqref{eqn:Tate-power-iso} is regarded as the base-change
\beqn
\widehat H_{\zp}(C; \Lambda_{0, \kp}^{\rm univ} ) \underset{\Lambda_{0, \kp}^{\rm univ}}{\otimes} \Lambda_{0, \kp }^{\rm univ}
\eeqn
via the ring isomorphism $r_p: \Lambda_{0, \kp}^{\rm univ} \to \Lambda_{0, \kp}^{\rm univ}$ induced by $q \mapsto q^{\frac{1}{p}}$.
\end{lemma}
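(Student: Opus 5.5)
The plan is to reduce the statement to a purely algebraic fact about Tate cohomology of tensor powers. Then I will handle the Novikov coefficients by exploiting that $\Lambda_0^{\rm univ}$ is a valuation ring, so that $C$ splits into elementary pieces.

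\emph{Step 1: well-definedness and additivity.} For a cocycle $x\in C$, the power $x^{\otimes p}$ is invariant under cyclic permutation, and $d(x^{\otimes p})=0$, so it is an equivariant cocycle. I will check the following facts, as in \cite[Lemma 2.5]{covariant-constant}.
\begin{itemize}
\item Replacing $x$ by $x+dy$ changes $x^{\otimes p}$ by an equivariant coboundary.
\item $(x+y)^{\otimes p}-x^{\otimes p}-y^{\otimes p}$ is a sum of free $\zp$-orbits of mixed tensors. Free orbits are exact in Tate cohomology: an element $z+\iota z+\cdots+\iota^{p-1}z$ equals $d_{\zp}(z\theta t^{-1})$ up to terms involving $dz$.
\end{itemize}
Hence $qF$ is additive after inverting $t$. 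It is semilinear via $r_p$, because $(q^\lambda x)^{\otimes p}=q^{p\lambda}x^{\otimes p}$ and $t^{-1}$ scales compatibly. So $qF$ becomes $\Lambda_{0,\kp}^{\rm univ}$-linear once the source is base-changed along $r_p$. On the source, the $\zp$-action is trivial, so $\wh H_{\zp}(C)\cong H(C)\otimes \kp\otimes E(\theta)$ with the appropriate completion.

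\emph{Step 2: reduction to elementary complexes.} Over the valuation ring $\Lambda_0^{\rm univ}$, a finitely generated free complex is isomorphic to a direct sum of two kinds of pieces, by the Usher--Zhang normal form (singular value decomposition), which applies since $\Lambda_0^{\rm univ}$ is a valuation ring:
\begin{itemize}
\item rank-one complexes $\Lambda_0^{\rm univ}\cdot e$ with zero differential;
\item two-term complexes $\Lambda_0^{\rm univ} a\to\Lambda_0^{\rm univ} b$ with $a\mapsto q^{\beta}b$, $\beta\ge0$.
\end{itemize}
Both sides of the statement are functorial under chain isomorphisms. For $C=\bigoplus C_i$, the tensor power $C^{\otimes p}$ decomposes $\zp$-equivariantly into the diagonal summands $C_i^{\otimes p}$ plus free orbits of mixed summands. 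A mixed summand $\bigoplus_{g}\iota^g(C_{i_1}\otimes\cdots\otimes C_{i_p})$ is an induced module, so its Tate cohomology vanishes, again by the explicit contracting homotopy using $t^{-1}$. So it suffices to treat each elementary $C_i$.

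\emph{Step 3: the elementary cases.} For the rank-one case, $e^{\otimes p}$ generates the complex and $qF$ is visibly an isomorphism. For the two-term case, $C\otimes_{\Lambda_0}\mathrm{Frac}$ is acyclic but $H(C)=\Lambda_0/q^\beta$. I need $\wh H_{\zp}(C^{\otimes p})\cong \Lambda_{0,\kp}/q^{p\beta}$ in the appropriate degrees, generated by $b^{\otimes p}$ (and $b^{\otimes p}\theta$), matching the $r_p$-twist of $\Lambda_{0,\kp}/q^{\beta}$. I will compute this by filtering $C^{\otimes p}$ by the number of $a$-factors. Each filtration level other than the diagonal ones $a^{\otimes p}$ and $b^{\otimes p}$ consists of free $\zp$-orbits, so it is Tate-acyclic. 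What remains is the complex spanned by $a^{\otimes p}$ and $b^{\otimes p}$ (times $\rp$), whose connecting map is $a^{\otimes p}\mapsto \pm q^{p\beta}b^{\otimes p}$.

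\emph{Main obstacle.} The spectral-sequence step in the two-term case is the hard part. The mixed terms are acyclic only after inverting $t$, and over $\Lambda_0$ one must verify that no extra $q$-torsion is created in the Tate differential from $a^{\otimes p}$ to $b^{\otimes p}$. Concretely, I must confirm that the induced map is exactly multiplication by $q^{p\beta}$ (up to a unit) rather than by some smaller power arising from zig-zags through the free orbits. I would handle this with an explicit chain homotopy contracting the free part, written with a fixed section $z\mapsto z\theta t^{-1}$, and track valuations to see that the transferred differential has valuation exactly $p\beta$. Completeness of $\Lambda_{0,\kp}^{\rm univ}$ is needed to make sense of the base change. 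Finite generation guarantees that the completed tensor product is algebraic.
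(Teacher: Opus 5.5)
You have the right overall strategy, and the paper itself gives no argument beyond citing \cite{S-HZ}. Your ingredients are: $qF$ is Frobenius-semilinear and additive in Tate cohomology; $C$ splits over the valuation ring $\Lambda_0^{\rm univ}$ into elementary pieces; mixed tensors form induced $\zp$-complexes, which are Tate-acyclic; so everything reduces to $E=(\Lambda_0^{\rm univ}a\to\Lambda_0^{\rm univ}b,\ a\mapsto q^\beta b)$.

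\textbf{The gap is in Step 3, which carries all the content.} The formula $a^{\otimes p}\mapsto\pm q^{p\beta}b^{\otimes p}$ is asserted, not proved, and the danger you name is not the real one. Write the Tate complex of $E^{\otimes p}$ as $\bigoplus_{k=0}^{p}V_k\otimes\Lambda_{0,\kp}^{\rm univ}\otimes E(\theta)$, where $V_k$ is the $\fp$-span of words containing $k$ letters $a$. Its differential is $\delta+q^\beta d_0$, where:
\begin{itemize}
\item $\delta$ is the $\zp$-part; it preserves $k$ and is defined over $\fp$;
\item $d_0$ is the differential of $(\fp a\xrightarrow{1}\fp b)^{\otimes p}$; it lowers $k$ by one.
\end{itemize}
The contraction $h$ of the free levels $1\le k\le p-1$ is $q$-free, and the perturbation series terminates. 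Hence the transferred differential is exactly $q^{p\beta}A$ with $A=\pi(d_0h)^{p-1}d_0\iota$. This $A$ is a $\kp$-linear map from the span of $a^{\otimes p},a^{\otimes p}\theta$ to the span of $b^{\otimes p},b^{\otimes p}\theta$. For degree reasons it sends $a^{\otimes p}\theta^{\epsilon}\mapsto c_\epsilon\, t^{-(p-1)/2}q^{p\beta}\,b^{\otimes p}\theta^{\epsilon}$ with $c_\epsilon\in\fp$.

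So zig-zags through the free orbits can never produce a smaller power of $q$. What must be shown is $c_0,c_1\neq 0$, and tracking valuations cannot detect this. If either constant vanished, $\wh H_{\zp}(E^{\otimes p};\Lambda^{\rm univ}_{0,\kp})$ would acquire free summands. Then $qF$, whose source here is torsion, could not be onto.

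\textbf{The missing idea is an acyclicity input.} The same transfer data, with the same $A$, compute the Tate cohomology of the $\fp$-complex $(\fp a\xrightarrow{1}\fp b)^{\otimes p}$ (formally $q^\beta=1$). That complex is bounded and acyclic, so its Tate cohomology vanishes, which forces $A$ to be invertible over $\kp$. Equivalently: $E\otimes\Lambda^{\rm univ}$ is acyclic over the Novikov field, so $q^{p\beta}A$ is invertible there. Consequently
\[
\wh H_{\zp}(E^{\otimes p};\Lambda^{\rm univ}_{0,\kp})\cong(\Lambda^{\rm univ}_{0,\kp}/q^{p\beta})\langle b^{\otimes p},b^{\otimes p}\theta\rangle,
\]
and $qF$ identifies this with the Frobenius twist of $(\Lambda^{\rm univ}_{0,\kp}/q^{\beta})\langle b,b\theta\rangle$. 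With this step supplied, your argument is complete. Note also that the unit $t^{-(p-1)/2}$ appearing in $A$ is exactly why the statement needs the Tate version rather than the Borel one.
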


The above lemma has the following consequence for the Floer complex $C = CF(H; \Lambda_0^{\rm univ})$ as defined in \eqref{canonical_submodule}.
Notice that the Tate cochain complex
\beqn
\widehat{CF(H; \Lambda_0^{\rm univ})_{\zp}^{\otimes p}}
\eeqn
still has the energy filtration $\wh{\mc A}_C^{\otimes p}$ induced from the energy filtration on $C$. As the Tate cohomology is a finitely generated module over $\Lambda_{0, \kp}^{\rm univ}$ while $\kp$ is a field, one hence has a bar-length spectrum 
\beqn
\wh\beta_1^{\otimes p}(H) \leq \cdots \leq \wh \beta_{\tilde{K}}^{\otimes p}(H).
\eeqn
It is clear that $\wh \beta_j^{\otimes p}(H)$ only depends on the time-1 map $\phi$, hence we also denote them by $\wh \beta_j^{\otimes p}(\phi)$. Then by Lemma \ref{lemma:quasi-Fr}, when $p$ is odd, $\tilde{K} = 2K$\footnote{When $p=2$, one shall have $\tilde K = K$.} and
\beqn
\wh \beta^{\otimes p }_{2j-1}(\phi) = \wh \beta^{\otimes p }_{2j}(\phi) = p   \beta_j(\phi), \quad \quad \quad \forall 1 \leq j \leq K.
\eeqn

\subsection{Equivariant cohomology via Morse theory}
\label{subsubsec:borel-construction}

It is often more convenient to realize the Borel construction on the chain level using equivariant Morse theory. We choose the $\zp$-equivariant Borel model as follows. Consider
\beqn
S^\infty:= \Big\{ v = (v_0, v_1, \cdots)\ |\ v_i \in {\mb C},\ i \gg 0 \Longrightarrow v_i = 0,\ \sum_{i=0}^\infty |v_i|^2 = 1 \Big\}
\eeqn
which has a free $\zp$-action by simultaneously multiplying coordinates $v_i$ by $p$-th roots of unity. The quotient space is a classifying space for $\zp$, denoted by $B\zp$. On the other hand, notice that there is a free ${\mb Z}_{\geq 0}$-action generated by 
\beqn
\tau(v_0, v_1, \cdots) = (0, v_0, v_1, \cdots)
\eeqn
which commutes with the $\zp$-action. 

$S^\infty$ admits a finite-dimensional approximation using smooth manifolds $S^{2n-1}$. Any object on $S^\infty$ is called smooth if its restriction to each truncation $S^{2n-1}$ is smooth in the usual sense. Now we choose a special Morse function on $S^\infty$; we do not intend to compare it with other choices. Define
\begin{equation}
\begin{split}\label{equation:morse-function-s-inf}
g:S^{\infty} \to &\ \mathbb{R}\\
    (v_0, v_1, \cdots) \mapsto &\ \sum_{l \geq 0} \Big( l  |v_l|^2 + \epsilon \cdot \text{Re}(v_l^p) \Big)
    \end{split}
\end{equation}
which was the choice used in \cite[Section 4]{SZhao-pants}. Notice that $g$ is invariant under the $\zp$-action and satisfies $g(\tau(v)) = g(v) + 1$. Define the Morse index in the usual way, which is the number of negative eigenvalues of the Hessian. 

One can explicitly write down the coordinates of critical points. For each even degree $2l$, the $p$ critical points of index $2l$ are
\beq\label{critical_points_1}
w_{2l}^k = ( \underbrace{0, \ldots, 0}_{l}, e^{\frac{2k \pi {\bf i}}{p} - \frac{\pi {\bf i}}{p}}, 0, \cdots),\ k = 1, 2, \ldots, p;
\eeq
for each odd degree $2l+1$, the $p$ critical points of index $2l+1$ are
\beq\label{critical_point_2}
w_{2l+1}^k = (\underbrace{0, \ldots, 0}_{l}, e^{\frac{2k \pi {\bf i}}{p}}, 0, \cdots),\ k = 1, 2, \ldots, p.
\eeq

Then one obtains a Morse cochain complex associated to $g$ and a suitably chosen Riemannian metric invariant under $\zp \times {\mb Z}_{\geq 0}$-action. More precisely, one defines
\beqn
CM(g) = \Big\{ \sum_{i=1}^\infty a_i w_i\ |\ a_i \in \fp,\ w_i \in {\rm crit}(g),\ \lim_{i \to \infty} g (w_i) = +\infty \Big\}
\eeqn
which is a formal completion of the usual Morse cochain complex of finite sums. For each degree $k\geq 0$, there are exactly $p$ generators. The $\zp$-invariant part of the chain complex recovers the $\zp$-equivariant cohomology of a point; indeed, in $\fp$-coefficients, the $\zp$-invariant part has a trivial differential and is isomorphic to the group $\rp$.

\subsubsection{Ring structure of $\rp$}\label{subsubsec:ring-BZ/p}

We also realize the ring structure of $\rp \cong H_\zp^*({\rm pt})$ using the Morse model. Let $T_{2, 1}$ denote the tree which is the disjoint union of two incoming edges $T_1\cong T_2 \cong (-\infty, 0]$ and an outgoing edge $T_\infty \cong [0, +\infty)$. Let $s \in T_{2,1}$ denote the coordinate on each edge; when there is no ambiguity, $s$ corresponds to a real number. To achieve transversality, we choose a $\zp$-invariant family of vector fields $Y_s$ parametrized by $s \in T_{2,1}$
\begin{enumerate}

\item $Y (s) = 0$ when $|s|\gg 0$. 

\item For each $n$, $Y|_{S^{2n-1}}$ is tangent to $S^{2n-1} \subset S^\infty$ and $\nabla Y$ is sufficiently small in the normal direction of $S^{2n-1}$.
\end{enumerate}
Let $Y_i$ denote the restriction of $Y$ to $T_i \subset T_{2,1}$. A {\bf $Y$-perturbed flow tree} is a triple $\zeta = (\zeta_1, \zeta_2, \zeta_\infty): T_{2,1} \to S^\infty$ which solves the equation
\beqn
\zeta' (s) = \nabla g(\zeta(s)) + Y_s(\zeta(s))
\eeqn
subject to the incidence relation
\beqn
\zeta_1(0) = \zeta_2(0) = \zeta_\infty(0).
\eeqn
Each flow tree converges to a triple of critical points of $g$ at $\infty$ of the edges.

Given a triple of critical points $w_1, w_2, w_\infty \in {\rm crit}(g)$, let $S^{2n-1}\subset S^\infty$ be the smallest truncation which contains all of them. One can see that a flow tree connecting $w_1, w_2, w_\infty$ is contained in $S^{2n-1}$. We say that a $Y$-perturbed flow tree is regular for $(w_1, w_2, w_\infty)$ if it is regular as a flow tree contained in $S^{2n-1}$.\footnote{In fact if it is regular in $S^{2n-1}$, then it is regular in any larger truncation.}
 
For a generic perturbation $Y$ and each triple of critical points $w_1, w_2, w_\infty$, one obtains a regular moduli space of solutions, whose mod $p$ count induces a $\zp$-equivariant map
\beqn
CM(g) \otimes CM(g) \to CM(g).
\eeqn
Hence after restriction to the $\zp$-invariant part, one obtains
\beqn
\rp \otimes  \rp \cong CM_{\zp}(g) \otimes  CM_{\zp} (g) \to CM_{\zp} (g) \cong \rp,
\eeqn 
from which one obtains a bilinear map $\rp \times \rp \to \rp$. The usual Morse theoretic technique can be used to show that the bilinear map is independent of the choices of $Y$, and satisfies associativity. Hence $\rp$ is equipped with a structure of a ${\mb Z}$-graded, associative and supercommutative algebra over $\fp$.

Using any finite truncation of $S^\infty$, and because we are passing to the formal completion when defining the cochain complex, we see that the ring structure defined by Morse cohomology agrees with the standard ring structure on $\rp \cong \fp \pst \otimes E(\theta)$ (when $p>2$) or $\rp \cong {\mb F}_2 [\![\theta]\!]$ (when $p = 2$).



\subsubsection{Equivariant Morse cohomology when equivariant transversality holds}

Recall that given a cochain complex $C$ with a $\zp$-action, one has defined an algebraic version of the $\zp$-equivariant cochain complex $C_{\zp}$ as shown. When $C$ is realized as a Morse cochain complex for a $\zp$-invariant Morse--Smale pair $(f, h)$ on a smooth manifold $M$ with a $\zp$-action, this algebraic construction can be realized Morse-theoretically. Indeed, the product function $f\times g$, by which we mean $\pi_{M}^*f + \pi_{S^{\infty}}^*g$ where $\pi_{M}, \pi_{S^{\infty}}$ are the projections from the product, and the product metric on $M \times S^\infty$ form a Morse--Smale pair and hence a Morse complex $CM(f\times g) \cong CM (f) \otimes CM (g)$. The $\zp$-invariant part is a subcomplex, denoted by 
\beqn
CM_{\zp} (f):= CM(f \times g)^{\inv}.
\eeqn
Notice that this complex is isomorphic to the algebraically defined one, but the isomorphism is not canonical. For example, the following
\beqn
\begin{split}
    CM(f) \otimes \rp &\ \to CM(f \times g)^\inv \\
    x \otimes t^i & \mapsto \sum_{l=1}^{p} (\iota^l x) \otimes w_{2i}^l,\\
    x \otimes t^i \theta & \mapsto \sum_{l=1}^p (\iota^l x) \otimes w_{2i+1}^l
    \end{split}
\eeqn
is an isomorphism of complexes, where $\iota \in \zp$ is the generator.

Notice that when $\zp$ acts trivially on $M$, the equivariant Morse  complex is then isomorphic to $CM(f) \otimes \rp$ and the equivariant Morse cohomology is isomorphic to $HM(f) \otimes \rp$. 


In the current situation, the structure of $\rp$-modules on the equivariant cohomology $HM_{\zp}(f)$ can be defined both algebraically and Morse-theoretically. Indeed they coincide.

\begin{prop}
As $\rp$-modules, the equivariant Morse cohomology $HM_{\zp}(f)$ and the equivariant cohomology of the complex $CM(f)$ coincide. In particular, when $\zp$ acts trivially on $M$, as $\rp$-modules, one has 
\beqn
HM_{\zp}(f) \cong HM(f) \otimes \rp.
\eeqn
\end{prop}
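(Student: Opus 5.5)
The plan is to compare the two module structures directly on the chain level through the explicit isomorphism
\[
\Psi: CM(f)\otimes \rp \to CM(f\times g)^{\inv}, \qquad x\otimes t^i \mapsto \sum_{l=1}^p (\iota^l x)\otimes w^l_{2i},\quad x\otimes t^i\theta \mapsto \sum_{l=1}^p (\iota^l x)\otimes w^l_{2i+1}
\]
written above. Only then will we read off the $\rp$-actions on both sides. On the Morse side, the $\rp$-module structure on $HM_{\zp}(f)$ comes from the flow-tree product of Subsection \ref{subsubsec:ring-BZ/p}, applied to $f\times g$ on one incoming edge and $g$ on the other. Concretely, we use $Y$-perturbed trees in $M\times S^\infty$ whose $M$-component along the second edge is constant, or equivalently the pullback of the $\rp$-action along the projection $M\times_{\zp}S^\infty\to B\zp$. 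On the algebraic side, $t$ acts by $\kzp$-linearity and $\theta$ acts by the map $\tilde\theta$ of the preceding remark.

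First I will check that $\Psi$ is a chain isomorphism. The $M\times S^\infty$ Morse differential splits into three kinds of flow lines: those constant in the $S^\infty$ factor, giving $d$; those from $w^l_{2i}$ to $w^{l}_{2i+1}$ and $w^{l+1}_{2i+1}$ with opposite signs, giving the $(\iota x - x)\theta$ term; and those from $w^l_{2i+1}$ to all $w^{k}_{2i+2}$, giving the norm term. This uses the explicit critical points \eqref{critical_points_1}, \eqref{critical_point_2}, the $\tau$-periodicity $g\circ\tau=g+1$ of $g$, and the invariant product metric.

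Second, since $t$ acts on $CM(g)$ by the shift $\tau$, up to chain homotopy, the $t$-action matches on both sides. I will argue this by choosing $Y$ compatible with $\tau$, so that trees with one input $w_2^l$ reproduce the shift modulo the invariant sum.

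Third, and this is the main obstacle, I must match the $\theta$-action with $\tilde\theta$. Here I would compute the count of flow trees with one input $w^{k}_1$ explicitly in the finite truncation $S^3$ (where $g$ is an explicit perfect-ish function on a lens space cover). The aim is to show that multiplication by the invariant class $\sum_k w^k_1$ sends $\sum_l \iota^l x\otimes w^l_{2i+1}$ to $\sum_l(\iota x+2\iota^2x+\cdots+(p-1)\iota^{p-1}x)$-type combinations times $w_{2i+2}$. The weights $1,2,\dots,p-1$ should arise from how many of the $p$ rotated copies of the tree point between two odd cells. If the direct count is messy, my fallback is uniqueness: both actions are chain maps $C_{\zp}\to C_{\zp}[1]$ that agree on the trivial-action case and are natural in equivariant maps. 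I would then reduce to the universal case of the free module $\fp[\zp]$, where $H_{\zp}$ is $\fp$ in degree $0$ and a degree-one map is forced to agree up to homotopy.

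Finally, when $\zp$ acts trivially, $\iota=\id$. In that case $\Psi$ is $\rp$-linear, since $\tilde\theta$ reduces to multiplication by $\theta$ because $1+2+\cdots+(p-1)\equiv 0$ kills the $t^{k+1}$ term. This gives $HM_{\zp}(f)\cong HM(f)\otimes\rp$ as $\rp$-modules.
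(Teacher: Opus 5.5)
Your Step~1 is fine; it is the chain isomorphism $\Psi$ the paper writes down. The gap is in the decisive step, matching the Morse-theoretic action of $\theta$ with $\tilde\theta$, which neither of your routes establishes.

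There is no reason a direct count should produce the weights $1,2,\dots,p-1$ on the nose. The Y-tree product depends on the perturbation $Y$ and is canonical only up to equivariant chain homotopy. At best you get agreement with $\tilde\theta$ up to homotopy, and that is exactly what needs proof.

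Your fallback tests on the wrong object. For the free module $\fp[\zp]$ one has $H_{\zp}=\fp$ in degree $0$, so every degree-one operation is zero on cohomology there. For instance, the zero operation and multiplication by $\theta$ agree on $\fp[\zp]$ but differ on the trivial module $\fp$. Agreement on $\fp[\zp]$ therefore carries no information, and ``naturality in equivariant maps'' gives no mechanism to carry it to a general complex.

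Step~2 has a similar weakness. A $\tau$-invariant $Y$ does not make trees with input $\sum_l w^l_2$ reproduce the shift. The reason is that $\tau$ shifts all three ends of a tree at once, so it does not relate trees with ends $(w_{2i},w_2,w_{2i+2})$ to any simpler configuration.

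The missing idea is the uniqueness statement the paper's proof rests on. Both module structures come from $\zp$-equivariant cellular approximations of the diagonal of $S^\infty$: the cells $\Delta_i$ on one side, Y-shaped flow trees on the other. Any two such approximations are equivariantly chain homotopic. In your language, the argument runs as follows.

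\begin{enumerate}
\item When equivariant transversality holds you may take $f_{s,v}=f$. The equations then decouple and rigid configurations have constant $M$-component. So the Morse action of $\theta$ is the restriction to invariants of $\mathrm{Id}_{CM(f)}\otimes\mu_Y$, where $\mu_Y=m_Y(-,\sum_k w^k_1)$ is an equivariant degree-one chain endomorphism of $CM(g)$.
\item Via $\Psi$, $\tilde\theta$ is, up to sign, the restriction of $\mathrm{Id}\otimes\mu_{\rm alg}$, where $\mu_{\rm alg}(w^l_{2k})=w^l_{2k+1}$ and $\mu_{\rm alg}(w^l_{2k+1})=\sum_{j=1}^{p-1}j\,w^{l-j}_{2k+2}$. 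One checks directly that $\mu_{\rm alg}$ is an equivariant chain map.
\item $CM(g)$ is a bounded-below complex of free, hence injective, $\fp[\zp]$-modules, with cohomology $\fp$ in degree $0$ by contractibility of $S^\infty$. Hence equivariant chain maps $CM(g)\to CM(g)[1]$, up to equivariant homotopy, are classified by $H^1(CM(g)^{\zp})=\rp^1$ via evaluation on the unit $\sum_l w^l_0$.
\item Both $\mu_Y$ and $\mu_{\rm alg}$ send the unit to a representative of $\theta$. This is where your trivial-action computation genuinely enters. So $\mu_Y\simeq\mu_{\rm alg}$ equivariantly.
\item Tensoring the homotopy with $\mathrm{Id}_{CM(f)}$ and restricting to invariants gives the proposition.
\end{enumerate}

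The same argument, comparing $\tau_*$ with $m_Y(-,\sum_l w^l_2)$, handles the $t$-action.
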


\begin{proof}
Indeed, the algebraic definition of $C_\zp(CM(f))$ uses a specific $\zp$-invariant cellular decomposition of $S^\infty$ (see details in \cite{covariant-constant}) and the $\rp$-module structure (indeed only the cochain map $\tilde\theta$) depends on a $\zp$-equivariant cellular approximation of the diagonal map of $S^\infty$. The Morse-theoretic definition, which counts Y-shaped flow trees, gives another equivariant cellular approximation. As the cellular approximations are (chain) homotopic, the induced multiplications agree on the cohomology level. 
\end{proof}

\subsubsection{Equivariant Morse cohomology without assuming equivariant transversality}

Now let $M$ be a manifold with a $\zp$-action and $f: M \to {\mb R}$ be a $\zp$-invariant Morse function.\footnote{In general, by perturbation one can achieve the Morse condition equivariantly but not the Morse--Smale condition.} In order to define the equivariant Morse cochain complex, we use the following Borel type construction. Consider again the function $f\times g: M \times S^\infty \to {\mb R}$. Now the $\zp$-action on $M \times S^\infty$ is free, allowing us to achieve equivariant transversality. 

There are many concrete choices of perturbations. For example, one can perturb the function $f$ to allow it to depend on $v \in S^\infty$, i.e., a $\zp$-invariant function $(v, x) \mapsto f_v(x)$ on $S^\infty \times M$ such that $f_v(x) = f(x)$ when $(v, x)$ is close to a critical point of $f \times g$. One can further ensure that the set of critical points is unchanged under the perturbation provided that the perturbation is sufficiently small in $C^1$-norm. Then one can consider the coupled flow line equation for $(\eta, \zeta): {\mb R} \to M \times S^\infty$:
\begin{align*}
    &\ \eta'(s) = \nabla f_{\zeta (s)} (\eta (s)),\ &\ \zeta'(s) = \nabla g(\zeta (s)),
\end{align*}
which has again an obvious translation symmetry. Define the Morse cochain complex as the (completed) tensor product
\beqn
CM (f) \otimes CM (g)
\eeqn
whose differential counts rigid perturbed flow lines. Define
\beqn
CM_{\zp}(f):= (CM (f) \otimes CM (g))^\inv
\eeqn
as a subcomplex, whose chain homotopy type is independent of the perturbation. 

By using perturbed flow trees in $S^\infty$, one can also define a module structure of $HM_{\zp}(f)$ over $\rp$ induced by a chain-level bilinear map
\beqn
CM_{\zp} (f) \otimes CM _{\zp} (g) \to CM_{\zp}(f).
\eeqn
More precisely, choose a perturbation $Y$ over the tree $T_{2,1}$ which can be used to define the ring structure on $S^\infty$, cf. Section \ref{subsubsec:ring-BZ/p}. Choose another generic family of functions 
\beqn
f_{s, v}: M \to {\mb R}
\eeqn
parametrized by $s \in {\mb R}$ and $v \in S^\infty$ such that 1) $f_{s, v} = f$ near $s = 0$ and 2) $f_{s, v} = f_v$ for $|s|\gg 0$ and 3) $f_{s, \gamma v} (\gamma x) = f_{s, v}(x)$ for all $\gamma \in \zp$. Consider pairs $(\xi, \eta)$ where $\eta = (\zeta_1, \zeta_2, \zeta_\infty): T_{2,1} \to S^\infty$ is a $Y$-perturbed flow tree and $\xi: {\mb R} \to M $ solves the equation 
\beqn
\xi'(s) = \left\{ \begin{array}{cc}  \nabla f_{s, \zeta_1(s)}(\xi(s)),\ &\ s \leq 0,\ \\
 \nabla f_{s, \zeta_\infty(s)}(\xi(s)),\ &\ s \geq 0 \end{array}  \right.
\eeqn
The coefficients of the equation on $\xi$ are smooth. We then impose critical point asymptotics $\xi(\pm \infty) \in {\rm crit}(f)$. By choosing a generic $f_{s, v}$, one obtains equivariant transversality and hence defines a chain map
\beqn
\big( CM(f) \otimes CM(g) \big) \otimes CM(g) \to CM(f) \otimes CM(g).
\eeqn
Its restriction to the $\zp$-invariant part gives
\beqn
CM_{\zp}(f) \otimes CM_{\zp}(g) \hookrightarrow (CM(f) \otimes CM(g) \otimes CM(g))^{\zp} \to CM_{\zp}(f)
\eeqn
hence a bilinear map
\beqn
HM_{\zp}(f) \otimes \rp \to HM_{\zp}(f).
\eeqn
One can further show that this map defines a $\rp$-module structure on $HM_{\zp}(f)$ using a standard TQFT argument.

One can further define a multiplicative structure on $HM_{\zp}(f)$ via the cochain-level map
\beqn
CM_{\zp} (f) \otimes  CM_{\zp}(f) \to CM_{\zp}(f)
\eeqn
via gradient flow trees in $M \times S^\infty$, whose associativity can be verified in the standard way. Lastly, because the multiplicative structure respects the $\rp$-module structure, it descends to a map
\beqn
HM_{\zp}(f) \underset{\rp}{\otimes} HM_{\zp}(f) \to HM_{\zp}(f).
\eeqn

We comment on the case when $\zp$ acts trivially on $M$. In this case, $HM_{\zp}(f)$ is isomorphic to $HM(f) \otimes \rp$ as $\rp$-algebras.

\subsection{The quantum Steenrod power operation}
\label{subsec:quantumStpower}

There are a few \emph{a priori} different definitions of the quantum Steenrod operation $\qst_p$ involved in this paper as well as related literature. For (weakly) monotone $(M, \omega)$, Seidel--Wilkins \cite{covariant-constant} defined $\qst_p$ using the cellular chain model of the classifying space $B\zp$, which matches the one used in \cite{Seidel-formal}. In this paper, for our purpose, we will combine the construction in \cite{covariant-constant} with the Morse model of $B\zp$ to give an alternate definition for (weakly) monotone $(M, \omega)$, which will be used only in proving the Kunneth property (Theorem \ref{thm: Kunneth}) of $\qst_p$ in the monotone case. In Appendix \ref{sec:alternative-qst} we will show that the two definitions agree. On the other hand, the first and the fourth authors \cite{Bai_Xu_2025} defined $\qst_p$ for general compact $(M, \omega)$, which will be recalled and used in other applications in this paper. In \cite{Bai_Xu_2025}, it was shown that the general definition agrees with the above more traditional definitions for (weakly) monotone targets.

We first give a formal description of the quantum Steenrod operation using the Morse model, including the particular moduli spaces, without discussing perturbations. Then we recall the theorem in \cite{Bai_Xu_2025} for the construction in the general case. The construction using inhomogeneous perturbations will be recalled in the section where we prove the Kunneth property of the quantum Steenrod operation for monotone targets. 

Consider a closed symplectic manifold $(M, \omega)$ which has the associated Novikov ring $\Lambda^\Gamma$ (see \eqref{eqn_Novikov_ring}) with base field ${\mb K} = \fp$. Choose a Morse function $f$ and a Morse--Smale metric on $M$, which give rise to a Morse chain complex $CM(f)$. Choose an $\omega$-compatible almost complex structure $J$.

We use a specific domain for defining the moduli space. On the other hand, consider the $p$-fold cover $z \mapsto z^p$ from $S^2 \to S^2$. Denote the source $S^2$ by $\Sigma_p$, which has a $\zp$-action. Specify the special points
\beqn
z_k = e^{\frac{2\pi k {\bf i}}{p}},\ k = 1, 2, \ldots, p,\ z_\infty = \infty \in \Sigma_p.
\eeqn 

Now we describe the moduli spaces. For each collection of critical points $y_1, \ldots, y_p, y_\infty \in {\rm crit}(f)$ and a given homology class $A \in H_2(M; {\mb Z})$, one can consider tuples $(u, \gamma_1, \ldots, \gamma_p, \gamma_\infty)$ where $u: \Sigma_p \to M$ is a $J$-holomorphic map in class $A$, $\gamma_1, \ldots, \gamma_p: (-\infty, 0] \to M$ are gradient lines of $f$ which are asymptotic to $y_1, \ldots, y_p$ at $-\infty$ and $\gamma_\infty: [0, +\infty) \to M$ is a gradient line of $f$ which is asymptotic to $y_\infty$ at $+\infty$; they are also required to satisfy the matching condition 
\beqn
u(z_k) = \gamma_k(0),\ k = 1, \ldots, p, \infty.
\eeqn
To count such objects equivariantly, we need to use the Morse-theoretic Borel construction and achieve $\zp$-equivariant transversality. To this end, we couple the map $u$ and the gradient rays $\gamma_1, \ldots, \gamma_p, \gamma_\infty$ with (parametrized) gradient lines in $S^\infty$. More precisely, given $y_k \in {\rm crit}(f)$, $k = 1, \ldots, p, \infty$, $A \in H_2(M; {\mb Z})$, and $w_\pm \in {\rm crit}(g) \subset S^\infty$, consider the moduli space 
\beqn
{\mc M}_{w_-, w_+} (A; y_1, \ldots, y_p, y_\infty)
\eeqn
of tuples $(u, \gamma_1, \ldots, \gamma_p, \gamma_\infty, w)$ where $(u, \gamma_1, \ldots, \gamma_p, \gamma_\infty)$ are described as above while $w: {\mb R} \to S^\infty$ is a gradient flow line of $g$ which converges to $w_\pm$ at $\pm \infty$. Notice that as the ${\mb Z}_{\geq 0} \times \zp$-action on $S^\infty$ is free, there is a free action on the disjoint union of these moduli spaces over a fixed $A \in H_2(M; {\mb Z})$ and all possible asymptotic limits of the Morse flow lines/rays. This is the reason why equivariant transversality is possible. 

Using either inhomogeneous terms or abstract FOP perturbations, one can define $\fp$-valued signed counts of the moduli spaces ${\mc M}_{w_-, w_+}(A; y_1, \ldots, y_p, y_\infty)$ (which vanish by definition if the expected dimension is not zero). One then obtains a $\fp$-linear chain map
\beqn
CM(f)^{\otimes p} \otimes CM (g) \to CM (f \times g)   \cong CM (f) \otimes CM (g)  
\eeqn
whose value on the generator $y_1\otimes \cdots \otimes y_p \otimes w_-$ is 
\beqn
\sum_{w_+} \# {y_\infty, w_+}_{w_-, w_+}(A; y_1, \ldots, y_p, y_\infty) (y_\infty \otimes w_+).
\eeqn
The ${\mb Z}_{\geq 0} \times \zp$-equivariance on the perturbation implies that the above chain map admits a well-defined restriction to the $\zp$-invariant part (with respect to the action on $C(g)$ while permuting the $p$ factors in $C(f)^{\otimes p}$), defining an $\fp$-linear cochain map
\beqn
\wt{\qst}_{p, A}: CM(f)^{\otimes p}_{\zp} \to \Big( CM(f) \otimes CM(g) \Big)^\inv \cong CM(f) \otimes \rp
\eeqn
which induces a map on cohomology
\beqn
\wt{\qst}_{p, A}: H(CM (f)^{\otimes p}_{\zp}) \to HM (f) \otimes \rp.
\eeqn
The degree $A$ part of the quantum Steenrod operation is the composition of the above map with the quasi-Frobenius map
\beqn
qF: H (M; \fp) \cong HM(f) \to H (CM (f)^{\otimes p}_{\zp}).
\eeqn
Define 
\beqn
\qst_{p, A}:= \wt{\qst}_{p, A} \circ qF: H(M;\fp) \to H(CM (f)) \otimes \rp \cong H(M) \otimes  \rp.
\eeqn
Finally, the following weighted sum is well-defined after suitable completion:
\beqn
\qst_p:= \sum_{A \in H_2(M; {\mb Z})} q^A \qst_{p, A}: H(M; \Lambda^\Gamma) \to H(M; \Lambda_\rp^\Gamma).
\eeqn

\begin{thm}(\cite[Theorem H]{Bai_Xu_2025}) For any compact symplectic manifold $(M, \omega)$, the quantum Steenrod operation
\beqn
\qst_p: H (M; \Lambda^\Gamma) \to H^*(M; \Lambda_\rp^\Gamma)
\eeqn
is well-defined. Moreover, its classical part coincides with the total Steenrod operation
\beqn
St_p: H (M; {\mb F}_p) \to H (M;\fp) \otimes \rp.
\eeqn
When $(M, \omega)$ is weakly monotone, this definition agrees with that of \cite{covariant-constant}.
\end{thm}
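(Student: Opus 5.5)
The statement is \cite[Theorem H]{Bai_Xu_2025}, quoted from earlier work. Here I only indicate how the argument there would be organized, taking the integral virtual package of \cite{BaiXu-FPO1, Bai_Xu_2025} as a black box.

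\textbf{Well-definedness.} For each class $A$ and each tuple of critical points $(y_1,\dots,y_p,y_\infty,w_-,w_+)$ I would construct the Gromov compactification of ${\mc M}_{w_-,w_+}(A;y_1,\ldots,y_p,y_\infty)$. This is a compact space with a Kuranishi-type chart structure. The family is equivariant under the free ${\mb Z}_{\geq 0}\times\zp$-action inherited from $S^\infty$. I would then choose FOP perturbations of these charts that are
\begin{enumerate}
\item compatible with the boundary strata (Morse breaking in $M$ and in $S^\infty$, and sphere bubbling);
\item equivariant under the free action.
\end{enumerate}
Freeness is what makes condition 2 achievable: one perturbs on the quotient and pulls back. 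The FOP framework produces integer-valued counts of zero-dimensional perturbed moduli spaces, and I would reduce these mod $p$.

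The boundary analysis of one-dimensional moduli spaces shows that the counts define a chain map
\[
CM(f)^{\otimes p}\otimes CM(g)\to CM(f)\otimes CM(g).
\]
Equivariance then gives the restriction $\wt{\qst}_{p,A}$ to invariant parts. Gromov compactness says only finitely many $A$ with $\omega(A)\le E$ contribute for each $E$, so the $q^A$-weighted sum converges in $\Lambda^\Gamma_\rp$.

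Independence of auxiliary choices (the perturbation, $J$, $f$, $g$) follows from parametrized moduli spaces over $[0,1]$. These give equivariant chain homotopies. Composing with $qF$, which is well defined by \cite[Lemma 2.5]{covariant-constant}, then yields $\qst_p$ on cohomology.

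\textbf{Classical part.} For $A=0$ every stable map is constant. With a Morse-model perturbation, the moduli space becomes the space of $p+1$ gradient rays meeting at one point, coupled to flow lines in $S^\infty$. The count is therefore the equivariant Morse model of the diagonal $M\to M^p$ paired with the Borel construction. By the standard Morse/cellular comparison (as in the Proposition above identifying Morse and algebraic equivariant cohomology), this computes the Steenrod construction $x\mapsto \Delta^*_{\zp}(x^{\otimes p})$, which is $St_p$.

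\textbf{Main obstacle: agreement with \cite{covariant-constant} in the weakly monotone case.} The hard step is to show that the FOP counts agree with counts using inhomogeneous equivariant perturbations. I would first try a cobordism argument: in the weakly monotone setting, generic equivariant domain-dependent $J$ make the top strata transverse. Multiply-covered spheres that appear in bubbling then lie in codimension $\ge 2$, so the Kuranishi perturbation can be taken to be the unperturbed section on a neighborhood of the relevant strata. This is the same idea used for local Floer cohomology above. The difficulty is making that choice compatible with the equivariant and inductive structure of the FOP construction. Once that is done, the two counts coincide on the nose. Finally, the cellular model of \cite{covariant-constant} and the Morse model of $B\zp$ are compared by a chain homotopy equivalence, which is deferred to Appendix \ref{sec:alternative-qst}.
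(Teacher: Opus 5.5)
Your outline matches how the paper treats this statement: well-definedness and the identification of the classical part with $St_p$ are taken directly from \cite[Theorem H]{Bai_Xu_2025}. The agreement with \cite{covariant-constant} is split exactly as you propose, into the FOP-versus-geometric Morse-model comparison in the weakly semi-positive setting \cite[Theorem 24.6]{Bai_Xu_2025}, followed by the Morse-versus-cellular comparison of Appendix~\ref{sec:alternative-qst}; the latter is not literally a chain homotopy equivalence of complexes but a comparison of the Morse cells $P_i$ with the cells $\Delta_i$ via bounding chains $B_i$ and moduli spaces parametrized over them, showing that the two cocycles are cohomologous.
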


\begin{remark}
    The comparison result \cite[Theorem 24.6]{Bai_Xu_2025} is established by taking the Morse-theoretic model in the definition of $\qst_p$ in the weakly semi-positive setting, which is different from the cellular models in \cite{covariant-constant}. The claimed result follows by combining \cite[Theorem 24.6]{Bai_Xu_2025} and the results in Appendix \ref{appendixa}.
\end{remark}

\subsection{Equivariant Floer cohomology}\label{subsec:equiv-Floer-coho}

To study prime iterations of Hamiltonian diffeomorphisms, we need to study equivariant Hamiltonian Floer cohomology. In this subsection, we review the abstract construction of \cite[Part 4]{Bai_Xu_2025} in the general setting using virtual perturbations. We also review the definition of the local equivariant cohomology (which is equivalent to the definition of \cite{SZhao-pants}). 

We quickly set up the relevant notations. Let $p$ be a prime. Given the Hamiltonian $H: M \times S^1 \to {\mb R}$, define its $p$-th iteration $H^{(p)}: M \times S^1 \to {\mb R}$ by 
$$
H^{(p)}(x, t) := pH(x, {pt}),
$$
whose associated time-1 map is the Hamiltonian diffeomorphism $\phi^1_{H^{(p)}} = (\phi^1_H)^p$. Then the action functional ${\mc A}_{H^{(p)}}$ is invariant under the $\Z/p$-action on $\widetilde{LM}$ whose generator acts by 
\beqn
(x(t), \overline{x}(z)) \mapsto (x(t+\frac1p), \overline{x}(e^{\frac{2\pi i}{p}} \cdot z)).
\eeqn
The equivariant Floer cohomology $HF_{\zp}(H^{(p)})$ is formally the equivariant Morse cohomology of ${\mc A}_{H^{(p)}}$, similar to the viewpoint that the ordinary Floer cohomology is formally the Morse cohomology of the symplectic action functional.

Now we review the definition of the equivariant Floer cohomology provided in \cite{Bai_Xu_2025}. Assume that $H^{(p)}$ is nondegenerate. 
Choose a compatible almost complex structure $J$. Given a pair of capped orbits $\ov{y}_\pm \in \tilde {\mc O}(H^{(p)})$ and a pair of critical points $w_\pm \in {\rm crit}(g) \subset S^\infty$, consider the moduli space of pairs
\beqn
u: {\mb R}\times S^1 \to M,\ w: {\mb R} \to S^\infty
\eeqn
where $u$ is a Floer cylinder for the Hamiltonian $H^{(p)}$ which connects $\ov{y}_\pm$ and $w$ is a gradient line of $g$ which connects $w_\pm$. Let 
\beq\label{eqn13}
{\mc M}_{w_-, w_+}(\ov{y}_-, \ov{y}_+)
\eeq
be the moduli space of such pairs modulo {\it simultaneous translation}. Notice that the disjoint union over all possible asymptotic limits has a free ${\mb Z}_{\geq 0} \times \zp$-action. Similar to the case of quantum Steenrod operation, this is the reason why equivariant transversality is possible. 

The differential of the equivariant Floer complex is defined via virtual counts of moduli spaces \eqref{eqn13}. First, one can compactify the moduli space by allowing breaking and sphere bubbles. The disjoint union of  compactifications still carries a free ${\mb Z}_{\geq 0} \times \zp$-action. Then the FOP perturbation method allows us to define integer, hence mod $p$ counts of the moduli spaces \eqref{eqn13} which are invariant under the action. One considers the cochain group
\beqn
\left\{ \sum_i a_i \ov{y}_i \otimes w_i\ \left| \begin{array}{l} a_i \in \fp,\ \ov{y}_i \in \tilde{\mc O}(H^{(p)}), w_i \in {\rm crit}(g) \\
\displaystyle \inf_i  {\mc A}_{H^{(p)}}(\ov{y}_i) > -\infty, \\
\displaystyle \lim_{i \to \infty} \big( g(w_i) + {\mc A}_{H^{(p)}} (\ov{y}_i) \big) = +\infty 
\end{array} \right. \right\} \cong CF(H^{(p)}) \wh\otimes CM(g)
\eeqn
which is a certain completion of the tensor product $CF(H^{(p)}) \otimes  CM(g)$. It has a $\zp$-equivariant module structure over $\Lambda_{\kzp}^\Gamma$ where the formal variable $t$ acts via the ${\mb Z}_{\geq 0}$-action on $S^\infty$. Then the counts give a $\zp$-equivariant, $\Lambda_{\kzp}^\Gamma$-linear map
\beqn
d_{\zp}: CF(H^{(p)}) \wh\otimes CM(g) \to CF(H^{(p)}) \wh\otimes CM(g).
\eeqn
The $\zp$-invariant part is a subcomplex, denoted by 
\beq\label{equivariant_Floer_complex}
CF_{\zp}(H^{(p)}):= \left( CF(H^{(p)}) \wh\otimes CM(g) \right)^\inv.
\eeq
Its cohomology is the $\zp$-equivariant Floer cohomology of $H^{(p)}$, denoted by $HF_{\zp}(H^{(p)})$. {\it A priori} it is a module over $\Lambda_{\kzp}^\Gamma$. By coupling with Y-shaped gradient trees in $S^\infty$, one can define a canonical $\Lambda_{\rp}^\Gamma$-module structure on the cohomology. 

On the other hand, one can also define equivariant continuation maps for two different sets of choices. In particular, for two different choices of Hamiltonians $H_1$ and $H_2$, one can also build a continuation map from $CF_{\zp}(H_1^{(p)})$ to $CF_{\zp}(H_2^{(p)})$ and prove that it is a chain homotopy equivalence. We summarize the relevant results in \cite{Bai_Xu_2025} below.

\begin{thm}(\cite[Theorem I]{Bai_Xu_2025})\label{thm211}
    Let $(M, \omega)$ be a compact symplectic manifold. 
    
    \begin{enumerate}
    
        \item Let $H$ be a $1$-periodic Hamiltonian such that $H^{(p)}$ is nondegenerate. Then there exists a ${\mb Z}$-graded cochain complex of modules over $\Lambda_{\kzp}^\Gamma$, called the $\Z/p$-equivariant Floer cochain complex,
        \beqn
        (CF_{\zp} (H^{(p)}), d_{\zp})
        \eeqn
        with underlying space 
        \beqn
        CF_{\zp}(H^{(p)}) = \Big( CF(H^{(p)}) \wh{\otimes} CM(g) \Big)^\inv
        \eeqn
        where $d_{\zp}$ depends on an $\omega$-compatible almost complex structure $J$ and a system of choices. Moreover, its cohomology has a well-defined $\Lambda_{\rp}^\Gamma$-module structure which extends the $\Lambda_{\kzp}^\Gamma$-module structure\footnote{The proof in \emph{loc.cit.} establishes the $\Lambda_{\kzp}^\Gamma$-module structure in the Morse setting, but the same argument applies to the Floer setting.}.

        \item For Hamiltonians $H_1, H_2$ with both $H_1^{(p)}$ and $H_2^{(p)}$ nondegenerate, there is a $\Lambda_{\kzp}^\Gamma$-linear chain homotopy equivalence
        \beqn
        (CF_{\zp} (H_1^{(p)}), d_{\zp}) \to (CF_{\zp}(H_2^{(p)}), d_{\zp})
        \eeqn
        (called an equivariant continuation map) whose homotopy class does not depend on any choices. Moreover, the resulting map on cohomology is linear over $\rp$. Therefore, there is a well-defined invariant of $M$, called the {\bf $\zp$-equivariant Floer cohomology}, denoted by 
        \beqn
        HF_\zp(M; \Lambda_{\kzp}^\Gamma).
        \eeqn

\item For any Morse complex $CM(f)$ on $M$ and a Hamiltonian $H$ such that $H^{(p)}$ is nondegenerate, there exist $\Lambda_{\kzp}^\Gamma$-linear cochain maps
\beqn
\pss_{\zp}: CM(f) \otimes \Lambda_{\kzp}^\Gamma \otimes E(\theta) \to CF_\zp(H^{(p)})
\eeqn
and 
\beqn
\ssp_{\zp}: CF_{\zp}(H^{(p)}) \to CM(f) \otimes \Lambda_{\rp}^\Gamma
\eeqn
(the equivariant PSS and SSP maps) whose homotopy classes are well-defined. Moreover, the induced maps on cohomology are isomorphisms as graded $\Lambda_{\rp}^\Gamma$-modules. In particular,
\beqn
HF_{\zp}(M) \cong H(M) \otimes \Lambda_{\rp}^\Gamma.
\eeqn

\item (Notice that the equivariant PSS and SSP maps are not necessarily inverses to each other.) The difference
\beqn
\ssp_\zp \circ \pss_\zp - {\rm Id}: H(M) \otimes \Lambda_{\rp}^\Gamma \to H(M) \otimes \Lambda_{\rp}^\Gamma.
\eeqn
strictly increases either the energy filtration or the $(t,\theta)$-grading.
\end{enumerate}
\end{thm}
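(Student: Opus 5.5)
The statement just above is Theorem \ref{thm211}, which is cited from \cite[Theorem I]{Bai_Xu_2025}. My plan is to check that its four items follow from the construction just described, using the FOP virtual package of \cite{Bai_Xu_2025} as a black box for the moduli counts.

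\textbf{Item (1).} I would first compactify the moduli spaces \eqref{eqn13} by allowing Floer breakings, Morse breakings in $S^\infty$ and sphere bubbles. The union of all these compactifications carries a free ${\mb Z}_{\geq 0}\times\zp$-action: $\zp$ acts by the loop rotation on $u$ together with the action on $S^\infty$, and ${\mb Z}_{\geq 0}$ acts by the shift $\tau$.

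Since the action is free, one can choose FOP perturbations compatibly with it. To do this, I would construct perturbations on the quotient flow category, which is finite-dimensional below each energy and index level, and pull them back. The resulting integer counts are invariant under the action, and their reductions mod $p$ give $d_{\zp}$.

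The identity $d_{\zp}^2=0$ comes from the boundary strata of the one-dimensional moduli spaces, exactly as in the non-equivariant case. Invariance of the counts makes $d_{\zp}$ preserve the invariant subcomplex and commute with the $t$-action. Gromov compactness together with $g(\tau v)=g(v)+1$ gives convergence in the completion \eqref{equivariant_Floer_complex}.

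The $\rp$-module structure is defined by coupling Floer cylinders with $Y$-perturbed flow trees on $T_{2,1}$, following the Morse construction of Section \ref{subsubsec:ring-BZ/p}. Associativity and unitality are then proved by the usual one-parameter degeneration of trees.

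\textbf{Item (2).} The continuation maps use $s$-dependent Hamiltonians $H_s$ interpolating $H_1^{(p)}$ and $H_2^{(p)}$, coupled to gradient lines in $S^\infty$. I would take the data $\zp$-invariant by letting the homotopy depend on $(s,v)$ equivariantly. A homotopy of homotopies gives a chain homotopy, and concatenation gives a chain homotopy equivalence.

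For $\rp$-linearity on cohomology, I would compare the two ways of coupling a tree with the continuation cylinder through a one-parameter family in which the vertex position moves along the cylinder.

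\textbf{Item (3).} $\pss_{\zp}$ and $\ssp_{\zp}$ count spiked planes coupled to $S^\infty$. Here the plane is modeled on the $p$-fold symmetric domain, so the rotation acts on it, and the Morse side is treated with trivial action, using the same equivariant Borel perturbation scheme. I would show that these maps are isomorphisms using item (4) together with a filtration argument.

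\textbf{Item (4).} The composition $\ssp_\zp\circ\pss_\zp$ is homotopic, by gluing and neck-stretching, to a count of equivariant perturbed spheres with two Morse spikes. The constant-sphere, lowest-$(t,\theta)$-degree contribution is the identity. Every other contribution either has positive $\omega$-energy, or has positive Morse index drop in $S^\infty$ and so raises the $(t,\theta)$-degree.

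Hence the difference from the identity is strictly filtration-increasing, i.e.\ topologically nilpotent in the complete filtration. Therefore $\ssp_\zp\circ\pss_\zp$ is invertible, and so is $\pss_\zp\circ\ssp_\zp$ by the symmetric argument. This proves the isomorphism claimed in (3), and $HF_{\zp}(M)\cong H(M)\otimes\Lambda_{\rp}^\Gamma$ follows.

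\textbf{Main obstacle.} I expect the hardest step to be constructing FOP perturbations that are simultaneously ${\mb Z}_{\geq 0}\times\zp$-equivariant and compatible with all boundary strata, including the $p$-fold symmetric spheres in item (3), whose stabilizers are nontrivial before the Borel coupling. I would first try to build them inductively over energy and the $S^\infty$-truncation level. Freeness is only guaranteed after coupling with $S^\infty$, so the induction must work on the coupled spaces at each stage.
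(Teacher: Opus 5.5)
Your outline is essentially the route the paper takes. The paper gives no independent proof of this statement; it cites \cite[Theorem I]{Bai_Xu_2025} after sketching exactly the construction you reconstruct: coupling Floer data with gradient lines of $g$ in $S^\infty$, using the free ${\mb Z}_{\geq 0}\times\zp$-action on the compactified moduli spaces to choose equivariant FOP perturbations, passing to the invariant part, and coupling with $Y$-shaped trees for the $\rp$-module structure.

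The one step to tighten is the claim that $\pss_{\zp}\circ\ssp_{\zp}$ is invertible ``by the symmetric argument''. That argument is not literally symmetric: on the Floer side there is no constant-solution leading term, and PSS/SSP shift the action filtration. It is cleaner to show by a complete-filtration comparison that $\pss_{\zp}$ is a quasi-isomorphism, since its $(t,\theta)$-degree-zero part is the ordinary PSS map. Then the invertibility of $\ssp_{\zp}\circ\pss_{\zp}$ from item (4) gives the conclusion for $\ssp_{\zp}$.
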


The above formulation of the equivariant Floer cochain complex looks different from the usual description, for example, in \cite{SZhao-pants}\cite{covariant-constant}. We give a reformulation of the complex. We focus on the $p>2$ case. Note that the cohomological grading on $CM(g)$ induces an identification of graded module over $\Lambda_{\kzp}^\Gamma$
\beqn
CF_\zp(H^{(p)}) \cong CF(H^{(p)}) \otimes \rp \cong \Big( CF(H^{(p)}) \oplus CF(H^{(p)}) \otimes \theta \Big) \otimes \kzp.
\eeqn
If $w_k^1, \ldots, w_k^p\in S^\infty$ are the index $k$ critical points of $g$, then a concrete correspondence is 
\begin{align*}
&\ \ov{x} \otimes t^l \mapsto \sum_{\alpha=1}^p \iota^\alpha(\ov{x}) \otimes w_{2l}^\alpha,\ &\ \ov{x}\otimes t^l \theta \mapsto \sum_{\alpha=1}^p \iota^\alpha(\ov{x}) \otimes w_{2l+1}^\alpha.
\end{align*}
However, such a correspondence is not unique as only the cyclic order of  $(w_k^1, \ldots, w_k^p)$ is well-defined. On the other hand, fixing such a correspondence, one can translate the equivariant differential $d_{\zp}$ to a familiar form. Namely, $d_{\zp}$ is $t$-linear and is determined by 
\beqn
\begin{split}
d_\zp( \ov{x} \otimes 1) = &\ \sum_{l=0}^\infty d_{\zp}^{2l} (\ov{x}) \otimes t^l + \sum_{l=0}^\infty d_{\zp}^{2l+1}(\ov{x}) \otimes t^l \theta\\
= &\ d_{\zp}^0 (\ov{x}) \otimes 1 + d_{\zp}^1 (\ov{x}) \otimes \theta + {\rm higher\ order\ terms},\\
d_\zp( \ov{x} \otimes \theta) = &\ \sum_{l=0}^\infty d_{\zp}^{'2l}(\ov{x})\otimes t^l \theta + \sum_{l=0}^\infty d_{\zp}^{'2l+1}(\ov{x}) \otimes t^{l+1} \\
= &\ d_{\zp}^{'0} (\ov{x}) \otimes \theta + {\rm higher\ order\ terms}
\end{split}
\eeqn
where ``higher order terms'' are measured in terms of $(t, \theta)$-degrees. Then $d_{\zp}^0$ and $d_{\zp}^{'0}$ are both differentials on the ordinary Floer cochain group $CF(H^{(p)})$. In \cite{Bai_Xu_2025} the following additional features of $d_{\zp}$ are proved.

\begin{thm}\label{thm213}
One can arrange the equivariant differentials $d_{\zp}$ on $CF_\zp(H^{(p)})$ such that under the above identification with $CF(H^{(p)}) \otimes \rp$, $d_{\zp}^0 = d_{\zp}^{'0}  = d_{H^{(p)}}$ such that the complex $(CF(H^{(p)}), d_{H^{(p)}})$ of graded $\Lambda^\Gamma$-modules is filtered chain homotopy equivalent to a Floer complex $CF(H^{(p)})$ of the nondegenerate Hamiltonian $H^{(p)}$ (see Theorem \ref{thm:Floer-cochain}). 

\begin{enumerate}

    \item $d_{\zp}^1$ is a null-homotopic cochain map from $CF(H^{(p)})$ to itself.

    \item $d_{\zp}$ strictly increases the energy filtration.
\end{enumerate}
\end{thm}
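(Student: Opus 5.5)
The plan is to build the equivariant complex of \eqref{equivariant_Floer_complex} from a carefully chosen, $\zp$-equivariant family of Floer data parametrized by $S^\infty$. We then read off the three claimed properties from the structure of the moduli spaces \eqref{eqn13} together with the energy identity. Concretely, we choose Floer data $(H_v,J_v)_{v\in S^\infty}$ satisfying three conditions. First, they satisfy the equivariance $H_{\iota v}(t,\cdot)=H_v(t-\tfrac1p,\cdot)$ and $J_{\iota v}(t)=J_v(t-\tfrac1p)$. Second, they are compatible with the shift $\tau$. Third, $(H_v,J_v)=(H^{(p)},J)$ for $v$ in a neighbourhood of ${\rm crit}(g)$. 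The moduli spaces ${\mc M}_{w_-,w_+}(\ov y_-,\ov y_+)$ then consist of pairs $(u,w)$ with $u$ solving the Floer equation for $(H_{w(s)},J_{w(s)})$. The FOP perturbations of \cite{Bai_Xu_2025} are then chosen invariantly under the free ${\mb Z}_{\ge0}\times\zp$-action on the compactified moduli spaces.

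\emph{Step 1: the leading terms.} For $w_+=w_-$ the gradient line $w$ is constant at a critical point, so $u$ is an ordinary Floer cylinder of $(H^{(p)},J)$. Hence the $\theta^0t^0$ component $d^0_{\zp}$ and the $\theta t^0$ component $d'^0_{\zp}$ are both computed by the same unparametrized moduli spaces. We choose the FOP perturbation on these strata to be the one defining a Floer complex as in Theorem \ref{thm:Floer-cochain}, and we extend equivariantly using the free action. This gives $d^0_{\zp}=d'^0_{\zp}=d_{H^{(p)}}$. The filtered chain homotopy equivalence with an arbitrary Floer complex $CF(H^{(p)})$ is then the continuation map $\Phi^{12}$ of Theorem \ref{thm:Floer-cochain}, which is filtered by \cite[Theorem K]{Bai_Xu_2025}.

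\emph{Step 2: $d^1_{\zp}$.} The component $d^1_{\zp}$ counts pairs $(u,w)$ with $w$ an index-one gradient line from $w_0^\alpha$ to $w_1^{\alpha}$ or to $w_1^{\alpha-1}$. By equivariance of the data, each such count is a continuation map between $(H^{(p)},J)$ and its rotation. Under the identification with $CF(H^{(p)})\otimes\rp$, it therefore takes the form $d^1_{\zp}=\Phi-\iota_*\Phi'$, where $\Phi,\Phi'$ are continuation-type maps and $\iota_*$ is the rotation. Rotation of the loop parameter is itself realized by a continuation along the loop of Floer data $s\mapsto (H^{(p)}(t-\tfrac{s}{p}),J(t-\tfrac sp))$, $s\in[0,1]$. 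The standard homotopy-of-homotopies argument, run with FOP perturbations over the one-parameter family, then shows that $\iota_*\Phi'\simeq\Phi$. Hence $d^1_{\zp}$ is null-homotopic, which is item (1).

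\emph{Step 3: strict increase of the energy filtration.} For each contributing pair $(u,w)$, write $K_s:=H_{w(s)}$. The energy identity reads
\beqn
0\le E(u)={\mc A}_{H^{(p)}}(\ov y_+)-{\mc A}_{H^{(p)}}(\ov y_-)-\int_{\mb R}\!\int_{S^1}(\partial_s K_s)(t,u(s,t))\,dt\,ds .
\eeqn
We choose the $v$-dependence of $H_v$ so that the last integral is $\le0$, i.e.\ the family is monotone along the gradient flow of $g$. This is possible because the required monotonicity is compatible with the $\zp$- and $\tau$-equivariance, after replacing $H_v$ by $H_v+c\,\chi(v)$ with $\chi$ a suitable $\zp$-invariant function increasing along $\nabla g$. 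Then every configuration with $E(u)>0$ strictly raises the action.

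The remaining configurations are those with $u$ constant (in $s$) at an orbit, paired with a nonconstant rigid $w$. I expect these to be the \textbf{main obstacle}. Naively they contribute zero-energy terms of the form $(x-\iota x)\otimes\theta$ on orbits whose $\zp$-orbit is free. They do not appear in the bare $\theta^0t^0$ component, but they do appear in $d^1_{\zp}$. The plan for handling them is as follows. Because $H_v=H^{(p)}$ only near ${\rm crit}(g)$ and the perturbation $c\,\chi$ is strictly monotone along any nonconstant gradient line, a $u$ that is constant in $s$ is \emph{not} a solution for the parametrized data. Any solution over a nonconstant $w$ therefore picks up a strictly positive term $c\,(\chi(w_+)-\chi(w_-))$ in the identity above. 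I would first try to make this precise on finite truncations $S^{2n-1}$, choosing $c$ smaller than the minimal gap in ${\rm Spec}(H^{(p)})$. We then check that the FOP perturbation, which does not alter the energy of solutions, preserves the strict inequality on the virtual counts. This last point is exactly the advantage of the abstract perturbations emphasized in the introduction, and it would give item (2).
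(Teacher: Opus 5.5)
Your Steps 1 and 2 are reasonable in outline. In particular, $d^1_{\zp}=\Phi-\iota\Phi'\simeq 1-\iota\simeq 0$ is the right mechanism for (1). However, you work in a different set-up from the paper, and Step 3 has a gap that cannot be closed. The paper, via \cite{Bai_Xu_2025}, never lets the Floer data depend on $v\in S^\infty$. Its moduli spaces couple the \emph{unperturbed} $(H^{(p)},J)$-Floer equation with gradient lines of $g$, and equivariant transversality comes from FOP perturbations chosen invariantly under the free ${\mb Z}_{\geq 0}\times\zp$-action. These perturbations do not touch the energy identity, so $E(u)={\mc A}_{H^{(p)}}(\ov y_+)-{\mc A}_{H^{(p)}}(\ov y_-)\geq 0$ holds exactly and filtration compatibility is automatic. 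The remark closing the discussion of Tate spectral invariants says explicitly that $S^\infty$-dependent geometric perturbations are what destroy this. Your choice of $(H_v,J_v)$ reintroduces the sign-indefinite term $\int\!\int\partial_s H_{w(s)}$.

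The repair in Step 3 does not work. Since $c\chi(v)$ is constant on $M$, $X_{H_v+c\chi(v)}=X_{H_v}$, so no moduli space changes. The ``strictly positive term'' $c(\chi(w_+)-\chi(w_-))$ only re-weights the filtration to ${\mc A}_{H^{(p)}}(\ov y)+c\chi(w)$ on $\ov y\otimes w$, which is not ${\mc A}_{H^{(p)}}$. Moreover, no $\zp$- and $\tau$-invariant $\chi$ can increase strictly along all nonconstant gradient lines, because $\tau w_0^k=w_2^k$ and there are flow lines $w_0^k\to w_1^j\to w_2^{k'}$. Taking $\chi=g$ instead gives $t$ a filtration weight $c$.

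More fundamentally, the configurations you flag are unavoidable. Take a rigid $w$ from $w_0^\alpha$ to $w_1^\alpha$ or $w_1^{\alpha-1}$. The trivial cylinder at a nondegenerate $\ov y$ is regular, so there is always a nearby solution from $\ov y$ to $\ov y$ itself, with ${\mc A}_{H^{(p)}}$-difference exactly $0$ whatever the data. These solutions are also forced by $d_{\zp}^2=0$: they are the ends of the one-dimensional spaces formed by a rigid Floer cylinder and an index-one gradient line. On the invariant part they contribute $\mathrm{id}\otimes d_g$, i.e.\ $\pm(1-\iota)$ to $d^1_{\zp}$ and $\pm(1+\iota+\cdots+\iota^{p-1})$ to the $\theta\mapsto t$ component. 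Both are nonzero and action-preserving on any free $\zp$-orbit of generators, i.e.\ any contractible $p$-periodic point of $\phi$ that is not fixed.

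Stated invariantly: at an action level $a$ carried by such an orbit, $H(\mathrm{gr}_a)$ contains that orbit's Borel cohomology $\fp$, which is $t$-torsion. A strictly increasing differential would instead give $H(\mathrm{gr}_a)=\mathrm{gr}_a$, free over $\kzp$. Since $H(\mathrm{gr}_a)$ is invariant under filtered homotopy equivalence, no arrangement can achieve literal strictness.

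So item (2) can only be proved in the form ``$d_{\zp}$ does not decrease ${\mc A}_{H^{(p)}}$, and increases it strictly on every term other than $\mathrm{id}\otimes d_g$.'' That is exactly what the energy identity gives in the paper's set-up, and it is what Step 3 should target.
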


\subsubsection{Quantitative theory and Tate spectral invariants}

Now we talk about the quantitative feature of the equivariant theory. Under the framework of \cite{Bai_Xu_2025}, the differential of $d_{\zp}$ respects the action filtration 
\beqn
{\mc A}_{H^{(p)}}: CF_{\zp} (H^{(p)}) \to {\mb R} \cup \{+\infty\} 
\eeqn
by construction. Hence for all $a \in {\mb R}$, one has a subcomplex
\beqn
CF_{\zp}(H^{(p)})^{>a}:= {\mc A}_{H^{(p)}}^{-1}((a, +\infty]) \subseteq CF_{\zp} (H^{(p)});
\eeqn
given some interval $I = (a,b) \subset \R$, one similarly has the quotient complex
\beqn
CF_{\zp}(H^{(p)})^I:= CF_{\zp}(H^{(p)})^{>a}/ CF_{\zp }(H^{(p)})^{>b}.
\eeqn
By the invariance result discussed in the previous section, we can unambiguously denote the resulting cohomology groups as
\begin{align*}
&\ HF_{\zp}(H^{(p)})^{>a}, \ &\  HF_{\zp}(H^{(p)})^I.
\end{align*}

Note that the above definitions require $H^{(p)}$ to be nondegenerate. However, following Section \ref{subsec:filtered-floer}, we can consider the partially ordered set $P_{a,b}^{(p)}(H)$ which consists of Hamiltonians $K$ such that
\begin{itemize}
    \item $K^{(p)}$ is nondegenerate,
    \item $K < H$, and
    \item $a,b \notin \mathrm{Spec}(K^{(p)})$,
\end{itemize}
with partial order given by $K \leq K'$ pointwise. Then using the continuation maps in Theorem \ref{thm211} (2), we can define
\beqn
HF_{\zp}(H^{(p)})^I := \varinjlim_{K \in P^{(p)}_{a, b}(H)} HF_{\zp}(K^{(p)})^I.
\eeqn
Letting $b = \infty$, we can also make sense of $HF_{\zp}(H^{(p)})^{>a}$ via a colimit.

One can use the Tate version of the equivariant Floer theory to discuss spectral invariants as the inversion of the formal variable $t$ produces a Novikov field $\Lambda_{\kp}^\Gamma$. The underlying cochain complex replaces \eqref{equivariant_Floer_complex} by 
\begin{equation}\label{eqn:tate-complex}
    \widehat{CF}_{\zp}(H^{(p)}; \Lambda_{\kp}^\Gamma):= CF_{\zp}(H^{(p)}) \underset{\Lambda_{\kzp}^\Gamma}{\otimes} \Lambda_\kp^\Gamma
\end{equation}
and the differential is the $\Lambda_\kp^\Gamma$-linear extension of the equivariant Floer differential. We denote the resulting cohomology group by 
\beqn
\widehat{HF}_{\zp}(H^{(p)}; \Lambda_{{\mc K}_p}^\Gamma).
\eeqn
The action filtration ${\mc A}_{H^{(p)}}$ extends to a non-archimedean norm
\beqn
\wh{\mc A}_{H^{(p)}}: \wh{CF}_{\zp}(H^{(p)}) \to {\mb R} \cup \{+\infty\}.
\eeqn
Then using Theorem \ref{thm211}, one can consider the {\bf Tate spectral invariants} following Definition \ref{defn:spec-inv}
\beqn
\wh c(\cdot, H^{(p)}): \wh{HF}_{\zp}(M; \Lambda_{\kp}^\Gamma) \to {\mb R} \cup \{+\infty\}, 
\eeqn
which extend to all Hamiltonians using continuity of the spectral invariants.

On the other hand, one also has the {\bf Tate bar-length spectrum}, which can be seen by turning to the universal Novikov ring $\Lambda_{0, \kp}^{\rm univ}$. More precisely, consider
\beq\label{eqn16}
\wh{CF}_{\zp}(H^{(p)}; \Lambda_{0, \kp}^{\rm univ}):= \left( {\rm span}_{\Lambda_0^{\rm univ}} \Big\{ q^{-\cl A_{H^{(p)}}(\ov{x}_1)} \ov{x}_1, \dots, q^{-\cl A_{H^{(p)}}(\ov{x}_N)} \ov{x}_N \Big \} \otimes CM(g) \right)^\inv 
\eeq
where ${\mc O}(H^{(p)}) = \{ x_1, \ldots, x_N\}$ and $\ov{x}_1, \ldots, \ov{x}_N \in \wt{\mc O}(H^{(p)})$ are choices of cappings. This is a canonically defined $\Lambda_{0, \kp}^{\rm univ}$-submodule of $\wh{CF}_{\zp}(H^{(p)}; \Lambda_{\kp}^{\rm univ})$ (cf. Section \ref{subsubsec:bar}). Then the exponents of the torsion part of the Tate cohomology give rise to the \emph{Tate bar-length spectrum}. The bar-lengths only depend on the time-1 map $\phi^p$. An element of the bar-length spectrum is denoted by
\beqn
\widehat{\beta}_j(\phi^p) = \widehat{\beta}_j(H^{(p)}).
\eeqn

\begin{remark}
In the weakly monotone setting, if we use geometric perturbations to construct the equivariant Floer cochain complex, due to the infinite dimensionality of $S^\infty$, in order to achieve transversality, the resulting equivariant Floer differential may no longer respect the action filtration (induced from $H^{(p)}$) because of the ``largeness" of the perturbations. In the monotone or symplectically aspherical setting, this issue can be resolved by either using perturbations of $J$ or using the polynomial dependence of $d_{\zp}$ on the $t$ variable as a result of the index-energy relation. In the strictly weakly monotone setting, Sugimoto \cite{Sugimoto} introduced the notions of $X_K$- and $X_\infty$-modules to transfer the equivariant differentials defined over finite approximations $S^{2K+1}$ of $S^{\infty}$ that respect the action filtrations successively via algebraic tools. 
\end{remark}

\subsubsection{Local equivariant Floer cohomology}

The local version of the equivariant Floer cohomology was firstly defined in \cite{SZhao-pants} using classical methods; this is possible as the local consideration does not involve sphere bubbling. However, we will define the local version of equivariant Floer cohomology using abstract perturbations which fits into the construction of \cite{Bai_Xu_2025} as well as our homological perturbation given later in Proposition \ref{prop33b}. Let $x\in {\mc O}(H)$ be an isolated 1-periodic orbit such that $x^{(p)}\in {\mc O}(H^{(p)})$ is also isolated. Choose a small perturbation $H_1$ of $H$ such that $H_1^{(p)}$ is also nondegenerate. Then by Theorem \ref{thm211}, one can define the $\zp$-equivariant differential
\beqn
d_{\zp}: CF(H_1^{(p)}) \wh{\otimes}CM(g) \to CF(H_1^{(p)}) \wh{\otimes} CM(g)
\eeqn
whose $\zp$-invariant part is the complex $CF_{\zp}(H_1^{(p)})$. Moreover, one can identify those ``small'' differentials among orbits which are close to $x^{(p)}$, giving rise to 
\beqn
d_{\zp}^\loc: CF^\loc (H_1^{(p)}, x^{(p)}) \wh{\otimes} CM(g) \to CF^\loc (H_1^{(p)}, x^{(p)}) \wh{\otimes} CM(g).
\eeqn
Its $\zp$-invariant part is then a complex
\beqn
CF_{\zp}^\loc(H_1^{(p)}, x^{(p)}).
\eeqn
One can use the continuation map argument to prove (which we omit) that the chain homotopy type is independent of perturbations and the choice of the small perturbation $H_1$. Hence the cohomology, denoted by 
\beqn
HF_{\zp}^\loc(H^{(p)}, x^{(p)})
\eeqn
is a well-defined graded $\Lambda_{\kzp}^\Gamma$-module. Moreover, there is a well-defined $\rp$-module structure whose structural coefficients are counts of moduli spaces coupling Y-shaped gradient trees in $S^\infty$.

We compare our definition with the definition of \cite{SZhao-pants} using perturbations (which also needs choosing a nondegenerate perturbation $H_1$).

\begin{lemma}\label{lemma_local_eq_comparison}
The above definition of $HF_\zp^\loc(H^{(p)}, \ov{x}^{(p)})$ agrees with that of \cite{SZhao-pants}. More precisely, if $CF_\zp^\loc(H_1^{(p)}, \ov{x}^{(p)})_{\rm SZ}$ denotes the complex defined in \cite{SZhao-pants}, then there exists a $\kzp$-linear chain homotopy equivalence 
\beqn
CF_\zp^\loc (H_1^{(p)}, \ov{x}{}^{(p)})_{\rm SZ} \to CF_\zp^\loc(H_1^{(p)}, \ov{x}{}^{(p)})
\eeqn
which induces an isomorphism of $\rp$-modules\footnote{The $\rp$-module structure was claimed in \cite{SZhao-pants} without explicit construction.}
\beqn
HF_\zp^\loc(H^{(p)}, \ov{x}{}^{(p)})_{\rm SZ} \cong HF_\zp^\loc (H^{(p)}, \ov{x}{}^{(p)}).
\eeqn
\end{lemma}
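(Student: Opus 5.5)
The plan is to compare the two constructions through a single interpolating family of perturbation data, exploiting the fact that in the local setting no sphere bubbling occurs. Fix the nondegenerate perturbation $H_1$ close to $H$ and an isolating neighborhood $U$ of $x^{(p)}$. By the energy bound $\hbar$ for nonconstant $J$-spheres recalled in Section \ref{subsec:local-floer}, if $H_1$ is $C^2$-close enough to $H$, every Floer cylinder between orbits of $H_1^{(p)}$ inside $U$ has energy below $\hbar$. Hence, in the parametrized moduli spaces ${\mc M}_{w_-,w_+}(\ov{y}_-,\ov{y}_+)$ coupling with gradient lines of $g$ on $S^\infty$, compactification involves only breaking. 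The same holds for the Seidel--Zhao moduli spaces, which use $S^\infty$-dependent Floer data $(H_{1,v}, J_v)$ that are $\zp\times\Z_{\ge 0}$-equivariant.

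The key steps run as follows.

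First, observe that for generic equivariant Seidel--Zhao data the parametrized moduli spaces are cut out transversely. As in the non-equivariant discussion of Section \ref{subsec:local-floer}, one may then take the FOP perturbations in the construction of \cite{Bai_Xu_2025} to be the unperturbed Kuranishi sections. Taking the underlying geometric data in the Bai--Xu construction to be exactly the Seidel--Zhao data, the two complexes coincide on the nose: the mod $p$ counts agree, and the identification of $(CF^\loc\wh\otimes CM(g))^{\inv}$ with $CF^\loc\otimes\rp$ is the same one.

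Second, for arbitrary choices on both sides, build an equivariant continuation map. Use a one-parameter family of $S^\infty$-dependent data interpolating between the Seidel--Zhao data and the Bai--Xu data. Count the parametrized continuation cylinders coupled with gradient lines of $g$, using Kuranishi charts with FOP perturbations that are equivariant under the free $\Z_{\ge 0}\times\zp$-action; these are transverse wherever the geometric data already are. Standard gluing shows this is a $\kzp$-linear chain map. The reverse map and the homotopies come from two-parameter families, so the map is a chain homotopy equivalence. The action-window and isolation argument (small energy means cylinders stay in $U$) guarantees that only local orbits contribute.

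Third, check $\rp$-linearity on cohomology. Since \cite{SZhao-pants} does not construct the module structure, we define it on their complex by the same Y-shaped tree coupling described for the Morse setting in Section \ref{subsubsec:borel-construction}, using their data. The continuation map then intertwines the $\theta$-actions up to homotopy, via a parametrized moduli space over (interval)$\times$(Y-tree) whose boundary strata give exactly the commutator plus a homotopy. The $t$-action is the shift $\tau$ on both sides, and the map commutes with it strictly by $\Z_{\ge 0}$-equivariance.

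I expect the main obstacle to be the second step: making the FOP perturbations on the parametrized and interpolating moduli spaces simultaneously equivariant, compatible with breaking, and equal to zero sections on the already transverse Seidel--Zhao strata. I would handle this by constructing perturbations inductively over the energy and index (Morse degree of $w_\pm$) filtration, using freeness of the $\Z_{\ge 0}\times\zp$-action to work on the quotient, and extending from the boundary strata as in \cite{Bai_Xu_2025}.
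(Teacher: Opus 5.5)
Your proposal follows the paper's own argument. You run the Bai--Xu global Kuranishi chart construction on the Seidel--Zhao $S^\infty$-dependent data, observe that the sections are already transverse so the resulting complex coincides strictly with the Seidel--Zhao complex, and then compare with the fixed-$J$, abstract-perturbation complex via an equivariant continuation map and a continuation-built homotopy inverse. Your extra step, which equips the Seidel--Zhao side with an $\rp$-action via Y-shaped tree coupling and checks that the continuation map intertwines it, fills in a point the paper's sketch leaves implicit.
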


\begin{proof}[Sketch of proof]
The idea is the same as the comparison in the nonequivariant case as discussed in \cite[Theorem G]{Bai_Xu_2025}. At the level of moduli spaces, the construction in \cite{SZhao-pants} differs from \cite{Bai_Xu_2025} in that the former achieves transversality by choosing a certain family of time-dependent almost complex structures parametrized by $S^\infty$ while the latter constructs the moduli spaces using a fixed almost complex structure, followed by abstract perturbations on Kuranishi charts of these moduli spaces. Nevertheless, the global Kuranishi chart construction in \cite[Section 25]{Bai_Xu_2025} applies equally well to the former situation. Then, following the argument in \cite[Theorem G]{Bai_Xu_2025}, one sees that the induced Kuranishi sections on the derived orbifold charts are already FOP transverse. By passing to the ${\mb Z}/p$-invariant part of the resulting cochain complex, one can directly identify the differential with the formulas in \cite[Section 6.1]{SZhao-pants}. Therefore, we have shown that the equivariant cochain complex produced from the procedure in \cite{Bai_Xu_2025} for this varying family of almost complex structures strictly agrees with $CF_\zp^\loc (H_1^{(p)}, \ov{x}{}^{(p)})_{\rm SZ}$. Finally, the desired chain map is obtained by a continuation map argument, which is a chain homotopy equivalence because we can also use continuation maps to construct a chain homotopy inverse.
\end{proof}


In general the local equivariant Floer cohomology need not be a free $\rp$-module. However, under certain local conditions it has a simple structure. Recall that a fixed point $x$ of $\phi$ is called {\bf $p$-admissible} if for each eigenvalue $\lambda \neq 1$ of $d\phi_x$, $\lambda^p \neq 1$. In other words, $d\phi_x$ and $d\phi_x^p$ have the same generalized eigenspace for the eigenvalue 1. 

\begin{prop}\label{prop_local_eq_structure}
When $x$ is $p$-admissible and an isolated fixed point of $\phi^p$, there is an isomorphism of $\rp$-modules 
\beq\label{eqn17}
HF_\zp^\loc(H^{(p)}, x^{(p)}) \cong HF^\loc(H^{(p)}, x^{(p)}) \otimes \rp.
\eeq
\end{prop}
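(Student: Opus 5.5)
The plan is to reduce to a situation where the $\zp$-action on the local Floer complex is chain-homotopically trivial in a controlled way. Then we apply the algebraic Borel construction together with a spectral sequence or filtration argument. This is the strategy of \cite[Section 6]{SZhao-pants} in the nondegenerate setting, transported to our framework via Lemma \ref{lemma_local_eq_comparison}.

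First, by Lemma \ref{lemma_local_eq_comparison}, we may work with the Shelukhin--Zhao model, which involves no sphere bubbling. Next, use $p$-admissibility to choose the local perturbation well. Since $d\phi_x$ and $d\phi^p_x$ have the same generalized $1$-eigenspace, we can take $H_1$ to be a small perturbation of $H$ supported near $x$ such that $H_1$ is nondegenerate near $x$ and the orbits of $H_1^{(p)}$ near $x^{(p)}$ are exactly the $p$-th iterates $y^{(p)}$ of the orbits $y$ of $H_1$ near $x$. Concretely, we perturb by a function that is autonomous, or at least $1/p$-periodic, in a neighborhood; admissibility guarantees that no new $p$-periodic orbits bifurcate off $x$ for small perturbations. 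The $\zp$-action on $\wt{\mc O}(H_1^{(p)})$ near $x^{(p)}$ then fixes every generator. Moreover, the local moduli spaces for $H_1^{(p)}$ are identified with those of the $p$-fold cover, on which $\zp$ acts by rotation.

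In this setting, the key step is to choose the $S^\infty$-parametrized data as pullbacks of $\zp$-invariant data. One option is a time-dependent $J$ that is $1/p$-periodic near $x$, possibly after a further small $C^2$ perturbation in the class of $1/p$-periodic data. We then check that local equivariant transversality can be achieved with data independent of $v\in S^\infty$. If it can, the equivariant differential reduces to $d\otimes \id$ on $CF^\loc(H_1^{(p)},x^{(p)})\otimes\rp$, with the trivial action $\iota=\id$ at chain level. The first Example after the algebraic Borel construction then gives the isomorphism \eqref{eqn17} of $\rp$-modules. The $\rp$-module structure is compatible because the Y-tree counts also decouple.

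The main obstacle is that equivariant transversality with $S^\infty$-independent data may fail, since iterated Floer cylinders are multiply covered. The fallback is a filtration argument. Filter $CF_\zp^\loc$ by the $(t,\theta)$-degree; the associated graded differential is $d^0_\zp = d^{'0}_\zp = d$ by the local analogue of Theorem \ref{thm213}. The action on generators is trivial, so the algebraic identities $\iota-1=0$ and $1+\iota+\dots+\iota^{p-1}=p=0$ hold mod $p$ on the $E_1$ page. The $E_2$ page is therefore $HF^\loc(H^{(p)},x^{(p)})\otimes\rp$. Degeneration would then be shown by comparing ranks: Smith-type inequalities show that the Tate version has rank equal to $\dim HF^\loc(H,x)$. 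Admissibility and the Ginzburg--Gürel persistence of local Floer homology under admissible iteration give $HF^\loc(H^{(p)},x^{(p)})\cong HF^\loc(H,x)$ up to shift. Together these force all higher differentials to vanish. Freeness over $\kzp$ then yields the module isomorphism, and the $\theta$-action is read off from the $E_2$ page.
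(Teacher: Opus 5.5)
Your primary plan fails as stated, for the multiple-cover reason you identify. Your fallback, however, is essentially a correct proof, by a route genuinely different from the paper's.

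The paper never attempts equivariant transversality for a general perturbation. It uses invariance of local (equivariant) Floer cohomology under a deformation $\phi_s$ along which $x$ stays uniformly isolated for both $\phi_s$ and $\phi_s^p$. It then uses the Ginzburg--G\"urel construction to deform $\phi$ to a split germ $\psi_0\times\psi_1$, with $\psi_1$ nondegenerate and $\psi_0$ autonomous and totally degenerate. In that normal form a constant almost complex structure already gives equivariant transversality, both for the differential and for the Y-tree moduli spaces. Hence $CF^\loc_\zp\cong CF^\loc\otimes\rp$ holds on the chain level, together with its $\rp$-structure.

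Your fallback is algebraic instead. You transfer the local equivariant complex to $(V\otimes\rp,\delta)$, with $V=HF^\loc(H^{(p)},x^{(p)})$ and $\delta$ of positive $(t,\theta)$-degree. After inverting $t$ the cohomology has $\kp$-dimension $2\dim V-2\rank(\delta)$. So $\delta=0$ as soon as the Tate rank is $2\dim V$, and the cohomology is then $V\otimes\rp$ as a $\kzp$-module.

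The paper's route yields a concrete chain model, which it reuses in the divisibility Lemma \ref{lma:div}; there the totally degenerate factor is matched with classical Steenrod operations on local Morse cohomology. Your route avoids all transversality analysis. It does, however, treat two inputs as black boxes: Ginzburg--G\"urel persistence (itself proved via the same normal form) and the local Tate localization theorem. It also produces only an abstract isomorphism.

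Three steps need to be justified differently.

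First, the Tate rank does not come from a Smith-type \emph{inequality}. The spectral sequence only gives the upper bound $\dim_{\kp}\widehat{HF}{}^\loc_\zp(H^{(p)},x^{(p)})\le 2\dim V$. The lower bound you need comes from Theorem \ref{thm_local_pants}(1): the local equivariant pants product is an isomorphism after inverting $t$. Combined with the Tate quasi-Frobenius computation (cf. Lemma \ref{lemma:quasi-Fr}), this gives $2\dim HF^\loc(H,x)$; note the factor $2$, which your rank statement omits. Persistence then turns this into $2\dim V$.

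Second, ``the action on generators is trivial'' does not by itself make the $E_1$-differential vanish. With $S^\infty$-dependent data, the $\theta$-component of $d_\zp$ is a continuation-type map, not the naive permutation of generators. You do not actually need this step, because the rank count kills all of $\delta$, including its first-order part.

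Third, ``the $\theta$-action is read off from $E_2$'' needs an argument. The Y-tree action is multiplication by $\theta$ plus terms of $(t,\theta)$-degree at least $2$, and such terms are divisible by $t$. Since the cohomology $H$ is $t$-torsion free, $\theta$ acts exactly on $H/tH$. The long exact sequence for $0\to H\xrightarrow{t}H\to H/tH\to 0$ then shows that $t$ acts surjectively on the finitely generated $\kzp$-module $\ker\theta/\mathrm{im}\,\theta$. By Nakayama this module vanishes, so $H$ is free over $\rp$ of rank $\dim V$.
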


\begin{proof}
The proof contains several steps. First, one uses a certain invariance property of the local (equivariant) Floer cohomology to reduce the consideration to certain normal forms of the germ of the Hamiltonian. Second, the normal form can be further deformed to a situation where the $\zp$-actions on the moduli spaces are all trivial.

We first consider the invariance property of the local (equivariant) Floer cohomology we will need. Let $\phi_s$, $s \in [0, 1]$ be a continuous family of Hamiltonian diffeomorphisms and $x \in {\rm Fix}(\phi_s)$ for all $s$. We say that $x$ is {\bf uniformly isolated} for the family $\phi_s$ if there exists a fixed open neighborhood $U\subset M$ of $x$ such that for all $s$, ${\rm Fix}(\phi_s) \cap U = \{x\}$. 

Now we follow a local argument of Ginzburg--Gurel \cite{GG-local-gap}. Let $x$ be an isolated and $p$-admissible fixed point of $\phi$. Let $X_0 \subset T_x M$ be the generalized eigenspace of $d\phi_x$ for eigenvalue $1$ and $X_1 \subset T_x M$ be the direct sum of other generalized eigenspaces. Both $X_0$ and $X_1$ are symplectic subspaces and one can identify a neighborhood of $x$ with an open ball of $X_0 \oplus X_1$. Then by the argument of \cite[Section 4.5]{GG-local-gap}, one can construct a family of Hamiltonian diffeomorphisms $\phi_s$ defined in such a neighborhood satisfying
\begin{enumerate}
    \item $\phi_0 = \phi$.

    \item $x$ is uniformly isolated for both the family $\phi_s$ and the family $\phi_s^p$.

    \item $d\phi_s$ remains constant at $x$.
    
    \item $\phi_1$ is the product of Hamiltonian diffeomorphisms $\psi_0$ on $X_0$ and $\psi_1$ on $X_1$ such that $0 \in X_0$ is totally degenerate for $\psi_0$ and $0 \in X_1$ is nondegenerate for $\psi_1$. Moreover, $\psi_0$ is autonomous, generated by a smooth function $f_0: X_0 \to {\mb R}$.
\end{enumerate}
Then by the persistence of local (equivariant) Floer cohomology, one can reduce the proof to the case of $\phi_1$ which has the split form. This is the situation where equivariant transversality can be achieved by using a constant almost complex structure. Hence one can see that 
\beqn
CF_\zp^\loc(\phi_1, x) \cong CF_\zp^\loc(\psi_0 \times \psi_1, (0,0)) \cong CF^\loc(\psi_0 \times \psi_1, (0, 0)) \otimes \rp.
\eeqn
One also does not need to perturb the Floer equation to achieve transversality for the moduli spaces defining the $\rp$-module structure on the cohomology. Hence one has the isomorphism \eqref{eqn17} as $\rp$-modules.
\end{proof}

\subsubsection{Local equivariant Floer cohomology using Hamiltonian perturbations}\label{subsubsec:local-equiv}

When $(M, \omega)$ is weakly monotone, the equivariant Floer cochain complex can be constructed using geometric perturbations. Though we will not need such a construction in its full generality, we will use it for the discussion of local equivariant Floer cohomology. 

Fix a Hamiltonian $H$ such that $H^{(p)}$ is nondegenerate. Now we consider a perturbation 
\beqn
{\ms H}^{eq}: S^\infty \times S^1 \times M \to {\mb R}
\eeqn
satisfying the following conditions.
\begin{enumerate}
    \item For each $v \in S^\infty$, ${\ms H}^{eq}(v, \cdot, \cdot)$ is sufficiently close to $H^{(p)}$ and agrees with $H^{(p)}$ near all 1-periodic orbits of $H^{(p)}$ such that for any $v \in S^{\infty}$, the Hamiltonian ${\ms H}^{eq}(v, \cdot, \cdot)$ has the same set of capped 1-periodic orbits as that of $H^{(p)}$.

    \item Let $\sigma \in \zp$ be the generator. Then 
    \beqn
    {\ms H}^{eq}\Big( \sigma v, t + \frac{1}{p}, x \Big) = {\ms H}^{eq} (v, t, x ).
    \eeqn

    \item Let $\tau: S^\infty \to S^\infty$ be the shifting map. Then 
    \beqn
    {\ms H}^{eq}(\tau v, t, x) = {\ms H}^{eq}(v, t, x).
    \eeqn
\end{enumerate}

Then for each pair $\ov{y}_\pm \in \tilde {\mc O}(H^{(p)})$ and $w_\pm \in {\rm crit}(g) \subset S^\infty$, consider the equation on pairs $(u, w)$ solving
\begin{align*}
&\ \partial_s u + J ( \partial_t u - X_{{\ms H}^{eq}(w(s), t)} (u)) = 0,\ &\ w'(s) + \nabla g (w(s)) = 0.
\end{align*}
The equation is obviously translation invariant. Moreover, finite energy solutions converge to 1-periodic orbits of $H^{(p)}$ as well as critical points of $g$. Then one can again define the moduli space
\beqn
{\mc M}_{w_-, w_+}(\ov{y}_-, \ov{y}_+; {\ms H}^{eq})
\eeqn
of solutions with the prescribed asymptotic limits modulo translation. 

Transversality can be achieved by taking generic ${\ms H}^{eq}$ while maintaining the required symmetry. The only problem is the issue of bubbling off spheres, which obstructs the construction beyond the weakly monotone case. If we assume $(M, \omega)$ is weakly monotone, then moduli spaces of expected dimension zero consist of finitely many points and moduli spaces of expected dimension one can be compactified by adding once-broken trajectories. Hence one can define mod $p$ count of these moduli spaces, which eventually lead to the construction of the equivariant Floer cochain complex $CF_{\zp}(H^{(p)})$. 

Following Lemma \ref{lemma_local_eq_comparison}, one can similarly show that in the weakly monotone setting, the above construction leads to equivariant Floer cochain complexes which are chain homotopy equivalent to the general version we use from \cite{Bai_Xu_2025}.


The perturbative approach is only used in this paper when discussing the local equivariant Floer theory. Let $(M, \omega)$ be a closed symplectic manifold and $H$ be a 1-periodic Hamiltonian with an isolated 1-periodic orbit $x$ such that its $p$-th iteration $x^{(p)}$ is isolated in ${\mc O}(H^{(p)})$. Choose a sufficiently small perturbation $H_1$ of $H$ such that $H_1^{(p)}$ is nondegenerate. Then $CF_{\zp}(H_1^{(p)})$ can be defined as described above. Moreover, one can identify a ``local'' differential which restricts to a differential of the subspace generated by orbits near $x^{(p)}$, giving the definition of the local equivariant Floer cochain complex, denoted by 
\beqn
CF_{\zp}^\loc(H^{(p)}, x^{(p)})
\eeqn

\subsection{The equivariant pair-of-pants}
\label{subsec:equivariant-pair-of-pants}

Next, we recall the construction of the 
equivariant pair-of-pants product following \cite{seidel, SZhao-pants}, and the generalization in \cite{Bai_Xu_2025}. As in the previous discussion of the quantum Steenrod power operation, we first describe the relevant moduli spaces without discussing details of the perturbation. 

Recall we used the domain $\Sigma_p$, which branches over $S^2$, as the domain of the quantum Steenrod operation. Let $\Sigma_p^\circ$ be the complement of the $p+1$ points at $p$-th roots of unity and $\infty$, which branches over ${\mb C}^* \cong {\mb R}\times S^1$. Then $\Sigma_p^\circ$ still carries a $\zp$-action. Moreover, the branched cover $\Sigma_p^\circ \to {\mb R}\times S^1$ given by $z \mapsto z^p$ pulls back $p$ negative cylindrical ends and one positive cylindrical end. Let $H$ be a Hamiltonian on $(M, \omega)$ such that both $H$ and $H^{(p)}$ are nondegenerate. Then the flat Hamiltonian connection $H dt$ on ${\mb R}\times S^1$ is pulled back via the covering $\Sigma_p^\circ \to {\mb R}\times S^1$ to a Hamiltonian connection $\sigma_H$ on $\Sigma_p^\circ$ which is $H dt$ at each negative end and $H^{(p)}dt$ at the positive end. Choose a compatible almost complex structure $J$. For each tuple $(x_1, \ldots, x_p)$ of 1-periodic orbits of $H$ and $y_\infty \in {\mc O}(H^{(p)})$, one can consider the Floer equation
\beqn
(\nabla^{\sigma_H} u)^{0,1} = 0,\ u: \Sigma_p^\circ \to M
\eeqn
such that $u$ is asymptotic to $x_j$ at the $j$-th negative end and asymptotic to $y_\infty$ at the positive end. Here $\nabla^{\sigma_H}$ is the covariant derivative associated to the Hamiltonian connection $\sigma_H$. By fixing cappings $\ov{x}_1, \ldots, \ov{x}_p$ and $\ov{y}_\infty$ one can further restrict the homotopy type of solutions $u$. 

The equivariant pair-of-pants product is, roughly speaking, defined by $\zp$-equivariant virtual counts of the above moduli spaces. To achieve equivariant transversality, one couples the above equation with parametrized gradient lines in $S^\infty$. For each tuple $(\ov{x}_1, \ldots, \ov{x}_p; \ov{y}_\infty)$ and a pair of critical points $w_\pm \in {\rm crit}(g) \subset S^\infty$, let 
\beqn
{\mc M}_{w_-, w_+}(\ov{x}_1, \ldots, \ov{x}_p; \ov{y}_\infty)
\eeqn
be the moduli space of pairs $(u, w)$ where $u: \Sigma \to M$ is a solution to the above equation with asymptotic data $\ov{x}_1, \ldots, \ov{x}_p; \ov{y}_\infty$ at cylindrical ends and $w$ is a gradient flow line in $S^\infty$ connecting $w_-$ and $w_+$. One compactifies the moduli space by adding broken and bubbled configurations. Here we consider breakings at negative and positive ends in different ways. At negative ends, the breakings at the $p$ cylindrical ends of $\Sigma$ and the negative end of the domain ${\rm dom}(w) = {\mb R}$ are treated independently. On the other hand, the breaking at the positive end of $\Sigma$ and the positive end of the domain of $w$ is regarded as {\it simultaneous}. In other words, breakings at negative ends need to take quotient by the translation of ${\mb R}^{p+1}$ while the breakings at positive ends are quotiented by a single ${\mb R}$. 

By using the FOP perturbation method, one can define integral, hence $\fp$-valued counts of the compactified moduli spaces. As the ${\mb Z}_{\geq 0} \times \zp$-action is free, one can choose such perturbations equivariantly. These counts can be assembled to chain maps
\beqn
CF(H; \Lambda^\Gamma)^{\otimes p} \wh{\otimes} CM(g) \to CF(H^{(p)}; \Lambda^\Gamma) \otimes CM(g).
\eeqn
We summarize the algebraic consequences in the following theorem.

\begin{thm}\cite[Theorem J]{Bai_Xu_2025}\label{thm_FSt}
Let $H$ be a nondegenerate Hamiltonian on $(M, \omega)$ such that $H^{(p)}$ is also nondegenerate. Let $CF(H; \Lambda^\Gamma)$ be the Hamiltonian Floer cochain complex and $CF_{\zp}(H^{(p)})$ be the equivariant Floer cochain complex. Then there exists a well-defined homotopy class of $\Lambda_{\kzp}^\Gamma$-linear cochain maps
    \beqn
    \xymatrix{
 CF(H )^{\otimes p}_{\zp} = \Big( CF(H )^{\otimes p} \wh{\otimes} CM(g) \Big)^\inv \ar[r]^-{\wt{\fst}_p} & \Big( CF(H^{(p)} ) \otimes CM(g) \Big)^\inv} =  CF_{\zp}(H^{(p)}) ,
    \eeqn
    called the {\bf equivariant pants product}.
    Its composition with the quasi-Frobenius map
    \beqn
    qF: H(CF(H)) \to H(CF(H )^{\otimes p}_{\zp})
    \eeqn
    defines a canonical $\fp$-linear map
    \beqn
    \fst_p:= \wt{\fst}_p \circ qF: HF(M; \Lambda^\Gamma) \to HF_{\zp}(M; \Lambda^\Gamma_{\kzp})
    \eeqn
    called the {\bf Floer-theoretic Steenrod operation}. 
\end{thm}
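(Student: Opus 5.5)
The statement is \cite[Theorem J]{Bai_Xu_2025}, so the plan is to reconstruct its proof inside the framework already recalled: the flow-category and FOP-perturbation package of Theorems \ref{thm:Floer-cochain} and \ref{thm211}, applied to the parametrized pair-of-pants moduli spaces ${\mc M}_{w_-, w_+}(\ov{x}_1, \ldots, \ov{x}_p; \ov{y}_\infty)$.

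\textbf{Step 1: compactify and describe the boundary.} First I will fix the Gromov--Floer compactifications of these moduli spaces, allowing breakings at the ends and sphere bubbles. For expected dimension one, I will check that the codimension-one strata are of exactly three types:
\begin{itemize}
\item breaking of a Floer cylinder at one of the $p$ negative ends, or of a gradient line at the negative end of $w$; these are treated independently, quotient by ${\mb R}^{p+1}$;
\item simultaneous breaking at the positive end, which gives a configuration in the moduli space \eqref{eqn13} defining $d_{\zp}$;
\item sphere bubbling, which occurs in codimension at least two at the level of Kuranishi charts.
\end{itemize}
The union of these moduli spaces carries a free ${\mb Z}_{\geq 0}\times\zp$-action. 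Here $\zp$ rotates $\Sigma_p$, hence permutes the negative ends cyclically, and acts on $S^\infty$. The ${\mb Z}_{\geq 0}$ factor acts by $\tau$.

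\textbf{Step 2: construct the chain map.} Using the global Kuranishi chart construction of \cite{Bai_Xu_2025}, I will choose FOP perturbations that are equivariant under ${\mb Z}_{\geq 0}\times\zp$. This is possible because the action is free. The perturbations must also be compatible with the perturbations already chosen on the boundary strata, namely those defining the differentials of $CF(H)$ and $CF_{\zp}(H^{(p)})$. The resulting integral counts, reduced mod $p$, define a map $CF(H)^{\otimes p}\wh\otimes CM(g)\to CF(H^{(p)})\otimes CM(g)$. The boundary analysis of Step 1 gives the chain map identity. Indeed, the negative breakings assemble into the tensor-product differential twisted by the Borel $CM(g)$ differential, and the positive breakings assemble into $d_{\zp}$.
\begin{itemize}
\item Equivariance under $\zp$ lets the map restrict to $\zp$-invariant parts.
\item Equivariance under $\tau$ gives $\kzp$-linearity.
\item Weighting by $q^A$ gives linearity over $\Lambda^\Gamma$.
\end{itemize}
Convergence after completion will follow from the energy identity: the energy equals the action difference, since the connection $\sigma_H$ has zero curvature. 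Combined with Gromov compactness, this yields finitely many contributions below each energy level.

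\textbf{Step 3: show the homotopy class is well defined.} I will use one-parameter families of data (almost complex structures, perturbations, choices of $g$-metric) together with equivariant perturbations on the parametrized moduli spaces. These produce chain homotopies, and the standard cobordism argument makes the homotopy class independent of all choices. Finally, $\fst_p$ is defined by composing with $qF$ on cohomology, using \cite[Lemma 2.5]{covariant-constant}. Its compatibility with continuation maps is the usual gluing statement: composing the pants product with equivariant continuation maps is homotopic to the pants product for different data. This makes $\fst_p$ a canonical map $HF(M)\to HF_{\zp}(M)$.

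\textbf{Main obstacle.} I expect the main difficulty to be Step 2: choosing FOP perturbations that are simultaneously $\zp$-equivariant, compatible with the infinitely many strata indexed by critical points of $g$ (via $\tau$), and coherent with the already-fixed perturbations on the boundary. My first approach will be an inductive construction over energy and over the index of $w_\pm$. At each stage I will extend equivariantly from the boundary, using freeness of the action to take quotient charts.
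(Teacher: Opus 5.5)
Your proposal is correct and follows the same route as the paper. The paper does not prove this statement itself: it quotes it from \cite{Bai_Xu_2025} after sketching exactly your construction, namely ${\mb Z}_{\geq 0}\times\zp$-equivariant FOP counts of pants coupled to gradient lines in $S^\infty$, with independent breakings at the $p+1$ negative ends and simultaneous breaking at the positive end, restricted to invariant parts and composed with $qF$.

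The one point you should make explicit is the $\fp$-linearity of $\fst_p$. $qF$ is additive only up to norm classes $(1+\iota+\cdots+\iota^{p-1})z\otimes 1$, and these are killed by $t$ because $d_{\zp}(z\otimes\theta)=\pm(1+\iota+\cdots+\iota^{p-1})z\otimes t$. Additivity of $\fst_p$ then follows from the $\kzp$-linearity of $\wt{\fst}_p$ together with the $t$-torsion-freeness of $HF_{\zp}(M)\cong H(M)\otimes\Lambda^\Gamma_{\rp}$ (Theorem \ref{thm211}).
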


The first important property is the equivalence between the Floer-theoretic quantum Steenrod operation and the quantum Steenrod operation. In \cite{Bai_Xu_2025} the following theorem is proved. Remember that the ordinary and the equivariant Floer cohomology are both invariants of the symplectic manifold.

\begin{thm}\cite[Theorem J]{Bai_Xu_2025}\label{thm214}
Let $(M,\omega)$ be a closed symplectic manifold. Then for any prime $p$, 
the following diagram commutes.
    \begin{equation}\label{equivpssisom}
\vcenter{ \xymatrix{
H^*(M;\Lambda^\Gamma ) \ar[r]^-{\qst_p} \ar[d]_{\pss} &
H^*(M;\Lambda_{\rp}^\Gamma) 
\\ 
HF(M; \Lambda^\Gamma)
\ar[r]_-{\fst_p}
&
HF_{\zp}(M; \Lambda_{\kzp}^\Gamma) \ar[u]_{\ssp_{\zp}}
}}
\end{equation}
\end{thm}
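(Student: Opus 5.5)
The plan is to prove commutativity by a standard cobordism (neck-stretching) argument on the moduli spaces that define the four maps in \eqref{equivpssisom}. On the chain level, $\ssp_{\zp}\circ \wt{\fst}_p\circ \pss^{\otimes p}$ counts broken configurations. Each consists of $p$ PSS caps (planes with one negative puncture carrying $H$, with Morse rays from $y_1,\dots,y_p$ attached), the pair-of-pants surface $\Sigma_p^\circ$ with connection $\sigma_H$, and an SSP cap with positive Hamiltonian $H^{(p)}$ and an outgoing Morse ray. All of these are coupled to a gradient line $w$ in $S^\infty$. Gluing the $p$ negative caps and the positive cap produces a single domain, namely $\Sigma_p$ with a Hamiltonian perturbation supported in a compact region. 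This is exactly the domain of $\qst_{p,A}$, decorated by a $\zp$-equivariant inhomogeneous term (or FOP datum) parametrized by $S^\infty$.

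The key steps are as follows. First, construct a one-parameter family of $\zp\times{\mb Z}_{\geq 0}$-equivariant Floer data on $\Sigma_p$, indexed by a neck length $R\in[0,\infty]$. At $R=\infty$ the data break into the PSS/pants/SSP configurations. At $R=0$ the Hamiltonian term is cut off to zero, which recovers the moduli spaces ${\mc M}_{w_-,w_+}(A;y_1,\dots,y_p,y_\infty)$ defining $\wt{\qst}_{p,A}$. Since the group action on $S^\infty$ is free, the parametrized moduli spaces can be perturbed equivariantly in the FOP framework of \cite{Bai_Xu_2025}, giving integer, hence mod $p$, counts. Second, analyze the codimension-one boundary of the parametrized space. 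Its strata are the two ends $R=0$ and $R=\infty$, Morse/Floer breakings that contribute $d\circ K\pm K\circ d$, and breakings in $S^\infty$ that are absorbed in $d_{\zp}$. This yields a $\Lambda_{\kzp}^\Gamma$-linear chain homotopy between $\wt{\qst}_p$ and $\ssp_{\zp}\circ\wt{\fst}_p\circ\pss^{\otimes p}_{\zp}$ as maps $CM(f)^{\otimes p}_{\zp}\to CM(f)\otimes\rp$. Third, precompose with $qF$. Because $\pss$ is a chain map on the nonequivariant complex, $\pss^{\otimes p}$ intertwines the two quasi-Frobenius maps, that is $qF\circ\pss=\pss^{\otimes p}_{\zp}\circ qF$ on cohomology. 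Combining this with the chain homotopy gives $\ssp_{\zp}\circ\fst_p\circ\pss=\qst_p$.

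The step I expect to be the main obstacle is the gluing and compatibility of the equivariant virtual perturbations along the $R=\infty$ end. The perturbations chosen for the PSS, pants and SSP moduli spaces, which have different breaking conventions (independent breakings at the negative ends versus simultaneous breaking at the positive end), must be extendable to an equivariant FOP perturbation on the whole parametrized family. The glued global Kuranishi charts must also be compatible with the product structure of the $p$ caps, which are permuted by $\zp$. I would try first to build the global Kuranishi charts directly on the parametrized family, following \cite[Section 25]{Bai_Xu_2025}, and to choose the perturbations inductively on energy, starting from the boundary stratum at $R=\infty$. The energy filtration needs no separate argument, since neither side requires filtration control for this statement.
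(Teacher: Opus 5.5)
The paper gives no argument of its own here: it imports the statement verbatim as \cite[Theorem J]{Bai_Xu_2025}. Your outline is the standard proof of this compatibility and is correct. It consists of a $\zp\times{\mb Z}_{\geq 0}$-equivariant neck-stretching cobordism on $\Sigma_p$, running from the glued $\pss$/pants/$\ssp$ configurations to the $\qst_p$ moduli spaces, followed by the observation that $\pss^{\otimes p}$ intertwines the two quasi-Frobenius maps. Gluing an $\ssp$ cap directly at the output, rather than inverting an equivariant $\pss$, is the right choice, since $\ssp_{\zp}$ and $\pss_{\zp}$ need not be mutually inverse. The step you flag, namely equivariant FOP perturbations compatible with the $R=\infty$ gluing, is exactly the technical content that the cited reference supplies.
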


Another important property of the equivariant pants product is that after passing to the Tate cohomology, it induces an isomorphism (cf. \cite{seidel,SZhao-pants,Sugimoto,Bai_Xu_2025}) which also preserves the energy filtration.

\begin{thm}\cite[Theorem M]{Bai_Xu_2025}\label{thm:pants-localization}
The equivariant pair-of-pants product induces an isomorphism on Tate cohomology which respects the energy filtration. As a consequence (which is not explicitly stated in \cite{Bai_Xu_2025}), after inverting $t \in \kzp$ and base change to the universal Novikov ring $\Lambda_{0,\kp}^{\rm univ}$, $\wt \fst_p$ (see \eqref{canonical_submodule} and \eqref{eqn16}) induces a $\Lambda_{0, \kp}^{\rm univ}$-linear chain map
    \beqn
    \wh{\fst}_p: \wh{CF(H; \Lambda_{0 }^{\rm univ})^{\otimes p}_{\zp}} \to 
    \wh{ CF}_{\zp}(H^{(p)}; \Lambda_{0, \kp}^{\rm univ});
    \eeqn
which is a chain homotopy equivalence.

\end{thm}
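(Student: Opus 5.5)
The plan is to deduce the chain-level statement from the filtered isomorphism on Tate cohomology. The route is a standard algebraic fact about Floer-type complexes over the universal Novikov field: they admit normal forms (Usher--Zhang \cite{usher-zhang}). Here the ground field is $\kp = \fp(\!(t)\!)$ instead of $\fp$, and $t$ has valuation $0$. So $\Lambda^{\rm univ}_{\kp}$ is again a Novikov field over a field, and the theory of \cite{usher-zhang} applies verbatim. Both sides are built from finitely generated free modules: $CF(H;\Lambda_0^{\rm univ})^{\otimes p}$ on the left, and the lattice \eqref{eqn16} on the right. I treat both as Floer-type complexes over $\Lambda^{\rm univ}_{\kp}$ and write $\Lambda_{0,\kp}^{\rm univ}$ for the valuation ring.

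\textbf{Step 1: $\wh\fst_p$ preserves the canonical lattices.} For a solution $u$ on $\Sigma_p^\circ$ with asymptotics $(\ov{x}_1,\dots,\ov{x}_p;\ov{y}_\infty)$, the connection $\sigma_H$ is pulled back from the flat connection $H\,dt$, so it has zero curvature. Hence the energy identity reads
\beqn
\mc A_{H^{(p)}}(\ov{y}_\infty) - \sum_j \mc A_H(\ov{x}_j) = E(u) \geq 0 .
\eeqn
The framework of \cite{Bai_Xu_2025} uses abstract rather than inhomogeneous perturbations, so this inequality survives perturbation and the counts are filtration nondecreasing. The coupling with $S^\infty$ does not affect the action. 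Consequently $\wt\fst_p$, written in the rescaled bases $q^{-\mc A}\ov{x}$, has coefficients in $\Lambda_{0,\kzp}^{\rm univ}$. After inverting $t$, it restricts to a chain map between the lattices in the statement.

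\textbf{Step 2: filtered isomorphism.} The first part of Theorem~\ref{thm:pants-localization} says the map is an isomorphism on Tate cohomology respecting the energy filtration. I read this as: for every $a\in\R$, the induced maps on $\wh H^{>a}$ are isomorphisms. This is the point I expect to require the most care. One may need to unwind the proof in \cite{Bai_Xu_2025}, whose filtered form comes from the low-energy (constant solution) part: $x^{\otimes p}\mapsto \pm t^{k}x^{(p)}$, with the remaining terms strictly raising the action, as in Seidel and Shelukhin--Zhao. If only this leading-order statement is available, I would argue instead as follows. Write the map in Usher--Zhang singular value bases adapted to the filtrations. The leading part is an isomorphism of associated graded complexes, and the remainder has positive valuation. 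A filtered map of complete filtered complexes that is an isomorphism on associated gradeds, up to positive-valuation corrections, is then a filtered homotopy equivalence, by a geometric-series inversion that converges because the filtrations are complete.

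\textbf{Step 3: chain homotopy equivalence over $\Lambda_{0,\kp}^{\rm univ}$.} By \cite[Theorem 4.13]{usher-zhang}, each lattice complex splits over $\Lambda_{0,\kp}^{\rm univ}$ as a direct sum of two kinds of elementary complexes: rank-one complexes with zero differential, and two-term complexes $\Lambda_0 \xrightarrow{q^{\beta}}\Lambda_0$. Filtered cohomology in all degrees $a$ determines both the free part and the torsion exponents $\beta_j$. By Step 2, $\wh\fst_p$ therefore induces an isomorphism $H(\,\cdot\,;\Lambda^{\rm univ}_{0,\kp})$, matching free parts and torsion summands $\Lambda_0/q^{\beta_j}$. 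Between complexes of this split elementary form, a quasi-isomorphism is a chain homotopy equivalence. Concretely, write each complex as $H\oplus(\text{contractible part})$, where the contractible part consists of the two-term pieces modulo their homology. The map is then an isomorphism on the minimal models, and one builds an explicit inverse and explicit homotopies piece by piece. This yields the claimed $\Lambda_{0,\kp}^{\rm univ}$-linear chain homotopy equivalence.
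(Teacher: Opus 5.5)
Your route (lattice preservation from the energy identity, then the filtered reading of Theorem M, then Usher--Zhang normal forms over $\Lambda^{\rm univ}_{0,\kp}$) is the natural way to extract this consequence. The paper gives no argument beyond citing \cite[Theorem M]{Bai_Xu_2025}, and this is what it tacitly relies on. You are also right that only the filtered reading can work. An isomorphism on Tate cohomology over the field $\Lambda^{\rm univ}_{\kp}$ that preserves the induced filtration is blind to finite bars: multiplication by $q^{\epsilon}$ on a single finite bar is vacuously such an isomorphism, but is not a homotopy equivalence of lattices.

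However, Step 3 as written has a non sequitur and a wrong model.
\begin{itemize}
\item That both sides have the same free rank and torsion exponents says nothing about whether the particular map $\wh{\fst}_p$ induces an isomorphism on lattice cohomology.
\item The splitting ``$H\oplus$ contractible part'' does not exist over $\Lambda^{\rm univ}_{0,\kp}$. The module $\Lambda_0/q^{\beta}$ is not free, and for $\beta>0$ the complex $\Lambda_0\xrightarrow{q^{\beta}}\Lambda_0$ is not homotopy equivalent to its cohomology. Only the pieces with $\beta=0$ are contractible.
\end{itemize}
The clean fix is the mapping cone. Filter $K=\mathrm{Cone}(\wh{\fst}_p)$ by the minimum of the two filtrations; its unit ball is exactly the cone of the restricted map of lattices. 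The long exact sequence and the five lemma turn the filtered isomorphism into $H(K^{>a})=0$. In a Usher--Zhang normal form of $K$, every infinite bar and every finite bar of length $\beta>0$ contributes a nonzero summand to $H(K^{>a})$, for any $a$, because all real exponents are available in the universal Novikov ring. So all bars of $K$ have length $0$, and the unit ball of $K$ is a direct sum of copies of $\Lambda_0\xrightarrow{1}\Lambda_0$. It is therefore contractible, which is equivalent to the restricted map being a chain homotopy equivalence.

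Your fallback in Step 2 would fail as stated. The zero-energy part of $\wh{\fst}_p$ only counts the constant pants $u^p_x$. At chain level it therefore kills every non-diagonal tensor and misses every non-iterated orbit of $H^{(p)}$. So it is not an isomorphism of associated graded complexes, and geometric-series inversion only inverts chain isomorphisms.

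What it is, is a quasi-isomorphism after reduction modulo the maximal ideal $\mathfrak m=\{\nu>0\}$ of $\Lambda^{\rm univ}_{0,\kp}$:
\begin{itemize}
\item Non-diagonal tensors and non-iterated orbits form free $\zp$-orbits, so they have vanishing Tate cohomology for the zero-energy differential.
\item On the class of $\ov{x}^{\otimes p}$ the map is $c_x t^{\alpha}\ov{x}^{(p)}$ with $c_x\neq 0$ (Theorem \ref{thm_local_pants}), which is invertible once $t$ is inverted.
\end{itemize}
The correct replacement for your inversion is a Nakayama-type step using the same normal form. If $K\otimes_{\Lambda^{\rm univ}_{0,\kp}}\kp$ is acyclic, then $K$ has no infinite bars and no bars of positive length, so $K$ is again contractible.
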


Recall that both the $p$-fold tensor product and the equivariant Floer complex of $H^{(p)}$ have their Tate bar-length spectra. It follows from the above theorem that 
\beqn
\wh\beta_j^{\otimes p}(H) = \wh\beta_j(H^{(p)}),
\eeqn
where $\wh\beta_j^{\otimes p}(H)$ constitutes the bar-length spectrum of $\wh{CF(H; \Lambda_{0 }^{\rm univ})^{\otimes p}_{\zp}}$. Combining with Lemma \ref{lemma:quasi-Fr}, the bar-length spectrum of $H$ and the Tate bar-length spectrum of $H^{(p)}$ satisfy the relation
\begin{equation}\label{eqn:bar-length-localization}
    \widehat{\beta}_{2j-1}(H^{(p)}) = \widehat{\beta}_{2j}(H^{(p)}) = p\beta_j(H), \quad \quad \quad 1 \leq j \leq N,
\end{equation}
and there are exactly $2N$ torsion components of the Tate cohomology $\widehat{HF}_{\zp} (H^{(p)};\Lambda_{0, \kp}^{\rm univ})$. As a result, we see that
\beq\label{eqn218}
\widehat{\beta}_{\mathrm{tot}}(H^{(p)}) = 2p \beta_{\mathrm{tot}}(H).
\eeq

Moreover, one has a quantitative version of the Smith inequality, which was proved as \cite[Theorem D]{S-HZ} in the monotone case.

\begin{thm}\label{thm219}
For any prime $p$ and any Hamiltonian $H$ on $(M, \omega)$ such that both $H$ and $H^{(p)}$ are nondegenerate, there holds
\begin{equation}\label{eqn219}
    \beta_{\mathrm{tot}}(H^{(p)}) \geq p\cdot\beta_{\mathrm{tot}}(H).
\end{equation}
\end{thm}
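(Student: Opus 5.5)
We prove Theorem \ref{thm219} by following the argument of \cite[Theorem D]{S-HZ}, with the monotone inputs replaced by the general package recalled above. The plan has two halves. First, we compare the Tate bar-length spectrum of $H^{(p)}$ with the ordinary bar-length spectrum of $H^{(p)}$ through a spectral sequence, or ``Smith-type'', inequality. Second, we use the localization identity \eqref{eqn218}, which follows from Theorem \ref{thm:pants-localization} and Lemma \ref{lemma:quasi-Fr}.

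Concretely, write $C = CF(H^{(p)};\Lambda_0^{\rm univ})$ for the canonical lattice \eqref{canonical_submodule}. Write $\wh C_{\zp} = \wh{CF}_{\zp}(H^{(p)};\Lambda^{\rm univ}_{0,\kp})$ as in \eqref{eqn16}. By Theorem \ref{thm213}, after identifying $\wh C_{\zp}\cong C\otimes \rp[t^{-1}]$, the differential takes the form
\[ d_{\zp} = d_{H^{(p)}}\otimes 1 + (\text{terms raising the } (t,\theta)\text{-degree}), \]
and every term preserves the action filtration. The key claim is the inequality
\[ \wh\beta_{\rm tot}(H^{(p)}) \le 2\,\beta_{\rm tot}(H^{(p)}). \]
This is a statement about perturbing a filtered complex. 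The associated graded of $\wh C_{\zp}$ for the $(t,\theta)$-filtration has two copies of $C$ per period of $t$, namely $C$ and $C\theta$, and these carry the undeformed differential. Over $\Lambda^{\rm univ}_{0,\kp}$, one period of the Tate complex therefore has total torsion length $2\beta_{\rm tot}(H^{(p)})$.

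To prove the key claim, I would use the characterization of total bar length as a valuation of a torsion module. For a finitely generated torsion $\Lambda_0^{\rm univ}$-module $T\cong\bigoplus \Lambda_0/q^{\beta_j}$, the total bar length is $\sum\beta_j$, which is the ``length'' $\ell(T)$. Thus $\beta_{\rm tot}$ equals $\ell$ of the torsion part of $H(C)$. The inequality then becomes lower semicontinuity of torsion length under a filtered deformation of the differential. The first step is to pass to a finite truncation over $\Lambda_{0}^{\rm univ}\otimes \kp$, regarded via the $t$-adic filtration. The second step is to use that for a spectral sequence of finitely generated modules the torsion length can only decrease from $E_1$ to $E_\infty$. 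Here the relevant $E_1$-page is $H(C)\otimes \rp[t^{-1}]$. The third step is to identify the torsion length of $E_\infty$ with that of $\wh H_{\zp}$. This is legitimate because $\kp$ is a field, so the $t$-filtration is split up to extension, and $\ell$ is additive on short exact sequences.

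The main obstacle is the second step, since $\ell$ is additive but the free ranks may also change between pages. A cancellation of free summands can in principle create new torsion. To rule this out, I would use an alternative, cleaner formula that avoids the spectral sequence bookkeeping:
\[ \beta_{\rm tot}(D) = \mathrm{val}\,\det\bigl(\text{the differential restricted to a complement of the cycles}\bigr), \]
computed in Usher--Zhang orthogonal bases. A perturbation $d_{\zp}$ that is strictly filtration-nondecreasing and upper triangular with respect to the $(t,\theta)$-degree has its relevant minors bounded in valuation by those of $d_{H^{(p)}}\otimes 1$. This gives the inequality.

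Combining the key claim with \eqref{eqn218} gives
\[ 2p\,\beta_{\rm tot}(H) = \wh\beta_{\rm tot}(H^{(p)}) \le 2\beta_{\rm tot}(H^{(p)}), \]
which is \eqref{eqn219}.
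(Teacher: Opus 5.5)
Your reduction is the right one: the paper also proves exactly the key claim $\wh\beta_{\rm tot}(H^{(p)})\le 2\beta_{\rm tot}(H^{(p)})$ and combines it with \eqref{eqn218}. The gap is in your justification of the key claim. You rely on the principle that a filtration-nondecreasing perturbation raising the $(t,\theta)$-degree cannot increase total bar-length, and that principle is false in this generality. Take $C=\Lambda_0^{\rm univ}\langle x\rangle$ with zero differential, so $\beta_{\rm tot}=0$, and let the $\theta$-component of the Tate differential be $x\mapsto q^{\lambda}x\theta$ with $\lambda>0$. The resulting complex has a finite bar of length $\lambda$.

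This is precisely the cancellation of free summands you flagged, and the determinant formula does not get around it. In invariant form it says that $\beta_{\rm tot}$ is the minimal valuation of the $r\times r$ minors of the differential on the lattice, where $r$ is its rank. Comparing these minors with those of $d_{H^{(p)}}\otimes 1$ only makes sense if the perturbation leaves $r$ unchanged. If $r$ jumps, as in the example, $\beta_{\rm tot}$ is computed from larger minors about which you know nothing. Nothing in your argument supplies this rank preservation.

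The missing input is Theorem \ref{thm213}(1): the $\theta$-component $S_p=d^1_{\zp}$ is null-homotopic. Note also that over $\kp$ the symbol $\theta$ is not a coefficient. So the $t^0$-part of the Tate differential is not two copies of $d_{H^{(p)}}$ but the cone of $S_p$. The paper therefore splits your key claim into two steps.

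\emph{Step 1.} Write $\wh d_{\zp}=d_{\rm Cone}+tK$; this is a genuine $t$-adic deformation, and no rank is lost. Indeed, $H(d_{\rm Cone})$ has rank $2\dim H^*(M)$ because $S_p$ is null-homotopic. The Tate cohomology has the same rank by Theorem \ref{thm211}(3) after inverting $t$. This gives $\wh\beta_{\rm tot}(H^{(p)})\le\beta_{\rm tot}(d_{\rm Cone})$ via \cite[Proposition 17]{S-HZ}. This is the step where a leading-term-of-minors argument like yours is valid.

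\emph{Step 2.} Because $S_p$ is filtered and null-homotopic, the connecting map in the long exact sequence of the cone vanishes over the Novikov field. Hence the torsion of $H(d_{\rm Cone})$ is an extension of a submodule of the torsion of $H(d_{H^{(p)}})[1]$ by a quotient of the torsion of $H(d_{H^{(p)}})$. This yields $\beta_{\rm tot}(d_{\rm Cone})\le 2\beta_{\rm tot}(H^{(p)})$, as in \cite[Lemma 18]{S-HZ}.

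With these two inputs added, your outline becomes the paper's proof.
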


\begin{proof}
The comparison of total bar-lengths is via the Tate bar-length spectrum of $H^{(p)}$. By Theorem \ref{thm213}, one can write the differential $\wh d_{\zp}$ on the Tate complex 
\beqn
\wh CF_\zp(H^{(p)}) \cong \Big( CF(H^{(p)}) \oplus CF(H^{(p)}) \otimes \theta \Big) \otimes \kp
\eeqn
in the form 
\beqn
\wh d_\zp  =  \left[ \begin{array}{cc} d_{H^{(p)}} & S_p \\ 0 & d_{H^{(p)}} \end{array} \right] + t K
\eeqn
where $K$ does not decrease the $(t, \theta)$-degree. The leading $2\times 2$ matrix is the cone of $S_p$, whose differential is denoted temporarily by $d_{\rm Cone}$. Then one can use the argument in \cite{S-HZ}. More precisely, by \cite[Proposition 17]{S-HZ}, one has 
\beqn
\wh \beta_{\rm tot}(H^{(p)}) \leq \beta_{\rm tot}(d_{\rm Cone})
\eeqn
where the latter is the total bar-length of $d_{\rm Cone}$. Moreover, as $S_p$ is null-homotopic (see Theorem \ref{thm213}) and strictly increases the energy filtration, by \cite[Lemma 18]{S-HZ}, one has 
\beqn
2 \beta_{\rm tot}(H^{(p)}) \geq \beta_{\rm tot}(d_{\rm Cone}).
\eeqn
Together with \eqref{eqn218}, one obtains \eqref{eqn219}.
\end{proof}

\subsubsection{The local version}\label{subsubsec:local-floer-steenrod}

Let $x \in {\mc O}(H)$ be an isolated orbit such that $x^{(p)} \in {\mc O}(H^{(p)})$ is also isolated. Previously one has defined the local (equivariant) Floer cohomologies $HF^\loc(H, x)$ and $HF_\zp^\loc(H^{(p)}, x^{(p)})$. Now we sketch a definition of a local equivariant pants product which is equivalent to the original one given by \cite{SZhao-pants}.

Choose a small and nondegenerate perturbation $H_1$ of $H$ such that $H_1^{(p)}$ is also nondegenerate. Then, up to making choices one has the complexes $CF(H_1)$ and $CF_{\zp}(H_1^{(p)})$. The general construction of $\fst_p$ given in \cite{Bai_Xu_2025} provides a $\zp$-equivariant cochain map
\beqn
\wt{\wt{\fst}}_p: CF(H_1)^{\otimes p} \wh{\otimes} CM(g) \to CF(H_1^{(p)}) \wh{\otimes} CM(g)
\eeqn
which, after restricting to the $\zp$-invariant part, reducing to cohomology, and composing with the quasi-Frobenius map, is the map $\fst_p$. Here, one can identify a ``local part''
\beqn
\wt{\wt{\fst}}{}_{p, x}^\loc: CF^\loc (H_1, x)^{\otimes p} \wh{\otimes} CM(g) \to CF^\loc(H_1^{(p)}, x^{(p)}) \wh{\otimes} CM(g)
\eeqn
by energy consideration, whose restriction to the $\zp$-invariant part gives
\beqn
\wt{\fst}{}_{p, x}^\loc: CF^{\loc}(H_1, x)^{\otimes p}_{\zp} \to CF^\loc_{\zp}(H_1^{(p)}, x^{(p)}).
\eeqn
One can show that this is a cochain map and compatible with the local continuation maps. Hence one obtains a well-defined map on cohomology
\beqn
\wt{\fst}{}_{p, x}^\loc: H(CF^\loc(H_1, x)^{\otimes p}_{\zp}) \to HF^\loc_\zp(H^{(p)}, x^{(p)}).
\eeqn
By composing the quasi-Frobenius map $qF$ we obtain a well-defined map
\beqn
\fst_{p, x}^\loc: HF^\loc(H, x) \to HF^\loc_\zp(H^{(p)}, x^{(p)}).
\eeqn

Following Lemma \ref{lemma_local_eq_comparison}, the following statement is deduced from the same reasoning.

\begin{lemma}\label{lemma222}
The above definition of $\fst_{p, x}^\loc$ coincides with the definition of \cite{SZhao-pants}. More precisely, the following diagram commutes.
\beqn
\xymatrix{  &  &      HF_\zp^\loc(H^{(p)}, x^{(p)}) \ar[dd] \\
HF^\loc(H, x) \ar[urr]  \ar[drr]  &  &  \\
&  &   HF_\zp^\loc(H^{(p)}, x^{(p)})_{\rm SZ} }
\eeqn
\qed
\end{lemma}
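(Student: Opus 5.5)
The plan is to mirror the proof of Lemma \ref{lemma_local_eq_comparison}: first identify the chain-level map of \cite{SZhao-pants} with one produced by the machinery of \cite{Bai_Xu_2025} for a particular choice of data, and then use a continuation argument to compare with the general choice. We fix a small nondegenerate perturbation $H_1$ of $H$ with $H_1^{(p)}$ nondegenerate. By the energy argument of Section \ref{subsec:local-floer}, no sphere bubbling occurs in the local moduli spaces of equivariant pants: the local solutions have energy far below the minimal bubble energy $\hbar$ of $J$. The construction of \cite{SZhao-pants} uses a $\zp\times{\mb Z}_{\geq 0}$-equivariant family of perturbation data (almost complex structures and Hamiltonian terms parametrized by $S^\infty$) on $\Sigma_p^\circ$, for which the local moduli spaces ${\mc M}_{w_-,w_+}(\ov x_1,\dots,\ov x_p;\ov y_\infty)$ are cut out transversely.

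Next we run the global Kuranishi chart construction of \cite[Section 25]{Bai_Xu_2025} with exactly these parametrized data. As in \cite[Theorem G]{Bai_Xu_2025}, transversality implies that the unperturbed Kuranishi sections are already FOP transverse. Hence the resulting $\zp$-equivariant chain map $CF^\loc(H_1,x)^{\otimes p}\wh\otimes CM(g)\to CF^\loc(H_1^{(p)},x^{(p)})\wh\otimes CM(g)$ literally equals the Seidel--Zhao count. The same holds for the differentials on source and target, which is the content of Lemma \ref{lemma_local_eq_comparison}. Restricting to $\zp$-invariant parts and precomposing with $qF$, the Seidel--Zhao map $HF^\loc(H,x)\to HF^\loc_\zp(H^{(p)},x^{(p)})_{\rm SZ}$ is then represented by our construction for a nonstandard (family-dependent) choice of data.

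It remains to compare this choice with the fixed-$J$ choice defining $\wt{\fst}{}^\loc_{p,x}$. We would build a one-parameter family interpolating between the two sets of data on $\Sigma_p^\circ$ and count the parametrized local moduli spaces virtually and equivariantly. This should give a $\kzp$-linear chain homotopy
\beqn
\Phi_+\circ \wt{\fst}{}^{\loc}_{p,x,\rm SZ} \simeq \wt{\fst}{}^\loc_{p,x}\circ \Phi_-^{\otimes p},
\eeqn
where $\Phi_-$ is the nonequivariant local continuation map on $CF^\loc(H_1,x)$ and $\Phi_+$ is the equivariant one from Lemma \ref{lemma_local_eq_comparison}, i.e. the vertical arrow of the diagram. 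Since $qF$ is natural with respect to chain maps, i.e. $\Phi_-^{\otimes p}\circ qF = qF\circ \Phi_-$ on cohomology, and $\Phi_-$ identifies the two versions of $HF^\loc(H,x)$, the diagram commutes.

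The main obstacle is the chain homotopy. One must check that the boundary strata of the parametrized moduli spaces are only those producing the four terms above, with the asymmetric breaking conventions (independent at the negative ends, simultaneous at the positive end). One must also check that local energy bounds are uniform along the interpolation, so that no orbits escape the isolating neighborhood and no bubbling appears. I would handle the energy issue first, by shrinking $H_1-H$ and the interpolation in $C^2$ using the isolation of $x$ and $x^{(p)}$.
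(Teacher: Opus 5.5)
Your proposal is correct and follows essentially the same route as the paper, which deduces the lemma by the reasoning of Lemma \ref{lemma_local_eq_comparison}: run the global Kuranishi chart construction of \cite{Bai_Xu_2025} with the Seidel--Zhao family of data, where the unperturbed sections are already FOP transverse so the counts agree on the nose, and then compare with the fixed-$J$ data by a continuation argument. Your explicit chain homotopy $\Phi_+\circ\wt{\fst}{}^\loc_{p,x,{\rm SZ}}\simeq\wt{\fst}{}^\loc_{p,x}\circ\Phi_-^{\otimes p}$ and your use of the naturality of $qF$ spell out what the paper leaves implicit. Note only that your $\Phi_+$ points opposite to the vertical arrow of the diagram, which is harmless since it is an isomorphism.
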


Now we recall two useful facts proved in \cite{SZhao-pants}. The first is the invertibility of the local equivariant pants product. The second is about the behavior of the operator in the nondegenerate case. Both rely on a coproduct construction of \cite{SZhao-pants} which has not been fitted into the general framework of \cite{Bai_Xu_2025}. 

\begin{thm}(cf. \cite[Section 10]{SZhao-pants})\label{thm_local_pants}
\begin{enumerate}

\item $\fst_{p, x}^\loc$ is an isomorphism.

\item When $x$ and $x^{(p)}$ are nondegenerate, $HF^\loc(H, x) \cong CF^\loc(H, x)$ is generated over $\Lambda^\Gamma$ by any capping $\ov{x}\in \wt{\mc O}(H)$ and $HF_\zp^\loc(H^{(p)}, x^{(p)})$ is generated over $\Lambda_\rp^\Gamma $ by the corresponding capped orbit $\ov{x}^{(p)}$. Then there exists $\alpha$, $0 \leq \alpha \leq n(p-1)$ and $0 \neq c_x \in \fp$ such that 
\beqn
\fst_{p, x}^\loc( \ov{x} ) = c_x t^\alpha \ov{x}^{(p)}.
\eeqn
\end{enumerate}
\end{thm}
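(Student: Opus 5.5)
The plan is to work entirely in the model of \cite{SZhao-pants} using Lemma \ref{lemma_local_eq_comparison} and Lemma \ref{lemma222}. Since only small energies occur, no sphere bubbling appears there and classical equivariant transversality (Hamiltonians or almost complex structures parametrized by $S^\infty$) is available. Both assertions are then local statements about germs of $\phi$ near $x$. I read the invertibility in (1) in the sense of \cite{SZhao-pants}, namely after inverting $t$ (Tate localization), with injectivity before localization; this reading should be checked against \cite[Section 10]{SZhao-pants}. Part (2) only needs this localized invertibility.

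\textbf{Step 1: a local equivariant coproduct.} Take the same branched cover $\Sigma_p^\circ\to\mathbb{R}\times S^1$ with the ends reversed: one negative end weighted by $H^{(p)}$ and $p$ positive ends weighted by $H$. Coupling with gradient lines in $S^\infty$ as before defines
\[
\wt{\mathit{Cop}}{}^\loc_{p,x}: CF^\loc_{\zp}(H_1^{(p)},x^{(p)}) \to CF^\loc(H_1,x)^{\otimes p}_{\zp}.
\]
Energy and isolation arguments identical to those defining $\wt{\fst}{}^\loc_{p,x}$ show this is a local cochain map, compatible with local continuation maps.

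\textbf{Step 2: the compositions.} Glue the two surfaces to get a $\zp$-symmetric genus zero surface with $p$ inputs and $p$ outputs, respectively one input and one output, carrying $p$ branch points of order $p$ (in the other composition, a cylinder with two branch points). Degenerate the conformal structure by bringing the branch points together so that a $p$-fold branched sphere bubbles off. Following the computation in \cite{SZhao-pants}, this sphere carries constant maps only; at small energy these are the only contributions. Its equivariant contribution is the equivariant Euler class of the relevant obstruction bundle, namely $T_xM^{\oplus (p-1)}$ with the regular-representation action minus the trivial summand. That class equals $c\, t^{n(p-1)}$ with $0\neq c\in\fp$. Hence both compositions are chain homotopic to multiplication by $c\,t^{n(p-1)}$. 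It follows that $\wt{\fst}{}^\loc_{p,x}$ is injective and becomes an isomorphism once $t$ is inverted. Composing with $qF$ then gives (1), using that $qF$ is injective and additive after Tate localization by Lemma \ref{lemma:quasi-Fr} and its local analogue.

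\textbf{Step 3: the nondegenerate case (2).} Now $CF^\loc(H,x)$ has rank one over $\Lambda^\Gamma$, generated by $\ov x$, with zero differential. By Proposition \ref{prop_local_eq_structure}, or directly since the local complex has a single orbit, $HF^\loc_\zp(H^{(p)},x^{(p)})$ is free of rank one over $\Lambda_\rp^\Gamma$ on $\ov x^{(p)}$. Thus $\fst^\loc_{p,x}(\ov x)$ is a homogeneous element of degree $p|\ov x|$ in this rank-one module. Its possible terms are $t^a\ov x^{(p)}$, of degree $|\ov x^{(p)}|+2a$, and $t^a\theta\ov x^{(p)}$, of degree $|\ov x^{(p)}|+2a+1$. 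Only one parity matches, and the parity of $p|\ov x|-|\ov x^{(p)}|$ is even because the Conley--Zehnder indices of $x$ and $x^{(p)}$ have the same parity (the sign of $\det(I-d\phi^k_x)$ is compatible under iteration). So the image is $c_x t^\alpha\ov x^{(p)}$ with
\[
\alpha=\tfrac12\big(p|\ov x|-|\ov x^{(p)}|\big).
\]
Moreover $c_x\neq0$ by Step 2. The bound $0\le\alpha\le n(p-1)$ follows from the iteration inequality $|\CZ(x^{(p)})-p\Delta|\le n$ together with $|\CZ(x)-\Delta|\le n$, where $\Delta$ is the mean index of $x$. A naive combination of these only gives $|\alpha|\le n(p+1)/2$. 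The sharp one-sided bound is better obtained from Step 2 itself: $\mathit{Cop}\circ\fst=c\,t^{n(p-1)}$ forces $\alpha\le n(p-1)$, and $\alpha\ge0$ holds because the pants product lands in the Borel complex, which contains no negative powers of $t$.

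\textbf{Main obstacle.} I expect the hardest part to be Step 2: identifying the degenerate-limit contribution as exactly a nonzero multiple of $t^{n(p-1)}$, including signs and the $\theta$-terms, and proving the gluing and compactness statements for the coupled $S^\infty$-parametrized moduli spaces. I would try first to import the argument of \cite[Section 10]{SZhao-pants} verbatim, which is legitimate by Lemma \ref{lemma222}, and then verify that their transversality setup applies to germs of arbitrary closed $(M,\omega)$.
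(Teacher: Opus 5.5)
Your proposal follows the paper's proof. You import the Shelukhin--Zhao local coproduct through Lemma \ref{lemma222}, use $C^{\loc}_{p,x}\circ\wt{\fst}{}^{\loc}_{p,x}=\pm t^{n(p-1)}$ on the free rank-one module generated by $\ov{x}^{\otimes p}$, and read off $c_x\neq 0$ and $0\le\alpha\le n(p-1)$ from divisibility. Your parity argument for excluding $\theta$-terms (valid for odd $p$) is correct but redundant, since a homogeneous divisor of $t^{n(p-1)}$ in $\rp$ already cannot contain $\theta$.

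Two asides, neither used for (2), are inaccurate:
\begin{enumerate}
\item The composition identity only shows that $\ker\wt{\fst}{}^{\loc}_{p,x}$ consists of $t$-torsion. For degenerate $x$ the off-diagonal part of $H(CF^{\loc}(H,x)^{\otimes p}_{\zp})$ is $t$-torsion, so injectivity of $\wt{\fst}{}^{\loc}_{p,x}$ before localization does not follow.
\item Because of the $n(p-1)$ grading shift, the naive index estimate gives $-n\le\alpha\le pn$, not $|\alpha|\le n(p+1)/2$.
\end{enumerate}
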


\begin{proof}
By Lemma \ref{lemma222}, one can use results from \cite{SZhao-pants}. There, a local equivariant pair-of-pants coproduct is constructed. When $x^{(p)}$ is nondegenerate, it has the form
\beqn
C_{p, x}^\loc: HF_\zp^\loc(H^{(p)}, x^{(p)}) \to H(CF^\loc(H, x)_\zp^{\otimes p}).
\eeqn
It was also proved that
\beqn
C_{p, x}^\loc \circ \wt{\fst}{}_{p,x}^\loc = (-1)^n t^{n(p-1)} {\rm Id}: H(CF^\loc(H, x)_\zp^{\otimes p}) \to H(CF^\loc(H, x)_\zp^{\otimes p}).
\eeqn
Note that $H(CF^\loc(H, x)_\zp^{\otimes p})$ is also 1-dimensional, generated by $\ov{x}^{\otimes p}$. Hence 
\beqn
\wt{\fst}{}_{p, x}^\loc (\ov{x}^{\otimes p}) = a_x \ov{x}^{(p)}
\eeqn
where $a_x \in \rp\setminus \{0\}$ is a homogeneous divisor of $t^{n(p-1)}$. Hence $a_x$ can only be a nonzero multiple of a power of $t$ whose degree is at most $n(p-1)$. Lastly, the quasi-Frobenius map is simply $\ov{x} \mapsto \ov{x}^{\otimes p}$.
\end{proof}


\subsubsection{Local version using Hamiltonian perturbations}\label{subsubsec:local-pants-perturb}

For the purpose of this paper, we also need to review the construction using classical methods for the weakly monotone case as well as the local version of the equivariant pants product. This will be used in Section \ref{subsec: local-kunneth} to compute the equivariant pants product in local Floer cohomology. We note that one can also use the approach in \cite{SZhao-pants} to perturb $J$ instead, and the upshot on cohomology is evidently equivalent.

Consider the $p$-fold covering $z \mapsto z^p$ from $S^2$ to $S^2$ branched over $0$ and $\infty$. Let $\tilde S$ be the domain of the branched covering with $\infty$ as well as the preimages of $1$ removed. Then $\zp$ acts freely on $\tilde S$ which cyclically  permutes the $p$-th roots of unity $z_1, \ldots, z_p$. Choose cylindrical ends at the $p$-th roots of unity as well as $\infty$ in a $\zp$-equivariant fashion. Choose also a $\zp$-equivariant Hamiltonian connection $\sigma$ on $\tilde S$ which is asymptotic to $H dt$ on each of the $p$ negative ends and asymptotic to $H^{(p)} dt$ on the positive end. Then the equivariant pair-of-pants product is defined, roughly speaking, by counting solutions to the Floer equation on $\tilde S$ with such a Hamiltonian perturbation, with suitable further perturbations to achieve transversality. More precisely, consider pairs $(u, \zeta)$ where
\begin{align*}
&\ u: \tilde S \to M,\ (\nabla^\sigma u)^{0,1} = 0,\ &\ \zeta: {\mb R} \to S^\infty,\ \zeta'(s) = \nabla g(\zeta(s)).
\end{align*}
The chain-level equivariant pair-of-pants product should be defined via appropriately counting solutions to the above equation.

To achieve equivariant transversality, we choose to perturb the Hamiltonian connection which is allowed to vary with a variable $v \in S^\infty$. First, choose a family of Hamiltonians
\beqn
{\ms H}^{eq}_v: S^1\times M \to {\mb R},\ v \in S^\infty
\eeqn
which can be used to define the equivariant Floer complex $CF_{\zp}(H^{(p)})$. Then choose a family of Hamiltonian connections
\beqn
\sigma_v^{eq},\ v \in S^\infty
\eeqn
on $\tilde S$ satisfying the following conditions.
\begin{enumerate}
\item Its restriction to the positive end coincides with ${\ms H}^{eq}_v dt$.

\item Its restriction to each negative end coincides with $H dt$.

\item It is invariant under the $\zp \times {\mb Z}_{\geq 0}$-action.
\end{enumerate}
Then for any given tuple of capped orbits $\ov{x}_1, \ldots, \ov{x}_p \in \tilde {\mc O}(H)$, $\ov{y} \in \tilde {\mc O}(H^{(p)})$ and critical points $w_\pm \in {\rm crit} g$, one considers the moduli space
\beqn
{\mc M}_{w_-, w_+} (\ov{x}_1, \ldots, \ov{x}_p; \ov{y})
\eeqn
of solutions $(u, \zeta)$ to the equation 
\begin{align*}
&\ (\nabla^{\sigma_{\zeta(0)}^{eq}} u)^{0,1} = 0,\ &\ \zeta'(s)  = \nabla g(\zeta(s))
\end{align*}
such that $u$ is asymptotic to $\ov{x}_k$ along the $k$-th negative end and asymptotic to $\ov{y}$ along the positive end, and such that $\zeta(s)$ converges to $w_\pm$ at $\pm \infty$. Notice that the disjoint union of moduli spaces over all combinations of asymptotic data has a natural free $\zp$-action. Therefore, by choosing a generic $\sigma_v^{eq}$, one can achieve $\zp$-equivariant transversality. Then under the weakly monotone assumption, when the expected dimension of the moduli space is zero, one obtains a mod $p$ count. When the expected dimension of the moduli space is one, it can be compactified by adding configurations with at most one breaking; notice that the breaking at the positive end should be regarded as simultaneously breaking both a Floer trajectory and a gradient line in $S^\infty$. Therefore, one obtains a linear map
\beqn
\wt{\wt{FSt}}_p: CF(H)^{\otimes p} \wh \otimes CM(g) \to CF(H^{(p)}) \wh\otimes CM(g)
\eeqn
which is $\zp$-equivariant. Here $\wh\otimes$ is a completion of the ordinary tensor product so both sides are modules over $\Lambda_{\kzp}^\Gamma$. The restriction to the $\zp$-invariant part is then a chain map
\beqn
\wt{FSt}_p: CF(H)^{\otimes p}_{\zp} \to CF_{\zp}(H^{(p)}).
\eeqn
Composing with the quasi-Frobenius map, it induces the $\fp$-linear map on cohomology
\beqn
FSt_p: HF(H) \to HF_{\zp}(H^{(p)}; \Lambda_{\kzp}^\Gamma).
\eeqn
It was proved in \cite{Bai_Xu_2025} that in the weakly monotone case, the two definitions agree. 

The approach using geometric perturbations in particular works for the construction of the local version of the equivariant pants product for isolated fixed points. Consider a general closed symplectic manifold $(M, \omega)$, a Hamiltonian $H$ with an isolated capped orbit $x \in \wt{\mc O}(H)$ such that $x^{(p)}$ is also an isolated orbit of $H^{(p)}$. Let $G$ be a sufficiently small perturbation of $H$. Then the above construction gives a local pair-of-pants operation
\beqn
{\it FSt}^{\loc}_{p, x}: HF^{\rm loc}(H, x) \to HF^{\loc}_{\zp} (H^{(p)}, x^{(p)}).
\eeqn

\section{Homological perturbations for degenerate Hamiltonians}\label{section_hp}\label{section3}

In this section, we provide preliminary technical results which will be used in the proofs of results about pseudo-rotations and periodic points of Hamiltonian diffeomorphisms.


\subsection{Hamiltonians with isolated fixed points}\label{subsec:hp}

First, we use homological perturbation techniques to extend our quantitative discussion of Floer theory to degenerate Hamiltonians with isolated fixed points.

To state the main constructions, we first equip the local Floer cohomology with a natural energy filtration. Let $H$ be a 1-periodic Hamiltonian and $x \in {\mc O}(H)$ be an isolated contractible 1-periodic orbit. Let ${\mb K}$ be any ground field. Recall that to construct the local Floer cohomology $HF^{\rm loc}(H, x; \Lambda_{\mb K}^\Gamma)$, one chooses a small nondegenerate perturbation $H_1$ of $H$. Then one can write down the Floer cochain group $CF(H_1; \Lambda_{\mb K}^\Gamma)$ which contains a $\Lambda_{\mb K}^\Gamma$-submodule
\beqn
CF^\loc (H_1, x; \Lambda_{\mb K}^\Gamma)\subset CF(H_1; \Lambda_{\mb K}^\Gamma)
\eeqn
generated by nondegenerate elements in ${\mc O}(H_1)$ which are close to $x$. We modify the action ${\mc A}_{H_1}$ on $CF^\loc(H_1, x; \Lambda_{\mb K}^\Gamma)$ by defining 
\beqn
{\mc A}_H(\ov{x}'):= {\mc A}_H (\ov{x})
\eeqn
if $\ov{x}'$ is a capped 1-periodic orbit of $H_1$ which is close to a capping $\ov{x}$ of $x$. Notice that this does not change the convergence condition required for chains in $CF^\loc (H_1, x; \Lambda_{\mb K}^\Gamma)$. 

Then the Floer differential $d_{H_1}$ on $CF(H_1; \Lambda_{\mb K}^\Gamma)$ (given by Theorem \ref{thm:Floer-cochain}) restricts to a local differential $d^\loc$ on $CF^\loc(H_1, x; \Lambda_{\mb K}^\Gamma)$ which preserves this modified filtration. We can verify that this modified energy filtration induces a well-defined filtration
\beq\label{eqn_local_action}
{\mc A}_{H, x}: HF^\loc (H, x; \Lambda_{\mb K}^\Gamma) \to {\mb R} \cup \{+\infty\}.
\eeq

We extend \cite[Theorem D]{S-PRQS} to the general setting and prove it in Appendix \ref{app: hp}. 

\begin{prop}\label{prop31}
Let $(M,\om)$ be a closed symplectic manifold. Consider a $1$-periodic Hamiltonian $H$ with $\fix(\phi^1_H)$ finite. Then there exists $\epsilon_H>0$, and, for each ground field $\bK$, there exists a ${\mb Z}$-graded complex of $\Lambda_{\mb K}^\Gamma$-modules
\beqn
C(H) = \left( \bigoplus_{x \in {\mc O}(H)}   HF^{\loc}(H, x;  \Lambda_{\mb K}^\Gamma), d_H \right)
\eeqn
graded by the (shifted) Conley--Zehnder index which satisfies the following conditions.

\begin{enumerate}

\item Let ${\mc A}_H: C(H) \to {\mb R} \cup \{+\infty\}$ be the energy filtration induced from ${\mc A}_{H, x}$ (see \eqref{eqn_local_action}). Then for any $y \in C(H)$, one has
\beqn
{\mc A}_H(d_H (y)) \geq  {\mc A}_H(y) + \epsilon_H.
\eeqn



\item $C(H)$ is chain homotopic to the Floer cochain complex $CF(G)$ (given by Theorem \ref{thm:Floer-cochain}) for any nondegenerate Hamiltonian $G$. 

\item For any $a, b \notin {\rm Spec}(H)$, the cohomology of the complex $C(H)^{(a, b)}$ agrees with $HF(H)^{(a, b)}$ which was defined by \eqref{eq: filtered-irrational}. 
\end{enumerate}
\end{prop}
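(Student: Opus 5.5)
The plan is to apply the homological perturbation lemma to a filtered Floer complex of a small nondegenerate perturbation of $H$, contracting the ``local'' part of the differential onto its cohomology. Since $\fix(\phi^1_H)$ is finite, ${\mc O}(H) = \{x_1, \ldots, x_N\}$ is finite. Fix isolating neighborhoods $U_i$ of the $x_i$ (in loop space) and set
\beqn
\epsilon_H := \tfrac12 \min\Big\{ |a - b| \ \Big|\ a \neq b \in {\rm Spec}(H) \cap [-C, C] \Big\}.
\eeqn
Because the differential only increases action, it suffices to work modulo $\Gamma$-translation, so a finite window $[-C,C]$ is enough. Positivity of $\epsilon_H$ uses finiteness of ${\mc O}(H)$: the spectrum is a finite union of $\omega(\Gamma)$-cosets, but distinct values among finitely many action representatives still differ by a definite amount. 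If ${\mc A}_H(\ov x_i) - {\mc A}_H(\ov x_j) \in \omega(\Gamma)$, cappings can be adjusted, and the local and nonlocal parts are separated by the energy quantization of the next paragraph.

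Next, take a $C^2$-small nondegenerate perturbation $H_1$ of $H$ supported near the $x_i$, together with auxiliary data $(J,\Xi)$ as in Theorem \ref{thm:Floer-cochain}. Then
\beqn
CF(H_1) = \bigoplus_i CF^\loc(H_1, x_i; \Lambda^\Gamma_{\mb K})
\eeqn
as $\Lambda^\Gamma$-modules, and every generator carries the modified filtration ${\mc A}_H$. Split $d_{H_1} = d^\loc + d'$, where $d^\loc$ collects the contributions of trajectories of energy less than $\epsilon_H$. For $H_1$ close enough to $H$, Gromov--Floer compactness shows that such low-energy trajectories stay in the isolating neighborhoods and have no sphere bubbles, because bubbles cost at least $\hbar$ (we take $\epsilon_H < \hbar$). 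Hence $d^\loc$ is block diagonal and coincides with the local differentials defining $HF^\loc(H, x_i)$, so $(d^\loc)^2 = 0$. Every other contribution, counted with the modified action ${\mc A}_H$, increases action by at least $2\epsilon_H$ minus $O(\|H_1 - H\|)$, hence by at least $\epsilon_H$. Over the field ${\mb K}$, choose for each $\ov x_i$ a filtration-preserving splitting
\beqn
CF^\loc = \mathcal H \oplus B \oplus dB,
\eeqn
giving data $(i, \pi, h)$ of a strong deformation retract onto $HF^\loc$. Extend these $\Gamma$-equivariantly; this is possible because each action level is finite-dimensional modulo $\Gamma$. The perturbation lemma then produces
\beqn
d_H = \pi d' i + \pi d' h d' i + \pi d' h d' h d' i + \cdots .
\eeqn
The series converges in the Novikov topology because each $d'$ raises action by $\epsilon_H$ while $h$, $i$, $\pi$ preserve action. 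The same estimate gives (1), and the perturbed $i$ and $\pi$ give a filtered chain homotopy equivalence $C(H) \simeq CF(H_1)$. Combining this with the continuation maps of Theorem \ref{thm:Floer-cochain} yields (2), and the grading is preserved since all maps have degree zero.

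For (3), the homotopy equivalence above is filtered only up to the error $\|H_1 - H\|$. So for $a, b \notin {\rm Spec}(H)$ choose $H_1 < H$ with $\|H_1 - H\|$ much smaller than ${\rm dist}(\{a,b\}, {\rm Spec}(H))$. Then $C(H)^{(a,b)} \simeq CF(H_1)^{(a,b)}$. Next, check that along a cofinal sequence $H_1 \nearrow H$ in $P_{a,b}(H)$ the monotone continuation maps commute with these identifications up to homotopy. This holds because continuation between two such perturbations is again local at low energy and filtered. The colimit \eqref{eq: filtered-irrational} is then attained and equals $H(C(H)^{(a,b)})$.

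I expect the main obstacle to be the interaction of the perturbation lemma with the abstract FOP perturbations of \cite{Bai_Xu_2025}. First, the low/high-energy splitting of $d_{H_1}$ must be compatible with the virtual construction: as discussed in \S\ref{subsec:local-floer}, the low-energy moduli spaces are transversely cut out, so the perturbations can be taken to be trivial there. Second, one must make the homotopies filtration-controlled uniformly in $\Gamma$. I would handle both by working with the universal-Novikov lattice \eqref{canonical_submodule} and the energy estimates of \cite[Theorem K]{Bai_Xu_2025}.
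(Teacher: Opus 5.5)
Your overall architecture is the paper's: split $d_{H_1}$ into a block-diagonal local part plus a high-energy remainder, contract each local block onto $HF^{\loc}$ compatibly with recapping, and transfer by the perturbation series. But the constant that drives it fails in the generality of the statement.

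You take $\epsilon_H$ to be half the gap of ${\rm Spec}(H)\cap[-C,C]$. When $(M,\omega)$ is not rational, $\omega(\Gamma)\subset\mathbb{R}$ is dense, so ${\rm Spec}(H)$, a finite union of $\omega(\Gamma)$-cosets, is dense and this gap is $0$. Finiteness of ${\mc O}(H)$ does not rescue it. The differential $d_{H_1}$ pairs $\ov{x}_i$ with every recapping $q^A\ov{x}_j$, and ${\mc A}_H(\ov{x}_j)+\omega(A)-{\mc A}_H(\ov{x}_i)$ can be positive and arbitrarily small; passing to a window or working modulo $\Gamma$ does not change this.

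Two of your steps silently use this gap: the compactness step (trajectories of energy $<\epsilon_H$ are local with compatible cappings) and the claimed gain of $2\epsilon_H$ for the remaining trajectories. Ruling out bubbles via $\hbar$ is not enough. You must also rule out, in the limit, a nonconstant (possibly broken) Floer cylinder of $H$ of energy $\le\epsilon_H$.

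The missing ingredient is the crossing-energy lemma (Lemma \ref{lemmab1}, from \cite{SZhao-pants}). Since the orbits of $H$ are isolated, every nonconstant Floer cylinder of $(H,J)$ has energy $\ge\epsilon_0>0$. Indeed, if the energies tend to $0$, the cylinders converge uniformly to some constant cylinder $u_x$. They then eventually lie in a tubular neighborhood of $x$, so they have trivial relative capping class and hence zero energy. A second compactness argument (Lemma \ref{lemmab3}) gives, for $H_1$ close to $H$, a dichotomy: each Floer cylinder of $H_1$ is either local with compatible cappings or has energy $\ge\epsilon_0/2$. Taking $\epsilon_H=\epsilon_0/4$, as in Lemma \ref{lemmab5}, the rest of your transfer argument goes through. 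As written, your argument is valid only when $\omega(\Gamma)$ is discrete.

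The same problem breaks your proof of (3). If ${\rm Spec}(H)$ is dense, then ${\rm dist}(\{a,b\},{\rm Spec}(H))=0$, so the $H_1$ you ask for does not exist. For a typical $H_1$, truncation at level $a$ by ${\mc A}_{H_1}$ and by the modified ${\mc A}_H$ differ on infinitely many recapped generators. So your identification $C(H)^{(a,b)}\simeq CF(H_1)^{(a,b)}$ is unjustified. This is exactly why \eqref{eq: filtered-irrational} is a colimit, and the comparison has to be made in the colimit, as the paper does:

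\begin{enumerate}
\item Take a cofinal sequence $H_i\nearrow H$ and set $H_i'=H_i-\delta_i$ with $\delta_i\to0$. This gives inclusions $CF(H_i')^{{\mc A}_{H_i'}>a}\hookrightarrow CF(H_i)^{{\mc A}_H>a}$.
\item The targets all compute $H(C(H)^{>a})$, compatibly with continuation maps that preserve the modified filtration (Lemma \ref{lemmab7}).
\item The discrepancy groups $HF(H_i)^{{\mc A}_{H_i'}\le a<{\mc A}_H}$ die in the direct limit. A cocycle there has ${\mc A}_H>a+\delta$ for some $\delta>0$, and is pushed into ${\mc A}_{H_j'}>a$ once $\delta_j<\delta$.
\end{enumerate}
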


\begin{proof}
See Appendix \ref{app: hp}.
\end{proof}

We have the following corollary immediately from the above statement. It will be used in the proofs of Corollary \ref{cor36} and Lemma \ref{lemma:SDM}.

\begin{cor}\label{cor32}
Let $p$ be a prime. Let $H$ be a Hamiltonian generating a Hamiltonian diffeomorphism $\phi$ which has finitely many contractible fixed points such that
\beqn
N(\phi; \fp) = {\rm dim}_{\fp} H^*(M; \fp).
\eeqn
(For example, when $\phi$ is an $\fp$ Hamiltonian pseudo-rotation.) Then the differential of the complex $C(H)$ vanishes. 
In particular, for all $a \in \R \setminus \spec(H),$ the cohomology $HF(H; \Lambda^\Gamma)^{>a}$ is given by the action-completion of 
\beqn
\bigoplus_{\cl{A}_H(\ol{x}) > a} HF^{\loc}(H,\ol{x}; {\mb F}_p).
\eeqn
\end{cor}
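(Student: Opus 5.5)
The plan is a dimension count on the complex $C(H)$ of Proposition \ref{prop31}, taken over ${\mb K} = \fp$. By Proposition \ref{prop31}(2), $C(H)$ is chain homotopic to $CF(G; \Lambda^\Gamma)$ for any nondegenerate $G$. Its cohomology is therefore $HF(M; \Lambda^\Gamma)$, which via PSS is isomorphic to $H^*(M; \fp) \otimes_{\fp} \Lambda^\Gamma$. The dimension over the Novikov field $\Lambda^\Gamma$ is thus $\dim_{\fp} H^*(M;\fp)$.

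Next I will compute the rank of the underlying module of $C(H)$. By construction it is $\bigoplus_{x \in {\mc O}(H)} HF^\loc(H, x; \Lambda^\Gamma)$, where $HF^\loc(H,x;\Lambda^\Gamma)$ is the action-completed sum over cappings of $HF^\loc(H, \ov{x}; \fp)$. Fix one capping $\ov{x}$ for each $x$. Since $\Gamma$ acts freely on cappings, $HF^\loc(H,x;\Lambda^\Gamma) \cong HF^\loc(H,\ov{x};\fp)\otimes_{\fp}\Lambda^\Gamma$. This module is free of rank $\dim_{\fp} HF^\loc(\phi, x; \fp)$ over $\Lambda^\Gamma$, once we forget the $\Z$-grading and keep only the $\Z/2$ one. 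Summing over contractible fixed points, which are exactly ${\mc O}(H)$, the rank of $C(H)$ is $N(\phi;\fp) = \dim_{\fp} H^*(M;\fp)$.

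Now $C(H)$ is a finite-dimensional complex over the field $\Lambda^\Gamma$, and its cohomology has the same dimension as the complex itself. Since $\dim H(C) = \dim C - 2\,\mathrm{rank}(d_H)$, this forces $d_H = 0$.

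For the filtered statement, take $a \notin \spec(H)$ and use Proposition \ref{prop31}(3) with $b = +\infty$ (or $b$ large followed by a limit, which is harmless since the filtration is bounded below on each summand). This identifies $HF(H;\Lambda^\Gamma)^{>a}$ with the cohomology of the subcomplex $C(H)^{>a}$. With $d_H = 0$, that cohomology is just $C(H)^{>a}$, namely the action-completion of $\bigoplus_{\cl A_H(\ov{x})>a} HF^\loc(H,\ov{x};\fp)$. Here I use that ${\mc A}_{H,x}$ assigns to $HF^\loc(H,\ov{x})$ the action $\cl A_H(\ov{x})$.

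The main obstacle is the rank bookkeeping in the second step: checking that the Novikov-coefficient local cohomology is free of the correct rank. If Proposition \ref{prop31}(3) is only available for finite $b$, a further point is justifying the passage from windows $(a,b)$ to $(a,+\infty)$. Both should follow from the definitions in Section~\ref{subsec:local-floer} and the fact that $d_H = 0$.
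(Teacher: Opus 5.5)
Your proof is correct and is essentially the paper's argument. The paper likewise computes $\dim_{\Lambda^\Gamma} C(H) = \sum_x \dim_{\fp} HF^{\loc}(H,x;\fp) = N(\phi;\fp) = \dim_{\Lambda^\Gamma} HF(M;\Lambda^\Gamma)$, notes via Proposition \ref{prop31}(2) that this equals the dimension of the cohomology of $C(H)$, and concludes $d_H = 0$; your extra bookkeeping (freeness of the local summands over $\Lambda^\Gamma$, and reading off the filtered statement from Proposition \ref{prop31}(3), whose appendix proof in fact treats $C(H)^{>a}$ directly) only fills in details the paper leaves implicit.
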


\begin{proof}
By definition, one has
\begin{multline*}
{\rm dim}_{\Lambda^\Gamma} C(H) = \sum_{x \in {\mc O}(H)} {\rm dim}_{\fp} HF^{\rm loc}(H, x; \fp) = N(\phi; \fp) \\
= {\rm dim}_{\fp} H^*(M; {\mb F}_p) = {\rm dim}_{\Lambda^\Gamma} HF(M; \Lambda^\Gamma)
\end{multline*}
which, by (2) of Proposition \ref{prop31}, is also equal to the dimension of the cohomology of $C(H)$. Hence the differential on $C(H)$ vanishes. 
\end{proof}

\subsection{Homological perturbation for equivariant Floer cohomology}

We turn to the equivariant situation. Here, the energy filtration is defined similarly as \eqref{eqn_local_action}.

\begin{prop}\label{prop33b}
Let $H$ be a Hamiltonian on $(M, \omega)$ whose time-1 map has finitely many fixed points. Let $p$ be an odd prime such that ${\rm Fix}_c(\phi) = {\rm Fix}_c(\phi^p)$ and such that all fixed points are $p$-admissible. Then there exist $\epsilon_H>0$ and a complex of modules over $\Lambda_{\kzp}^\Gamma$
\beqn
C_{\zp}(H^{(p)}) = \left( \bigoplus_{x \in {\mc O}(H)} HF^\loc_\zp(H^{(p)}, x^{(p)}), d_{\zp} \right)
\eeqn
satisfying the following conditions.
\begin{enumerate}
    \item The complex $C_{\zp}(H^{(p)})$ is chain homotopic to $CF_\zp(G^{(p)})$ for any $G$ with $G^{(p)}$ being nondegenerate. In particular, the cohomology of $C_{\zp}(H^{(p)})$ is canonically isomorphic to $HF_\zp(M)$.

    \item The differential $d_{\zp}$ increases the energy filtration by $\epsilon_H$.
\end{enumerate}
\end{prop}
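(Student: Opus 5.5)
The plan is to run the equivariant analogue of the homological perturbation argument behind Proposition \ref{prop31}. First choose a small perturbation $H_1$ of $H$ with $H_1^{(p)}$ nondegenerate, supported in disjoint isolating neighborhoods $U_x$ of the points of ${\rm Fix}_c(\phi) = {\rm Fix}_c(\phi^p)$. The equality of contractible fixed point sets ensures that every orbit of $H_1^{(p)}$ lies in some $U_x$, close to $x^{(p)}$. We reassign actions as in \eqref{eqn_local_action}, giving an orbit near $\ov{x}^{(p)}$ the action ${\mc A}_{H^{(p)}}(\ov{x}^{(p)})$. With this convention, the equivariant differential $d_{\zp}$ on $CF_{\zp}(H_1^{(p)})$ from Theorem \ref{thm211} splits as
\[
d_{\zp} = d_{\zp}^{\loc} + d_{\zp}^{\rm far}.
\]
Here $d^{\loc}_{\zp} = \bigoplus_x d^{\loc}_{\zp,x}$ collects the low-energy contributions connecting orbits in the same $U_x$ with the same induced capping; these are exactly the local equivariant differentials of \S\ref{subsubsec:local-equiv}. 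The remaining part $d^{\rm far}_{\zp}$ raises the modified action by at least some $\epsilon_H > 0$. A natural choice is a fraction of the minimum of two quantities: the gap between distinct points of ${\rm Spec}(H^{(p)})$ that differ by an action jump, and the minimal sphere energy $\hbar$. Indeed, by the usual energy estimate, a trajectory that leaves $U_x$, or that changes the capping, carries energy bounded below uniformly as $H_1 \to H$.

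Next, we need a strong deformation retract of $\big(\bigoplus_x CF^{\loc}_{\zp}(H_1^{(p)},x^{(p)}), d^{\loc}_{\zp}\big)$ onto its cohomology $\bigoplus_x HF^{\loc}_{\zp}(H^{(p)},x^{(p)})$, and it must be $\Lambda_{\kzp}^\Gamma$-linear and filtration preserving. This is where $p$-admissibility enters. By Proposition \ref{prop_local_eq_structure}, $HF^{\loc}_{\zp}\cong HF^{\loc}(H^{(p)},x^{(p)})\otimes \rp$ is free over $\kzp$. So I would first take a filtered retraction $(i,\pi,h)$ of the nonequivariant local complex, which exists over the field $\Lambda^\Gamma_{\mb K}$. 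I would then extend it $t$-adically, writing $d^{\loc}_{\zp} = d^{\loc}_0 + (\text{terms of positive } (t,\theta)\text{-degree})$ and applying the perturbation lemma with respect to the $t$-adic filtration.

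Freeness should make the resulting spectral sequence degenerate, so that the transferred local differential vanishes. The alternative would be to absorb it, which is harmless since it is local. I expect this to be the main obstacle. One has to check that the $t$-adic perturbation series converges in the completed tensor product, and that the induced transferred structure on $HF^{\loc}\otimes\rp$ has trivial internal differential. One route is to appeal to the split normal form used in the proof of Proposition \ref{prop_local_eq_structure}, where the $\zp$-action on local moduli spaces is trivial. Another is to compare ranks over $\kp$ after inverting $t$.

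Finally, apply the homological perturbation lemma to the full differential $d_{\zp} = d^{\loc}_{\zp} + d^{\rm far}_{\zp}$ with the perturbation $\delta = d^{\rm far}_{\zp}$. The transferred differential on $C_{\zp}(H^{(p)})$ is
\[
\pi\,\delta\,(1-h\delta)^{-1} i = \sum_{k \ge 0} \pi\,\delta\,(h\delta)^k\, i .
\]
Each term raises action by at least $\epsilon_H$, because $h$, $i$ and $\pi$ preserve the filtration. The series converges in the action-completed module for the same reason, and this gives (2). The perturbation lemma also produces a chain homotopy equivalence with $CF_{\zp}(H_1^{(p)})$. Composing with the equivariant continuation maps of Theorem \ref{thm211}(2) to any $CF_{\zp}(G^{(p)})$ gives (1), and the identification with $HF_{\zp}(M)$ then follows from Theorem \ref{thm211}(3). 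The argument should parallel the Appendix \ref{app: hp} proof of Proposition \ref{prop31}.
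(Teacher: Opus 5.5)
Your proposal is correct and has the same architecture as the paper's proof in Appendix~\ref{subsectionb3}. Both arguments use crossing energy to split the equivariant differential into a local part and a part raising the modified action. Both then retract the local part onto $\bigoplus_x HF^{\loc}_{\zp}(H^{(p)},x^{(p)})$ using the freeness that $p$-admissibility supplies, apply the perturbation lemma, and use equivariant continuation maps for (1).

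The real difference is how the local retraction is built. The paper applies Lemma~\ref{lemmab6} directly over the PID $\kzp$. The local equivariant complex is free over $\kzp$, and by Proposition~\ref{prop_local_eq_structure} so is its cohomology, so a strong deformation retract onto cohomology exists with no further argument.

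You instead retract the nonequivariant local complex over the field, perturb $t$-adically, and then must show that the transferred local differential $\delta'$ on $HF^{\loc}\otimes\rp$ vanishes. Your rank comparison does this. The cohomology of $(HF^{\loc}\otimes\rp,\delta')$ is $HF^{\loc}_{\zp}$, which is free of the same $\kzp$-rank as the underlying module. So after the flat base change to $\kp$ the map $\delta'\otimes 1$ has rank zero, and $\delta'=0$ by torsion-freeness. This vanishing is genuinely needed: your fallback of ``absorbing'' a nonzero $\delta'$ would break (2), because $\delta'$ preserves the modified action.

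Your route is longer, but it gives a retraction whose reduction mod $(t,\theta)$ is the nonequivariant one. That is exactly the extra structure the paper later needs, and it redoes the perturbation in your two-step way in the proof of Lemma~\ref{lemma37}.

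One small correction concerns $\epsilon_H$. It should be a fixed fraction of the crossing energy $\epsilon_0$ of Lemma~\ref{lemmab1} for $H^{(p)}$, used via Lemma~\ref{lemmab3}. It should not be a gap in ${\rm Spec}(H^{(p)})$ or the sphere energy $\hbar$. When $\omega(\Gamma)$ is dense the spectrum has no gaps. Low-energy trajectories that change the capping are excluded only because small-energy cylinders stay $C^0$-close to constant ones, which is the principle your next sentence correctly invokes.
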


\begin{proof}
See Appendix \ref{subsectionb3}.
\end{proof}

One can also transform the equivariant pair-of-pants product to the homologically perturbed complexes constructed above.

\begin{prop}\label{prop35}
Let $(M, \omega)$ be a closed symplectic manifold and $H$ be a 1-periodic Hamiltonian whose time-1 map $\phi$ has  finitely many fixed points such that ${\rm Fix}_c(\phi) = {\rm Fix}_c(\phi^p)$. Let $C(H)$ be the complex provided by Proposition \ref{prop31} and $C_{\zp}(H^{(p)})$ be the complex provided by Proposition \ref{prop33b}. 
Then there exists a cochain map
\beqn
\wt P: C(H)^{\otimes p}_{\zp} \to C_{\zp}(H^{(p)}) 
\eeqn
and a decomposition 
\beqn
\wt P = \wt P^{\rm loc} + \wt P^{\rm big},
\eeqn
which satisfy the following conditions.
\begin{enumerate}

\item Let $P: H(C(H)) \to HF_{\zp}(H^{(p)})$ be the composition of $\wt P$ and the quasi-Frobenius map on the cohomology level. Then the following diagram commutes.
\beqn
\xymatrix{  H(C(H)) \ar[r]^-{P} \ar[d]_{\cong}  &   H(C_{\zp}(H^{(p)})) \ar[d]^{\cong} \\
            HF(M; \Lambda^\Gamma)  \ar[r]_-{\fst_p}   &      HF_{\zp}(M; \Lambda_{\kzp}^\Gamma )  }%
            \eeqn
Here the vertical arrows are provided by or induced from the ones of Proposition \ref{prop31} and Proposition \ref{prop33b}.

\item For any $x \in {\mc O}(H)$, $\wt P^\loc$ restricts to a $\Lambda_{\kzp}^\Gamma$-linear map
\beqn
\wt P^\loc_x: HF^\loc(H, x )^{\otimes p}_{\zp} \cong HF^\loc(H, x)^{\otimes p} \otimes \rp \to HF_{\zp}^\loc(H^{(p)}, x^{(p)})
\eeqn
which coincides with the local equivariant pants product.

\item For any $a \in C(H)^{\otimes p}$, one has 
\beqn
{\mc A}_{H^{(p)}}(\wt P^\loc (a)) \geq {\mc A}_H^{\otimes p} (a).
\eeqn
Moreover, there exists $\epsilon_H>0$ such that for any $a \in C(H)^{\otimes p}_\zp$, one has 
    \beqn
    {\mc A}_{H^{(p)}}(\wt P^{\rm big}(a)) \geq {\mc A}_H^{\otimes p} (a) + \epsilon_H.
    \eeqn

\end{enumerate}
\end{prop}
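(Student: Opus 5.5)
The plan is to obtain $\wt P$ by transferring the chain-level equivariant pants product through the homological perturbation data that produced $C(H)$ and $C_{\zp}(H^{(p)})$ in Propositions \ref{prop31} and \ref{prop33b}. First, fix a small nondegenerate perturbation $H_1$ of $H$ with $H_1^{(p)}$ nondegenerate; since ${\rm Fix}_c(\phi)={\rm Fix}_c(\phi^p)$, every orbit of $H_1^{(p)}$ lies near some $x^{(p)}$ with $x\in{\mc O}(H)$. The proofs in Appendix \ref{app: hp} give a filtered deformation retraction: a projection $\pi: CF(H_1)\to C(H)$, an inclusion $i: C(H)\to CF(H_1)$ and a homotopy $h$, obtained from the splitting of $d_{H_1}=d^{\loc}+d^{\rm big}$. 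Here $d^{\loc}$ is the direct sum of the local differentials (shifting the modified action ${\mc A}_H$ by at most a small $\delta$), and $d^{\rm big}$ raises it by at least a gap $\epsilon'$ determined by the minimal separation of ${\rm Spec}(H)$ and the minimal sphere energy $\hbar$. The equivariant analogues $\pi_{\zp}, i_{\zp}, h_{\zp}$ are obtained the same way. Taking $p$-fold tensor powers of $(\pi,i,h)$ with the standard symmetrized tensor-trick homotopy gives $\zp$-equivariant retraction data for $C(H)^{\otimes p}_{\zp}$.

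Next, define $\wt P := \pi_{\zp}\circ \wt{\fst}_p\circ i^{\otimes p}$, where $\wt{\fst}_p$ is the chain-level pants product for $H_1$ from Theorem \ref{thm_FSt}. Since perturbation lemma data consists of chain homotopy equivalences compatible with the identifications in Propositions \ref{prop31}(2) and \ref{prop33b}(1), the diagram in (1) commutes on cohomology. One also checks that the quasi-Frobenius map commutes with $i^{\otimes p}$, which holds because $i(y)^{\otimes p}=i^{\otimes p}(y^{\otimes p})$.

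For the decomposition, write $\wt{\fst}_p=\wt{\fst}{}^{\loc}+\wt{\fst}{}^{\rm big}$ by energy. The local part counts solutions whose $p$ inputs all lie near the same $x$ and whose output lies near $x^{(p)}$, with energy below $\hbar$. Every other contribution (mixed inputs, outputs near different orbits, or sphere bubbles) has topological energy at least the gap $\epsilon''$, again from the spectrum separation for the iterates and from $\hbar$. Expanding $\pi_{\zp}, i$ into their local leading terms plus higher corrections, the part of $\wt P$ built purely from local pieces is block diagonal in $x$. Define this part as $\wt P^{\loc}$, and collect everything else into $\wt P^{\rm big}$, with $\epsilon_H$ the minimum of the gaps minus the small defects $O(\delta)$ incurred by the perturbation. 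Then (3) follows from filtration estimates: the local pants product satisfies ${\mc A}_{H^{(p)}}\ge {\mc A}_H^{\otimes p}$, since the actions of $x^{(p)}$ and of $p$ copies of $x$ agree and energy is nonnegative, while each big piece gains at least $\epsilon_H$. For (2), the restriction of $\wt P^{\loc}$ to the $x$-block is exactly the local transfer defining $\wt{\fst}{}^{\loc}_{p,x}$ through local retractions. Because $C(H)$ is built from $HF^{\loc}(H,x)$, Lemma \ref{lemma222} lets us identify it with the local equivariant pants product. The $p$-admissibility hypothesis from Proposition \ref{prop33b} is implicitly needed so that $C_{\zp}(H^{(p)})$ is defined.

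I expect the main obstacle to be the small-defect bookkeeping in (3). The local differentials and local pants products for the perturbed $H_1$ do not preserve the modified filtration exactly; they only do so up to $\delta\to 0$ as $H_1\to H$. This defect must be absorbed so that the inequality for $\wt P^{\loc}$ holds with no loss. To handle this, I would first use the modified action, which assigns all orbits near $\ov x$ the same value ${\mc A}_H(\ov x)$, so that local maps are exactly filtration-nondecreasing. I would then choose $H_1$ close enough that $\delta<\epsilon_H/2$ before fixing $\epsilon_H$. The equivariant perturbations would be taken within the FOP framework of \cite{Bai_Xu_2025}, which, as noted in the introduction, does not disturb the energy estimates.
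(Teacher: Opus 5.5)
Your construction of $\wt P$ is the paper's (Appendix~\ref{app: hp}). You transfer the chain-level pants product of a nearby nondegenerate $H_1$ as $\wt P=\pi_{\zp}\circ\wt{\fst}_p\circ i^{\otimes p}$, get (1) from $qF\circ i=i^{\otimes p}\circ qF$, and split $\wt{\fst}_p$ by energy.

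The gap is in how you justify that energy splitting. You take the gap constant from the minimal separation of ${\rm Spec}(H)$ and of the spectra of the iterates, together with $\hbar$. This fails for two reasons:
\begin{itemize}
\item In the generality of the proposition, $\omega(\Gamma)$ may be dense in $\mathbb{R}$. Then ${\rm Spec}(H)$ and ${\rm Spec}(H^{(p)})$ have no positive separation at all.
\item Even for rational $\omega$, separation of action \emph{values} cannot distinguish distinct capped orbits that happen to have equal action.
\end{itemize}
So the action difference ${\mc A}_{H_1^{(p)}}(\ov y)-\sum_j{\mc A}_{H_1}(\ov x_j)$, which computes the energy of a pants solution, gives no lower bound for three kinds of solutions: those whose inputs do not all lie near one orbit; those whose inputs lie near $x$ but whose output lies near some $y^{(p)}\neq x^{(p)}$; and those whose output lies near $x^{(p)}$ but in a capping class shifted by $A\in\Gamma$ with $0<\omega(A)\ll 1$. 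The constant $\hbar$ says nothing about any of these. If such solutions existed, the recapping ones would fall into your $\wt P^{\loc}$, which would then not be the capping-preserving local equivariant pants product demanded in (2). The others would fall into $\wt P^{\rm big}$ with no uniform gain $\epsilon_H$, breaking (3).

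What is missing is the crossing energy estimate for pairs of pants, Lemma~\ref{lemmab4}, built on Lemmas~\ref{lemmab1} and~\ref{lemmab2}. It is a Gromov compactness statement, not action bookkeeping. For the flat connection on $\Sigma_p^\circ$ pulled back from $H\,dt$, the only zero-energy solutions are the trivial pants $u_x^p$, and all others have energy at least $\epsilon_p$. Hence, as $H_1\to H$, every solution of energy below $\frac12\min\{\epsilon_p,\epsilon_0(H^{(p)})\}$ lies in a $C^0$-neighborhood of some $u_x^p$. That forces its inputs to lie near a single $x$, its output near $x^{(p)}$, and its cappings to be compatible. This is exactly the local/big dichotomy with a uniform $\epsilon_H$. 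Its cylinder analogue, Lemma~\ref{lemmab3}, is likewise what underlies the splitting of $d_{H_1}$ that you import.

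With this lemma in place of your spectral argument, your proof becomes the paper's. Putting only the purely local pieces of $\pi_{\zp}$ and $i$ into $\wt P^{\loc}$, as you do, is if anything cleaner than the paper's $\wt P^{\loc}=\pi_{\zp}\circ\wt{\fst}{}_p^{\loc}\circ i^{\otimes p}$.

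A minor point: a $\zp$-symmetrized tensor-trick homotopy is not available over $\fp$. Averaging would divide by $p$, and $(\ker\pi)^{\otimes p}$ need not be equivariantly contractible. Nothing in your argument needs it, so you can drop that claim.
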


\begin{proof}
See Appendix \ref{proof_prop35}.
\end{proof}

\subsection{Tate spectrum and bar-length comparison}

We consider the relation between bar-length spectra of $H$ and $H^{(p)}$ which extends the nondegenerate case discussed in Section \ref{section2}. Consider the cochain complex $C(H)$ given by Proposition \ref{prop31} (which is well-defined up to filtered chain homotopy equivalence) with coefficient field ${\mb F}_p$ for a fixed prime $p$. As a Floer-type complex (in the sense of \cite{usher-zhang}), it has a well-defined bar-length spectrum
\beqn
0 < \beta_1(H) \leq \cdots \leq \beta_N(H).
\eeqn
Similarly, one considers the Tate version of the complex given by Proposition \ref{prop33b}, namely 
\beqn
\wh C_{\zp}(H^{(p)}):= C_\zp(H^{(p)}) \underset{\Lambda_{\kzp}^\Gamma}{\otimes} \Lambda_\kp^\Gamma,
\eeqn
which is a Floer-type complex over $\Lambda_{\kzp}^\Gamma$, hence has the corresponding Tate bar-length spectrum (it contains no zero bar because of (2) of Proposition \ref{prop33b})
\beqn
0 < \wh \beta_1(H^{(p)}) \leq \cdots \leq \wh \beta_{N^{(p)}}(H^{(p)}).
\eeqn

One can also consider the version using the universal Novikov field. Namely
\beqn
C(H; \Lambda^{\rm univ}):= C(H) \underset{\Lambda^\Gamma}{\otimes} \Lambda^{\rm univ}
\eeqn
and 
\beqn
\wh C_\zp(H^{(p)}; \Lambda_{\kp}^{\rm univ}):= \wh C_\zp(H^{(p)}) \underset{ \Lambda_{\kp}^\Gamma}{\otimes} \Lambda_{\kp}^{\rm univ}.
\eeqn
The cochain map $\wt P$ obviously extends to 
\beq\label{eqn21}
\wh P: \wh{ C(H; \Lambda^{\rm univ})^{\otimes p}_\zp} \to \wh C_{\zp}(H^{(p)}; \Lambda_\kp^{\rm univ}).
\eeq

\begin{cor}\label{cor35}
Under the assumption of Proposition \ref{prop33b} and Proposition \ref{prop35}, if $p$ is an odd prime\footnote{When $p=2$, we have $N^{(p)} = N$ and $\wh \beta_{i}(H^{(p)}) = p \beta_i(H)$.}, then one has $N^{(p)} = 2 N$ and for each $i = 1, \ldots, N$, there holds
\beqn
\wh\beta_{2i-1}(H^{(p)}) = \wh \beta_{2i}(H^{(p)}) = p \beta_i(H).
\eeqn
\end{cor}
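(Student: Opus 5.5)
The proof follows the nondegenerate argument that produced \eqref{eqn:bar-length-localization}, with $CF(H)$ replaced by the homologically perturbed complex $C(H)$ of Proposition \ref{prop31} and $CF_\zp(H^{(p)})$ replaced by $C_\zp(H^{(p)})$ of Proposition \ref{prop33b}. The key claim is that the extended map $\wh P$ in \eqref{eqn21} is a \emph{filtered} chain homotopy equivalence of Floer-type complexes over $\Lambda_{\kp}^{\rm univ}$. Since Tate bar-length spectra are invariant under filtered chain homotopy equivalences (Usher--Zhang), this gives $\wh\beta_j(H^{(p)}) = \wh\beta_j^{\otimes p}(C(H))$. Lemma \ref{lemma:quasi-Fr}, applied to the finitely generated free $\Lambda_0^{\rm univ}$-complex spanned by $q^{-\mc A_H}$-normalized local generators of $C(H)$, then shows that the Tate bar-lengths of $\wh{C(H)^{\otimes p}_\zp}$ are each $p\beta_i(H)$ repeated twice (because of $\theta$ when $p$ is odd). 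This yields $N^{(p)} = 2N$ and the stated formula.

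To prove the key claim, use the decomposition $\wt P = \wt P^{\rm loc} + \wt P^{\rm big}$ of Proposition \ref{prop35}. By (3), $\wt P^{\rm loc}$ preserves the filtration and $\wt P^{\rm big}$ strictly raises it by at least $\epsilon_H$. Likewise, the differentials of both complexes raise the filtration by at least $\epsilon_H$, by Proposition \ref{prop31}(1) and Proposition \ref{prop33b}(2). Consequently, on the associated graded with respect to the action filtration in steps smaller than $\epsilon_H$, the differentials vanish and $\wh P$ reduces to the direct sum over $x\in{\mc O}(H)$ and cappings of the Tate-localized local maps $\wt P^\loc_x$. By Proposition \ref{prop35}(2), each $\wt P^\loc_x$ is the local equivariant pants product. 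After inverting $t$, it becomes an isomorphism $\wh{HF^\loc(H,x)^{\otimes p}_\zp}\to \wh{HF}^\loc_\zp(H^{(p)},x^{(p)})$ that preserves the local action. This uses Theorem \ref{thm_local_pants}(1) together with Proposition \ref{prop_local_eq_structure} (which applies by $p$-admissibility) to identify the target as free over $\rp$. Locally, the Tate complex $\wh{HF^\loc(H,x)^{\otimes p}_\zp}$ is identified with $HF^\loc(H,x)\otimes \kp\otimes E(\theta)$ via $qF$ and the Frobenius twist $r_p$; this is where the factor $p$ in the action enters.

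With these ingredients, write $\wh P = \wh P^{\rm loc}_0 + R$, where $\wh P^{\rm loc}_0$ is a filtration-preserving isomorphism of the underlying filtered modules (an isometry in the non-archimedean sense) and $R$ raises the filtration by at least $\epsilon_H$. A standard non-archimedean Neumann-series argument then shows that $\wh P$ itself is a filtered isomorphism of chain complexes, hence an isometric isomorphism of Floer-type complexes. Its barcodes therefore agree.

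The main obstacle is the local step. One must check that $\wt P^\loc_x$ becomes an \emph{isometric} isomorphism after Tate localization, not merely an isomorphism. The filtration on the local equivariant side is constant on each capping (the modified action), so it suffices that $\wt P^\loc_x$ maps capping $\ov{x}$ components to $\ov x^{(p)}$ components bijectively over $\kp$. Theorem \ref{thm_local_pants}(1) gives bijectivity. Compatibility with cappings is automatic from energy considerations, since local solutions have energy near zero. I would verify this carefully first, since everything else is formal.
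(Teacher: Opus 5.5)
Your key claim, that $\wh P$ is a filtered chain homotopy equivalence, is the right one. It is what the paper establishes, and Lemma~\ref{lemma:quasi-Fr} then finishes as you describe. But your argument for it proves something false, namely that $\wh P$ is a chain isomorphism.

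The error is the assertion that the differentials of \emph{both} complexes raise the filtration by $\epsilon_H$. Proposition~\ref{prop31}(1) controls only $d_H$. The Tate differential on $\wh{C(H)^{\otimes p}_{\zp}}$ also contains the cyclic-permutation terms $(-1)^{|x|}(\iota x - x)\theta t^i$ and $(-1)^{|x|}(1+\iota+\cdots+\iota^{p-1})x\,t^{i+1}$. These preserve ${\mc A}_H^{\otimes p}$ exactly, and they are nonzero on non-diagonal tensors, which exist since $\rank C(H) \geq \dim H^*(M;\fp)\geq 2$. By Proposition~\ref{prop33b}(2), the target differential has no action-preserving part. Since $\wh P$ is a filtration non-decreasing chain map, its action-preserving part $\wh P^{\rm loc}_0$ must kill the (nonzero) image of the domain's action-preserving differential, so $\wh P^{\rm loc}_0$ is not injective. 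There are two concrete reasons:
\begin{itemize}
\item By the crossing-energy Lemma~\ref{lemmab4}, $\wt P^{\rm loc}$ only sees the diagonal summands $HF^\loc(H,x)^{\otimes p}\otimes\rp$. Tensors $\ov x_1\otimes\cdots\otimes\ov x_p$ involving two different orbits therefore lie in its kernel.
\item Even on a diagonal summand, Theorem~\ref{thm_local_pants}(1) is a statement about cohomology. So is the $qF$/$r_p$ identification of the local Tate term with $HF^\loc(H,x)\otimes\kp\otimes E(\theta)$. At chain level, $\wt P^\loc_x$ is only a quasi-isomorphism.
\end{itemize}
So the Neumann series has no invertible leading term. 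Capping compatibility, the point you single out, is not where the difficulty lies.

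What is needed is a quasi-isomorphism version of the perturbation argument, which is what the paper does. Pass to the $\Lambda_0^{\rm univ}$-lattices spanned by the $q^{-{\mc A}}$-normalized local generators; on these, $\wh P$ restricts to a $\Lambda_{0,\kp}^{\rm univ}$-linear chain map $\wh P_0$. Modulo the maximal ideal, $\wt P^{\rm big}$, $d_H$ and $d_{\zp}$ all vanish. What remains is $\bigoplus_x \wt P^\loc_x$, with the domain carrying only its permutation differential. This reduction is a quasi-isomorphism for two reasons: the non-diagonal tensors form free $\zp$-orbits, which have vanishing Tate cohomology; and the local products are isomorphisms on Tate cohomology. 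The cone of $\wh P_0$ is then a finitely generated free complex over the valuation ring $\Lambda_{0,\kp}^{\rm univ}$ with acyclic reduction. By the standard normal form for such complexes (Usher--Zhang, FOOO), it is contractible. Hence $\wh P_0$ induces an isomorphism on $\Lambda_{0,\kp}^{\rm univ}$-cohomology, and the torsion exponents agree. This is the content of the paper's sentence that, the local restrictions being chain homotopy equivalences, $\wh P_0$ is an isomorphism on cohomology.

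Alternatively, you can rescue your Neumann-series argument. First contract the domain, via a filtration-preserving homological perturbation, onto a minimal model with underlying module $r_p^*\bigl(C(H)\otimes\kp\otimes E(\theta)\bigr)$. Only on that model is the leading term a module isomorphism.
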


\begin{proof}
Consider a version of the previous construction in this section over the universal Novikov ring $\Lambda_{0}^{\rm univ}$  as follows. Choose for each $x \in {\mc O}(H)$ a capping $\ov{x}$ and define 
\beqn
HF^{\rm loc}(H, x; \Lambda_0^{\rm univ}):= q^{- {\mc A}_H(\ov{x})} \Big( HF^{\rm loc}(H, \ov{x})  {\otimes} \Lambda_0^{\rm univ} \Big) \subset HF^{\rm loc}(H, x; \Lambda^{\rm univ})
\eeqn
and
\beqn
C(H; \Lambda_0^{\rm univ}):= \bigoplus_{x \in {\mc O}(H)} HF^{\rm loc}(H, x; \Lambda_0^{\rm univ}) \subset C(H; \Lambda^{\rm univ}).
\eeqn
Then $d_H$ canonically induces a $\Lambda_0^{\rm univ}$-linear differential on $C(H; \Lambda_0^{\rm univ})$. Similarly, one can define the Tate version 
\beqn
\wh C_{\zp}(H^{(p)}; \Lambda_{0, \kp }^{\rm univ}).
\eeqn
Then the cochain map $\wh P$ of \eqref{eqn21} restricts to a $\Lambda_{0, \kp}^{\rm univ}$-linear cochain map
\beqn
\wh P_0: \wh{ C(H; \Lambda_0^{\rm univ})^{\otimes p}_{\zp}} \to \wh C_{\zp}(H^{(p)}; \Lambda_{0, \kp}^{\rm univ}).
\eeqn
As the local restrictions are chain homotopy equivalences, $\wh P_0$ induces an isomorphism of $\Lambda_{0, \kp}^{\rm univ}$-modules
\beqn
\wh P_0: H ( \wh{ C (H; \Lambda_0^{\rm univ} )_\zp^{\otimes p}}  ) \to H( \wh C_{\zp}(H^{(p)};\Lambda_{0, {\mc K}_p}^{\rm univ})).
\eeqn
Therefore, the Tate bar-length spectrum  is computed by
\beqn
\wh{\beta}_{2i-1}(H^{(p)}) = \wh{\beta}_{2i}(H^{(p)})  = p \beta_i(H). \qedhere
\eeqn
\end{proof}

As a corollary, in situations relevant to our applications, we can prove that the differential of $C_{\zp}(H^{(p)})$ vanishes. 

\begin{cor}\label{cor36}
Under the hypothesis of Proposition \ref{prop33b}, suppose in addition that the time-1 map $\phi$ of $H$ satisfies
\begin{align*}
    &\ {\rm Fix}_c(\phi) = {\rm Fix}_c(\phi^p),\ &\ N(\phi; {\mb F}_p) = N(\phi^p; {\mb F}_p) = {\rm dim}_{{\mb F}_p} H^*(M; {\mb F}_p).
\end{align*}
Then the differential $d_{\zp}$ of the complex of Proposition \ref{prop33b} vanishes.
\end{cor}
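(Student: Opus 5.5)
The plan is a dimension count parallel to the proof of Corollary \ref{cor32}, carried out in the Tate setting and using the bar-length comparison of Corollary \ref{cor35}.

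\emph{Step 1: the differential of $C(H)$ vanishes.} Since $N(\phi;\fp) = \dim_{\fp} H^*(M;\fp)$, Corollary \ref{cor32} applies directly. Consequently $C(H)$, and hence $C(H;\Lambda_0^{\rm univ})$, has no finite bars, so its bar-length spectrum is empty: $N = 0$.

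\emph{Step 2: the Tate complex has no finite bars.} Corollary \ref{cor35} gives $N^{(p)} = 2N = 0$. Thus $\wh C_{\zp}(H^{(p)};\Lambda_{\kp}^{\rm univ})$ has no finite bars. Because a Floer-type complex whose differential strictly raises the filtration (item (2) of Proposition \ref{prop33b}) has no zero-length bars, its differential must vanish identically. More precisely, by the Usher--Zhang normal form, the complex is filtered isomorphic to a direct sum of infinite bars and finite bars of positive length. Having no finite bars forces the Tate cohomology to have dimension over $\Lambda_\kp^{\rm univ}$ equal to the rank of the underlying module, and that forces $\wh d_{\zp} = 0$.

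As a cross-check on the dimensions, note that $p$-admissibility and Proposition \ref{prop_local_eq_structure} give
\[
HF^\loc_\zp(H^{(p)},x^{(p)}) \cong HF^\loc(H^{(p)},x^{(p)}) \otimes \rp .
\]
Together with $\mathrm{Fix}_c(\phi)=\mathrm{Fix}_c(\phi^p)$, this shows that the underlying module of $C_\zp(H^{(p)})$ is free over $\Lambda^\Gamma_{\kzp}\otimes E(\theta)$. Its rank is $N(\phi^p;\fp) = \dim H^*(M;\fp)$ copies of $\rp$. This matches the rank of $HF_\zp(M) \cong H(M)\otimes\Lambda^\Gamma_\rp$ from Theorem \ref{thm211}(3), which is consistent with vanishing of the differential.

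\emph{Step 3: pass from the Tate complex back to the Borel complex.} The underlying module of $C_{\zp}(H^{(p)})$ is free over $\Lambda^\Gamma_{\kzp}$, hence $t$-torsion free. It therefore embeds into its localization $\wh C_\zp(H^{(p)}) = C_\zp(H^{(p)})\otimes_{\Lambda_{\kzp}^\Gamma}\Lambda_\kp^\Gamma$. Base change along $\Lambda^\Gamma_\kp \to \Lambda^{\rm univ}_\kp$ is likewise injective on free modules. Since $d_\zp$ is the restriction of $\wh d_{\zp} = 0$, it follows that $d_\zp = 0$.

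The main obstacle is Step 2: justifying that ``no finite bars'' really gives $\wh d_{\zp}=0$ on the chain level, and not merely up to filtered homotopy. I would first invoke the Usher--Zhang singular value decomposition for the Tate complex over the field $\Lambda_\kp^{\rm univ}$. If the complex had nonzero differential, there would be a pair $(y, \wh d_\zp y)$ with $\wh d_\zp y \neq 0$. This pair contributes a finite bar of length at least $\epsilon_H > 0$ by item (2) of Proposition \ref{prop33b}, contradicting $N^{(p)} = 0$.

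A fallback for Step 2 is the dimension count from the cross-check above. Comparing the Tate cohomology with the free module $C_\zp(H^{(p)})\otimes\Lambda^\Gamma_\kp$, a nonzero differential would strictly decrease the dimension of cohomology over $\Lambda_\kp^\Gamma$ below the rank of the underlying module. That contradicts the Tate version of Theorem \ref{thm211}(3) combined with the rank computation.
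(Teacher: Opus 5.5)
Your argument is correct and is the paper's proof run in direct rather than contrapositive form. The paper assumes $d_{\zp}\neq 0$, observes that the Tate differential is then nonzero and strictly filtration-increasing, so $\wh\beta_1(H^{(p)})>0$, and uses Corollary~\ref{cor35} to get $p\beta_1(H)>0$, contradicting Corollary~\ref{cor32}. Your Step~3 (injectivity of $C_{\zp}(H^{(p)})\to \wh C_{\zp}(H^{(p)})$ on the free module) only makes explicit a step the paper takes for granted, and the rank cross-check and fallback are not needed.
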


\begin{proof}
Indeed, if $d_{\zp} \neq 0$, then the Tate version $\wh C_{\zp}(H^{(p)})$ has a nontrivial differential. As $d_{\zp}$ (which is also the differential of $\wh C_\zp(H^{(p)})$) strictly increases the filtration, the Tate bar-length spectrum has a nonzero element $\wh \beta_1(H^{(p)})$. By Corollary \ref{cor35}, one has $\wh \beta_1(H^{(p)}) = p \beta_1(H) > 0$. However, this contradicts Corollary \ref{cor32}.
\end{proof}

On the other hand, one has a comparison on the shortest bar-lengths.

\begin{lemma}\label{lemma37}
Let $p$ be an odd prime and $\phi \in {\rm Ham}(M, \omega)$. Suppose ${\rm Fix}_c(\phi) = {\rm Fix}_c(\phi^p)$ and all $x \in {\rm Fix}_c(\phi)$ are $p$-admissible. If the barcode of the complex $C(H^{(p)})$ constructed by Proposition \ref{prop31} has a nonzero bar, so does the barcode of the Tate complex $\wh C_\zp(H^{(p)})$. In addition,
\beqn
\beta_1(H^{(p)}) \geq \wh \beta_1(H^{(p)}).
\eeqn
\end{lemma}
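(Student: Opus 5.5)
The strategy is to exploit the shape of the equivariant differential on $C_{\zp}(H^{(p)})$. We compare the Tate complex $\wh C_\zp(H^{(p)})$ with the cone of a null-homotopic filtered map on $C(H^{(p)})$, exactly as in the proof of Theorem \ref{thm219}, and then use the $p$-admissibility to control the leading term.

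\textbf{Step 1: the leading term of $d_{\zp}$.} By Proposition \ref{prop_local_eq_structure}, $p$-admissibility gives $HF^\loc_\zp(H^{(p)},x^{(p)}) \cong HF^\loc(H^{(p)},x^{(p)})\otimes\rp$ as $\rp$-modules. Hence the underlying module of $C_\zp(H^{(p)})$ from Proposition \ref{prop33b} is $C(H^{(p)})\otimes \rp$, where $C(H^{(p)})$ is the complex of Proposition \ref{prop31} for $H^{(p)}$. Running the homological perturbation of Appendix \ref{subsectionb3} with the arrangement of Theorem \ref{thm213}, I would show that, after inverting $t$, the differential takes the form
\[
\wh d_\zp=\left[\begin{array}{cc} d_{H^{(p)}} & S\\ 0 & d_{H^{(p)}}\end{array}\right]+tK .
\]
Here $d_{H^{(p)}}$ is (filtered homotopic to) the differential of $C(H^{(p)})$. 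The map $S$ is a null-homotopic filtered cochain map, and it strictly increases the filtration by at least $\epsilon_H$. The operator $K$ does not decrease the $(t,\theta)$-degree. This step requires checking that the homological perturbation transfers the structure of Theorem \ref{thm213} to the perturbed complexes, with the zeroth-order term equal to $d_{H^{(p)}}$ itself. I expect it to be the main obstacle, because one must track that the transferred $d^0_\zp$ agrees with the transferred nonequivariant differential on the nose rather than only up to homotopy. I would address this by performing both transfers simultaneously with the same contraction data on the local pieces.

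\textbf{Step 2: bar-length comparison.} With this form in hand, the two inequalities of \cite{S-HZ} apply. First, \cite[Proposition 17]{S-HZ}: adding the higher-order perturbation $tK$ can only shorten bars, in the sense that the barcode of $\wh d_\zp$ is dominated by that of the cone. Second, \cite[Lemma 18]{S-HZ}: since $S$ is null-homotopic and filtration-increasing, the cone of $S$ has barcode equal to two copies of the barcode of $d_{H^{(p)}}$, up to the effect of the homotopy. For the shortest bar I would use the version of these statements concerning the minimal bar rather than the total length. Suppose $C(H^{(p)})$ has a finite bar of length $\beta_1(H^{(p)})$. Then there is an element $y$ with $d_{H^{(p)}}$-boundary realizing it. Passing to the cone, the element $y\otimes\theta$ (corrected by the null-homotopy of $S$) gives a nonzero boundary in $\wh C_\zp(H^{(p)})$ whose filtration gap is at most $\beta_1(H^{(p)})$.

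\textbf{Step 3: conclusion.} Step 2 shows that the Tate differential is nonzero, so the Tate barcode has a nonzero finite bar. Since $\wh\beta_1$ is the minimal filtration gap of a boundary, computed via the Usher--Zhang characterization \cite[Theorem 4.13]{usher-zhang} over $\Lambda_{0,\kp}^{\rm univ}$, we obtain $\wh\beta_1(H^{(p)})\le\beta_1(H^{(p)})$. Strict positivity of $\wh\beta_1(H^{(p)})$ follows from (2) of Proposition \ref{prop33b}.
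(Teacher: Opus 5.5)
Your outline matches the paper's: transfer the block form of Theorem \ref{thm213} to $C(H^{(p)})\otimes\rp$ by homological perturbation, then read $\wh\beta_1$ off the shape of the differential. However, the step that actually uses $p$-admissibility is missing, and you have put the difficulty in the wrong place.

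The diagonal being $d_{H^{(p)}}$ on the nose is automatic. Contract with the non-equivariant local data, which have $(t,\theta)$-degree zero, and treat $S$ and $tK$ as positive-degree perturbations; the perturbation formula then respects the $(t,\theta)$-filtration. What is not automatic is your claim that the transferred $S$ and $K$ raise the filtration. Theorem \ref{thm213} gives strictness only for the true action filtration on $CF_\zp(G^{(p)})$. In the clustered model, every orbit near $x^{(p)}$ is assigned the action of $x^{(p)}$. The low-energy equivariant solutions near $x^{(p)}$ then contribute a local part, $\wt S^{\loc}$ in the off-diagonal slot plus $t\wt K^{\loc}$, which preserves but need not raise this filtration. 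This local part is exactly what computes $HF^\loc_\zp(H^{(p)},x^{(p)})$, since the local part of $d_{H^{(p)}}$ is zero by construction.

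The paper kills it as follows. By Proposition \ref{prop_local_eq_structure}, $p$-admissibility gives $HF^\loc_\zp(H^{(p)},x^{(p)})\cong HF^\loc(H^{(p)},x^{(p)})\otimes\rp$. This is a free $\kzp$-module of the same rank as the chain module, so the local differential must vanish, i.e.\ $\wt S^\loc=0$ and $\wt K^\loc=0$. Only then does the Tate differential strictly raise the filtration. Only then is your block-form complex a legitimate model of the complex of Proposition \ref{prop33b}, so you cannot simply import Proposition \ref{prop33b}(2) for it as you do in Step 3. In your write-up, $p$-admissibility is used only to identify the underlying module, which is not where it is needed.

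This gap is not cosmetic. Step 3 uses $\wh\beta_1=\min\{\mathcal A(\wh dY)-\mathcal A(Y):\wh dY\neq 0\}$, which holds only when there are no zero-length bars. With filtration-preserving local terms, a small jump can come from mixing a zero-length bar with a long one, and then it bounds nothing.

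Once strictness is in place, Step 2 is much simpler than you make it. It needs neither \cite[Proposition 17]{S-HZ} nor \cite[Lemma 18]{S-HZ}, which are total-bar-length statements, nor the null-homotopy of $S$. Take $y\in C(H^{(p)})$ with $\mathcal A(dy)-\mathcal A(y)=\beta_1(H^{(p)})$. The component of $\wh d y$ in $C(H^{(p)})\cdot t^0$ (no $\theta$) is exactly $dy\neq 0$. Hence $\mathcal A(\wh d y)-\mathcal A(y)\le\beta_1(H^{(p)})$, which gives both a nonzero Tate bar and $\wh\beta_1(H^{(p)})\le\beta_1(H^{(p)})$.
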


\begin{proof}
See Appendix \ref{appendixb}. 
\end{proof}

\section{Proofs of the main results}\label{sec-proofsappl}

Using the technical results presented above, we can prove our main results on Hamiltonian dynamics.

\subsection{The divisibility lemma}

For our applications, we require the following key lemma. We first recall the homological perturbation construction in the context of Proposition \ref{prop33b} and Proposition \ref{prop35}. For an isolated $x \in {\mc O}(H)$ with $x^{(p)} \in {\mc O}(H^{(p)})$ also being isolated, choosing a capping $\ov{x}$, one has the local equivariant pants product 
\beqn
\wt P_{\ov{x}}^\loc: HF^\loc(H, \ov{x})^{\otimes p} \to HF_\zp^\loc(H^{(p)}, \ov{x}{}^{(p)}).
\eeqn

\begin{lemma}[The divisibility lemma]\label{lma:div}
Under the assumption of Proposition \ref{prop31}, \ref{prop33b}, \ref{prop35}, suppose in addition that $x$ is $p$-admissible. Hence by Proposition \ref{prop_local_eq_structure}, one has 
\beqn
HF_\zp^\loc(H^{(p)}, \ov{x}^{(p)}) \cong HF^\loc(H^{(p)}, \ov{x}^{(p)}) \otimes \rp.
\eeqn
Then the restriction of the local equivariant pants product $\wt P^{\rm loc}$ at $\ov{x}$ given by Proposition \ref{prop35} has a local decomposition written as 
\beqn
\wt P_{\ov{x}}^{\rm loc} = \sum_{2k \geq 0} t^k \wt P^\loc_{2k} + \sum_{2k+1 \geq 1} t^k \theta \wt P^\loc_{2k+1}
\eeqn
where 
\beqn
\wt P_i^\loc: HF^{\rm loc}(H, \ov{x})^{\otimes p} \to HF^{\rm loc}(H^{(p)}, \ov{x}{}^{(p)})
\eeqn
are the components. Then for any $y \in HF^{\rm loc}(H, \ov{x})$ with $\wt P^\loc(y\otimes \cdots \otimes y)\neq 0$, one has
\beqn
\inf \Big\{ i\ |\ \wt P_i^\loc (y \otimes \cdots \otimes y) \neq 0 \Big\} \leq 2n (p-1).
\eeqn
\end{lemma}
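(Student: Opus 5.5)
The idea is to reduce to the split normal form used in the proof of Proposition \ref{prop_local_eq_structure}, and there compute the local equivariant pants product by a Künneth decomposition. First, by the persistence argument of Ginzburg--G\"urel quoted there, deform $\phi$ near $x$ through a family $\phi_s$ with $x$ uniformly isolated for $\phi_s$ and $\phi_s^p$ and $d\phi_s$ fixed at $x$. The endpoint $\phi_1=\psi_0\times\psi_1$ acts on $X_0\oplus X_1$, with $0\in X_0$ totally degenerate and $0\in X_1$ nondegenerate. Local equivariant continuation maps intertwine the local pants products up to homotopy, and they are $\rp$-linear. The minimal $(t,\theta)$-degree $i$ with $\wt P_i^\loc(y^{\otimes p})\neq 0$ is therefore invariant, since it can be read off from the $\rp$-module structure, and it suffices to prove the bound for $\phi_1$.

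For the split germ, I would invoke the local Künneth result for the equivariant pants product, Proposition \ref{prop55}, announced in the introduction. It says that under the identifications
\beqn
HF^\loc(\psi_0\times\psi_1,0)\cong HF^\loc(\psi_0,0)\otimes HF^\loc(\psi_1,0)
\eeqn
and the corresponding ones for $p$-th iterates, the local equivariant pants product is the product of the two factor products, up to Koszul sign. The $X_1$ factor is nondegenerate of dimension $2n_1$, and $p$-admissibility makes its $p$-th iterate nondegenerate too. Theorem \ref{thm_local_pants}(2) then gives that this factor acts by $c\,t^{\alpha_1}$ with $c\neq 0$ and $\alpha_1\le n_1(p-1)$. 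For the $X_0$ factor, $\psi_0$ is generated by an autonomous function $f_0$ with totally degenerate critical point. Take $f_0$ small, so that $p f_0$ still has an isolated critical point at $0$ and all relevant local Floer solutions are $t$-independent Morse flow objects. The local equivariant pants product then reduces to a local Morse-theoretic Steenrod operation at an isolated critical point of $f_0$. Equivalently, it is the equivariant map $C^*_{\rm loc}(f_0)^{\otimes p}\to C^*_{\rm loc}(pf_0)\otimes\rp$ coming from the diagonal of a neighborhood pair $(U,U^-)$, that is, the relative Steenrod power $St_p$ on $H^*(U,U^-)$.

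The classical Steenrod power of a class $y$ in degree $r$ has leading term $y^p\,t^{\cdot}$ in top degree $pr$ and lowest term in degree $r$. Hence its minimal nonvanishing $(t,\theta)$-degree is at most $(p-1)r$, where the relevant local cohomological degrees satisfy $r\le 2n_0$, with $n_0=\dim X_0/2$. To control this I would use the fact that the $(p-1)r$ component of $St_p(y)$ is $c_p t^{r(p-1)/2}y$ with $c_p\neq 0$, or the $\theta$-analogue for odd $r$; this is Steenrod's axiom $P^{r/2}$-top term / $\beta P$. Note that $P^0=\id$ only constrains the top $t$-power, so it is really the axiom $P^{k}y=y^p$ for $|y|=2k$ that matters for the minimal degree. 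Either way the $X_0$ factor contributes minimal degree at most $2n_0(p-1)$. The nonvanishing hypothesis $\wt P^\loc(y^{\otimes p})\ne 0$ lets me restrict to a nonzero component.

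Combining the factors, the minimal degree is at most $2n_0(p-1)+2n_1(p-1)\le 2n(p-1)$, and a general $y$ is handled by decomposing into Künneth tensors and taking leading terms. The main obstacle is this last bookkeeping step: the $(t,\theta)$-degree of a product of lowest-order terms could cancel across summands, since $y^{\otimes p}$ is not multiplicative in the decomposition of $y$. I would handle it by working in the Tate localization, where $qF$ is additive and $\wt P^\loc$ is an isomorphism by Theorem \ref{thm_local_pants}(1). There one argues via the coproduct $C^\loc_{p,x}$ satisfying $C\circ\wt P=\pm t^{n(p-1)}$, adapted to degenerate points through the split form. That would give directly that $\wt P^\loc(y^{\otimes p})$ divides $t^{n(p-1)}y^{\otimes p}$, i.e. that its minimal degree is at most $2n(p-1)$.
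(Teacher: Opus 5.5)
Your reduction to the split germ $\psi_0\times\psi_1$, your use of Proposition~\ref{prop55}, your treatment of the nondegenerate factor by Theorem~\ref{thm_local_pants}(2), and your reduction of the degenerate factor to a relative Steenrod power all match the paper's proof. The gap is in the final combination step.

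The obstacle you flag does not exist. The factor $\psi_1$ has a nondegenerate fixed point, so $HF^\loc(\psi_1,0)$ is one-dimensional, spanned by some $e_1$. Hence every $y$ is $\kappa^\loc(e_1\otimes y_2)$, and no sum of several Künneth tensors ever occurs. The non-additivity of $qF$ is also harmless: the target $HF^\loc(H^{(p)},\ov{x}^{(p)})\otimes\rp$ is $t$-torsion free, so the norm-class cross terms die, and homogeneous pieces of $y_2$ of different degrees cannot cancel because of the grading.

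What you actually need, and never state, is the property the paper singles out as easily neglected. The local equivariant Künneth map $\kappa^\loc_\zp$ is $\rp$-bilinear, in particular $\theta$-linear (Proposition~\ref{prop_local_eq_Kunneth}), and its $(t,\theta)$-degree-zero part is the non-equivariant Künneth isomorphism. Only with this can you pull $c_1t^{a_1}$ out of $\kappa^\loc_\zp\bigl(c_1t^{a_1}e_1^{(p)}\otimes P^\loc_{x_2}(y_2)\bigr)$. You then see that its leading term is $c_1t^{a_1}\kappa^\loc\bigl(e_1^{(p)}\otimes P^\loc_{x_2,i_2}(y_2)\bigr)\neq 0$, in degree $2a_1+i_2\le 2n(p-1)$. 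Writing the equivariant side as a plain tensor product over $\rp$ hides exactly this point.

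The repair you propose does not work as stated. The paper uses the local coproduct identity $C\circ\wt P=\pm t^{n(p-1)}$ (in the proof of Theorem~\ref{thm_local_pants}) only when $x^{(p)}$ is nondegenerate, and notes that the coproduct has not been set up in its framework. Adapting it ``through the split form'' would require the identity on the degenerate factor, which is exactly what is unavailable. If the identity were available at degenerate points, it would prove the lemma outright and make the split-form and Künneth argument superfluous. Even then, reading off a minimal degree would require controlling $t$-torsion in $H(CF^\loc(H,x)^{\otimes p}_\zp)$ and the filtration behaviour of $C$, which you do not address.

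Your account of the degenerate factor is also confused, although its conclusion survives. The needed fact is the paper's claim. For $|z|=k$, the component of $St_p(z)$ of $(t,\theta)$-degree $i$ lies in cohomological degree $pk-i$, and it vanishes once $pk-i<k$. Combined with $k\le 2n_0$, this shows every component above $2n_0(p-1)$ vanishes. Nonvanishing of some component is supplied by the hypothesis $\wt P^\loc(y^{\otimes p})\neq 0$. The component in $(t,\theta)$-degree $k(p-1)$ is the $P^0=\mathrm{id}$ term $c\,t^{k(p-1)/2}z$. It is not a ``$P^{k/2}$'' term, and it carries no $\theta$ even for odd $k$, since $k(p-1)$ is even. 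The axiom $P^{k/2}z=z^p$ governs the degree-$0$ component and plays no role in an upper bound on the minimal degree, so your remark that it is ``really'' this axiom that matters is backwards.
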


Since the proof is rather long and technical, we first outline the strategy. The key idea is to reduce the consideration to a local normal form of the fixed point also used in the proof of Proposition \ref{prop_local_eq_structure}. For different reasons, the divisibility condition is true for both the nondegenerate direction and the totally degenerate direction. One then uses the Kunneth property of the local equivariant pants product established in Section \ref{section5} to obtain the product case. Another key property, which is easily neglected, is the fact that the local equivariant Kunneth map is bilinear over the ring $\rp$ (in particular, linear over the odd variable $\theta$) proved as Proposition \ref{prop_local_eq_Kunneth}.

\begin{proof}[Proof of Lemma \ref{lma:div}]
We first reduce the consideration to the case with a local normal form as in the proof of Proposition \ref{prop_local_eq_structure}. 
Recall that the local (equivariant) Floer cohomology is invariant under deformation as long as the fixed point remains uniformly isolated through the deformation. Moreover, the identifications between different moments through the deformation are given by certain continuation maps. As the local equivariant pants product is compatible with continuation maps, one can assume that $H$ has a product type. In other words, let $\phi$ be the time-1 map of $H$. Then we may assume that $M = X_1 \times X_2$ and $\phi = \phi_1 \times \phi_2$ are products. If $x = (x_1, x_2)$, then $x_1$ is a nondegenerate fixed point of $\phi_1$ and $x_2$ is a totally degenerate and isolated fixed point of $\phi_2$. 

To save notations, abbreviate $HF^\loc(\phi, x)$ by $HF^\loc(\phi)$ (and in other similar occasions) as the critical points in the consideration are clear from the context. Let
\beqn
\wt P_{x_i}^\loc:= \wt{\fst}{}_{p, x_i}^\loc: HF^\loc(\phi_i)^{\otimes p} \to HF_\zp^\loc(\phi_i^p) \cong HF^\loc(\phi_i^p) \otimes \rp
\eeqn
be the local equivariant pants product and let 
\beqn
P_{x_i}^\loc: HF^\loc(\phi_i) \to HF_\zp^\loc(\phi_i^p)
\eeqn
be their compositions with the quasi-Frobenius map $y \mapsto y^{\otimes p}$.

%
%
%

The basic strategy in the rest of the proof is to use the Kunneth property of the local equivariant pants product (Proposition \ref{prop55}) and the leading order bounds of the equivariant pants product on the two factors. 
Then by Proposition \ref{prop_local_eq_Kunneth} and Proposition \ref{prop55} proved below, the following diagram commutes up to the Koszul sign.
\beqn
\xymatrix{   HF^\loc (\phi)  \ar[rr]^{P^\loc }     &     &  HF_\zp^\loc (\phi^p)  \\
             HF^\loc (\phi_1) \otimes HF^\loc (\phi_2) \ar[rr]_-{P_{x_1}^\loc \otimes P_{x_2}^\loc } \ar[u]^{\kappa^{\rm loc}}  & &   HF_\zp^\loc  (\phi_1^p)  \underset{\rp}{\otimes} HF_\zp^\loc  (\phi_2^p) \ar[u]_{\kappa_\zp^\loc }}
\eeqn
Notice that the non-equivariant Kunneth map $\kappa^{\rm loc}$ is an isomorphism. On the other hand, from the proof of Proposition \ref{prop_local_eq_structure} one knows that $\kappa_\zp^\loc$ is also an isomorphism, although it wouldn't be true if we drop the $p$-admissibility condition.

Now we prove that both $P_{x_1}^\loc$ and $P_{x_2}^\loc$ satisfy the bound we would like to prove. We first analyze the totally degenerate piece $P_{x_2}^\loc$. As $x_2$ is also $p$-admissible, by Proposition \ref{prop_local_eq_structure}, one has an isomorphism of $\rp$-modules
\beqn
HF_\zp^\loc(\phi_2^p) \cong HF^\loc(\phi_2^p) \otimes \rp.
\eeqn

\noindent {\it Claim.} If we decompose
\beqn
P_{x_2}^\loc = \sum_{2k\geq 0} t^k P_{x_2, 2k}^\loc + \sum_{2k+1\geq 1} t^k \theta P_{x_2, 2k+1}^\loc,
\eeqn
then 
\beqn
P_{x_2, i}^\loc = 0 \ {\rm when}\ i > 2(p-1)n_2 = (p-1) {\rm dim}_{\mb R} X_2.
\eeqn

\noindent {\it Proof of the claim.} The basic idea is to reduce the consideration to local Steenrod operations on local Morse cohomology. By the argument of Proposition \ref{prop_local_eq_structure}, the local equivariant cohomology can be computed using an autonomous Hamiltonian $h$ and the isomorphism is provided by a continuation map. Therefore, the cochain level map $\fst_{p, x}^\loc$ can also be defined using the autonomous $h$, denoted by 
\beqn
P^\loc_{\rm Morse}: HM^\loc(h, 0) \to HM^\loc(h, 0) \otimes \rp.
\eeqn
We know that the local Morse cohomology is identified with a relative cohomology $H^*(B, N)$ with $B$ being a $2n_2$-dimensional ball while $N \subset \partial B$ is a codimension zero submanifold with boundary. 
Using an argument analogous to \cite[Section 3.2]{wilk} for general primes $p$, one can show that $P_{\rm Morse}^\loc$ is identified with the classical Steenrod operation 
\beqn
{\it St}_p: H^*(B,N) \to H^*(B,N) \otimes \rp
\eeqn
given by \cite[Equation (1.6)]{Seidel-formal}. By the classical theory of Steenrod operations, the coefficient of $t^\alpha \theta^\beta$ of $St_p(z)$ for $|z|=k$ vanishes when $2\alpha + \beta > k (p-1)$. As $H^*(B, N)$ has non-vanishing degrees up to ${\rm dim} B = 2n_2$, the assertion of the claim follows. (The required conclusion also follows directly from an analysis of the Morse moduli spaces defining $P_{\rm Morse}^{\loc}$, as in the proof of \cite[Proposition 3.3]{wilk}, specifically ``Axiom 5".) \hfill {\it End of the proof of the claim.}

On the other hand, the nondegenerate piece is an easy consequence of Theorem \ref{thm_local_pants}. If $e_1 \in HF^{\rm loc}(\phi_1)$ is a generator (which induces a corresponding generator $e_1^{(p)}$ of $HF_\zp^\loc  (\phi_1^p)$), then there exists $a_1 \leq n_1 (p-1)$ and $c_1 \neq 0$ such that
\beq\label{eqn25}
P^\loc_{x_1}  ( e_1 ) = c_1 t^{a_1} e_1^{(p)}.
\eeq



Now we can use the Kunneth property of the local equivariant pants product (Proposition \ref{prop55}). Let $e_1 \in HF^{\rm loc}(\phi_1; {\mb F}_p)$ be a generator and $y_2 \in HF^{\rm loc}(\phi_2; {\mb F}_p)$. Suppose $P^{\rm loc}( e_1 \otimes y_2 ) \neq 0$. Then by \eqref{eqn25} and the Kunneth property, one has
\beqn
\begin{split}
&\ \pm P^{\rm loc}( \kappa^{{\rm loc}}(e_1 \otimes y_2)) \\
= &\ \kappa_\zp^\loc    \Big( P^{\rm loc}_{x_1}(e_1) \otimes P^{\rm loc}_{x_2}(y_2) \Big) \\
= &\ \kappa_\zp^\loc \Big( c_1 t^{a_1} e_1^{(p)} \otimes P^{\rm loc}_{x_2} (y_2) \Big) \\
= &\ c_1 t^{a_1} \kappa_\zp^\loc  \Big( e_1^{(p)} \otimes P^{\rm loc}_{x_2} (y_2) \Big)\\
= &\ c_1 t^{a_1} \kappa_\zp^\loc \Big( \sum_{2k} t^k e_1^{(p)}\otimes P^\loc_{x_2, 2k}(y_2) + \sum_{2k+1} t^k \theta e_1^{(p)} \otimes P^\loc_{x_2, 2k+1}(y_2) \Big)\\
= &\ c_1 \Big( \sum_{2k} t^{a_1 + k} \kappa_{\zp}^\loc( e_1^{(p)} \otimes P_{x_2, 2k}^\loc(y_2) ) + \sum_{2k+1} t^{a_1 + k} \theta \kappa_{\zp}^\loc( e_1^{(p)} \otimes P_{x_2, 2k+1}^\loc(y_2)) \Big).
\end{split}
\eeqn
Here we used the property that $\kappa_\zp^\loc$ is $\rp$-bilinear.

Now let $P^\loc_{x_2, i_2}(y_2)$ be the leading order term of $P_{x_2}^\loc (y_2)$. Notice that $\kappa_\zp^\loc$ does not decrease the $(t, \theta)$-grading and its leading order term is the non-equivariant Kunneth map $\kappa^{\rm loc}$, which is an isomorphism. Then the leading term of $P^{\rm loc}(\kappa^{\rm loc}(e_1 \otimes y_2))$ is a nonzero multiple of 
\beqn
\kappa^{\rm loc}(e_1^{(p)} \otimes P^\loc_{x_2, i_2}(y_2))
\eeqn
which has $(t, \theta)$-degree no greater than $2n(p-1)$. 
\end{proof}

\subsection{Technical results on symplectically degenerate maxima}

We need a small digression on filtered Floer cohomology of iterates of a Hamiltonian near a symplectically degenerate maximum, which will be used towards the proof of Theorem \ref{thm: pr}, Theorem \ref{thm: infinitely many}, and Theorem \ref{thm: sdm}. Notice that the statement of the following lemma depends on Proposition \ref{prop31}.

\begin{lemma}\label{lemma:SDM}
Let $H$ be a 1-periodic Hamiltonian such that $\ov{x} \in \tilde {\mc O}(H)$ is a symplectically degenerate maximum (whose mean index, denoted by $\Delta$, is necessarily an integer). Denote $c = {\mc A}_H(\ov{x})$. Then for every sufficiently small $\eps > 0$, there exists a sufficiently large natural number $r_{\eps} > 0$ such that for all $r \geq r_{\eps}$ there exists $\delta = \delta_{\eps,r} \in (0,\eps)$ such that the following connecting homomorphism 
\begin{equation}\label{eq: connecting}
     HF^{r\Delta+n}(H^{(r)})^{(rc+\delta, rc+\eps)} \to HF^{r\Delta+n+1}(H^{(r)})^{(rc-\delta,rc+\delta)},\end{equation}
     associated to the short exact sequence
     \beqn
     \xymatrix{ 0 \ar[r] & C(H^{(r)})^{(rc-\delta, rc+\delta)} \ar[r] & C(H^{(r)})^{(rc-\delta, rc+\epsilon)} \ar[r] & C(H^{(r)})^{(rc + \delta, rc + \epsilon)} \ar[r] & 0 } 
     \eeqn
     is nontrivial. 
\end{lemma}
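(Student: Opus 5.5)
We would follow the Ginzburg/Hein strategy for symplectically degenerate maxima, adapted to the complex $C(H^{(r)})$ of Proposition \ref{prop31}.

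\textbf{Step 1 (local input).} By definition of an SDM, $HF^{\loc,2n}(H,\ov{x};\Q)\neq 0$ and $\Delta(H,\ov{x})=0$ after shifting the capping; in general $\Delta$ is an integer. By the local theory of Ginzburg (persistence of local Floer cohomology under iteration for totally degenerate maxima, cf. \cite[Section 3.2]{Ginzburg-CC} and \cite{Hein-CCCY}), for all $r$ one has $HF^{\loc}(H^{(r)},\ov{x}^{(r)})$ concentrated in degree $r\Delta+n+n$. More precisely, after the grading shift it is concentrated in the top degree $r\Delta+n$ of the window $[r\Delta-n, r\Delta+n]$, and it is nonzero there. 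Thus the middle piece $H(C(H^{(r)})^{(rc-\delta,rc+\delta)})$ receives a nonzero class in degree $r\Delta+n$ from $\ov{x}^{(r)}$. Since $\Spec(H)$ is closed and nowhere dense near $c$, we choose $\delta$ so that $\ov{x}^{(r)}$ and its nearby capped orbits are the only generators of $C(H^{(r)})$ with action in $(rc-\delta,rc+\delta)$. We use $\epsilon$ small, together with the gap $\epsilon_{H^{(r)}}$ from Proposition \ref{prop31}(1).

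\textbf{Step 2 (the key geometric input).} Ginzburg's theorem says that for an SDM, for every $\epsilon>0$ and all large $r$, the local class of $\ov{x}^{(r)}$ maps to zero in filtered homology with action window slightly enlarged upward: $HF^{r\Delta+n}(H^{(r)})^{(rc-\delta,rc+\delta)}\to HF^{r\Delta+n}(H^{(r)})^{(rc-\delta,rc+\epsilon)}$ kills it. In cohomological conventions, the local class is hit by a differential from a generator of action in $(rc+\delta,rc+\epsilon)$, i.e.\ it becomes exact. This is proved by the explicit "Ginzburg bump" argument. One chooses a normal form near $x$ in which $H$ has a strict local max in a suitable sense, with the linearized flow close to identity and the rotation small. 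Then one compares, via monotone homotopies, $H^{(r)}$ with a Hamiltonian whose filtered Floer cohomology in the window vanishes in that degree. This comparison uses only local geometry plus continuation maps, which are filtered by \cite[Theorem K]{Bai_Xu_2025}.

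We would transport Hein's version \cite{Hein-CCCY} through the colimit definition \eqref{eq: filtered-irrational} and Proposition \ref{prop31}(3). Item (3) identifies the filtered homology of $C(H^{(r)})$ with Hein's. Hence the vanishing statement, which only involves formal filtered continuation maps, holds verbatim.

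\textbf{Step 3 (conclusion).} In the long exact sequence of the given short exact sequence, the local class $\alpha\neq 0$ lies in the kernel of $HF^{r\Delta+n+1}(\cdot)^{(rc-\delta,rc+\delta)}\to HF^{r\Delta+n+1}(\cdot)^{(rc-\delta,rc+\epsilon)}$. Here the degree bookkeeping must match the paper's $+1$, so that $\alpha$ sits in degree $r\Delta+n+1$ after the normalization of Step 1. By exactness, $\alpha$ lies in the image of the connecting map \eqref{eq: connecting}, which is therefore nontrivial.

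\textbf{Main obstacle.} The main obstacle is Step 2. The difficulty is making sure Ginzburg's and Hein's vanishing argument uses only properties available here: filtered continuation maps, local Floer cohomology, and the absence of bubbling for small perturbations. It must not use any semipositivity. A secondary difficulty is getting the degree and action conventions to match, including the choice of $\delta$ so that no other orbits of $H^{(r)}$ have action within $\delta$ of $rc$.
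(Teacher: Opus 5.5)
Your overall plan (reduce to Ginzburg's SDM argument as extended by Hein, then get nontriviality of the connecting map by exactness) matches the paper's. The exactness step is also fine: a nonzero class of the middle window that dies in the enlarged window lies in the image of the connecting map.

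The gap is in Step 1, and it sits exactly where the lemma is needed. You assert that the spectrum is nowhere dense near $c$, so that $\delta$ can be chosen with $\ov{x}^{(r)}$ and its cluster the only generators of $C(H^{(r)})$ with action in $(rc-\delta,rc+\delta)$. This fails when $\omega(\Gamma)\subset\mathbb{R}$ is dense, which is the irrational case; for rational $\omega$ the paper can simply quote \cite{GG-ai}. Then $\mathrm{Spec}(H^{(r)})\supset rc+\omega(\Gamma)$ is dense, and every such window contains infinitely many recappings of $x^{(r)}$ and of other orbits. They lie in the very degrees of the lemma as soon as $\omega$ is dense on $\ker c_1$, e.g.\ in the irrational symplectically Calabi--Yau case treated by Hein. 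Isolation by action is impossible. This is precisely why the paper's proof invokes Hein's decomposition of filtered Floer cohomology into local summands and runs Ginzburg's argument on the summand of $\ov{x}^{(r)}$.

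In the present framework the correct substitute for your Step 1 is Proposition \ref{prop31}(1). Take $2\delta\le\epsilon_{H^{(r)}}$; then $C(H^{(r)})^{(rc-\delta,rc+\delta)}$ has zero differential and contains $HF^{\loc}(H^{(r)},\ov{x}^{(r)})$ as a direct summand. Use the gap only to choose $\delta$. If $\epsilon+\delta\le\epsilon_{H^{(r)}}$, the enlarged window also has zero differential and the connecting map vanishes, so the lemma in fact forces the crossing-energy gap of $H^{(r)}$ below $\epsilon+\delta$.

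Your Step 2 does not repair this. Hein's theorem is proved for symplectically Calabi--Yau manifolds, so it cannot be transported verbatim to arbitrary $M$ via Proposition \ref{prop31}(3). What has to be transported is the method: split the filtered complex near level $rc$ into the summand generated by the cluster of $\ov{x}^{(r)}$ and the rest, then do Ginzburg's local analysis on that summand. That rests on crossing-energy estimates and local Floer theory, not on formal properties of filtered continuation maps.

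The mechanism you sketch is also not Ginzburg's. He obtains nonvanishing in the upper window from a nonzero monotone-homotopy map between bump Hamiltonians $H_-\le H^{(r)}\le H_+$ (near $x$) that factors through $H^{(r)}$. He does not compare with a Hamiltonian whose filtered cohomology vanishes.

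Finally, your gradings do not close up. Step 1 places the local class in degree $r\Delta+n$, while Step 3 needs it in degree $r\Delta+n+1$. Also, ``hit by a differential from higher action'' contradicts the paper's convention that $d$ increases action. The statement transcribes Ginzburg's homological map $HF_{n+1}\to HF_n$, so fix one convention and track the degrees and the direction of the short exact sequence in it.
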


\begin{proof}
    The assertion follows from the methods of proof of \cite[Proposition 4.7]{Ginzburg-CC}, \cite[Theorem 1.18]{GG-ai}, and \cite[Theorem 1.3]{Hein-CCCY}. Indeed, \cite{Hein-CCCY} adapts the methods of \cite{Ginzburg-CC} to show that on an arbitrary closed symplectically Calabi--Yau manifold, there holds
    \beqn
    HF^{r\Delta+n+1}(H^{(r)})^{(rc+\delta, rc+\eps)} \neq 0.
    \eeqn  
The argument in \cite{Hein-CCCY} applies equally well for an arbitrary closed symplectic manifold, as it relies on a general decomposition theorem for filtered Floer homology and Ginzburg's arguments \cite{Ginzburg-CC} applied to the ``local summand" of the decomposition.

Furthermore, Ginzburg proves that the above connecting map is non-trivial in the aspherical case, and this extends, by applying Hein's decomposition, to the case of an arbitrary closed symplectic manifold.
\end{proof}

To continue, we also need the following lemma for our applications. When $(M,\om)$ is rational in the sense that $\brat{[\om],\pi_2(M)} = \rho \Z$ for a constant $\rho \geq 0,$ Lemma \ref{lma:sdm} follows immediately from \cite[Theorem 1.18]{GG-ai}. When $(M,\om)$ is not rational, additional arguments, which use more of the techniques of symplectically degenerate maxima, are required. We present the proof below.

\begin{lemma}\label{lma:sdm}
Let $\phi$ be an $\F_p$ Hamiltonian pseudo-rotation generated by a Hamiltonian $H$. Then for all $k\geq 0$, $\tilde {\mc O}(H^{(p^k)})$ contains no symplectically degenerate maximum.
\end{lemma}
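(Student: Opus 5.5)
We argue by contradiction. Since $\phi^{p^k}$ is again an $\F_p$ pseudo-rotation (its $p$-power iterates are among those of $\phi$), it suffices to treat $k=0$. So suppose $\ov{x}\in\tilde{\mc O}(H)$ is a symplectically degenerate maximum, with mean index $\Delta$ and action $c={\mc A}_H(\ov{x})$. Replacing $H$ by $H^{(p^j)}$ does not destroy the SDM property: the mean index scales to $p^j\Delta$, and the local Floer cohomology in the top degree persists by the Ginzburg--G\"urel iteration results for SDMs. Hence we may work throughout with the iterates $H^{(r)}$ for $r=p^m$.

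\textbf{Step 1: the model complex has zero differential.} For every $r=p^m$, Corollary~\ref{cor32} applies to $\phi^r$, since $N(\phi^{r};\F_p)=\dim H^*(M;\F_p)$ by Definition~\ref{def: pr}. Thus the homologically perturbed complex $C(H^{(r)})$ of Proposition~\ref{prop31} has vanishing differential. It follows that for any action window $(a,b)$ with endpoints outside ${\rm Spec}(H^{(r)})$, $HF(H^{(r)})^{(a,b)}$ is simply the direct sum of the local groups $HF^{\loc}(H^{(r)},\ov{y})$ over the capped orbits $\ov{y}$ with action in $(a,b)$. Moreover, every connecting homomorphism attached to a short exact sequence of such windowed complexes vanishes identically, because the sequence of complexes splits with zero differentials.

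\textbf{Step 2: contradiction with Lemma~\ref{lemma:SDM}.} That lemma produces, for small $\eps$ and all $r\ge r_\eps$, some $\delta\in(0,\eps)$ for which the connecting map \eqref{eq: connecting} for $C(H^{(r)})$ is nontrivial. Choose $r=p^m\ge r_\eps$ with $rc\pm\delta,\ rc+\eps\notin{\rm Spec}(H^{(r)})$; this is possible because $\delta$ may be moved slightly within the open condition. Item (3) of Proposition~\ref{prop31} identifies the windowed cohomologies of $C(H^{(r)})$ with those of \eqref{eq: filtered-irrational}, so the connecting map in question is exactly the one computed in Step 1. It must therefore vanish, which contradicts Lemma~\ref{lemma:SDM}.

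\textbf{Main obstacle.} The delicate point is compatibility. Lemma~\ref{lemma:SDM} is stated for $C(H^{(r)})$, but it requires $r$ large without any restriction to powers of $p$, and its proof goes through Hein's filtered colimit definition. We need to make sure two things hold: that the conclusion remains valid along the subsequence $r=p^m$ (the lemma gives all $r\ge r_\eps$, so this is fine provided the SDM hypothesis on $H$ itself suffices), and that the filtered chain homotopy type of $C(H^{(r)})$ is the one in which Lemma~\ref{lemma:SDM} is phrased. In the irrational case, action values accumulate, so the connecting map cannot be checked directly on cohomology. I would handle this by noting that the zero-differential splitting is a chain-level statement about $C(H^{(r)})$ itself, and the connecting map depends only on that filtered complex. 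This circumvents the need for any rationality of $(M,\om)$.
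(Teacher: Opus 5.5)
Your proposal is correct and follows the paper's proof essentially verbatim: reduce to $k=0$, use Corollary~\ref{cor32} to see that $C(H^{(p^m)})$ has zero differential, so every windowed connecting map vanishes, and this contradicts Lemma~\ref{lemma:SDM} at $r=p^m\ge r_\eps$. Your auxiliary remarks are not needed. These are the persistence of the SDM property under iteration, the adjustment of $\delta$ off the spectrum, and the appeal to item (3) of Proposition~\ref{prop31}. Lemma~\ref{lemma:SDM} is already phrased for the complexes $C(H^{(r)})$ and all $r\ge r_\eps$, and a zero differential kills the connecting map whatever the window endpoints are.
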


\begin{proof}
Suppose on the contrary, for some $k\geq 0$, $\tilde {\mc O}(H^{(p^k)})$ contains some symplectically degenerate maximum. As $\phi$ is an $\fp$ pseudo-rotation, $\phi^{p^k}$ is also an $\fp$ pseudo-rotation. Hence we may assume that $k = 0$. By Lemma \ref{lemma:SDM}, if $\ov{x} \in \tilde{\mathcal{O}}(H)$ is a symplectically degenerate maximum, then for every sufficiently small $\epsilon > 0$, there exists $r_{\epsilon} \in \mathbb{Z}_{\geq 0}$ such that for all $r \geq r_{\epsilon}$ we can find $\delta \in (0,\epsilon)$ such that the connecting homomorphism \eqref{eq: connecting} is nonzero. However, it follows from Corollary \ref{cor32} that when $r = p^k$, the differential of $C(H^{(p^k)})$, as well as that of $C(H^{(p^k)})^{(a,b)}$ for an arbitrary window $(a,b)$ vanishes. Hence the connecting homomorphism \eqref{eq: connecting} also vanishes, which is a contradiction. This finishes the proof.
\end{proof}

\begin{remark}\label{rmk:sdm for primes}
The same argument applies in the setting of Theorem \ref{thm: infinitely many}.     
\end{remark}

\begin{remark}\label{rmk:sdm for perfect}
The proof by contradiction in the above argument generalizes to the setting of a Hamiltonian diffeomorphism $\phi$ with finitely many periodic points. The argument itself does not use the interplay between quantum Steenrod operations and equivariant pants product; we therefore defer the proof of this more general version, which leads to the proof of Theorem \ref{thm: sdm}, to the end of this Section.
\end{remark}

\subsection{Proof of Theorem \ref{thm: pr}}\label{subsection_proof_uniruled}

Let $\phi$ be an $\fp$ Hamiltonian pseudo-rotation. Choose a Hamiltonian $H$ that generates $\phi$. Theorem \ref{thm: pr} follows from the following proposition, whose proof resembles that in \cite{CGG2}.

\begin{prop}\label{prop_carrier_iteration}
Under the assumption of Theorem \ref{thm: pr}, let $\phi\in {\rm Ham}(M)$ be an $\fp$ Hamiltonian pseudo-rotation generated by a Hamiltonian $H$. Suppose $\qst_p(\mu) = St_p(\mu)$. Then there exists $\ov{x} \in \tilde {\mc O}(H)$ such that for all sufficiently large $k\geq 0$, 
\beqn
HF^{\loc, 2n} (H^{(p^k)}, \ov{x}^{(p^k)}; \fp) \neq 0.
\eeqn
\end{prop}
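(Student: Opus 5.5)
We will follow the carrier-iteration strategy of \cite{CGG2}, transplanted to the homologically perturbed complexes of Section \ref{section_hp}. Since $\phi$ is an $\fp$ pseudo-rotation, so is every $\phi^{p^k}$. Hence Corollary \ref{cor32} gives $C(H^{(p^k)})$ zero differential, and the cohomology $HF(M;\Lambda^\Gamma)$ is identified, filtration-preservingly, with $\bigoplus_{x} HF^{\loc}(H^{(p^k)},x^{(p^k)})$. Because the periodic points of period $p^k$ are finite for all $k$, the sets ${\rm Fix}_c(\phi^{p^k})$ stabilize. After replacing $\phi$ by $\phi^{p^{k_0}}$ we may assume ${\rm Fix}_c(\phi)={\rm Fix}_c(\phi^p)$ at every stage. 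Since $d\phi_x$ has only finitely many eigenvalues, each root of unity of order a power of $p$ appears only finitely often, so after a further iteration all fixed points are also $p$-admissible. Propositions \ref{prop33b}, \ref{prop35}, Corollary \ref{cor36} and Lemma \ref{lma:div} then apply at every level $H^{(p^k)}$.

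Define the \emph{carrier} of $\mu$ at level $k$ to be a capped orbit $\ov{x}_k$ whose local summand contains the component of $\pss(\mu)$ realizing the spectral invariant $c(\mu,H^{(p^k)})$. By Corollary \ref{cor32} this is well defined, since with vanishing differential $c(\mu,\cdot)$ is just the minimal action of a nonzero local component. The main step is to compute $\fst_p(\pss\mu)$ in two ways. By Theorem \ref{thm214} and the hypothesis $\qst_p(\mu)=St_p(\mu)=\mu$ (since $\mu$ has top degree, $St_p(\mu)$ is, up to a unit, $t^{n(p-1)}\mu$), the class $\fst_p(\pss\mu)$ equals, up to the SSP--PSS discrepancy controlled by Theorem \ref{thm211}(4), $c\,t^{n(p-1)}\pss_{\zp}(\mu)$ with no quantum corrections. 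On the other hand, Proposition \ref{prop35} with Corollary \ref{cor36} (vanishing $d_\zp$) splits $P=\wt P\circ qF$ into $\wt P^{\loc}$ and $\wt P^{\rm big}$, where $\wt P^{\rm big}$ raises action by at least $\epsilon_H$. At the carrier, the component of $P(\pss\mu)$ of action exactly $p\,c(\mu,H^{(p^k)})$ is therefore $\wt P^{\loc}_{\ov{x}_k}(y^{\otimes p})$, where $y$ is the carrier component. This is nonzero by Theorem \ref{thm_local_pants}(1).

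Comparing the two computations, first the actions force $c(\mu,H^{(p^{k+1})})=p\,c(\mu,H^{(p^k)})$, and the carrier at level $k+1$ is $\ov{x}_k^{(p)}$. Next compare $(t,\theta)$-degrees. Lemma \ref{lma:div} bounds the leading $(t,\theta)$-order of the local term by $2n(p-1)$, and the other side has order exactly $2n(p-1)$ (from $t^{n(p-1)}$) in degree $2n$. Matching cohomological degrees then forces $\deg y + 2n(p-1) - (\text{leading order}) = 2n$. Since the leading order is at most $2n(p-1)$, we get $\deg y\geq 2n$. Degree $2n$ is maximal for $\mu$, so the carrier component must lie in $HF^{\loc,2n}$. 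Iterating, we set $\ov{x}:=\ov{x}_{k_0}$ (after unwinding the iteration, the corresponding orbit of $H$), and then $HF^{\loc,2n}(H^{(p^k)},\ov{x}^{(p^k)})\neq0$ for all large $k$.

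The main obstacle is making the degree and action bookkeeping rigorous. The delicate points are, first, the SSP/PSS error term of Theorem \ref{thm211}(4), and second, ensuring that the $\wt P^{\rm big}$ contributions and other local summands of equal action cannot cancel the leading carrier term. I would handle both by working in the $t$-adic and action filtration simultaneously, using the strict $\epsilon_H$ gap and a lexicographic leading-term argument. I would also use Lemma \ref{lma:sdm} to exclude symplectically degenerate maxima. This ensures that the nonvanishing at degree $2n$ is compatible with the mean-index constraints, which lets the resulting degree-$2n$ carriers be used later to derive the contradiction proving Theorem \ref{thm: pr}.
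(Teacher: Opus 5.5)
There is a genuine gap in your third paragraph, at the step that carries the content of the proposition. The claim that ``the actions force'' $c(\mu,H^{(p^{k+1})})=p\,c(\mu,H^{(p^k)})$, with carrier $\ov{x}_k^{(p)}$, does not follow from action considerations alone.

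The action comparison (Tate filtered isomorphism plus the action gap for $\wt{P}^{\rm big}$) only controls the \emph{equivariant} class $\mu_{\zp}=\ssp_{\zp}^{-1}(\mu)$. It shows that $\wh{c}(\mu_{\zp},H^{(p^{k+1})})=p\,c(\mu,H^{(p^k)})$, attained on the summand of $x_k^{(p)}$. However, this may happen only in a coefficient of positive $(t,\theta)$-order. The non-equivariant spectral invariant sees only the $(0,0)$-coefficient, so you need two further inputs.
\begin{enumerate}
\item[(a)] The $(0,0)$-coefficient of each local component of $\mu_{\zp}$ agrees with the corresponding component of $\mu$ up to higher action (the paper's Claim A). This yields $c(\mu,H^{(p^{k+1})})\ge \wh{c}(\mu_{\zp},H^{(p^{k+1})})=p\,c(\mu,H^{(p^k)})$.
\item[(b)] On the $x_k^{(p)}$-summand, the action-$p\,c$ part of $t^{n(p-1)}\mu_{\zp}$ sits in $(t,\theta)$-order exactly $2n(p-1)$.
\end{enumerate}
Point (b) is exactly where Lemma \ref{lma:div} is indispensable. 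Since $\wt{P}^{\rm big}$ has strictly larger action, that part equals the action-$p\,c$ part of $\wt{P}^{\loc}$ of the carrier component, whose leading order is $\le 2n(p-1)$. Every term of $t^{n(p-1)}\mu_{\zp}$ has order $\ge 2n(p-1)$, so the $(0,0)$-coefficient of $\mu_{\zp}$ at $\ov{x}_k^{(p)}$ has action $p\,c$. By (a) the non-equivariant component then has action $p\,c$ too, which gives the reverse inequality and pins the carrier to the iterate. Without (b), the action $p\,c$ could be realized only at higher $(t,\theta)$-order, and the non-equivariant carrier could jump to another orbit or capping, which is precisely what the proposition excludes. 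This is also where the hypothesis $\qst_p(\mu)=St_p(\mu)$ really enters, via the lower bound $2n(p-1)$ on orders. The paper organizes this with the sets $I^{(p^k)}$ and proves $\emptyset\neq I^{(p)}\subset I^{(1)}$.

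Conversely, the use you make of Lemma \ref{lma:div} is vacuous. The carrier component $y$ is a local component of the degree-$2n$ class $\pss(\mu)$ under the graded splitting of Corollary \ref{cor32}, so it has degree $2n$ automatically. Degree bookkeeping cannot constrain the leading order $\ell$ anyway: the order-$\ell$ coefficient has degree $p\deg y-\ell$ on one side and $2np-\ell$ on the other, for every $\ell$. Also, ``the other side has order exactly $2n(p-1)$'' is the conclusion you need, not an input; a priori you only have $\ge$.

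Smaller points:
\begin{enumerate}
\item[(i)] $St_p(\mu)$ is $t^{n(p-1)}\mu$ up to a unit, not $\mu$.
\item[(ii)] Since $\ssp_{\zp}$ is an isomorphism, Theorem \ref{thm214} gives the exact identity $\fst_p(\pss\mu)=t^{n(p-1)}\ssp_{\zp}^{-1}(\mu)$, so there is no PSS--SSP discrepancy to control.
\item[(iii)] Distinct local summands cannot cancel one another. The only interaction to handle is $\wt{P}^{\loc}$ against $\wt{P}^{\rm big}$ on the same summand.
\item[(iv)] Lemma \ref{lma:sdm} plays no role here; it enters only when deducing Theorem \ref{thm: pr}.
\end{enumerate}
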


\begin{proof}[Proof of Theorem \ref{thm: pr}]
Assume on the contrary that $\qst_p(\mu) = St_p(\mu)$. Let $\phi$ be an $\fp$ Hamiltonian pseudorotation generated by a Hamiltonian $H$. Let $\ov{x} \in \tilde {\mc O}(H)$ be provided by Proposition \ref{prop_carrier_iteration}. Then by Lemma \ref{lma:sdm}, for any  $k\geq 0$, the iteration $\ov{x}^{(p^k)}$ cannot be a symplectically degenerate maximum. Then for all $k\geq 0$, $\Delta(\ov{x}^{(p^k)}) \neq 0$. 

By Proposition \ref{prop_carrier_iteration},  when $k$ is sufficiently large, $HF^{\loc, 2n} (H^{(p^k)}, \ov{x}^{(p^k)}) \neq 0$. It follows that the mean index $\Delta(\ov{x}^{(p^k)})$ is negative because $HF^{\rm loc}(H^{(p^k)}, \ov{x}^{(p^k)})$ is supported over degrees in the interval $[-\Delta(\ov{x}^{(p^k)}), -\Delta(\ov{x}^{(p^k)}) + 2n]$ using our grading convention. By homogeneity of the mean index, we see that
\beqn
\Delta(\ov{x}^{(p^k)}) = p^k \Delta(\ov{x}) \xrightarrow{k \to \infty} -\infty. 
\eeqn
However, just as above, note that $HF^{\rm loc} (H^{(p^k)}, \ov{x}^{(p^k)})$ is supported over degrees in the interval $[-\Delta(\ov{x}^{(p^k)}), -\Delta(\ov{x}^{(p^k)}) + 2n]$, which contradicts the conclusion of Proposition \ref{prop_carrier_iteration} when $k$ is sufficiently large. Therefore, $\qst_p(\mu) \neq St_p(\mu)$.
\end{proof}

Now we start to prove Proposition \ref{prop_carrier_iteration}. 

\begin{proof}[Proof of Proposition \ref{prop_carrier_iteration}]
We modify the pseudo-rotation to satisfy the condition of Lemma \ref{lma:div}. As $\phi$ is an $\fp$ Hamiltonian pseudo-rotation, we may replace $\phi$ by $\phi^{p^k}$ for any $k\geq 0$. Hence we may assume that for all $x \in {\rm Fix}_c(\phi)$, 
\beqn
{\rm ker}(d\phi_x - {\rm Id}) = {\rm ker}(d\phi_x^p - {\rm Id}).
\eeqn

Now we translate the condition on the quantum Steenrod operation $\qst_p(\mu) = t^{n(p-1)} \mu$ to one about the equivariant pants product. Using Theorem \ref{thm214}, one has 
\beqn
\fst_p( \pss(\mu)) = t^{n(p-1)} \ssp_{\zp}^{-1}(\mu) \in HF_\zp(M).
\eeqn
By abuse of notations, via $\pss$, we view $\mu$ as its image $\pss(\mu) \in HF(M)$. On the other hand, denote $\mu_\zp:= \ssp_\zp^{-1}(\mu) \in HF_\zp(M)$. Then one has 
\beq\label{eqn27}
\fst_p(\mu) = t^{n(p-1)} \mu_{\zp} \in HF_{\zp}^{2n}(M; \Lambda^\Gamma) \otimes \rp.
\eeq

Now one can localize the above equation to periodic orbits. Denote
\beqn
{\mc O}(H) = \{ x_1, \ldots, x_m\}.
\eeqn
By Proposition \ref{prop31}, as $\phi$ is an $\fp$ Hamiltonian pseudo-rotation, for all $k\geq 0$, one has
\beqn
HF(M; \Lambda^\Gamma) \cong HF(H^{(p^k)}) = \bigoplus_{i = 1}^m  HF^{\rm loc}(H^{(p^k)}, x_i^{(p^k)}; \Lambda^\Gamma).
\eeqn
With respect to this decomposition, we can write 
\beqn
\mu = \sum_{i=1}^m \mu_i^{(p^k)},\ {\rm where}\ \mu_i^{(p^k)} \in HF^{\loc, 2n}  (H^{(p^k)}, x_i^{(p^k)}; \Lambda^\Gamma).
\eeqn
We consider a similar decomposition for the equivariant Floer cohomology. Applying Proposition \ref{prop33b} and Corollary \ref{cor36}, one has 
\beqn
HF_{\zp}(M; \Lambda_{\kzp}^\Gamma) \cong HF_{\zp} (H^{(p^k)}; \Lambda_{\kzp}^\Gamma) = \bigoplus_{i=1}^m HF^{\rm loc}(H^{(p^k)}, x_i^{(p^k)}; \Lambda_{\fp}^\Gamma) \otimes \rp.
\eeqn
So the class $\mu_{\zp}$ can be decomposed as 
\beq\label{equivariant_decomposition}
\mu_{\zp} = \sum_{i=1}^m \mu_{\zp,i}^{(p^k)}(t, \theta),\ {\rm where}\ \mu_{\zp,i}^{(p^k)}(t, \theta) \in HF^{\loc, 2n} (H^{(p^k)}, x_i^{(p^k)}; \Lambda^\Gamma) \otimes \rp.
\eeq

{\it Claim A.} For all $k\geq 0$ and $i$, the leading order term $\mu_{\zp,i}^{(p^k)}(0,0)$ coincides with $\mu_i^{(p^k)}$ up to higher energy terms. 

{\it Proof of Claim A.} On the cochain level, $\mu_{\zp,i}^{(p^k)}(t, \theta)$ is realized by the inverse of the cochain-level equivariant SSP map. The involved moduli spaces contain the leading one corresponding to the non-equivariant SSP map, which gives $\mu_i^{(p^k)}$ up to high energy terms by the results in \cite[Section 13.7]{Bai_Xu_2025}. \hfill {\it End of the proof of Claim A.}

One can also view $\mu_{\zp,i}^{(p^k)}(t, \theta)$ as a class in the Tate cohomology
\beqn
\wh{HF}{}_\zp^\loc (H^{(p^k)}, x_i^{(p^k)}; \Lambda_{{\mc K}_p}^\Gamma) \cong HF^{\loc}(H^{(p^k)}, x_i^{(p^k)}; \Lambda^\Gamma) \otimes \Lambda_{{\mc K}_p} \otimes E(\theta).
\eeqn
What we do is to consider the ordinary and Tate spectral invariants and their ``carriers'' under the $p^k$-th iterations. 

{\it Claim B.} For all $k \geq 1$, there holds
\beqn
c(\mu, H^{(p^k)}) \geq \wh c(\mu_{\zp}, H^{(p^k)}) = p c(\mu, H^{(p^{k-1})}) \geq p^k c(\mu, H)
\eeqn

{\it Proof of Claim B.}
The decomposition of $\mu$ implies that 
\beqn
c(\mu, H^{(p^k)}) = \min_i {\mc A}_{H^{(p^k)}}(\mu_i^{(p^k)}).
\eeqn
The Tate version implies
\beqn
\wh c(\mu_{{\mb Z}/p}, H^{(p^k)}) = \min_i \wh{\mc A}_{H^{(p^k)}}( \mu_{\zp, i}^{(p^k)}).
\eeqn
Claim A implies that 
\beqn
{\mc A}_{H^{(p^k)}}(\mu_i^{(p^k)}) \geq \wh{\mc A}_{H^{(p^k)}}(\mu_{\zp, i}^{(p^k)}) \Longrightarrow c(\mu, H^{(p^k)}) \geq \wh c(\mu_{\zp}, H^{(p^k)}).
\eeqn
Moreover, as the equivariant pants product rescales the spectral invariants by $p$, one has 
\beqn
\wh c(\mu_{\zp}, H^{(p^{k+1})}) = \wh c(t^{n(p-1)} \mu_{\zp}, H^{(p^{k+1})}) = p c(\mu, H^{(p^k)}).
\eeqn
Then inductively, $c(\mu, H^{(p^k)}) \geq p^k c(\mu, H)$. \hfill {\it End of proof of Claim B.}

Claim B has a local version. Suppose for some $i$, ${\mc A}_{H^{(p^k)}}(\mu_i^{(p^k)}) = c(\mu, H^{(p^k)})$.\footnote{One can roughly regard $x_i$, or its $p^k$-th iteration, as a spectral carrier.} Then  
\beq\label{eqn_local_growth}
{\mc A}_{H^{(p^{k+1})}}(\mu_i^{(p^{k+1})}) \geq c(\mu, H^{(p^{k+1})}) \geq p c(\mu, H^{(p^k)}) = p {\mc A}_{H^{(p^k)}}(\mu_i^{(p^k)}).
\eeq

The proposition will follow from a specific quantitative statement and induction. For each $k\geq 0$, define
\beqn
I^{(p^k)}:= \Big\{ i \in \{1, \ldots, m\}\ |\ p^k c(\mu, H) = {\mc A}_{H^{(p^k)}} (\mu_i^{(p^k)}) \Big\}.
\eeqn

{\it Claim C.} There holds 
\beq\label{carrier_iteration}
\bigcap_{k \geq 0} I^{(p^k)} \neq \emptyset.
\eeq

We prove this assertion later. If it is true, then it follows that $c(\mu, H^{(p^k)}) = p^kc (\mu, H)$ for all $k$. Choose $i$ from the nonempty intersection. As all classes have a fixed degree $2n$, by the definition of the group $\Gamma$ and the Novikov ring, there exists a unique capping $\ov{x}_i$ of $x_i$ and a class
\beqn
0 \neq \ov{\mu}{}_i^{(1)} \in HF^{\loc, 2n} (H, \ov{x}_i; \fp) \subset HF^{\loc, 2n} (H, x_i; \Lambda^\Gamma)
\eeqn
such that 
\beqn
\mu_i^{(1)} = a_i^{(1)} \ov\mu_i^{(1)},\ a_i^{(1)} \in \Lambda^\Gamma,\ \nu(a_i^{(1)}) = 0.
\eeqn
Notice that the $p^k$-th iterations of $\ov{x}_i$ still have degree $2n$ and action being $p^k$ times the action of $\ov{x}_i$. It follows that one can write
\beqn
0 \neq \mu_i^{(p^k)} = a_i^{(p^k)} \ov\mu_i^{(p^k)}\ {\rm where}\ \nu(a_i^{(p^k)}) = 0,\ \ov\mu_i^{(p^k)} \in HF^{\loc, 2n} (H^{(p^k)}, \ov{x}_i^{(p^k)}; \fp).
\eeqn
Therefore, the assertion of Proposition \ref{prop_carrier_iteration} holds.



Now we prepare to prove Claim C. By induction, it is equivalent to 
\beqn
\emptyset \neq I^{(p)} \subset I^{(1)}.
\eeqn
The inclusion $I^{(p)} \subset I^{(1)}$ follows from \eqref{eqn_local_growth}. To prove the nonemptiness of $I^{(p)}$, one has to consider the Tate spectral numbers and use the decomposition of the equivariant pants product given by Proposition \ref{prop35}. Indeed, one can write
\beqn
t^{n(p-1)} \mu_{\zp} = t^{n(p-1)} \sum_{i=1}^m \mu_{\zp,i}^{(p)}(t, \theta) = P^{\rm loc}(\mu) + P^{\rm big}(\mu) = \sum_{i=1}^m P^{\rm loc}(\mu_i^{(1)}) + P^{\rm big}(\mu).
\eeqn
We know that $P^{\rm big}$ strictly increases the energy filtration and is additive. Hence 
\begin{multline*}
\wh{\mc A}_{H^{(p)}} \left( P^{\rm big}(\mu) \right) = \wh{\mc A}_{H^{(p)}} \Big( \sum_{i=1}^m P^{\rm big}(\mu_i^{(1)}) \Big) \geq \inf_{1\leq i \leq m} \wh{\mc A}_{H^{(p)}} \Big( P^{\rm big}(\mu_i^{(1)}) \Big) \\
> \inf_{1 \leq i \leq m} p {\mc A}_H (\mu_i^{(1)}) = p c(\mu, H).
\end{multline*}
On the other hand, since the (Tate version) operation $\wh{\fst}_p$ precisely rescales the energy filtration by $p$, we have
\begin{multline*}
pc(\mu, H) = \wh{c}( \wh{\fst}_p(\mu)) = \wh{\mc A}_{H^{(p)}}( P^{\rm loc}(\mu) + P^{\rm big}(\mu))\\
= \wh{\mc A}_{H^{(p)}}( P^{\rm loc}(\mu)) = \wh {\mc A}_{H^{(p)}} \Big( \sum_{i = 1}^m P^{\rm loc}(\mu_i^{(1)})\Big) = \inf_{1\leq i \leq m} \wh {\mc A}_{H^{(p)}} (P^{\rm loc}(\mu_i^{(1)})).
\end{multline*}
As the local equivariant pair-of-pants product rescales the local action by $p$ times,
\beqn
i \in I^{(1)} \Longrightarrow \wh{\mc A}_{H^{(p)}}( P^{\rm loc}(\mu_i^{(1)})) = p {\mc A}_H(\mu_i^{(1)}) = p c(\mu, H).
\eeqn
On the other hand, using \eqref{equivariant_decomposition}, one has
\beq\label{eqn28}
pc(\mu, H) = \wh{c}(t^{n(p-1)} \mu_{\zp}, H^{(p)}) = \inf_{1 \leq i \leq m} \wh {\mc A}_{H^{(p)}} ( \mu_{\zp,i}^{(p)}(t, \theta)).
\eeq

{\it Claim D.} For each $i \notin I^{(1)}$, one has 
\beqn
\wh{\mc A}_{H^{(p)}}(\mu_{\zp,i}^{(p)}(t, \theta)) > p c(\mu, H).
\eeqn

{\it Proof of Claim D.} Choose $i \notin I^{(1)}$. So ${\mc A}_H(\mu_i^{(1)}) > c(\mu, H)$. Notice that $\mu_{\zp,i}^{(p)}(t, \theta)$, if nonzero, either comes from $P^{\rm loc}(\mu_i^{(1)})$ or $P_{\rm big}(\mu)$. Both of them have action strictly greater than $p c(\mu, H)$. \hfill {\it End of proof of Claim D.}

Now we can prove Claim C. First, Claim D and \eqref{eqn28} imply that 
\beqn
pc(\mu, H) = \inf_{i \in I^{(1)}} \wh {\mc A}_
{H^{(p)}}( \mu_{\zp,i}^{(p)}(t, \theta)).
\eeqn
Hence there exists some $i_0 \in I^{(1)}$ such that
\beqn
\wh{\mc A}_{H^{(p)}}(P^{\rm loc}(\mu_{i_0}^{(1)})) = p {\mc A}_H(\mu_{i_0}^{(1)}) =  p c(\mu, H) = \wh{\mc A}_{H^{(p)}}( \mu_{\zp, i_0}^{(p)}(t, \theta)).
\eeqn
Now the $i_0$-th component of \eqref{eqn27} reads
\beqn
P^\loc(\mu_{i_0}^{(1)}) + \Big[ P^{\rm big}(\mu) \Big]_{i_0} = t^{n(p-1)} \mu_{\zp, i_0}^{(p)}(t, \theta) \in HF^{\loc, 2n}(H^{(p)}, x_{i_0}^{(p)}) \otimes \rp.
\eeqn
As the $P^{\rm big}(\mu)$ term has strictly higher energy, the first term on the right (ordered by $(t, \theta)$-degrees) which realizes the action, coincides with the leading order term of $P^{\rm loc}(\mu_{i_0}^{(1)})$. However, by Lemma \ref{lma:div}, the leading degree can be at most $2n(p-1)$. Hence
\beqn
pc(\mu, H) = \wh {\mc A}_{H^{(p)}}(P^\loc(\mu_{i_0}^{(1)})) = \wh{\mc A}_{H^{(p)}}(\mu_{\zp, i_0}^{(p)}(0, 0)).
\eeqn
By Claim A, this also agrees with ${\mc A}_{H^{(p)}}(\mu_{i_0}^{(p)})$. This finishes the proof of Claim C and hence the proposition. 
\end{proof}

\subsection{Proof of Theorem \ref{thm_torsion}}\label{subsection_proof_no_torsion}

The proof is a combination of the proof of Theorem \ref{thm: pr} and the corresponding result that has been established in the monotone setting \cite{AS-torsion}.



\begin{proof}[Proof of Theorem \ref{thm_torsion}] Without loss of generality, we may assume that $G = {\mb Z}/p'$ for a prime $p'$, generated by $\phi \in {\rm Ham}(M, \omega)$. We also assume by contradiction that for some  prime $p \neq p'$, $\qst_p(\mu) = St_p(\mu)$, which can be translated to \eqref{eqn27}, i.e.
\beqn
\fst_p( \pss(\mu)) = t^{n(p-1)} \mu_{\zp} = t^{n(p-1)} \ssp^{-1}_{{\mb Z}/p}(\mu) \in HF_\zp(M).
\eeqn
We focus on the case $p\neq 2$; the argument for the $p=2$ case is similar. Choose a Hamiltonian $H$ generating $\phi$. By Proposition \ref{propc17}, for all $k \geq 0$, one has a canonical decomposition 
\beqn
HF(M; \Lambda^\Gamma) = \bigoplus_{{\mc F} \in \pi_0({\rm Fix}_c(\phi))} HF^{\rm loc}(H^{(p^k)}, {\mc F}^{(p^k)}; \Lambda^\Gamma).
\eeqn
Let ${\mc F}_1, \ldots, {\mc F}_m$ be all the connected components of ${\rm Fix}_c(\phi)$. Then one can decompose
\beqn
\pss(\mu) = \mu_1^{(p^k)} + \cdots + \mu_m^{(p^k)}\ {\rm where}\ \mu_i^{(p^k)} \in HF^\loc (H^{(p^k)}, {\mc F}_i^{(p^k)}; \Lambda^\Gamma).
\eeqn
Proposition \ref{propc17} also provides a decomposition of $\mu_\zp$ written as 
\beqn
\mu_{\zp} = \sum_{i=1}^m \mu_{\zp,i}^{(p^k)}(t, \theta),
\eeqn
where
\beqn
 \mu_{\zp,i}^{(p^k)}(t, \theta) \in  HF^{\rm loc}(H^{(p^k)}, {\mc F}_{i}^{(p^k)}; \Lambda_\rp^\Gamma).
\eeqn
The rest of the proof is completely parallel to that of Theorem \ref{thm: pr}. The key is an analogue of Proposition \ref{prop_carrier_iteration}, which says that there exists a capped component $\ov{\mc F}$ such that for all sufficiently large $k$, 
\beqn
HF^{\loc, 2n}(H^{(p^k)}, \ov{\mc F}{}^{(p^k)}; \fp) \neq 0.
\eeqn
Notice that the divisibility lemma holds in this case as the local equivariant pair-of-pants product coincides with the classical Steenrod operation (Proposition \ref{propc13}) where the roles of Proposition \ref{prop33b} and Proposition \ref{prop35} are replaced by Proposition \ref{propc14}. 
\end{proof}

\subsection{Proof of Theorrem \ref{thm:contractible}}
The proof here follows from the same ideas as in the proof of \cite[Theorem H]{AS-torsion} using the results in Appendix \ref{app:MB}.

\begin{proof}[Proof of Theorrem \ref{thm:contractible}]
    Let's first consider the case when $\phi^{p^k} = \mathrm{Id}$. Using Proposition \ref{homological_perturbation_Morse_Bott} and the PSS isomorphism \cite[Theorem E]{Bai_Xu_2025}, if $H$ is the Hamiltonian generating $\phi$, we see that 
    \beqn
    \sum_{{\mc F} \in \pi_0({\mc O}(H)^{\rm{cont}})} \dim HF^{\rm{loc}}(H,{\mc F}; \Lambda_{{\mb F}_p}^{\Gamma}) \geq \dim H^*(M;\Lambda_{{\mb F}_p}^{\Gamma}),
    \eeqn
    where we only sum over \emph{contractible} components of the fixed points. Using Proposition \ref{propc9} and the identification between local Floer cohomology under different cappings, we see that
    \beqn
    \sum_{{\mc F} \in \pi_0({\mc O}(H)^{\rm{cont}})} \dim H^*({\mc F};{\mb F}_p) \geq \dim H^*(M;{\mb F}_p).
    \eeqn
    On the other hand, by applying the classical Smith inequality iteratively, we have
    \beqn
        \sum_{{\mc F} \in \pi_0({\mc O}(H))} \dim H^*({\mc F};{\mb F}_p) \leq \dim H^*(M;{\mb F}_p).
    \eeqn
    If there were noncontractible components of ${\mc O}(H)$, since $\phi$ is Morse--Bott, the above two inequalities cannot hold simultaneously.

    In general, we have $\phi^{p_1^{k_1} \cdots p_l^{k_l}} = \rm{Id}$ where $p_1, \dots, p_l$ are distinct prime numbers. Then, we can induct on $l$ following \cite[Section 5.10]{AS-torsion}. This finishes the proof.
\end{proof}

\subsection{Proof of Theorem \ref{thm: hyperbolic} and corollaries}\label{subsection_proof_hyperbolic}

First, we may replace $\phi$ by $\psi := \phi^{k_0}$ for some large integer $k_0$ such that 
all periodic points of $\psi$ are fixed points and all capped periodic orbits have Conley--Zehnder index congruent to $n$ mod $2$. Let $H$ be a Hamiltonian generating $\psi$. Since all the fixed points of $\psi$ are hyperbolic, we know
\beqn
{\rm CZ}(\ov{x}^{(k)}) = k {\rm CZ}(\ov{x}),\ \forall \ov{x} \in \tilde{\mc O}(H),\ \forall k \geq 1.
\eeqn
Since the cohomological grading $r=|\ol{x}|$ of $\ol{x}$ satisfies $|\ol{x}| = \CZ(\ol{x})+n,$ this means that for a prime $p$, 
\beqn
p|\ol{x}| - |\ol{x}^{(p)}| = p \CZ(\ol{x}) + pn - \CZ(\ol{x}^{(p)})-n = n(p-1).
\eeqn
By the invertibility of $\fst_{p, x}^\loc$ (see Theorem \ref{thm_local_pants}) one has
\beq\label{eqn31}
\fst_{p, x}^\loc (\ov{x}) = c_x t^{n(p-1)/2} \ov{x}^{(p)} \in HF_\zp^\loc (H^{(p)}, x^{(p)}),\ 0 \neq c_x \in \fp.
\eeq

We prove Theorem \ref{thm: hyperbolic} by contradiction. Suppose
\beqn
\qst_p(u) = St_p(u) = c t^{r(p-1)/2} u
\eeqn
for some $u \in H^r(M; \F_p)$ and a nonzero $c \in \fp$ with $r > n$. Consider the Floer complex $CF(H)$ with coefficients in $\Lambda^{\Gamma}$. It can be translated via Theorem \ref{thm214} to 
\beq\label{eqn32}
\fst_p( \pss(u)) = ct^{r(p-1)/2} \ssp_{\zp}^{-1}(u) \in HF_\zp(M).
\eeq

Now we localize the above equation to fixed points. As the Conley--Zehnder indices of all contractible fixed points have the same parity, the 
differential of $CF(H)$ vanishes. 
Hence for each $x \in {\mc O}(H)$, the class $\pss(u)\in HF^r (M)$ has well-defined local components in $HF^{\loc, r} (H, x; \Lambda^\Gamma)$; when nonzero, this component can be written as $a_x \ov{x}$ for some capping $\ov{x}$ and $a_x \in \Lambda^\Gamma$ with $\nu(a_x) = 0$. More precisely, one has
\beqn
\pss (u) = \sum_{x \in {\mc O}(H)} a_x \ov{x}.
\eeqn

Then by \eqref{eqn31} and \eqref{eqn32}, one has the following equality in $HF_\zp(M)$
\begin{multline}\label{eqn34}
 ct^{r(p-1)/2} \ssp_{\zp}^{-1} (u) = \fst_p( \pss(u))  \\
= P^\loc(\pss(u)) + P^{\rm big} (\pss(u)) = \sum_x \fst_{p, x}^\loc ( a_x \ov{x}) + P^{\rm big} (\pss(u)) \\
= \sum_{x} c_x a_x^p t^{n(p-1)/2} \ov{x}^{(p)} + P^{\rm big} (\pss(u)).
\end{multline}

Now by Theorem \ref{thm211}, $HF_{\zp}(M)\cong H(M) \otimes \Lambda_{\rp}^\Gamma$. Hence one can consider the leading order term with respect to the $(t, \theta)$-grading on both sides of \eqref{eqn34}. The $(t, \theta)$-degree of the leading order term of the left-hand-side is clearly at least $r(p-1)$. 

To derive a contradiction, consider a ``spectral carrier'' of $u$, i.e. some $\ov{x} \in \tilde{\mc O}(H)$ such that
\beqn
c(u, H) = {\mc A}_H ( a_x \ov{x}) = {\mc A}_H(\ov{x}).
\eeqn
Since the local equivariant pants product $\fst_{p, x}^\loc$ rescales the action by $p$ times while $P^{\rm big}$ strictly increases the action, the Tate spectral invariant of the right hand side of the above equation is realized by the term $c_x a_x^p t^{n(p-1)/2} \ov{x}^{(p)}$. Therefore, the $(t, \theta)$-degree of the leading order term of the right-hand-side of \eqref{eqn34} is at most $n(p-1)$. This contradicts the condition that $r>n$. This proves Theorem \ref{thm: hyperbolic}.


\begin{proof}[Proof of Corollary \ref{cor: generic}]
A standard argument (see \cite{GG-generic,Sugimoto-generic}), which is a local argument, shows that for a $C^2$-generic Hamiltonian diffeomorphism either all of its periodic points are hyperbolic or it has infinitely many periodic points by a variant of the Birkhoff-Lewis theorem \cite{Moser}. However, if all the periodic points of $\phi$ are hyperbolic and there are finitely many of them, then an iteration $\psi = \phi^{k_0}$ as above must be a pseudo-rotation with all periodic points hyperbolic. Theorem \ref{thm: hyperbolic} now applies and finishes the proof.    
\end{proof}


\begin{proof}[Proof of Corollary \ref{cor: generic new example}]\label{proof_new_example}\label{proof_cor19}

We work over characteristic $2$ and use quantum Steenrod squares in the monotone setting. Let $\mu \in H^4 (X; {\mb F}_2)$ be the point class. By \cite[Theorem 1.7]{covariant-constant}, one has 
\beqn
\qst_2(\mu) = \theta^4 \mu
\eeqn
where $\theta \in H^1(B {\mb Z}/2; {\mb F}_2)$ is the generator. On the other hand, let 
\beqn
\kappa: H^*(X) \otimes H^*(X) \otimes H^*(\mb{CP}^2) \to H^* (M)
\eeqn
be the usual Kunneth isomorphism. Consider 
\beqn
u = \kappa(\mu \otimes \mu \otimes 1) \in  H^8 (M).
\eeqn
Then by the Kunneth property of the quantum Steenrod squares (Theorem \ref{thm: Kunneth}) 
we have 
\begin{multline*}
\qst_2(u) = (\kappa\otimes m_p) ( \qst_2(\mu) \otimes \qst_2(\mu) \otimes QSt_2(1)) \\
= (\kappa \otimes m_p) ( \theta^4 \mu \otimes \theta^4 \mu \otimes 1) = \kappa( \mu \otimes \mu \otimes 1) \otimes \theta^8.
\end{multline*}
On the other hand, the classical Steenrod square satisfies the same Kunneth property. Hence
\beqn
St_2(u) = \theta^8 u = \qst_2(u).
\eeqn


Now we prove that a $C^2$-generic Hamiltonian diffeomorphism $\phi$ on $M$ has infinitely many periodic points. Recall that a $C^2$-generic $\phi$ either has infinitely many periodic points or all periodic points are hyperbolic. If the claim is not true, then $\phi$ has finitely many periodic points all of which are hyperbolic. Then the calculation $\qst_2(u) = St_2(u)$ and the fact $r = 8 > 6 = n$ contradict Theorem \ref{thm: hyperbolic}, see Remark \ref{rem:even-prime}.
\end{proof}

\subsection{Proof of Theorem \ref{thm: infinitely many}}\label{subsection_proof_anti_HZ}

We first prove a variant of Lemma \ref{lma:sdm}.

\begin{lemma}\label{lemma47}
Under the assumption of Theorem \ref{thm: infinitely many}, assume $\phi$ has only finitely many periodic points. Then no contractible fixed point of $\phi$ is a symplectically degenerate maximum.
\end{lemma}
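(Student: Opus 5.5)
We argue by contradiction, mirroring the proof of Lemma \ref{lma:sdm}. Suppose some contractible fixed point $x$ of $\phi$, with a capping $\ov{x} \in \tilde{\mc O}(H)$, is a symplectically degenerate maximum, and write $c = {\mc A}_H(\ov{x})$. By Lemma \ref{lemma:SDM}, for every sufficiently small $\eps>0$ there is $r_\eps$ such that for every $r \geq r_\eps$ we can find $\delta \in (0,\eps)$ for which the connecting homomorphism \eqref{eq: connecting} is nonzero. We will apply this with $r = p$, a prime with $p \geq r_\eps$. The hypothesis $\qst_p(\mu) = St_p(\mu)$ holds for infinitely many primes, so we are free to choose $p$ large.

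The aim is to show that for infinitely many such primes $p$, the complex $C(H^{(p)})$ of Proposition \ref{prop31}, taken with $\fp$ coefficients, has vanishing differential. This would make all windowed connecting maps vanish and give the contradiction.

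\begin{enumerate}
\item Since $\phi$ has finitely many periodic points, for all sufficiently large primes $p$ we have ${\rm Fix}_c(\phi) = {\rm Fix}_c(\phi^p)$. Also, every fixed point is $p$-admissible, because only finitely many roots of unity occur as eigenvalues of $d\phi_x$ at the finitely many fixed points.
\item The assumption $N(\phi;\Q) = \dim_\Q H^*(M;\Q)$ needs to be upgraded to $\fp$. The integral local Floer cohomologies $HF^\loc(\phi,x;\Z)$ are finitely generated, and so is $H^*(M;\Z)$. Hence for $p$ larger than all orders of torsion, universal coefficients give $N(\phi;\fp) = N(\phi;\Q)$ and $\dim_{\fp} H^*(M;\fp) = \dim_\Q H^*(M;\Q)$. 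Therefore $N(\phi;\fp) = \dim H^*(M;\fp)$.
\item Corollary \ref{cor32} then says that $C(H)$ has zero differential over $\fp$.
\item The same is needed for $C(H^{(p)})$, which requires $N(\phi^p;\fp) = \dim H^*(M;\fp)$. Here we use admissibility: by Proposition \ref{prop_local_eq_structure} and the invertibility in Theorem \ref{thm_local_pants}, we get $\dim HF^\loc(H^{(p)},x^{(p)};\fp) = \dim HF^\loc(H,x;\fp)$.
\end{enumerate}

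Combining step (4) with ${\rm Fix}_c(\phi)={\rm Fix}_c(\phi^p)$ gives $N(\phi^p;\fp) = N(\phi;\fp)$. Corollary \ref{cor32}, applied to $H^{(p)}$, then shows that $C(H^{(p)})$ and all its windows $C(H^{(p)})^{(a,b)}$ have zero differential. By part (3) of Proposition \ref{prop31}, the filtered Floer cohomologies appearing in \eqref{eq: connecting} are computed by these windows. The connecting map of a short exact sequence of complexes with zero differentials is zero, which contradicts Lemma \ref{lemma:SDM}.

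I expect the main obstacle to be the interplay of coefficients. Lemma \ref{lemma:SDM} is formulated with the $\Q$-graded notion of SDM, namely $HF^{\loc,2n}(H,\ov{x};\Q)\neq 0$, so its conclusion must hold over $\fp$ for our chosen $p$. I would handle this by checking that the Ginzburg–Hein argument only uses nonvanishing of the local cohomology in top degree together with mean index zero. Over $\Z$, $HF^{\loc,2n}$ has positive rank, so the $\fp$ version is nonzero for all large $p$. Hein's decomposition and Proposition \ref{prop31} then run verbatim with $\bK = \fp$, so the connecting map remains nonzero over $\fp$.

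Finally, note that the hypothesis $\qst_p(\mu)=St_p(\mu)$ is not actually needed for this lemma beyond guaranteeing infinitely many usable primes, consistent with Remark \ref{rmk:sdm for primes}.
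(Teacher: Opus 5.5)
Your proof is correct and has the same structure as the paper's. You pass from $\Q$ to $\fp$ by universal coefficients for large $p$, show $N(\phi^p;\fp)=\dim H^*(M;\fp)$, apply Corollary~\ref{cor32} to $H^{(p)}$, and contradict Lemma~\ref{lemma:SDM} with $r=p$.

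You diverge only in step (4). The paper simply cites persistence of local Floer cohomology under admissible iterations \cite[Theorem 1.1]{GG-local-gap}. You instead derive $\dim HF^{\loc}(H^{(p)},x^{(p)};\fp)=\dim HF^{\loc}(H,x;\fp)$ from the equivariant machinery. That derivation works, provided you make two points explicit:
\begin{enumerate}
\item The invertibility in Theorem~\ref{thm_local_pants} must be read in Tate cohomology. The map $\fst^{\loc}_{p,x}$, composed with the non-additive quasi-Frobenius, cannot literally be an isomorphism.
\item You then compare $\kp$-dimensions. By Lemma~\ref{lemma:quasi-Fr}, the Tate cohomology of $CF^{\loc}(H,x)^{\otimes p}$ has dimension $2\dim HF^{\loc}(H,x;\fp)$. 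By Proposition~\ref{prop_local_eq_structure}, and exactness of inverting $t$, the Tate local equivariant cohomology has dimension $2\dim HF^{\loc}(H^{(p)},x^{(p)};\fp)$.
\end{enumerate}

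Your route is heavier and yields only a dimension count. The Ginzburg--G\"urel theorem gives a graded isomorphism directly, and Proposition~\ref{prop_local_eq_structure} is itself proved with their normal form, so citing them is more economical.

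On the other hand, your check that the SDM hypothesis survives reduction mod $p$ fills in a point the paper leaves implicit. In fact positive rank over $\Z$ already makes $HF^{\loc,2n}(H,\ov{x};\fp)\neq 0$ for every $p$, not just large ones.

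Finally, $\qst_p(\mu)=St_p(\mu)$ is not needed even to supply primes here, since every sufficiently large prime works.
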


\begin{proof}
The argument is very similar to that of Lemma \ref{lma:sdm}. 
First, as the local Floer cohomology is defined over integers, 
by the universal coefficient theorem, 
there exists $p_0(\phi)$ such that for all primes $p \geq p_0(\phi)$ and contractible fixed points $x \in {\rm Fix}_c(\phi)$, there holds 
\beqn
HF^{\rm loc}(\phi, x; \fp) \cong HF^{\rm loc}(\phi, x; {\mb Z}) \otimes \fp.
\eeqn
Hence for $p \geq p_0(\phi)$, one has
\beqn
N(\phi;\F_p) = N(\phi;\Q).
\eeqn
Further, by \cite[Theorem 1.1]{GG-local-gap}, for $p$ sufficiently large and all contractible fixed points $x \in {\rm Fix}_c(\phi)$, one has
\beqn
\dim HF^{\loc} (\phi^p, x; \fp) = \dim HF^{\loc} (\phi,x; \fp).
\eeqn
As $\phi$ has only finitely many periodic points, when $p$ is sufficiently large, one has ${\rm Fix}_c(\phi) = {\rm Fix}_c(\phi^p )$. Hence 
\begin{multline}
N(\phi^p; \fp) = \sum_{y \in {\rm Fix}_c(\phi^p)} {\rm dim} HF^{\rm loc}(\phi^p, y; \fp) = \sum_{x \in {\rm Fix}_c(\phi)} {\rm dim} HF^{\rm loc}(\phi, x; \fp) \\
= N(\phi; \fp) = N(\phi; {\mb Q}) = {\rm dim} H^*(M; {\mb Q}) = {\rm dim} H^*(M; \fp).
\end{multline}
If $H$ is a 1-periodic Hamiltonian whose time-1 map is $\phi$, the above identity implies that the differential of the complex $C(H^{(p)})$ constructed in Proposition \ref{prop31} vanishes. Replacing $p^k$ in the proof of Lemma \ref{lma:sdm} by a sufficiently large $p$, one can obtain the same conclusion. 
\end{proof}


We then prove a variant of Proposition \ref{prop_carrier_iteration}. The proof idea is the same but certain details need to be modified.

\begin{lemma}\label{lemma48}
Under the assumption of Theorem \ref{thm: infinitely many}, suppose $\phi$ has only finitely many periodic points, then there exist $\ov{x} \in \tilde {\mc O}(H)$ and infinitely many primes $p$ such that
\begin{align*}
&\ HF^{\loc, 2n} (H, \ov{x}; \fp) \neq 0,\ &\ HF^{\loc, 2n} (H^{(p)}, \ov{x}^{(p)}; \fp) \neq 0.
\end{align*}
\end{lemma}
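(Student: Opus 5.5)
We will adapt the proof of Proposition \ref{prop_carrier_iteration}, replacing the tower of iterations $p^k$ by a single step $\phi \mapsto \phi^p$ for each sufficiently large prime $p$ with $\qst_p(\mu) = St_p(\mu)$. As in the proof of Lemma \ref{lemma47}, fix $p_0(\phi)$ such that for all primes $p \geq p_0(\phi)$ three things hold. First, $HF^{\loc}(\phi,x;\fp) \cong HF^{\loc}(\phi,x;{\mb Z})\otimes\fp$. Second, ${\rm Fix}_c(\phi) = {\rm Fix}_c(\phi^p)$, which uses finiteness of periodic points. Third, every contractible fixed point is $p$-admissible; this excludes only the finitely many primes $p$ for which some eigenvalue $\lambda \neq 1$ of some $d\phi_x$ satisfies $\lambda^p = 1$. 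By \cite[Theorem 1.1]{GG-local-gap} and the computation in Lemma \ref{lemma47}, we also have $N(\phi;\fp) = N(\phi^p;\fp) = \dim H^*(M;\fp)$. Hence Corollary \ref{cor32} applies to both $H$ and $H^{(p)}$, and Corollary \ref{cor36} applies to $C_{\zp}(H^{(p)})$. Consequently all the relevant differentials vanish, and the decompositions $\mu = \sum_i \mu_i^{(1)} = \sum_i \mu_i^{(p)}$ and $\mu_{\zp} = \sum_i \mu_{\zp,i}^{(p)}(t,\theta)$ are available, with the orbit set ${\mc O}(H) = \{x_1,\dots,x_m\}$ independent of $p$.

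For each such prime $p$ with $\qst_p(\mu)=St_p(\mu)$, we rerun Claims A, B and D and the argument for $I^{(p)}\neq\emptyset$ in the proof of Proposition \ref{prop_carrier_iteration}. The translation $\fst_p(\mu) = t^{n(p-1)}\mu_{\zp}$ comes from Theorem \ref{thm214}. The splitting $\wt P = \wt P^{\loc} + \wt P^{\rm big}$ comes from Proposition \ref{prop35}, and the leading-order bound comes from Lemma \ref{lma:div}, which needs $p$-admissibility. Together these give an index $i_0 = i_0(p) \in I^{(1)}$ satisfying ${\mc A}_{H^{(p)}}(\mu_{i_0}^{(p)}) = p\,c(\mu,H) = p\,{\mc A}_H(\mu_{i_0}^{(1)})$. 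Here $I^{(1)}$ is defined over $\fp$, but since the local groups are reductions of integral ones, it can be described independently of $p$ once $p\ge p_0$. This step needs a little care: the spectral invariant $c(\mu,H)$ over $\fp$ equals the rational one for large $p$. We will justify this via the vanishing differential and the integral structure of the local groups, since the components $\mu_i^{(1)}$ are reductions of the corresponding integral and rational components.

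Since $\{1,\dots,m\}$ is finite while there are infinitely many admissible primes, the pigeonhole principle gives a single index $i$ with $i = i_0(p)$ for infinitely many $p$. For this $i$, the degree-$2n$ argument from the end of the proof of Proposition \ref{prop_carrier_iteration} applies. Because the action equality is exact and all classes sit in degree $2n$, there is a unique capping $\ov{x}=\ov{x}_i$ such that $\mu_i^{(1)}$ has a nonzero valuation-zero component in $HF^{\loc,2n}(H,\ov{x};\fp)$. The equality ${\mc A}_{H^{(p)}}(\mu_i^{(p)}) = p{\mc A}_H(\ov{x}) = {\mc A}_{H^{(p)}}(\ov{x}^{(p)})$ then forces $\mu_i^{(p)}$ to have a nonzero component in $HF^{\loc,2n}(H^{(p)},\ov{x}^{(p)};\fp)$. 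Lemma \ref{lemma47} is needed here in the same role that Lemma \ref{lma:sdm} plays in the proof of Theorem \ref{thm: pr}. It guarantees that the local groups of the iterates behave as expected and rules out the degenerate scenario in which the carrier is a symplectically degenerate maximum.

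The main obstacle is making the choice of capping and carrier uniform in $p$. The coefficient field $\fp$ changes with $p$, so we must check that $I^{(1)}$ and the capping $\ov{x}_i$ do not depend on $p$. We will do this by working throughout with the integral complex $C(H;\Lambda^\Gamma_{\mb Z})$, which has vanishing differential after rationalization, so that action values and supports of the components of $\mu$ are determined by rational data for all $p\ge p_0$.
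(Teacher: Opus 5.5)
Your proposal is correct and follows essentially the paper's route: for each large prime $p$ with $\qst_p(\mu)=St_p(\mu)$, ${\rm Fix}_c(\phi)={\rm Fix}_c(\phi^p)$ and $p$-admissibility, you rerun Claims A--D of Proposition \ref{prop_carrier_iteration} for the single step $\phi\mapsto\phi^p$. This produces a carrier index $i_p$ with ${\mc A}_{H^{(p)}}(\mu_{i_p}^{(p)})=p\,{\mc A}_H(\mu_{i_p})=p\,c(\mu,H)$, and you then pigeonhole over the finitely many fixed points.

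Your extra care in making the capping independent of $p$ addresses a point the paper passes over. A second pigeonhole over the finitely many grading classes of cappings with $HF^{\loc,2n}\neq 0$ would also do this. By contrast, Lemma \ref{lemma47} is not needed inside this lemma: the paper uses it only afterwards, to turn Lemma \ref{lemma48} into the contradiction that proves Theorem \ref{thm: infinitely many}.
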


\begin{proof}
We can run the same argument as proving Proposition \ref{prop_carrier_iteration}. Notice that one no longer has an induction from $p^k$ to $p^{k+1}$. First, since $\phi$ has only finitely many periodic points, by considering large primes $p$, one has 
\beqn
{\rm Fix}_c(\phi) = {\rm Fix}_c(\phi^p).
\eeqn
Then, the equation $\qst_p(\mu) = {\it St}_p(\mu)$ can be translated, via the ordinary PSS map and the equivariant SSP map, to 
\beqn
\fst_p( \pss(\mu)) = t^{n(p-1)} \ssp_{\zp}^{-1}(\mu) \in HF_\zp(M).
\eeqn
Let $x_1, \ldots, x_m$ be all contractible fixed points of $\phi$. Then by Proposition \ref{prop31}, one can decompose $\pss(\mu)$ as 
\beqn
\pss(\mu) = \sum_{i=1}^m \mu_i,\ {\rm where}\ \mu_i \in HF^\loc(H, x_i; \Lambda^\Gamma)
\eeqn
and
\beqn
\pss(\mu) = \sum_{i=1}^m \mu_i^{(p)},\ {\rm where}\ \mu_i^{(p)} \in HF^\loc(H^{(p)}, x_i^{(p)}; \Lambda^\Gamma).
\eeqn
For $p$ sufficiently large, one can also assume that each $x_i$ is $p$-admissible. Using Proposition \ref{prop33b} and Proposition \ref{prop_local_eq_structure}, one can decompose $\mu_{\zp}$ as 
\beqn
\mu_\zp = \sum_{i=1}^m \mu_{\zp, i}^{(p)}(t, \theta),\ {\rm where}\ \mu_{\zp, i}^{(p)} \in HF^\loc(H^{(p)}, x_i^{(p)}; \Lambda_\rp^\Gamma).
\eeqn

Then one has the analogue of Claim A in the proof of Proposition \ref{prop_carrier_iteration}, i.e., 

{\it Claim A.} For infinitely many primes $p$, the leading order term $\mu_{\zp, i}^{(p)}(0,0)$ coincides with $\mu_i^{(p)}$ up to higher energy terms.

The proof is similar. Then as a consequence, for infinitely many primes $p$, there holds
\beqn
c(\mu, H^{(p)}) \geq \wh c(\mu_{\zp}, H^{(p)}) = p c(\mu, H).
\eeqn
This is the analogue of Claim B in the proof of Proposition \ref{prop_carrier_iteration}. Moreover, there exists $i$ such that ${\mc A}_H(\mu_i) = c(\mu, H)$ (i.e., $x_i$ is a spectral carrier). Then for such $i$, one has 
\beqn
{\mc A}_{H^{(p)}}(\mu_i^{(p)}) \geq c(\mu, H^{(p)}) \geq p c(\mu, H) = p {\mc A}_H(\mu_i).
\eeqn

Then one can prove that for each $p$, there exists $i_p$ such that 
\beqn
{\mc A}_{H^{(p)}}(\mu_{i_p}^{(p)}) = p {\mc A}_H(\mu_{i_p}) = pc(\mu, H),
\eeqn
using the same argument. As there are infinitely many such $p$, by the pigeon-hole principle, one can choose $i_0$ such that for infinitely many $p$, $i_0 = i_p$. 
\end{proof}

Now we can prove Theorem \ref{thm: infinitely many}. Assume that $\phi$ has only finitely many periodic points. By Lemma \ref{lemma47}, no contractible fixed point of $\phi$ is a symplectically degenerate maximum. Therefore, there exists $\epsilon>0$ such that for sufficiently large prime $p$,
\beqn
HF^{\loc, 2n}(H, \ov{x}; \fp) \neq 0 \Longrightarrow \Delta( \ov{x}, H) \geq \epsilon.
\eeqn
Therefore 
\beqn
HF^{\loc, 2n}(H, \ov{x}; \fp) \neq 0 \Longrightarrow \Delta( \ov{x}^{(p)}, H^{(p)}) \geq p\epsilon.
\eeqn
As the local Floer cohomology $HF^{\rm loc}(H^{(p)}, \ov{x}^{(p)}; \fp)$ is supported over degrees between $-\Delta (\ov{x}^{(p)}, H^{(p)})$ and $-\Delta(\ov{x}^{(p)}, H^{(p)}) + 2n$, it follows that for sufficiently large prime $p$, 
\beqn
HF^{\loc, 2n}(H, \ov{x}; \fp) \neq 0 \Longrightarrow HF^{\loc, 2n}(H^{(p)}, \ov{x}^{(p)}; \fp) = 0.
\eeqn
This contradicts Lemma \ref{lemma48}. Therefore, $\phi$ has infinitely many periodic points.


\subsection{Proof of Theorem \ref{thm_noncontractible}}

In \cite{Bai_Xu_2025} the first and the fourth authors established the foundation of Hamiltonian Floer theory for homologically nontrivial periodic orbits. We briefly recall the setup.

Choose a possibly noncontractible loop $\gamma: S^1 \to X$ which represents a nonzero class
\beqn
[\gamma] \neq 0 \in H_1(X; {\mb Z})_{\rm free}.
\eeqn
One can consider noncontractible periodic orbits of a Hamiltonian $H$ which are oriented cobordant to $\gamma$.\footnote{Note that this is equivalent to being homologous over ${\mb Z}$.} Given a 1-periodic orbit $x: S^1 \to X$ of $H$, a {\bf $\gamma$-capping} of $x$ is an oriented cobordism $u: \Sigma \to X$ from $\gamma$ to $x$ (where $\Sigma$ could be a higher genus surface); two $\gamma$-cappings are equivalent if their difference, which is a class in $H_2(X; {\mb Z})$, has zero symplectic area. In this setup we do not keep the ${\mb Z}$-grading but only consider the ${\mb Z}_2$-grading. A {\bf $\gamma$-capped orbit} of $H$ is a 1-periodic orbit together with an equivalence class of $\gamma$-cappings. Let 
\beqn
\tilde {\mc O}_\gamma(H)
\eeqn
denote the set of $\gamma$-capped orbits of $H$, on which there is a free action by the group
\beqn
\Pi:= H_2(X; {\mb Z})/ {\rm ker} (\omega).
\eeqn
Then each $\gamma$-capped orbit $p$ has a well-defined symplectic action ${\mc A}_{H,\gamma} (p)\in {\mb R}$ and a well-defined ${\mb Z}/2$-grading. 

\begin{thm}\cite[Theorem 1.10]{Bai_Xu_2025}
Let $(X, \omega)$ be a compact symplectic manifold and $\gamma:S^1 \to X$ be a smooth loop with $[\gamma] \neq 0 \in H_1(X; {\mb Z})_{\rm free}$. Let $H$ be a nondegenerate $1$-periodic Hamiltonian. Then for each $1$-periodic family of $\omega$-compatible almost complex structures $J$, there is a differential on the ${\mb Z}/2$-graded abelian group
\beqn
CF_\gamma ( H):= \Big\{ \sum_{i=1}^\infty a_i p_i\ |\ a_i \in {\mb Z},\ p_i \in \tilde {\mc O}_\gamma(H),\ \lim_{i \to \infty} {\mc A}_{H,\gamma} (p_i) = -\infty \Big\}
\eeqn
leading to a ${\mb Z}/2$-graded  complex of abelian groups
\beqn
CF_\gamma(H, J, \Xi).
\eeqn
Moreover, for each field ${\mb K}$, the tensor product
\beqn
CF_\gamma(H) \otimes \Lambda_{\mb K}^\Gamma
\eeqn
is a Floer-type complex over $\Lambda_{\mb K}^\Gamma$ whose filtered isomorphism class only depends on $H$.
\end{thm}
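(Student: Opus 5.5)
The plan is to run the construction of Theorem \ref{thm:Floer-cochain} (that is, \cite[Theorem A, Theorem B]{Bai_Xu_2025}) verbatim, replacing capped contractible orbits by $\gamma$-capped orbits, and to check that every input of that construction survives the change. First I would set up the action functional on the covering space of the component of the free loop space containing loops homologous to $\gamma$. Its fibre is given by $\gamma$-cappings modulo zero-area classes, and its deck group is $\Pi = H_2(X;{\mb Z})/\ker(\omega)$. I would define ${\mc A}_{H,\gamma}(p)$ as the integral of $H$ along $x$ minus $\omega$ paired with the cobordism $u$; this is well defined on equivalence classes, and $\Pi$ shifts it by $\omega(A)$. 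For a Floer cylinder $v$ from $x_-$ to $x_+$, gluing $v$ onto a $\gamma$-capping of $x_-$ gives a $\gamma$-capping of $x_+$. This yields the usual energy identity
\beqn
E(v) = {\mc A}_{H,\gamma}(p_-) - {\mc A}_{H,\gamma}(p_+)
\eeqn
(up to the sign conventions fixed in Section \ref{subsec:floer-theory}), so the differential is monotone in the filtration.

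Next I would construct the moduli spaces ${\mc M}(p_-, p_+)$ of Floer cylinders connecting $\gamma$-capped orbits. I would compactify them by broken trajectories and sphere bubbles; this is Gromov--Floer compactness, which only uses the energy bound above, and bubbles change the capping class by an element of $\Pi$ compatibly. These compactifications then assemble into a flow category indexed by $\tilde{\mc O}_\gamma(H)$ with a free $\Pi$-action. At this point the global Kuranishi chart and FOP perturbation machinery of \cite{BaiXu-FPO1, Bai_Xu_2025} applies unchanged, because it is local on the moduli spaces and does not use contractibility. It produces integer counts of the zero-dimensional strata and a differential with $d^2=0$ from the boundary of the one-dimensional strata. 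Since the Conley--Zehnder index of a noncontractible orbit is only defined after choosing a trivialization along $\gamma$, and it changes under $c_1$ of the capping class, I would keep only the ${\mb Z}/2$-grading. Orientations of the Kuranishi charts come from coherent orientations for noncontractible loops, using a fixed trivialization of $\gamma^*TX$ and its transport along cobordisms; this is the parity-only version of the standard argument.

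The Novikov finiteness condition is that for every $C$ there are only finitely many $p_+$ with nonzero coefficient and action above $C$. It follows from Gromov compactness: for fixed $p_-$ and an energy bound there are only finitely many $\Pi$-classes realized by nonempty compactified moduli spaces. So $d$ is well defined on the completed group $CF_\gamma(H)$, which is a module over $\Lambda_{\mb Z}^\Pi$. For independence of $(J,\Xi)$ I would build continuation maps over the parametrized moduli spaces in the same way, along with homotopies between them, and these form a chain homotopy equivalence. When the Hamiltonian is fixed these maps respect the action filtration, by the energy identity for $s$-independent $H$, and their leading term is the identity on generators. After tensoring with $\Lambda_{\mb K}^\Gamma$ I would therefore obtain a filtered homotopy equivalence of Floer-type complexes in the sense of \cite{usher-zhang}, hence filtered isomorphism classes independent of choices, exactly as in Section \ref{subsubsec:bar}.

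I expect the main obstacle to be the bookkeeping of cappings by higher-genus cobordisms rather than disks. One has to check three things: that concatenating a cobordism with a Floer cylinder or a bubble is compatible with the equivalence relation; that the $\Pi$-action on the flow category is free, so that the equivariant choice of FOP perturbations is possible; and that coherent orientations exist when the capping surfaces have varying genus. I would first try to reduce to a fixed reference: choose one $\gamma$-capping for each orbit, identify all others with it via $\Pi$, and then everything becomes the contractible case twisted by $\Pi$, with no sphere-versus-surface distinction left in the analysis.
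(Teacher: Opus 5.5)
The paper gives no proof of this statement; it quotes it directly from \cite[Theorem 1.10]{Bai_Xu_2025}. Your outline is the expected route and matches how the paper uses the result: you run the construction behind Theorem \ref{thm:Floer-cochain} with $\gamma$-capped orbits and deck group $\Pi$, keep only the $\mathbb{Z}/2$-grading, get the Novikov condition from Gromov compactness, and get filtered invariance from continuation maps whose zero-energy part is the identity.

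Two small remarks. First, the moduli spaces are just Floer cylinders (plus bubbles), and the $\gamma$-cappings only label actions and relative classes, so the obstacles you anticipate largely disappear. $\Pi$ identifies $\mathcal{M}(p_-,p_+)$ with $\mathcal{M}(Ap_-,Ap_+)$ rather than acting on a single moduli space, so no equivariant transversality is needed, and orientation lines can be attached to orbits independently of the capping genus. Second, for $CF_\gamma(H)$ to be finitely generated, and hence a genuine Floer-type complex, the Novikov ring should have exponents in $\Pi\cong\omega(H_2(X;\mathbb{Z}))$, not the $\pi_2$-based $\Gamma$ of Section \ref{section2}.
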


Under the assumption that $[\gamma] \neq 0 \in H_1(X)$, using a continuation map argument we can prove that the resulting cohomology vanishes. On the other hand, one considers the quantitative theory. For any prime $p$, similar to the previous contractible case, one has an equivariant Floer complex
\beqn
CF_{\zp, \gamma}(H^{(p)})
\eeqn
over $\Lambda_{\kzp}^\Gamma$ which is freely generated by $p$-periodic orbits in homology class $p\gamma$. Its filtered isomorphism class only depends on $H$. 

\begin{thm}\cite[Theorem 1.11]{Bai_Xu_2025}
There exists a filtered chain map
\beqn
\wt{\fst}_{p,\gamma}: C_{\zp}(CF_\gamma(H)^{\otimes p}) \to CF_{\zp,\gamma^{(p)}}(H^{(p)})
\eeqn
which extends to a filtered homotopy equivalence for the Tate versions
\beqn
\wh{\fst}_{p,\gamma}: \wh{C}_{\zp}(CF_\gamma(H)^{\otimes p}) \to \wh{CF}_{\zp, \gamma}(H^{(p)}).
\eeqn
\end{thm}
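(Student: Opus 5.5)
The plan is to copy the construction of $\wt{\fst}_p$ from Theorem \ref{thm_FSt} into the $\gamma$-capped setting. Then, following \cite{seidel,SZhao-pants}, prove the Tate statement by an action-filtration argument that reduces everything to the local pants products of Theorem \ref{thm_local_pants}.

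\emph{Step 1: cappings.} First fix a reference cobordism $\Sigma_\gamma$ from the $p$-fold cover $\gamma^{(p)}$ to the disjoint union of $p$ copies of $\gamma$. For instance, take the restriction of the branched cover $\Sigma_p^\circ\to\mathbb{R}\times S^1$ to a neighborhood of the ends, mapped by $\gamma$. Given $\gamma$-cappings $u_1,\ldots,u_p$ of $x_1,\ldots,x_p$ and a solution $u$ of the pants equation on $\Sigma_p^\circ$, the glued surface $\Sigma_\gamma\cup u_1\cup\cdots\cup u_p\cup u$ is a $\gamma^{(p)}$-capping of the output orbit $y$.

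This gluing is compatible with the $\Pi$-actions and with the cyclic permutation of the inputs. It therefore defines capped moduli spaces ${\mc M}_{w_-,w_+}(\ov{x}_1,\ldots,\ov{x}_p;\ov{y})$ exactly as in Section \ref{subsec:equivariant-pair-of-pants}. The energy identity for the pulled-back connection $\sigma_H$ reads
\beqn
E(u)={\mc A}_{H^{(p)},\gamma^{(p)}}(\ov{y})-\textstyle\sum_i{\mc A}_{H,\gamma}(\ov{x}_i)\ge 0 .
\eeqn
Its only input is that the connection is flat, so it holds verbatim here.

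\emph{Step 2: the filtered chain map.} Because the $\zp\times\mathbb{Z}_{\ge0}$-action on the coupled moduli spaces with $S^\infty$ is free, the FOP perturbation scheme of \cite{Bai_Xu_2025} can be chosen equivariantly. This gives $\fp$-valued counts, and passing to invariant parts yields $\wt{\fst}_{p,\gamma}$. It is a chain map by the usual boundary analysis, and it is filtered by Step 1, since abstract perturbations do not alter energy.

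\emph{Step 3: Tate equivalence.} After a small perturbation, $H$ and $H^{(p)}$ are nondegenerate and the actions of distinct orbits are separated by some $\epsilon>0$, except for the forced relation ${\mc A}(\ov{x}^{(p)})=p{\mc A}(\ov{x})$. Both Tate complexes are complete with respect to action, so by a filtered spectral-sequence comparison it suffices to show that the associated graded map is a quasi-isomorphism.

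\begin{enumerate}
\item On the source, the generators $\ov{x}_1\otimes\cdots\otimes\ov{x}_p$ with the $x_i$ not all equal form free $\zp$-orbits. Their Tate cohomology vanishes, because the norm map is an isomorphism on free modules.
\item The surviving diagonal generators $\ov{x}^{\otimes p}$ map, in action-zero energy, only by the local part. By the energy gap, this is $\fst^{\loc}_{p,x}$, i.e.\ $\ov{x}^{\otimes p}\mapsto c_x t^{\alpha}\ov{x}^{(p)}$ with $c_x\neq0$ by Theorem \ref{thm_local_pants}.
\item These are units after inverting $t$. Thus the graded map is an isomorphism, and the map itself is a filtered homotopy equivalence.
\end{enumerate}

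This step is insensitive to $\gamma$ being noncontractible: local Floer theory near $x$ does not see the homotopy class.

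\emph{Main obstacle.} I expect the main difficulty to be in Step 3, in two places. The first is to check that the higher-energy contributions $P^{\rm big}$ really raise action by a uniform $\epsilon$, as in Proposition \ref{prop35}, once FOP perturbations are used. The second is convergence of the spectral-sequence argument over $\Lambda^\Gamma_{\kp}$, where both the $t$-direction and the Novikov direction are infinite. I would handle convergence by working over $\Lambda_{0,\kp}^{\rm univ}$ and using that $d_{\zp}$ strictly increases action (Theorem \ref{thm213}). A $q$-adic completeness argument then upgrades the associated graded quasi-isomorphism to a chain homotopy equivalence.
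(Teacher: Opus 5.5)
The paper does not prove this statement; it is quoted from \cite{Bai_Xu_2025}, so there is no internal argument to compare against. Your outline is the standard Seidel/Shelukhin--Zhao localization argument: equivariant FOP counts, the filtration from the energy identity, and then reduction of the Tate statement to the energy-zero part. There, free $\zp$-orbits of basis tensors are Tate-acyclic, and the trivial pants send $\ov{x}^{\otimes p}$ to $c_x t^{\alpha}\ov{x}^{(p)}$.

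There is, however, a genuine gap exactly where you say Step 3 is insensitive to $\gamma$ being noncontractible. The $\zp$-fixed generators of $CF_{\zp,\gamma^{(p)}}(H^{(p)})$ are the iterates $x^{(p)}$ of \emph{all} $1$-periodic orbits $x$ of $H$ with $p[x]=p[\gamma]$ in $H_1(X;\mathbb{Z})$, not only those with $[x]=[\gamma]$. Suppose $H_1(X;\mathbb{Z})$ has $p$-torsion and $H$ has an orbit $x'$ in a class $[\gamma]+\tau$ with $\tau\neq 0$ and $p\tau=0$. Then $x'^{(p)}$ contributes a summand $\kp\otimes E(\theta)$ to the cohomology of the energy-zero part of the target. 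Nothing in the source hits it, since the only energy-zero pants are the $u^p_x$ whose $p$ inputs all equal $x$. A filtered homotopy equivalence would induce a quasi-isomorphism on these energy-zero parts, so in this situation the conclusion is false, not merely unproved. You therefore need the hypothesis that $H$ has no $1$-periodic orbits in classes $\gamma'\neq\gamma$ with $p\gamma'=p\gamma$ (for example, $H_1(X;\mathbb{Z})$ has no $p$-torsion). Alternatively, the source must be replaced by the sum over all such $\gamma'$. This is harmless for Theorem \ref{thm_noncontractible}, where $p$ is taken sufficiently large.

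Second, your claim that distinct generators have actions separated by some $\epsilon>0$ fails whenever $\omega(\Pi)$ is dense in $\mathbb{R}$, because recappings have arbitrarily close actions. So there is no discrete associated graded on which to run a spectral sequence. The correct substitute is to pass to the normalized complexes over the local ring $\Lambda_{0,\kp}^{\rm univ}$ and reduce modulo $\mathfrak{m}=\{\nu>0\}$. Abstract perturbations do not change the topological energy, and every nonconstant Floer cylinder or pants has positive energy. Hence the valuation-zero matrix entries come exactly from constant cylinders coupled with gradient lines in $S^\infty$ and from the trivial pants $u^p_x$. This reduction is your ``associated graded''. It contains the action-preserving Borel terms $(\iota-1)\theta$ and $(1+\iota+\cdots+\iota^{p-1})t$ of the source, so the assertion that $d_{\zp}$ strictly increases action is neither true there nor needed. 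The cone of $\wh{\fst}_{p,\gamma}$ is a finite-rank free complex over $\Lambda_{0,\kp}^{\rm univ}$ with acyclic reduction. Valuation-zero elements are units, so you can cancel pairs of generators linked by unit entries; this works with only a $\mathbb{Z}/2$-grading and shows the cone is contractible, which is precisely the filtered homotopy equivalence. Finally, before invoking Theorem \ref{thm_local_pants}, you must identify the valuation-zero part of the FOP-perturbed map with $\fst^{\loc}_{p,x}$. That is the comparison of Lemma \ref{lemma222}, applied after the germ reduction used in Lemma \ref{lemma411}.
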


Let $\wh\beta_{i,p\gamma}(H^{(p)})$ be the finite bar-lengths of the Tate complex and let $\beta_{i,\gamma}(H)$ be the finite bar-lengths of the complex $CF_\gamma(H)$. Then as a consequence, for $p$ odd, there holds
\beqn
\wh\beta_{2i-1, p\gamma}(H^{(p)}) = \wh\beta_{2i, p\gamma}(H^{(p)}) = p \beta_{i,\gamma}(H),\ i = 1, \ldots, N.
\eeqn
By the inequality between the minimal bar-lengths of the Tate and the regular complexes for $H^{(p)}$ in class $p\gamma$, there holds
\beq\label{eqn:minimal-bar-scale}
\inf \beta_{i,p\gamma}(H^{(p)}) \geq p \inf \beta_{i,\gamma}(H).
\eeq

Another ingredient in Sugimoto's argument is the persistence of local Floer homology. We provide a proof for the convenience of the reader.

\begin{lemma}[Persistence of local Floer cohomology: noncontractible case]\label{lemma411}
Let $\ov{x}$ be an isolated $\gamma$-capped 1-periodic orbit of $H$. Let $k \geq 1$. Suppose $\ov{x}$ is $k$-admissible. Then 
\beqn
HF^\loc_\gamma (H, \ov{x}; {\mb K}) \neq 0 \Longrightarrow HF^\loc_{k\gamma} (H^{(k)}, \ov{x}^{(k)}; {\mb K}) \neq 0.
\eeqn
\end{lemma}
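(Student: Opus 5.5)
The plan is to reduce to the local normal form of Ginzburg--G\"urel already used in the proof of Proposition \ref{prop_local_eq_structure}, and then use a K\"unneth decomposition. Local Floer cohomology only sees a neighbourhood of the orbit. Over such a neighbourhood the $\gamma$-capping affects only the action and grading normalization, not the local differential. Hence the noncontractible case is formally identical to the contractible one: after trivializing a tubular neighbourhood of the orbit $x$, the relevant object is a germ of a Hamiltonian diffeomorphism $\phi$ at a fixed point.

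\emph{Step 1: deformation.} Since $\ov{x}$ is $k$-admissible, decompose $T_xM = X_0 \oplus X_1$ into the generalized $1$-eigenspace of $d\phi_x$ and its complement. By \cite[Section 4.5]{GG-local-gap}, there is a family $\phi_s$ with the following properties:
\begin{itemize}
\item $x$ is uniformly isolated both for $\phi_s$ and for $\phi_s^k$;
\item $\phi_1 = \psi_0 \times \psi_1$, where $\psi_1$ is linear and nondegenerate on $X_1$, and $\psi_0$ is generated by an autonomous $C^2$-small function $f_0$ on $X_0$ with an isolated critical point at $0$.
\end{itemize}
Uniform isolation for $\phi_s^k$ uses $k$-admissibility: the eigenvalues $\lambda \neq 1$ of $d\phi_x$ have $\lambda^k \neq 1$, so no new fixed points of the $k$-th iterate bifurcate along the family. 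By invariance of local Floer cohomology under uniformly isolated deformations, realized by local continuation maps, both $HF^\loc(\phi, x)$ and $HF^\loc(\phi^k, x)$ are unchanged. We may therefore assume $\phi = \psi_0 \times \psi_1$.

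\emph{Step 2: K\"unneth.} For a split germ, use a split almost complex structure and split perturbations. The local Floer complex is then the tensor product of the local complexes of the two factors, so over a field
\[
HF^\loc(\phi^j, x) \cong HF^\loc(\psi_0^j, 0) \otimes HF^\loc(\psi_1^j, 0) \qquad \text{for } j = 1, k,
\]
up to a degree shift.

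\emph{Step 3: each factor.} The nondegenerate factor is a single generator for every iterate. Indeed, $\psi_1^k$ is nondegenerate by $k$-admissibility, so $HF^\loc(\psi_1^k, 0)$ is one-dimensional. For the degenerate factor, $\psi_0^j$ is the time-one map of $j f_0$, which is still $C^2$-small, and the totally degenerate point stays isolated. The standard identification of local Floer cohomology of a $C^2$-small autonomous Hamiltonian with local Morse cohomology then gives
\[
HF^\loc(\psi_0^j, 0) \cong HM^\loc(j f_0, 0) = HM^\loc(f_0, 0), \qquad \text{for } j = 1, k.
\]
Combining the two factors yields $\dim HF^\loc_{k\gamma}(H^{(k)}, \ov{x}^{(k)}) = \dim HF^\loc_\gamma(H, \ov{x})$. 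In particular nonvanishing persists, and in fact we get an isomorphism up to a grading shift.

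The main obstacle is justifying Step 1 uniformly for the $k$-th iterate while keeping the $\gamma$-capping and filtration data consistent. Since only the germ matters, I expect this to reduce verbatim to the contractible argument of \cite[Theorem 1.1]{GG-local-gap}. Alternatively, one could cite that theorem directly after observing that the construction of local Floer cohomology in a small isolating neighbourhood does not involve the free homotopy class of the orbit.
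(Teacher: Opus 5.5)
Your proposal is correct and follows the paper's route. The paper also observes that $HF^{\loc}_\gamma(H,\ov{x})$ depends only on the germ of $\phi$ at $x(0)$, and that this germ is Hamiltonian with $x(0)$ a constant, hence contractible, orbit; your tube trivialization supplies exactly this, cf.\ \cite[Remark 4.1]{GG-local-gap}. It then simply cites the contractible persistence theorem \cite[Theorem 1.1]{GG-local-gap}, which is your closing alternative, whereas your Steps 1--3 unpack the proof of that cited theorem (normal-form deformation, K\"unneth splitting, local Morse identification for the degenerate factor) without adding a new ingredient.
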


\begin{proof}
Let $\phi$ be the time-1 map of $H$. Then $x = x(0)$ is an isolated fixed point of $\phi$. Notice that the local Floer cohomology $HF_\gamma^{\rm loc}(H, \ov{x}; {\mb K})$ only depends on the germ of the symplectomorphism $\phi$ near $x$. Moreover, it is well-known that the germ of a symplectomorphism near a fixed point is Hamiltonian (see \cite[Remark 4.1]{GG-local-gap}). Then one can reduce the consideration to a local problem where the fixed point is contractible. Then the lemma follows from the contractible case \cite[Theorem 1.1]{GG-local-gap}.
\end{proof}

Now we can prove Theorem \ref{thm_noncontractible}. The remaining argument does not involve equivariant Floer theory.

\begin{proof}[Proof of Theorem \ref{thm_noncontractible}]
We adapt Sugimoto's argument in \cite{Sugimoto} to the current, more general situation. Let $H$ be a Hamiltonian as in the statement. Suppose, for a sufficiently large $p$, all 1-periodic orbits of $H^{(p)}$ in class $p \gamma$ are iterations of the generators of $CF_\gamma(H)$. We prove that for the next prime $p'>p$, the Hamiltonian $H^{(p')}$ has a 1-periodic orbit in homology class $p\gamma$. Since any such orbit must be simple (for $p$ being sufficiently large relative to $[\gamma]$), this would prove Theorem \ref{thm_noncontractible}.

Now by our assumption, there exists a $\gamma$-capped 1-periodic orbit $x$ of $H$ in class $\gamma$ with $HF_\gamma^{\rm loc}(H, x) \neq 0$. Let $\ov{x}^{(p)}$ be the obvious $\gamma^{(p)}$-capped 1-periodic orbit of $H^{(p)}$. Then for $p$ sufficiently large, by Lemma \ref{lemma411}, there holds
\beqn
HF_{p\gamma}^{\rm loc}(H^{(p)}, \ov{x}^{(p)}) \neq 0.
\eeqn
By adding a constant $C_p$ to the Hamiltonian $H^{(p)}$, one may assume that 
\beqn
{\mc A}_{H^{(p)}, \gamma^{(p)}} (\ov{x}^{(p)} ) = 0.
\eeqn

{\it Claim. Let $\inf \beta_{i,\gamma}(H) \geq 2\epsilon >0$. Then there exists a (small) constant $\delta>0$ and a nondegenerate perturbation $H_p$ of $H^{(p)}$ such that the natural map 
\beqn
HF (H_p, p \gamma)^{[-\delta, p \epsilon]} \to HF(H_p, p\gamma)^{[-p\epsilon, \delta]}
\eeqn
is nonzero.}

Indeed, using the inequality \eqref{eqn:minimal-bar-scale}, the argument of \cite[Proof of Theorem 2, Section 5]{Sugimoto} directly carries over: we have $\inf \beta_{i,p\gamma}(H^{(p)}) \geq 2p \epsilon$. If $\delta > 0$ is sufficiently small, we see that $\ov{x}^{(p)}$ induces a well-defined cohomology class on both sides such that the version in the latter is the image of the version in the former.

Now consider the Hamiltonian $H^{(p')}$ and a small nondegenerate perturbation $H_{p'}$. Notice that one can choose $H_{p'}$ such that the pointwise difference between $H_{p'}$ and $H_p$ is close to $(p'-p) \|H\|$ up to an error at most $2\delta$. Hence by choosing homotopies between $H_p$ and $H_{p'}$, one can build continuation maps
\begin{align*}
&\ \Phi: CF (H_p, p\gamma) \to CF(H_{p'}, p \gamma),\ &\ \Psi: CF (H_{p'}, p \gamma) \to CF (H_p, p\gamma)
\end{align*}
such that they naturally induce
\beqn
\Phi(CF(H_p, p\gamma)^{[a, b]}) \subseteq CF(H_{p'}, p\gamma)^{[a - (p'-p) \| H \| - 2\delta, b - (p'-p)\|H\| - 2\delta ]} 
\eeqn
and 
\beqn
\Psi(CF(H_{p'}, p\gamma)^{[c, d]}) \subseteq  CF(H_p, p\gamma)^{[c - (p'-p) \|H \| - 2\delta, d - (p'-p)\|H\| - 2\delta]}.
\eeqn
However, as $p'-p = o(p)$ as $p\to +\infty$, one has $(p'-p)\| H \| \ll p\epsilon$. Hence one has the following commutative diagram. 
\beqn
\xymatrix{   
HF (H_p, p \gamma)^{ [ - \delta,  p \epsilon]} \ar[rr] \ar[rd] & & HF(H_p, p \gamma)^{[- p \epsilon,  \delta]} \\
& HF(H_{p'}, p \gamma)^{[ - 3\delta - (p' -p)\| H \|,  p \epsilon - (p' - p) \| H \| - 2\delta  ]}  \ar[ru] & 
 }
\eeqn
The above claim implies that the bottom item in the diagram is nonzero. It follows that $H_{p'}$ has some 1-periodic orbit in class $p\gamma$. As $H_{p'}$ can be arbitrarily close to $H^{(p')}$, it follows that $H^{(p')}$ has a 1-periodic orbit in class $p\gamma$.
\end{proof}

\subsection{Proof of Theorem \ref{thm: sdm}}\label{proof_thm_sdm}

Finally, we prove the following result, which generalizes \cite[Theorem 1.18]{GG-ai} to all closed symplectic manifolds.

\begin{lemma}
Under the assumption of Theorem \ref{thm: sdm}, let $H$ be a Hamiltonian generating $\phi$ which has only finitely many periodic points. Then for each odd prime $p$, there exists $k_0>0$ such that the differential $d_{H^{(p^{k_0})}}$ on the complex $C(H^{(p^{k_0} )} )$ given by Proposition \ref{prop31} is nontrivial. 
\end{lemma}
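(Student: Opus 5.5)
The plan is to argue by contradiction, following Lemma \ref{lma:sdm} and Remark \ref{rmk:sdm for perfect}, and to use Lemma \ref{lemma:SDM} directly. Let $\ov{x}\in\tilde{\mc O}(H)$ be the symplectically degenerate maximum, with action $c={\mc A}_H(\ov{x})$ and mean index $\Delta$. Since $\phi$ has finitely many periodic points, each iterate $H^{(r)}$ has finitely many fixed points. Hence Proposition \ref{prop31} applies to every $H^{(r)}$ and produces the complex $C(H^{(r)})$, with underlying module $\bigoplus_y HF^{\loc}(H^{(r)},y;\Lambda^\Gamma)$.

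Fix a sufficiently small $\eps>0$ and take $r_\eps$ from Lemma \ref{lemma:SDM}. Choose $k_0$ with $p^{k_0}\geq r_\eps$ and set $r=p^{k_0}$. Lemma \ref{lemma:SDM} then gives $\delta\in(0,\eps)$ such that the connecting homomorphism \eqref{eq: connecting} is nonzero. This homomorphism comes from the short exact sequence of the windowed complexes $C(H^{(r)})^{(rc-\delta,rc+\delta)}\to C(H^{(r)})^{(rc-\delta,rc+\eps)}\to C(H^{(r)})^{(rc+\delta,rc+\eps)}$.

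The key step is the standard fact that the connecting map of such a sequence is induced by the differential $d_{H^{(r)}}$ itself. Concretely, we lift a cocycle from the quotient window, apply $d_{H^{(r)}}$, and land in the subcomplex. If $d_{H^{(r)}}=0$, then every windowed complex $C(H^{(r)})^{(a,b)}$ has zero differential. Its cohomology is then the complex itself, the short exact sequence splits on cohomology, and the connecting map vanishes. This is exactly the argument in the proof of Lemma \ref{lma:sdm}, but without using any pseudo-rotation hypothesis. The vanishing is deduced from $d=0$ directly instead of from Corollary \ref{cor32}.

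The point that needs care is identifying the filtered Floer cohomology used in Lemma \ref{lemma:SDM} with the cohomology of $C(H^{(r)})^{(a,b)}$. This is item (3) of Proposition \ref{prop31}, which holds provided the window endpoints $rc\pm\delta$ and $rc+\eps$ avoid ${\rm Spec}(H^{(r)})$. I would make sure $\delta$ and $\eps$ can be chosen generically. The spectrum is closed and nowhere dense, since $H^{(r)}$ has finitely many orbits, each with countably many cappings. The remaining detail is that Lemma \ref{lemma:SDM} holds for all $r\geq r_\eps$, so powers of $p$ are admissible. The oddness of $p$ plays no role beyond matching the conventions of Proposition \ref{prop31}. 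The main obstacle is essentially contained in Lemma \ref{lemma:SDM}, namely Hein's decomposition extended to general $M$. Granting it, the lemma follows formally, with the same contradiction as in Lemma \ref{lma:sdm}.
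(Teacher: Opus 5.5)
Your proposal is correct and is essentially the paper's argument with one layer unwound. The paper assumes $d_{H^{(p^k)}}=0$ for all $k\geq 1$, observes that $\phi$ is then an $\F_p$ Hamiltonian pseudo-rotation, and invokes Lemma \ref{lma:sdm}, whose proof is precisely your combination of Lemma \ref{lemma:SDM} with the vanishing of connecting maps for a complex with zero differential; your direct version even shows $d_{H^{(p^{k_0})}}\neq 0$ for every $k_0$ with $p^{k_0}\geq r_\eps$. One side remark needs correcting: when $(M,\omega)$ is irrational, ${\rm Spec}(H^{(r)})$ is a finite union of translates of the possibly dense countable group $\omega(\Gamma)$, so it need not be closed or nowhere dense. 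Only countability matters, and in fact no genericity choice is needed, because Lemma \ref{lemma:SDM} is already phrased for the connecting map of the windowed complexes $C(H^{(r)})$.
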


\begin{proof}
Fix an odd prime $p$. Suppose this is not the case. Then for all $k \geq 1$, the differential $d_{H^{(p^k)}}$ vanishes. Recall that by Proposition \ref{prop31}, the cohomology of $C(H^{(p^k)})$ is isomorphic to the Floer cohomology of $(M, \omega)$. Therefore one has 
\beqn
N(\phi^{p^k}; \fp) = \sum_{y \in {\rm Fix}_c(\phi^{p^k})} {\rm dim}_{\fp} HF^{\rm loc}(\phi^{(p^k)}, y; \fp) = {\rm dim}_{\Lambda^\Gamma} HF(M, \omega; \Lambda^\Gamma).
\eeqn
Then by Definition \ref{def: pr}, $\phi$ is an $\fp$ Hamiltonian pseudo-rotation. 
This contradicts Lemma \ref{lma:sdm}.
\end{proof}

Now we turn to the proof of Theorem \ref{thm: sdm}. Let $H$ be a Hamiltonian as in this theorem and assume its time-1 map $\phi$ has only finitely many periodic points. As a consequence of the previous lemma, by replacing $H$ by $H^{(p^{k_0})}$, we may assume that $d_H$ is nontrivial. Hence the barcode of $C(H)$ contains a finite bar with minimal bar-length $\beta_1(H) > 0$. Then by Corollary \ref{cor35} and Lemma \ref{lemma37}, one has
\beqn
\beta_1 (H^{(p)}) \geq \widehat \beta_1 (H^{(p)}) = p \beta_1(H).
\eeqn
Therefore, there exists $k_1$ such that $\beta_1 ( H^{(p^k)} ) \geq 1$ for $k \geq k_1.$ Then for $\eps < 1/3$ and $k \geq \max\{k_1, \log_p(r_{\eps})\},$ the connecting homomorphism \eqref{eq: connecting} must vanish. Indeed, if the connecting homomorphism \eqref{eq: connecting} were non-zero, then there would be a bar in the barcode of $H^{(p^k)}$ which starts in the interval $(rc-\delta,rc+\delta)$ and ends in the interval $(rc+\delta, rc+\eps).$ In particular, its length would be smaller than $2\delta+\eps< 1.$ This creates a contradiction to results of \cite{Ginzburg-CC,Hein-CCCY, GG-ai} as in the proof of Lemma \ref{lma:sdm}.

\section{Kunneth property of the quantum Steenrod operations}\label{section5}
\label{sec:equivariant-kunneth}

In this section we prove that the quantum Steenrod operation respects the ordinary and the equivariant Kunneth maps in the monotone setting. We also prove an analogous version in the Floer-theoretic setting. Throughout this section, all homology is taken with $\F_p$-coefficients unless otherwise stated. 

\subsection{Equivariant Kunneth maps via Morse theory}\label{subsec:equiv-kunneth}

We first need to describe the equivariant Kunneth map on the chain level, using the Morse model. For $i = 1, 2$, let $X_i$ be a smooth $\zp$-manifold and $f_i: X_i \to {\mb R}$ be a $\zp$-invariant Morse function. In Section \ref{subsubsec:borel-construction} we described the construction of the equivariant Morse cochain complex. 
Now we describe how to define the equivariant Kunneth map 
\beqn
\kappa_{\zp}: HM_{\zp}(f_1) \otimes HM_{\zp}(f_2) \to HM_{\zp}(f_1\times f_2)
\eeqn
on the chain level. 

Recall that we used a specific $\zp$-invariant function $g: S^\infty \to {\mb R}$, see \eqref{equation:morse-function-s-inf}, together with a $\zp$-invariant metric to give a cochain model for $B\zp$. Then $CM_{\zp}(f_i)$ is defined by considering the Morse complex associated to a $\zp$-invariant perturbation $f_{i, v}$ of $f_i$. We also choose another family 
\beqn
f_{\infty, v}: X_1 \times X_2 \to {\mb R},\ v \in S^\infty
\eeqn
for which the equivariant Morse complex $CM_{\zp}(f_1\times f_2)$ can be defined.

Recall $T_{2, 1}$ is the tree with two incoming edges identified with $(-\infty, 0]$ and one outgoing edge identified with $[0, +\infty)$. Consider a piecewise smooth family
\beqn
{\ms F}: T_{2,1} \times S^\infty \times X_1 \times X_2 \to {\mb R}
\eeqn
written as ${\ms F}_{s, v}(x_1, x_2)$ satisfying
\begin{enumerate}
    \item ${\ms F}$ is $\zp$-invariant, namely, for each $\gamma \in \zp$,
    \beqn
    {\ms F}_{s, \gamma v}(\gamma x_1, \gamma x_2) = {\ms F}_{s, v}(x_1, x_2).
    \eeqn

    \item When $s\in T_{2, 1}$ is close to $0$, the function ${\ms F}_{s, v}(x_1, x_2)$ is independent of $s$ and $v$.
    
    \item For $i = 1, 2$, when $s \in T_i$ and $s\ll 0$, ${\ms F}_{s, v}(x_1, x_2) = f_{i, v}(x_i)$.
        
    \item When $s \in T_\infty$ and $s\gg 0$, ${\ms F}_{s, v}(x_1, x_2) = f_{\infty, v}(x_1, x_2)$.
\end{enumerate}
Then for any piecewise smooth map 
\beqn
\zeta = (\zeta_1, \zeta_2, \zeta_\infty): T_{2,1} \to S^\infty
\eeqn
there is an associated family
\beqn
{\ms F}_{\zeta, s}: X_1 \times X_2 \to {\mb R},\ s\in {\mb R}
\eeqn
given by 
\beqn
{\ms F}_{\zeta, s} (x_1, x_2) = \left\{ \begin{array}{cc} {\ms F}_{s, \zeta_\infty (s)}(x_1, x_2),\ &\ s \geq 0,\\
{\ms F}_{s, \zeta_1(s)}(x_1, x_2) + {\ms F}_{s, \zeta_2(s)}(x_1, x_2),\  & s \leq 0. \end{array} \right.
\eeqn
Note that ${\ms F}_{\zeta, s}$ is smooth in $s$ because near $s = 0$, ${\ms F}_{s, v}$ is independent of $s$ and $v$.

We also fix the family of vector fields $Y_s$ on $S^\infty$ parametrized by $s \in T_{2,1}$ (which was used to define the multiplicative structure on $HM_{\zp}(g)$). Then for critical points $x_{i, \pm} \in {\rm crit}(f_i)$, $w_1, w_2, w_\infty \in {\rm crit}(g)$, consider the moduli space
\beqn
{\mc M}_{w_1, w_2, w_\infty}(x_{1, -}, x_{2, -}; x_{1, +}, x_{2, +})
\eeqn
of pairs $(\eta, \zeta)$ satisfying 
\begin{enumerate}

\item $\zeta: T_{2,1}\to S^\infty$ is a $Y$-perturbed flow tree with $\zeta_i(\infty) = w_i$ for $i = 1, 2, \infty$.

\item $\eta: {\mb R} \to X_1 \times X_2$ is a solution to the equation
\beqn
\eta'(s) = \nabla {\ms F}_{\zeta, s}(\eta(s))
\eeqn
such that $\eta$ converges to $(x_{1, -}, x_{2, -})$ at $-\infty$ and to $(x_{1, +}, x_{2, +})$ at $+\infty$.
\end{enumerate}
The union of moduli spaces over all possible asymptotic limits has a free $\zp$-action. Hence by choosing ${\ms F}_{s, v}$ generically, one can achieve equivariant transversality. The moduli space is shown in Figure \ref{Figure_Kunneth_pfold}.

\begin{figure}[ht]
    \centering
    \includegraphics[width=0.5\linewidth]{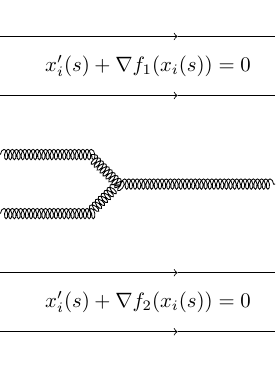}
    \caption{Moduli space defining the equivariant Kunneth map for the tensor products. The coiled tree is a flow tree in $S^\infty$.}
    \label{Figure_Kunneth_pfold}
\end{figure}

On the other hand, the moduli space ${\mc M}_{w_1, w_2, w_\infty}(x_{1,-}, x_{2, -}; x_{1, +}, x_{2, +})$ can be compactified by suitably adding broken configurations. More precisely, the breaking along the leg $T_1$ resp. $T_2$ is regarded as a trajectory in $X_1\times S^\infty$ resp. $X_2 \times S^\infty$, while the breaking along the leg $T_\infty$ is regarded as a map into $X_1\times X_2 \times S^\infty$. Then the counts of the moduli spaces ${\mc M}_{w_1, w_2, w_\infty}(x_{1, -}, x_{2, -}; x_{1, +}, x_{2, +})$ define a chain map 
\beq\label{Morse_Kunneth}
\kappa_{\zp}: CM_{\zp}(f_1) \otimes_{\fp} CM_{\zp}(f_2) \to CM_{\zp}(f_1\times f_2)
\eeq
which descends to the equivariant Kunneth map. One can also prove that on the cohomology level, the equivariant Kunneth map is $\rp$-bilinear, where $HM_{\zp}(f_1)$, $HM_{\zp}(f_2)$, and $HM_{\zp}(f_1\times f_2)$ are $\rp$-modules. We only summarize the results and omit the details.

\begin{prop}
The chain map \eqref{Morse_Kunneth} descends to an $\rp$-bilinear map
\beqn
\kappa_{\zp}: HM_{\zp}(f_1) \otimes_{\rp} HM_{\zp}(f_2) \to HM_{\zp}(f_1 \times f_2).
\eeqn
\end{prop}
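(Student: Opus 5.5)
The plan is to argue entirely at the level of moduli spaces: first show that \eqref{Morse_Kunneth} is a chain map, then establish $\rp$-bilinearity on cohomology by a standard cobordism (TQFT) argument that compares two ways of gluing a Y-shaped tree in $S^\infty$ onto the Kunneth tree.

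\emph{Chain map.} I would study the one-dimensional moduli spaces ${\mc M}_{w_1,w_2,w_\infty}(x_{1,-},x_{2,-};x_{1,+},x_{2,+})$. By Gromov-type compactness for flow trees in finite truncations $S^{2N-1}$ together with flow lines in $X_1\times X_2$, their boundary consists of three kinds of breaking: along $T_1$ (a $d_{\zp}$-trajectory in $X_1\times S^\infty$), along $T_2$ (a trajectory in $X_2\times S^\infty$), and along $T_\infty$ (a trajectory of the perturbed family $f_{\infty,v}$ in $X_1\times X_2\times S^\infty$). Counting boundary points gives $d\kappa_{\zp}=\kappa_{\zp}(d\otimes 1\pm 1\otimes d)$ on $(CM(f_1)\otimes CM(g))\otimes(CM(f_2)\otimes CM(g))$. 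Because ${\ms F}$ and $Y$ are $\zp$-invariant, the map is equivariant for the diagonal action. It therefore restricts to the tensor product of invariant parts and lands in $CM_{\zp}(f_1\times f_2)$. Signs come from the usual orientation conventions for fibre products.

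\emph{$\rp$-bilinearity.} The $\rp$-action on each $HM_{\zp}(\cdot)$ is defined by coupling with $Y$-perturbed trees $T_{2,1}$ in $S^\infty$, with one leg carrying the module input and the other a critical point of $g$. For $a\in\rp$, I need to show that $\kappa_{\zp}(a\cdot\alpha_1\otimes\alpha_2)$, $\kappa_{\zp}(\alpha_1\otimes a\cdot\alpha_2)$ and $a\cdot\kappa_{\zp}(\alpha_1\otimes\alpha_2)$ agree on cohomology. Each of these is a composition of two tree-type operations. Their concatenation is modelled on a ternary tree in $S^\infty$ whose three leaves are attached to $X_1$, $X_2$ and a point input for $a$. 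I would consider the one-parameter family of metric ternary trees, with the interior edge length ranging over $[0,\infty)$ in the two possible combinatorial types, which meet at the four-valent tree. Over this family I would choose a $\zp$-invariant family of perturbations $({\ms F},Y)$ extending the ones already fixed at the ends. The parametrized zero-dimensional moduli spaces then define a chain homotopy. The boundary of the one-dimensional parametrized spaces consists of the two end compositions plus terms of the form $d\circ h\pm h\circ d$, so the two compositions agree on cohomology. Associativity and the $\rp$-ring structure computed in Section~\ref{subsubsec:ring-BZ/p} then reduce the general $a$ to this statement.

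\emph{Main obstacle.} The hardest step is equivariant transversality for the parametrized families over the space of ternary trees. The input point in $S^\infty$ and the Morse data on the product must vary compatibly with the gluing at the ends, while $\zp$-invariance is maintained and ${\ms F}$ is kept independent of $(s,v)$ near the vertices so that the glued vector fields remain smooth. I would handle this as in the definition of \eqref{Morse_Kunneth}. The union of moduli spaces carries a free $\zp\times{\mb Z}_{\geq0}$-action coming from the $S^\infty$ component, so generic invariant perturbations can be chosen inductively on finite truncations $S^{2N-1}$. A little extra care is needed with $\theta$, since the action is not $\theta$-linear on chains, but the required compatibility holds after passing to cohomology.
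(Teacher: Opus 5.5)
Your proposal is correct and is essentially the argument the paper has in mind: the paper omits the proof and only appeals implicitly to the standard TQFT/cobordism reasoning. That reasoning is what you give, namely the chain-map property from the boundary of one-dimensional tree moduli spaces, diagonal $\zp$-equivariance to restrict to invariant parts, and a homotopy over the one-parameter family of ternary trees in $S^\infty$ interpolating between the two ways of attaching the $\rp$-input. Your closing appeal to associativity and the caveat about $\theta$ are unnecessary: in the Morse model the $\rp$-action is a chain-level tree count for each invariant generator of $CM_{\zp}(g)$ (whose differential vanishes), so the cobordism argument applies to each basis element of $\rp$ directly and linearity gives the rest.
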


\begin{remark}\label{remark52}
When one can achieve equivariant transversality on $X_i$, i.e., one can choose a $\zp$-invariant Morse--Smale pair $(f_i, h_i)$ on $X_i$, then one can also achieve equivariant transversality on $X_1 \times X_2$. In particular, one can take $f_{\infty, v} = f_1 \times f_2$ and choose the product metric. Then one can also choose ${\ms F}$ to be basically $f_1 \times f_2$. Therefore, the equations defining the moduli spaces are decoupled. 
\end{remark}

\subsubsection{Two special cases}

There are two special cases (in the finite-dimensional setting) considered in this paper. The first case is when $\zp$ acts trivially on $X_1$ and $X_2$. In this case, given any Morse functions $f_i: X_i \to {\mb R}$ with an associated Morse complex $CM(f_i)$, as $\rp$-modules one has 
\begin{align*}
&\ HM_{\zp}(f_i) \cong HM(f_i) \otimes \rp,\ &\ HM_{\zp}(f_1 \times f_2) \cong HM(f_1\times f_2) \otimes \rp.
\end{align*}
The equivariant Kunneth map reduces to the ordinary one. Namely,
\beqn
\kappa_{\zp} = \kappa \otimes m_p
\eeqn
where $\kappa: HM(f_1) \otimes HM(f_2) \to HM(f_1 \times f_2)$ is the ordinary Kunneth isomorphism and $m_p: \rp \otimes \rp \to \rp$ is the ring multiplication.

Another special case is the $p$-fold tensor power. Suppose $\zp$ acts trivially on $X_i$, $i = 1, 2$. Let $f_i: X_i\to {\mb R}$ be Morse functions. We have described the Morse model for the equivariant cochain complex $CM(f_i)^{\otimes p}_{\zp}$, which is the $\zp$-invariant part of the product Morse chain complex $CM(f_i^{\times p} \times g)$. Then the cochain-level equivariant Kunneth map is a special case of the general situation.

\subsubsection{Kunneth property of quasi-Frobenius map}

Now consider two Morse cochain complexes $CM(f_1)$, $CM(f_2)$ on $X_1$ and $X_2$ respectively. 

\begin{lemma}\label{Lemma_Kunneth_qF}
The following diagram commutes up to Koszul signs.
\beqn
\xymatrix{ HM(f_1) \otimes HM (f_2) \ar[rr]^-{qF \otimes qF} \ar[d]_{\kappa}    &   & H(CM(f_1)^{\otimes p}_{\zp}) \otimes H (CM(f_2)^{\otimes p}_{\zp})\ar[d]^{\kappa_{\zp}}  \\
         HM(f_1 \times f_2) \ar[rr]_-{qF}   &  &  H( CM (f_1\times f_2)^{\otimes p}_{\zp})   }
\eeqn
\end{lemma}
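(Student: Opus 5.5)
We check commutativity on cocycle representatives. Let $a_1 \in CM(f_1)$ and $a_2 \in CM(f_2)$ be cocycles. Going around the bottom of the diagram, $\kappa(a_1\otimes a_2)$ is represented by the product cocycle $a_1\times a_2 \in CM(f_1\times f_2) \cong CM(f_1)\otimes CM(f_2)$. Applying $qF$ then gives $(a_1\times a_2)^{\otimes p}\otimes 1$. Going around the top, we obtain $\kappa_{\zp}\big((a_1^{\otimes p}\otimes 1)\otimes(a_2^{\otimes p}\otimes 1)\big)$. It therefore suffices to show that these two equivariant cocycles are cohomologous, up to the Koszul sign $(-1)^{|a_1||a_2|p(p-1)/2}$.

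First we simplify $\kappa_{\zp}$. Since $\zp$ acts trivially on $X_i$, it acts on $X_i^{p}$ by permuting factors. The functions $f_i^{\times p}$ are $\zp$-invariant, but product metrics pulled back from a Morse--Smale metric on $X_i$ are also invariant and Morse--Smale, so equivariant transversality already holds on $X_i^p$ and on $(X_1\times X_2)^p\cong X_1^p\times X_2^p$. By Remark \ref{remark52}, we may take $f_{\infty,v}=f_1^{\times p}\times f_2^{\times p}$ with the product metric and $\ms F$ essentially the product. The defining equations then decouple: the $X$-component $\eta$ must be constant in the rigid count, and only the $Y$-shaped tree in $S^\infty$ contributes. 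The chain-level map is then
\[
(\alpha\otimes r)\otimes(\beta\otimes r')\mapsto \pm(\alpha\times\beta)\otimes m_p(r,r'),
\]
where $\alpha\times\beta$ is computed in the identification $CM(f_1^{\times p})\otimes CM(f_2^{\times p})\cong CM((f_1\times f_2)^{\times p})$, after reordering the tensor factors. Strictly, $\kappa_{\zp}$ is independent of choices only up to chain homotopy, so we invoke the standard continuation argument to pass from a general $\ms F$ to this decoupled one; I expect this independence to be routine.

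With this description, the top route sends the generator to $\pm\,\sigma(a_1^{\otimes p}\otimes a_2^{\otimes p})\otimes m_p(1,1)$. Here $\sigma$ is the shuffle isomorphism $CM(f_1)^{\otimes p}\otimes CM(f_2)^{\otimes p}\to (CM(f_1)\otimes CM(f_2))^{\otimes p}$. Since $1\in\rp$ is the unit of the Morse-theoretic ring structure, $m_p(1,1)=1$, and $\sigma$ sends $a_1^{\otimes p}\otimes a_2^{\otimes p}$ to $(a_1\otimes a_2)^{\otimes p}$. Both routes therefore agree on the nose at chain level, up to the sign from $\sigma$.

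The remaining task is to track that sign. Moving each copy of $a_2$ past the copies of $a_1$ to its right gives $\sum_{j=1}^{p-1} j\,|a_1||a_2| = |a_1||a_2|\,p(p-1)/2$, which is the Koszul sign appearing in Theorem \ref{thm: Kunneth}. I expect the main obstacle to be justifying that $\sigma$ is $\zp$-equivariant compatibly with the identifications of $CM_{\zp}$ with $(CM\otimes CM(g))^{\inv}$. The cyclic generator acts on $(X_1\times X_2)^p$ diagonally, matching the simultaneous cyclic actions on $X_1^p\times X_2^p$, so the shuffle commutes with the actions up to the same signs. One must also check that the sign conventions of the Morse orientations on $X_1^p\times X_2^p$ versus $(X_1\times X_2)^p$ differ exactly by the shuffle sign. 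I would verify this by comparing the orientations of unstable manifolds, which are products in both cases.
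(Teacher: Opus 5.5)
Your proposal is correct and follows essentially the same route as the paper: represent $qF(a_i)$ by the invariant cocycle $x_i^{\otimes p}\otimes(w_0^1+\cdots+w_0^p)$, use Remark~\ref{remark52} to decouple the flow-line equation on the product from the $Y$-shaped tree equation in $S^\infty$, and read off $\pm(\kappa(x_1\otimes x_2))^{\otimes p}\otimes(w_0^1+\cdots+w_0^p)$, with the sign coming from the shuffle of tensor factors. The paper leaves three details implicit that you make explicit: the sign $(-1)^{|a_1||a_2|p(p-1)/2}$, the fact that the $X$-component must be constant in rigid configurations while the tree factor acts as the unit, and the continuation argument needed to pass to decoupled data.
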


\begin{proof}[Sketch of proof]
Choose classes $a_i \in HM(f_i)$ and choose representing cocycles $x_i \in CM(f_i)$. Recall that $qF(a_i) \in H(CM(f_i)^{\otimes p}_{\zp})$ is represented by the equivariant cocycle
\beqn
x_i^{\otimes p} \otimes 1 \in CM(f_i) \otimes \cdots \otimes CM(f_i) \otimes \rp.
\eeqn
Using the Morse model on $S^\infty$, let $w_1, \ldots, w_p \in {\rm crit}(g) \subset  S^\infty$ be the $p$ critical points of $g$ of index $0$. Then the above cocycle is equivalent to the Morse cocycle
\beqn
x_i^{\otimes p} \otimes ( w_1 + \cdots + w_p) \in \left( CM(f_i)^{\otimes p} \otimes CM(g) \right)^\inv.
\eeqn
We claim that the equivariant Kunneth map
\beqn
\wt \kappa_\zp:\Big( CM(f_1)^{\otimes p} \otimes CM(g) \Big) \otimes \Big( CM(f_2)^{\otimes p} \otimes CM(g) \Big) \to CM(f_1 \times f_2)^{\otimes p}\otimes CM(g)
\eeqn
gives 
\beqn
\wt\kappa_\zp \Big( (x_1^{\otimes p} \otimes (w_1 + \cdots + w_p) ) \otimes (x_2^{\otimes p} \otimes (w_1 + \cdots + w_p)) \Big) = \pm (\kappa(x_1 \otimes x_2))^{\otimes p} \otimes (w_1 + \cdots + w_p).
\eeqn
Here the sign comes from switching certain copies of $x_1$ and $x_2$. This claim essentially follows from the fact that in this situation equivariant transversality holds. Hence the flow line equation in $X_1 \times X_2$ and the Y-shaped gradient tree equation in $S^\infty$ are decoupled (see Remark \ref{remark52}). This verifies the commutativity (up to signs) of the diagram.
\end{proof}

\subsubsection{Equivariant Kunneth for equivariant local Floer cohomology}\label{subsubsec:local-equiv-kunneth}

In the proof of the divisibility lemma (Lemma \ref{lma:div}), one needs to consider the Kunneth map for local equivariant Floer cohomology. Let $X_i$ be a symplectic manifold, $H_i$ being a Hamiltonian such that $x_i \in {\mc O}(H_i)$ is isolated and $x_i^{(p)} \in {\mc O}(H_i^{(p)})$ is also isolated. We will construct a Kunneth map
\beqn
\kappa_\zp^\loc: HF_\zp^\loc (H_1^{(p)}, x_1^{(p)}; \kzp) \underset{\kzp}{\otimes} HF^{\rm loc}_{\zp}( H_2^{(p)}, x_2^{(p)}; \kzp) \to HF^{\rm loc}_{\zp}( H_1^{(p)} \times H_2^{(p)}, x_1^{(p)}\times x_2^{(p)}; \kzp).
\eeqn

Recall the construction of the local equivariant Floer cohomology given in Section \ref{subsubsec:local-equiv}. Let $G_i$ be nondegenerate on $X_i$ which is sufficiently close to $H_i$. Choose a family 
\beqn
{\ms G}_i^{eq}(v, t, \cdot): X_i \to {\mb R}, v \in S^\infty, t \in S^1
\eeqn
of perturbations of $G_i^{(p)}$ to define the complex $CF_\zp^\loc (H_i^{(p)}, x_i^{(p)})$. On the other hand, $G_1 \times G_2$ is a nondegenerate perturbation of $H_1 \times H_2$. Choose a family 
\beqn
{\ms G}_\infty^{eq}(v, t, \cdot): X_1\times X_2 \to {\mb R}
\eeqn
as a perturbation of $G_1^{(p)}\times G_2^{(p)}$ in order to define the complex $CF_\zp^\loc (H_1^{(p)} \times H_2^{(p)}, x_1^{(p)}\times x_2^{(p)})$. 

To continue, choose the perturbation data for the $Y$-shaped flow trees in $S^\infty$. Namely, a family of vector fields $Y_s$ on $S^\infty$ parametrized by $s\in T_{2, 1}$ which vanishes when $|s|\gg 0$. Then extend ${\ms G}_1^{eq}, {\ms G}_2^{eq}$ and ${\ms G}_{\infty}^{eq}$ to three families: ${\ms G}_{1, s}^{eq}$ on $X_1$ which depends on $s \in T_1$, ${\ms G}_{2, s}^{eq}$ on $X_2$ which depends on $s \in T_2$, and ${\ms G}_{\infty, s}^{eq}$ on $X_1 \times X_2$ which depends on $s \in T_\infty$, such that 1) when $|s|\gg 0$, ${\ms G}_{i, s}^{eq} = {\ms G}_i^{eq}$ and 2) when $s$ is near $0$, ${\ms G}_{1, s} = G_1^{(p)}$, ${\ms G}_{2, s} = G_2^{(p)}$, and ${\ms G}_{\infty, s} = G_1^{(p)}\times G_2^{(p)}$. 

Now consider the following equation for triples $(u_1, u_2, \zeta)$ where $u_1: {\mb R}\times S^1 \to X_1$, $u_2: {\mb R}\times S^1 \to X_2$, $\zeta: T_{2,1} \to S^\infty$ are smooth maps such that 
\begin{enumerate}
    \item $\zeta$ is a $Y$-perturbed flow tree in $S^\infty$.

    \item When $s \leq 0$, for $i = 1, 2$,
    \beqn
    \frac{\partial u_i}{\partial s} + J_i(u_i) \left( \frac{\partial u_i}{\partial t} - X_{{\ms G}_{i, s, \zeta_i(s)}}(u_i) \right) = 0.
    \eeqn

    \item When $s \geq 0$, view $u = (u_1, u_2)$ as a map into $X_1 \times X_2$, for the product almost complex structure $J = J_1 \times J_2$, one has 
    \beqn
    \frac{\partial u}{\partial s} + J (u) \left( \frac{\partial u}{\partial t} - X_{{\mc G}_{\infty, s, \zeta_\infty(s)}}(u) \right) = 0.
    \eeqn
\end{enumerate}
Notice that when $s$ is near $0$, $u_i$ is a solution to the Floer equation for $G_i^{(p)}$. Similar to the finite-dimensional Morse case, one can choose ${\ms G}$ to achieve transversality. Therefore, one obtains a chain level map
\beqn
\kappa_{\zp}^{\rm loc}: CF_\zp^\loc (H_1^{(p)}, x_1^{(p)}) \otimes_{\fp} CF_\zp^\loc (H_2^{(p)}, x_2^{(p)}) \to CF_\zp^\loc (H_1^{(p)}\times H_2^{(p)}, x_1^{(p)} \times x_2^{(p)})
\eeqn
leading to the map on the homology level
\beqn
\kappa_{\zp}^{\rm loc}: HF_{\zp}^{\rm loc}(H_1^{(p)}, x_1^{(p)}) \otimes_{\fp} HF_{\zp}^{\rm loc}(H_2^{(p)}, x_2^{(p)}) \to HF_{\zp}^{\rm loc}(H_1^{(p)} \times H_2^{(p)}, x_1^{(p)} \times x_2^{(p)}).
\eeqn
We summarize our result here.

\begin{prop}\label{prop_local_eq_Kunneth}
The local equivariant Kunneth map $\kappa_\zp^\loc$ on local equivariant Floer cohomology is bilinear over $\rp$.
\end{prop}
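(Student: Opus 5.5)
To prove Proposition \ref{prop_local_eq_Kunneth}, I would first give bilinearity a precise meaning. Since $\kzp$-linearity is already built into the construction, what remains is compatibility with the $\rp$-module structures. Recall that the $\rp$-module structure on each $HF^\loc_\zp$ is defined by coupling the local Floer cylinders with $Y$-shaped perturbed flow trees in $S^\infty$, one leg of which carries a Morse cocycle for $g$ representing an element of $\rp$. I must show that for $r\in\rp$ (it suffices to take $r=\theta$, since $t$-linearity holds on the chain level)
\beqn
\kappa^\loc_\zp(r\cdot a\otimes b)=r\cdot\kappa^\loc_\zp(a\otimes b),\qquad \kappa^\loc_\zp(a\otimes r\cdot b)=(-1)^{|r||a|}r\cdot\kappa^\loc_\zp(a\otimes b)
\eeqn
on cohomology. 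The plan is a standard TQFT-style cobordism argument: I will realize both sides as counts of a single type of moduli space over a $1$-parameter family of domains in $S^\infty$, and read the chain homotopy off its boundary.

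First I would build the relevant parameter space. Consider trees with three inputs and one output in $S^\infty$: two inputs carry the equivariant classes $a,b$, coupled with Floer cylinders in $X_1$ and $X_2$, and the third input carries $r$ through a $g$-flow line. These form a one-parameter family of ribbon trees whose internal edge length is $\ell\in[0,\infty)$, together with a switch of topology through the $4$-valent vertex. At one end ($\ell\to\infty$ on the side where $r$ first joins the $X_1$-leg) the configuration breaks into the module action tree on $X_1\times S^\infty$ followed by the Kunneth tree; this end computes $\kappa^\loc_\zp(r a\otimes b)$. At the other end, $r$ joins after the Kunneth vertex, which computes $r\kappa^\loc_\zp(a\otimes b)$. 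Over this family I would choose Hamiltonian perturbations $\ms G$ extending those of Section \ref{subsubsec:local-equiv-kunneth} and of the module structure. These should be $\zp$-invariant, should interpolate through gluing, and should equal the unperturbed $G_1^{(p)}$, $G_2^{(p)}$, $G_1^{(p)}\times G_2^{(p)}$ near vertices. The $\zp$-action on the union of these parametrized moduli spaces is free, since the action on $S^\infty$ is free. Hence generic equivariant choices give transversality, exactly as for $\kappa^\loc_\zp$ itself.

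Second, I would establish compactness. Because everything is local near $x_1^{(p)},x_2^{(p)}$ and the perturbations are small, action bounds rule out both sphere bubbling and escape from the isolating neighbourhood, just as in the definition of local Floer cohomology. The one-dimensional parametrized moduli spaces then compactify with boundary consisting of three kinds of pieces: the two ends of the family, breakings along the legs (which contribute the differential composed with the operation), and breakings inside $S^\infty$ at the internal edge. Counting these boundary points mod $p$ yields a $\kzp$-linear chain homotopy $h$ with $dh\pm hd=\kappa(r\cdot -\otimes -)-r\cdot\kappa(-\otimes-)$. Restricting to $\zp$-invariant parts then gives the identity on $HF^\loc_\zp$. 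The second slot is handled symmetrically, with the Koszul sign coming from reordering $r$ past $a$ in the orientation conventions. Finally, independence of the choices follows from the same kind of argument applied to a further parameter.

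The main obstacle will be the middle of the family, where the topology of the tree in $S^\infty$ changes from one ribbon structure to the other (the associativity-type wall). There the perturbations on the $X_1$-leg and on the $X_1\times X_2$-leg must be matched consistently with the product structure. My first attempt would be to pass through a $4$-valent vertex at which all Hamiltonian terms are unperturbed products, using condition (2) on $\ms G$ near $s=0$. This makes the Floer part decouple there, so the wall reduces to the Morse-theoretic associativity of $\rp$ recorded in Section \ref{subsubsec:ring-BZ/p}.
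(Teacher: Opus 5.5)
This is the standard TQFT/cobordism argument the paper relies on. The paper states Proposition~\ref{prop_local_eq_Kunneth} without proof, just as it omits the details for the Morse-theoretic version. Your interpolation between ``$r$ acts on the $X_1$-leg before the Kunneth vertex'' and ``$r$ acts after it'' is exactly the expected route, with transversality from the free $\zp$-action on $S^\infty$ and compactness from local action bounds.

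\textbf{The shortcut to $r=\theta$ is not available.} You claim $t$-linearity holds on the chain level, so only $r=\theta$ needs checking. But the chain-level $\kappa^\loc_\zp$ is only $\fp$-bilinear; the paper writes its domain with $\otimes_{\fp}$.
\begin{itemize}
\item The $\tau$-invariance of the perturbation data makes \emph{single-input} operations $t$-linear: differentials, continuation maps, the pants product.
\item For a $Y$-shaped tree, applying $\tau$ shifts all three asymptotic critical points at once. So it does not relate the moduli spaces computing $\kappa(\tau a\otimes b)$ and $\tau\kappa(a\otimes b)$.
\item With coupled Floer perturbations depending on the tree's position in $S^\infty$, these counts agree only up to homotopy, not on the nose.
\end{itemize}

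\textbf{The fix costs nothing.} Nothing in your cobordism is specific to $\theta$. Run it with $r$ equal to each invariant basis cochain $\sum_l w^l_k$, i.e.\ $t^j$ and $t^j\theta$.
\begin{itemize}
\item This gives compatibility with every monomial of $\rp$ directly, with no appeal to associativity of the module action.
\item Combine it with the fact that on cohomology the tree action of $t$ agrees with the $\tau$-action (the local analogue of the ``extends'' clause in Theorem~\ref{thm211}(1)). This also gives the $\kzp$-balancedness needed for the map to descend to $\otimes_{\kzp}$, as in the paper's formulation.
\end{itemize}

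\textbf{The $4$-valent point is not a wall.} What you call the main obstacle is an interior point of your one-parameter family. It contributes no boundary term to the chain homotopy. You only need the perturbation data to be chosen continuously through it, so no reduction to associativity of $\rp$ is needed there.
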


\subsection{Quantum Steenrod operation via inhomogeneous term perturbations}\label{subsection52}


The definition of quantum Steenrod operation given in \cite{Bai_Xu_2025} and recalled in Subsection \ref{subsec:quantumStpower} is based on abstract perturbations. In the monotone setting, one can use an alternate approach by geometrically perturbing the equation, as done in \cite{SZhao-pants} and \cite{covariant-constant}. Here we provide such a definition under the monotonicity assumption using the Morse model. Notice that \cite{covariant-constant} uses a particular cellular model for the equivariant cohomology. The comparison between the cellular and the Morse-theoretic approaches is carried out in Appendix \ref{appendixa}.

Let $(M, \omega)$ be a positively monotone closed symplectic manifold. Choose a Morse function $f:M \to {\mb R}$ together with a Morse--Smale metric. Let $g: S^\infty \to {\mb R}$ be the $\zp$-invariant Morse function used throughout this paper. 

We choose the inhomogeneous term as follows. Recall that $\Sigma_p \to S^2$ is the $p$-fold cover branched over $0$ and $\infty$. Consider a smooth family of complex-antilinear maps
\beqn
\nu^{eq}_{v, z, x}: T_z \Sigma_p \to T_x M
\eeqn
parametrized by $(v, z, x) \in S^\infty \times \Sigma_p \times M$ which satisfies the two equivariance conditions:
\begin{enumerate}

\item The $\zp$-equivariance: for the generator $\iota \in \zp$ which acts on both $S^\infty$ and $\Sigma_p$
\begin{equation} \label{eq:nu-equivariance}
\nu^{eq}_{\iota(v),z,x} = \nu^{eq}_{v,\iota(z),x} \circ D \iota_z: T_z \Sigma_p \to  T_x M.
\end{equation}

\item The ${\mb Z}_{\geq 0}$-equivariance: for the generator $\tau\in {\mb Z}_{\geq 0}$ which acts on $S^\infty$
\beqn
\nu_{\tau(v), z, x}^{eq} = \nu_{v, z, x}^{eq}.
\eeqn
\end{enumerate}

We also choose a generic $\omega$-compatible $J$ such that \cite[Assumption 3.2]{covariant-constant} is satisfied, namely, the moduli spaces of certain configurations of simple $J$-holomorphic spheres (chains) are regular and certain evaluation maps on such moduli spaces are transverse to stable and unstable manifolds of the chosen Morse function $f: M \to {\mb R}$. 

Now we describe the moduli spaces; the moduli spaces considered in Section \ref{subsec:quantumStpower} are the special cases for $\nu_{v, z, x}^{eq} \equiv 0$. For $A \in H_2(M; \Z)$, $w_-, w_+ \in {\rm crit}(g)$, and $x_1, \ldots, x_p, y \in {\rm crit}(f)$, let 
\beqn
{\mc M}_{w_-, w_+}(A; x_1, \ldots, x_p, y)
\eeqn
be the set of pairs $(u, \zeta)$ where
\begin{itemize}

\item $\zeta: {\mb R} \to S^\infty$ is a solution to $\zeta'(s) = \nabla g(\zeta(s))$ which converges to $w_\pm$ at $\pm\infty$.

\item $u: \Sigma_p \to M$ is a solution to 
\beq\label{eq:u-equivariant-condition} 
\ov\partial_J u(z) = \nu_{\zeta(0), z,u(z)}^{eq}\,
\eeq

\item $u(\infty) \in W^u(y,f)$,

\item $u(z_l) \in W^s(x_l, f)$ for $l = 1, \ldots, p$.

\item $u_*([\Sigma_p]) = A$.
\end{itemize}

This moduli space has the expected dimension
\begin{equation} \label{equation:morse-moduli-dimension}
|y| + |w_-| + 2 c_1(A) - \sum_{l=1}^{p} |x_l| - |w_+|.
\end{equation}
Just as \cite[Lemma 4.1]{covariant-constant}, for a generic $\nu^{eq}$, one can ensure the required transversality. In particular, when the expected dimension is zero, the moduli space is finite. Moreover, upon choosing orientations on the unstable manifolds of $f$, this moduli space has an induced orientation. Hence one can define the mod $p$ count
\beqn
\#_p {\mc M}_{w_-, w_+}(A; x_1, \ldots, x_p, y)  \in \fp
\eeqn
(which is defined to be zero if the expected dimension is nonzero). 

		\begin{figure}[H]
	\def\svgwidth{0.5\linewidth}
\begingroup%
  \makeatletter%
  \providecommand\color[2][]{%
    \errmessage{(Inkscape) Color is used for the text in Inkscape, but the package 'color.sty' is not loaded}%
    \renewcommand\color[2][]{}%
  }%
  \providecommand\transparent[1]{%
    \errmessage{(Inkscape) Transparency is used (non-zero) for the text in Inkscape, but the package 'transparent.sty' is not loaded}%
    \renewcommand\transparent[1]{}%
  }%
  \providecommand\rotatebox[2]{#2}%
  \newcommand*\fsize{\dimexpr\f@size pt\relax}%
  \newcommand*\lineheight[1]{\fontsize{\fsize}{#1\fsize}\selectfont}%
  \ifx\svgwidth\undefined%
    \setlength{\unitlength}{767.51240888bp}%
    \ifx\svgscale\undefined%
      \relax%
    \else%
      \setlength{\unitlength}{\unitlength * \real{\svgscale}}%
    \fi%
  \else%
    \setlength{\unitlength}{\svgwidth}%
  \fi%
  \global\let\svgwidth\undefined%
  \global\let\svgscale\undefined%
  \makeatother%
  \begin{picture}(1,0.74901083)%
    \lineheight{1}%
    \setlength\tabcolsep{0pt}%
    \put(0,0){\includegraphics[width=\unitlength,page=1]{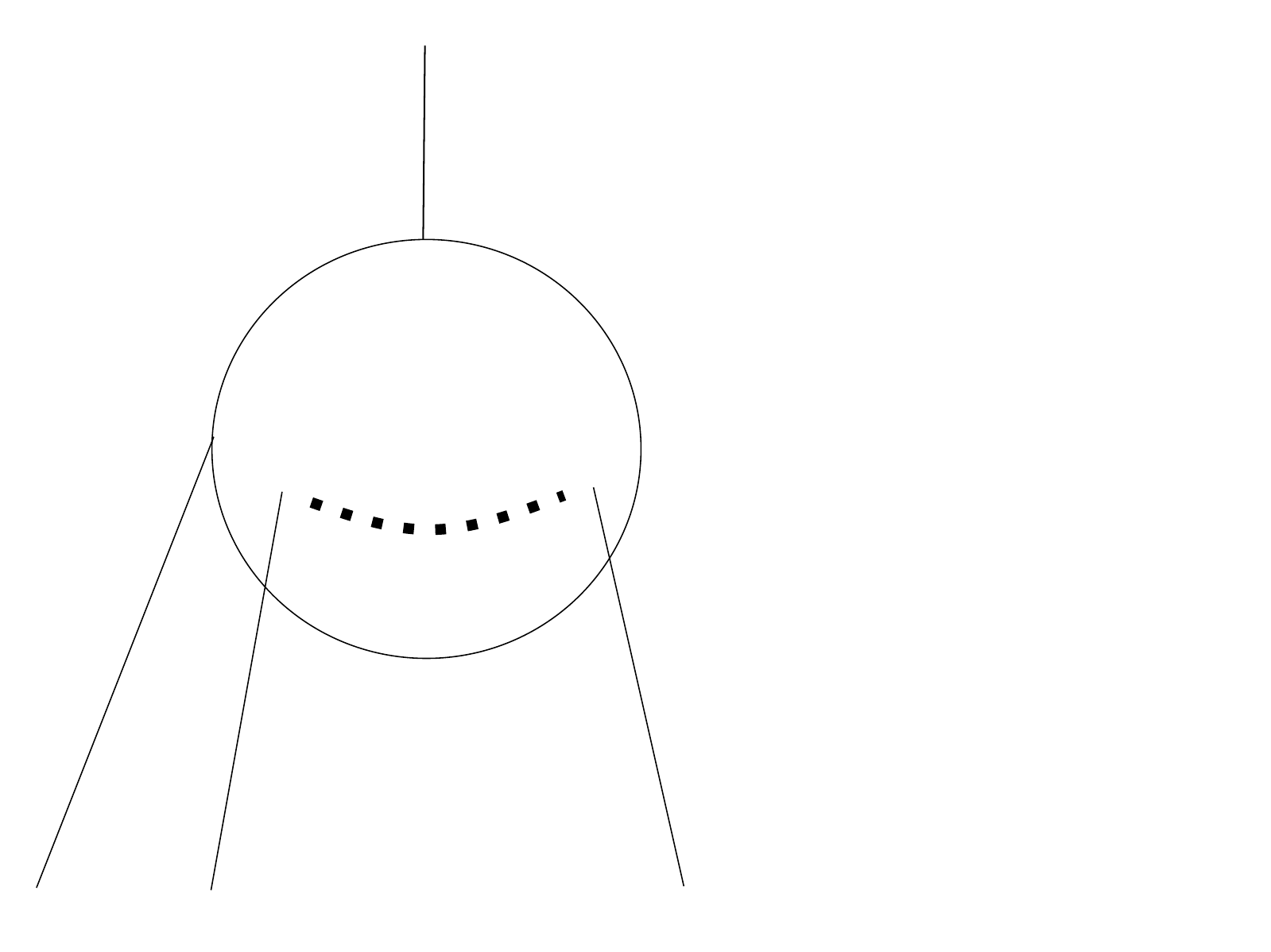}}%
    \put(0.64499171,0.0086038){\makebox(0,0)[lt]{\lineheight{1.25}\smash{\begin{tabular}[t]{l}$Z^0_{j,\delta}$\end{tabular}}}}%
    \put(0.65091441,0.70695664){\makebox(0,0)[lt]{\lineheight{1.25}\smash{\begin{tabular}[t]{l}$Z^k_{i,\epsilon}$\end{tabular}}}}%
    \put(0,0){\includegraphics[width=\unitlength,page=2]{qst.pdf}}%
    \put(0.14108895,0.0141128){\makebox(0,0)[lt]{\lineheight{1.25}\smash{\begin{tabular}[t]{l}$x_0$\end{tabular}}}}%
    \put(-0.00302825,0.01806115){\makebox(0,0)[lt]{\lineheight{1.25}\smash{\begin{tabular}[t]{l}$x_1$\end{tabular}}}}%
    \put(0.49447212,0.01207894){\makebox(0,0)[lt]{\lineheight{1.25}\smash{\begin{tabular}[t]{l}$x_j$\end{tabular}}}}%
    \put(0,0){\includegraphics[width=\unitlength,page=3]{qst.pdf}}%
    \put(0.319544,0.72112933){\makebox(0,0)[lt]{\lineheight{1.25}\smash{\begin{tabular}[t]{l}$y$\end{tabular}}}}%
  \end{picture}%
\endgroup%

			\caption{Elements of ${\mc M}_{w_-, w_+}(A; x_1, \ldots, x_p, y)$, where $w_- = Z^k_{i,\epsilon}$ and $w_+ = Z_{j,\delta}^0$.}
			\label{fig:eq-pop}
		\end{figure}

As the inhomogeneous perturbation $\nu_{v, z, x}^{eq}$ is $\zp$-equivariant, it follows that there is an orientation-preserving diffeomorphism
\beqn
\iota: {\mc M}_{w_-, w_+}(A; x_1, \ldots, x_p, y) \cong {\mc M}_{\iota(w_-), \iota(w_+)} (A; x_2, \ldots, x_p, x_1, y)
\eeqn
by $\iota(u, \zeta) = (u \circ \iota^{-1}, \iota \circ \zeta)$ where $\iota$ is the generator of $\zp$. Hence one has the equality
\beq\label{QST_equivariance}
\#_p {\mc M}_{w_-, w_+}( A; x_1, \ldots, x_p, y) = \#_p {\mc M}_{\iota(w_-), \iota(w_+)} (A; x_2, \ldots, x_p, x_1, y).
\eeq

Now one defines 
\beqn
\wt{\wt{QSt}}{}_{p,A}: CM(f)^{\otimes p}\otimes CM(g) \to CM(f) \otimes CM(g)
\eeqn
by linearly extending
\beq\label{equation:qst-tilde} 
\wt{\wt{QSt}}{}_{p, A}(x_1 \otimes \cdots \otimes x_p \otimes w_-) = \sum_{y, w_+} \#_p {\mc M}_{w_-, w_+}(A; x_1, \ldots, x_p, y) \cdot y \otimes w_+.
\eeq
As $M$ is monotone and $J$ is chosen to make simple holomorphic spheres regular, 1-dimensional moduli spaces can be compactified by adding only configurations with smooth spheres and possibly broken Morse trajectories. Such a structure of the boundary of 1-dimensional moduli spaces implies that $\wt{\wt{QSt}}{}_{p, A}$ is a chain map. 

Notice that by \eqref{QST_equivariance}, this map is $\zp$-equivariant. Namely
\beqn
\wt{\wt{QSt}}{}_{p, A}(x_2 \otimes \cdots \otimes x_p \otimes x_1 \otimes \iota(w_-)) = \iota (\wt{\wt{QSt}}{}_{p, A}( x_1 \otimes \cdots \otimes x_p \otimes w_-) ).
\eeqn
Therefore, its restriction to the $\zp$-invariant part of the domain gives a chain map
\beqn
\wt{QSt}{}_{p, A}: CM(f)^{\otimes p}_{\zp} \cong  ( CM(f)^{\otimes p} \otimes CM(g) )^{\zp} \to CM(f) \otimes CM(g)^{\zp} \cong CM(f) \otimes \rp.
\eeqn
By abuse of notation, the induced map on homology is denoted by
\beqn
\wt{QSt}{}_{p, A}: H_{\zp}(CM(f)^{\otimes p}) \to HM(f) \otimes \rp.
\eeqn

Recall we also have the quasi-Frobenius map in $\fp$-coefficients
\beqn
qF: HM(f) \to H_{\zp}(CM(f)^{\otimes p}).
\eeqn
Then we can define the degree $A$ part of the quantum Steenrod operation as 
\beqn
QSt_{p, A}:= \wt{QSt}_{p, A}\circ qF: HM(f) \to HM(f) \otimes \rp.
\eeqn
A standard argument using continuation maps shows that it induces a well-defined map
\beq
QSt_{p, A}: H(M) \to H(M) \otimes \rp
\eeq
which is  independent of the choices of $f$, $J$ and $\nu^{eq}_{v, z, x}$. The total quantum Steenrod operation is then
\beq
QSt_p:= \sum_{A \in H_2(M; {\mb Z})} q^A QSt_{p, A}: H(M) \to H(M; \Lambda^\Gamma) \otimes \rp.
\eeq
Here notice that the monotonicity condition guarantees it lands in the above target instead of $\Lambda^{\Gamma}_{\mathcal{K}_{0,p}} \otimes E(\theta)$.

\begin{thm}\cite[Theorem H]{Bai_Xu_2025}\label{thm52}
The above definition agrees with the definition using abstract perturbations for monotone targets.
\end{thm}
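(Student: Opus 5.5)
The statement to prove is Theorem \ref{thm52}: for monotone $(M,\omega)$, the operation $QSt_p$ built from equivariant inhomogeneous perturbations $\nu^{eq}$ and the Morse model of $B\zp$ agrees with the abstractly perturbed operation of \cite{Bai_Xu_2025}. I would follow the same strategy as in Lemma \ref{lemma_local_eq_comparison} and \cite[Theorem G]{Bai_Xu_2025}. The goal is to realize both counts inside one global Kuranishi chart framework and then use a cobordism to interpolate between them.

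\textbf{Step 1: the geometric count is an FOP count.} Consider the moduli spaces ${\mc M}_{w_-,w_+}(A;x_1,\ldots,x_p,y)$ defined with $\nu^{eq}$ and a regular $J$. Using the global Kuranishi chart construction of \cite[Section 25]{Bai_Xu_2025}, which applies to any perturbed Cauchy--Riemann equation, build $\zp\times{\mb Z}_{\geq 0}$-equivariant derived orbifold charts whose Kuranishi section is the map $u\mapsto \ov\partial_J u-\nu^{eq}_{\zeta(0),z,u}$. Monotonicity together with the genericity of $\nu^{eq}$ and $J$ (\cite[Assumption 3.2]{covariant-constant}) gives two facts in expected dimension $\le 1$: the solution sets are transversely cut out manifolds, and they have no nontrivial isotropy. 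The unperturbed section is therefore already FOP transverse. The integer virtual count reduces to the signed geometric count, and reducing mod $p$ recovers $\#_p{\mc M}$.

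\textbf{Step 2: interpolate from $\nu^{eq}$ to $\nu^{eq}\equiv 0$.} Take the one-parameter family $\lambda\nu^{eq}$ for $\lambda\in[0,1]$ and form the parametrized moduli spaces with a global chart over $[0,1]$. At $\lambda=0$ the equation is exactly the one used in \cite{Bai_Xu_2025}. Choose an equivariant FOP perturbation on the parametrized chart that restricts at $\lambda=1$ to the unperturbed (transverse) section from Step 1, and at $\lambda=0$ to the perturbation defining the abstract $\qst_p$. Counting rigid elements of the parametrized space defines a chain homotopy $K$, and the boundary of the 1-dimensional parametrized moduli spaces yields
\[
\wt{\wt{QSt}}{}^{\nu}_{p,A}-\wt{\wt{QSt}}{}^{\rm FOP}_{p,A}=dK\pm Kd .
\]
This boundary has three kinds of strata:
\begin{itemize}
\item Morse breakings in $M$ and in $S^\infty$;
\item the two ends $\lambda=0,1$;
\item sphere-bubble strata, which the virtual framework handles automatically.
\end{itemize}

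\textbf{Step 3: descend to cohomology.} The homotopy $K$ is $\zp\times{\mb Z}_{\ge0}$-equivariant, because all the data are. It therefore restricts to invariant parts and gives equality of $\wt{QSt}_{p,A}$ on $H_{\zp}(CM(f)^{\otimes p})$. The quasi-Frobenius map $qF$ is the same on both sides, so composing with it gives $QSt_{p,A}$ equal in both constructions. Summing over $A$ with weights $q^A$ finishes the argument; monotonicity guarantees only finitely many $A$ per degree. Appendix \ref{appendixa} separately handles the comparison with the cellular model of \cite{covariant-constant}.

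The main obstacle is Step 2: constructing an equivariant FOP perturbation on the parametrized chart that restricts to prescribed perturbations at both ends, compatibly with the breaking strata along $S^\infty$. I would first try to use the relative extension property of FOP perturbations from \cite{BaiXu-FPO1}. Since the $\zp\times{\mb Z}_{\ge0}$-action on the total moduli space is free, I would perform the extension on the quotient and then lift it back.
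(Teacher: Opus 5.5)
Your proposal is correct and is essentially the argument behind the cited result \cite[Theorem H]{Bai_Xu_2025}; the paper gives no independent proof. As in the sketch of Lemma \ref{lemma_local_eq_comparison} and \cite[Theorem G]{Bai_Xu_2025}, you build global Kuranishi charts for the $\nu^{eq}$-perturbed moduli spaces, use monotonicity and genericity to see that the unperturbed Kuranishi section is already FOP transverse in dimension at most one, and then connect to the abstractly perturbed count by an equivariant continuation/cobordism argument. One small correction: in the global-chart picture nodal configurations are interior points of the thickening, so sphere bubbling is not a boundary stratum of the perturbed one-dimensional zero locus; only Morse breakings and the ends $\lambda=0,1$ contribute.
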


\subsection{Proof of Theorem \ref{thm: Kunneth}}
\label{subsec:equiv-kunneth-steenrod}

The proof is based on the TQFT/cobordism argument which is described in Figure \ref{figure_QST_Kunneth}.

\begin{figure}[h]
    \centering
    \includegraphics[width=1\linewidth]{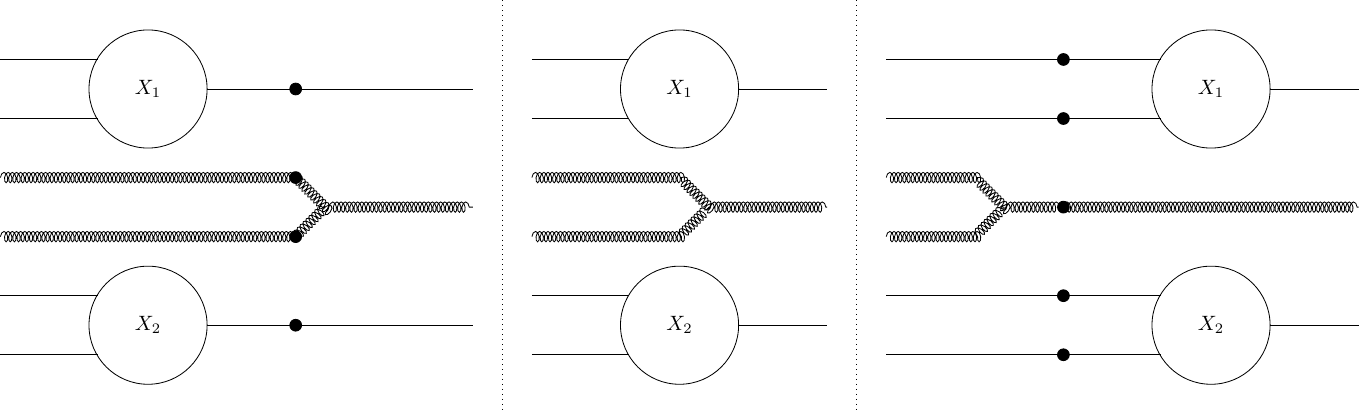}
    \caption{The cobordism leading to the Kunneth property of the quantum Steenrod operation. The coiled trees are flow trees in $S^\infty$ while the straight segments are flow lines in $X_1$ or $X_2$. The left picture corresponds to the composition $\kappa^{\zp} \circ (QSt_p^{X_1} \otimes QSt_p^{X_2})$ while the right picture corresponds to the composition $QSt_p^{X_1 \times X_2} \circ \kappa$.}
    \label{figure_QST_Kunneth}
\end{figure}


We will make the assumption that $X_1,X_2$ are closed monotone symplectic manifolds, with $X_1 \times X_2$ also monotone. Then quantum cohomology and Floer cohomology for $X_1$, $X_2$ and the product $X_1 \times X_2$ can all be defined over the Novikov ring of power series in $q$:
\beqn
\Lambda = \Big\{ \sum_{i=0}^\infty a_i q^i\ |\ a_i \in {\mb K} \Big\}
\eeqn
for any base ring ${\mb K}$.

Following the discussion in the previous subsection, we use the Morse-theoretic model to define $QSt_p$. To set things up, we fix the following data.
\begin{enumerate}
    \item For $X_i$, choose a Morse--Smale pair $(f_i, h_i)$ which gives a Morse complex $CM (f_i)$ over $\fp$. Then the product $(f_1 \times f_2, h_1 \times h_2)$ is Morse--Smale on the product $X_1\times X_2$.

    \item Fix an $\omega_i$-compatible almost complex structure $J_i$.

    \item Fix the Morse data for Y-shaped flow trees in $S^\infty$, which was used to define the ring structure on $\rp$.

    \item For $i = 1, 2$, for $X_i$, $f_i, h_i$, $J_i$, fix generic inhomogeneous terms $\nu_{X_i}^{eq}$ on $S^\infty \times \tilde{S} \times X_i$ for which we can define the quantum Steenrod operation on $X_i$ using the $\nu_{X_i}^{eq}$-perturbed $J_i$-holomorphic maps. Here $\tilde{S} = \Sigma_p$ from above.

    \item For the product manifold $X_1 \times X_2$, for the Morse--Smale pair $(f_1\times f_2, h_1 \times h_2)$, the product almost complex structure $J_1 \times J_2$, the Morse function $g_{\theta_\infty}$ on $S^\infty$, fix an inhomogeneous term $\nu_{X_1\times X_2}^{eq}$ on $S^\infty \times \tilde S \times (X_1\times X_2)$.

\end{enumerate}
These data allow us to define the following chain-level maps.
\begin{enumerate}

\item One has the chain map 
\beqn
\wt\kappa_{\zp}: \Big( CM (f_1)^{\otimes p} \otimes CM(g) \Big) \otimes \Big( CM(f_2)^{\otimes p} \otimes CM(g) \Big) \to CM (f_1\times f_2)^{\otimes p} \otimes CM(g).
\eeqn
Its restriction to the invariant part is the chain-level equivariant Kunneth map
\beqn
\kappa_{\zp}: CM (f_1)^{\otimes p}_{\zp} \otimes CM(f_2)^{\otimes p}_{\zp} \to CM (f_1 \times f_2)^{\otimes p}_{\zp}.
\eeqn

\item One has the chain map 
\beqn
\wt\kappa \otimes m_{\rp}: \Big( CM (f_1) \otimes CM (g) \Big) \otimes \Big( CM (f_2) \otimes CM( g ) \Big) \to CM (f_1 \times f_2) \otimes CM(g).
\eeqn
After restricting to the $(\zp\times \zp)$-invariant part, it induces the map
\beqn
\kappa \otimes m_{\rp}: \Big( H(X_1) \otimes \rp \Big) \otimes \Big( H(X_2) \otimes \rp \Big) \to H(X_1 \times X_2) \otimes \rp.
\eeqn

\item The chain level quantum Steenrod operations for $X_1$, $X_2$ and $X_1 \times X_2$.
\end{enumerate}

Now we consider moduli spaces of configurations described by Figure \ref{figure_QST_Kunneth}. Let $\rho \in {\mb R}$ be an additional parameter. Choose a 1-parameter family of inhomogeneous terms 
\beqn
\nu_{X_1\times X_2, \rho}^{eq},\ \rho \in {\mb R}
\eeqn
on $(S^\infty \times S^\infty) \times \tilde S \times (X_1 \times X_2)$ such that 
\begin{enumerate}
    \item When $\rho \geq 0$, $\nu_{X_1\times X_2, \rho}^{eq}$ does not depend on the second $S^\infty$-factor.
    
    \item For $\rho \gg 0$, $\nu_{X_1 \times X_2, \rho}^{eq}$ coincides with the inhomogeneous term $\nu_{X_1 \times X_2}^{eq}$ used to define the quantum Steenrod operation on $X_1\times X_2$.

    \item For $\rho \ll 0$, $\nu_{X_1 \times X_2, \rho}^{eq}$ coincides with the product $\nu_{X_1}^{eq} \times \nu_{X_2}^{eq}$.
\end{enumerate}

Given critical points
\beqn
x_{i, 1}, \ldots, x_{i, p}, y_i \in {\rm crit}(f_i), w_1 \in {\rm crit}(g_{\theta_1}), w_2\in {\rm crit}(g_{\theta_2}), w_\infty \in {\rm crit} (g_{\theta_\infty})
\eeqn
and $A_i \in H_2(X_i; {\mb Z})$, consider a moduli space ${\mc M}_\rho$ parametrized by $\rho \in {\mb R}$ described as follows. 
\begin{enumerate}

\item When $\rho \leq 0$, notice that we can write $\nu_{X_1\times X_2, \rho}^{eq}$ as the product of a pair of inhomogeneous terms $\nu_{X_1, \rho}^{eq}$ and $\nu_{X_2, \rho}^{eq}$. Then define
\beqn
{\mc M}_\rho = \left\{ \left. \begin{array}{c} x_{i, l}: (-\infty, 0] \to X_i,\ i = 1, 2, 1 \leq l \leq p,\\ y_i: [0, +\infty) \to X_i,\ i = 1, 2,\\
w_1, w_2: (-\infty, 0] \to S^\infty,\\ 
w_\infty: [0, +\infty) \to S^\infty,\\
u_i: \tilde S \to X_i,\ i = 1, 2 \end{array} \right| \begin{array}{c} \ov\partial u_i = \nu_{X_i, w_i(\rho)}^{eq} (u_i), \\ x_{i, l}'(s) = \nabla f_i , y_i'(s) = \nabla f_i,\\ \displaystyle \lim_{s \to -\infty} x_{i, l}(s) = x_{i, l}, \displaystyle \lim_{s \to +\infty} y_i(s) = y_i,\\
\displaystyle \lim_{s \to -\infty} w_i(s) = v_i, \displaystyle \lim_{s \to +\infty} w_\infty(s) = v_\infty,\\
x_{i, l}(0) = u_i(z_l),\ y_i(0) = u_i(z_\infty),\\
w_1(0) = w_2(0) = w_\infty(0)
\end{array} 
 \right\},
\eeqn

\item When $\rho \geq 0$, define
\beqn
{\mc M}_\rho = \left\{ \begin{array}{c} x_{i, l}: (-\infty, 0] \to X_i,\ i = 1, 2, 1\leq l \leq p,\\
y_i: [0, +\infty) \to X_i,\ i = 1, 2,\\
w_1, w_2: (-\infty, 0] \to S^\infty,\\
w_\infty: [0, +\infty) \to S^\infty,\\
u_i: \tilde S \to X_i,\ i = 1, 2 \end{array} \left| \begin{array}{c} \ov\partial (u_1 \times u_2) = \nu_{X_1 \times X_2, w_\infty(\rho)}^{eq}(u_i),\\
 x_{i, l}'(s) = \nabla f_i, y_i'(s) = \nabla f_i,\\ \displaystyle \lim_{s \to -\infty} x_{i, l}(s) = x_{i, l}, \displaystyle \lim_{s \to +\infty} y_i(s) = y_i,\\
\displaystyle \lim_{s \to -\infty} w_i(s) = v_i, \displaystyle \lim_{s \to +\infty} w_\infty(s) = v_\infty,\\
x_{i, l}(0) = u_i(z_l),\ y_i(0) = u_i(z_\infty),\\
w_1(0) = w_2(0) = w_\infty(0),\\
\end{array} \right.\right\}
\eeqn
\end{enumerate}

We only consider moduli spaces of expected dimension zero (without deforming $\rho$). Then including the deformation of $\rho$, define
\beqn
{\mc M}_\pm:= \bigsqcup_{\rho \in {\mb R}_\pm} {\mc M}_\rho.
\eeqn
Then one can see that both ${\mc M}_\pm$ are the zero loci of some Fredholm section of index $1$ defined over a certain Banach manifold. The union of all such moduli spaces admits a natural free $\zp$-action. One can perturb the family of inhomogeneous terms $\nu_{X_1\times X_2, \rho}^{eq}$ to achieve $\zp$-equivariant transversality. In addition, one can make the moduli space ${\mc M}_0$ transverse. 

One can compactify ${\mc M}_\pm$ by allowing sphere bubbles, trajectory breakings, and the parameter $\rho \to \pm\infty$. As we only consider the index $1$ case, the monotonicity condition and the transversality property of simple holomorphic spheres imply that the compactification contains, besides ${\mc M}_\pm$, either configurations having at most one Morse trajectory breaking with finite $\rho$, or configurations at $\rho = \pm\infty$ shown in Figure \ref{figure_QST_Kunneth}. 

Therefore, one obtains a $\zp$-equivariant chain homotopy between the two chain maps
\begin{align*}
&\ \wt\kappa{}^{\zp} \circ ( \wt{QSt}{}_p^{X_1} \otimes \wt{QSt}{}_p^{X_2}),\ &\ \wt{QSt}{}_p^{X_1\times X_2} \circ (\wt\kappa \otimes m_{{\mc R}_p}).
\end{align*}
from
\beqn
\Big( C(f_1)^{\otimes p} \otimes C(g_{\theta_1})  \Big) \otimes \Big( C(f_2)^{\otimes p} \otimes C(g_{\theta_2}) \Big)
\eeqn
to 
\beqn
C(f_1\times f_2)^{\otimes p} \otimes C(g_{\theta_\infty}).
\eeqn
Restricting to the $\zp \times \zp$-invariant part of the domain of the chain maps, one also obtains a chain homotopy. Hence over homology, one has
\beqn
\kappa^{\zp} \circ ( QSt_p^{X_1}\otimes QSt_p^{X_2}) = QSt_p^{X_1\times X_2} \circ (\kappa \otimes m_{{\mc R}_p}).
\eeqn

\subsection{Kunneth property for the local equivariant pair-of-pants product}\label{subsec: local-kunneth}

We now prove an analogue of Theorem \ref{thm: Kunneth} in the scenario of the local equivariant Floer cohomology, which is used in the proof of the divisibility lemma (Lemma \ref{lma:div}). We use the framework based on inhomogeneous perturbations.

\begin{prop}\label{prop55}
Given symplectic manifolds $X_1, X_2$ equipped with Hamiltonians $H_1, H_2$, and an isolated $\ov{x}_i \in \tilde {\mc O}(H_i)$ such that $\ov{x}_i^{(p)} \in \tilde {\mc O}(H_i^{(p)})$ is also isolated, the following diagram commutes up to the Koszul sign. 
\beqn
\xymatrix{ HF^\loc (H_1, \ov{x}_1) \underset{\fp}{\otimes} HF^\loc (H_2, \ov{x}_2) \ar[rr]^-{\fst_p^{\rm loc} \otimes \fst_p^{\rm loc}} \ar[d]_{\kappa^\loc}   &     &HF^\loc_\zp (H_1^{(p)}, \ov{x}_1^{(p)}) \otimes HF_\zp^\loc (H_2^{(p)}, \ov{x}_2^{(p)}) \ar[d]^{\kappa^{\rm loc}_{\zp}}\\
     HF^\loc (H_1 \times H_2, \ov{x}_1\times \ov{x}_2 ) \ar[rr]_{\fst_p^{\rm loc}} & &   HF_\zp^\loc (H_1^{(p)}\times H_2^{(p)}, \ov{x}_1^{(p)}\times \ov{x}_2^{(p)} )  }.
\eeqn
\end{prop}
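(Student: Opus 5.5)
We follow the cobordism argument used for Theorem \ref{thm: Kunneth} (Figure \ref{figure_QST_Kunneth}), now in the local Floer setting with Hamiltonian perturbations as in Sections \ref{subsubsec:local-equiv}, \ref{subsubsec:local-pants-perturb} and \ref{subsubsec:local-equiv-kunneth}. Fix small nondegenerate perturbations $G_i$ of $H_i$ with $G_i^{(p)}$ nondegenerate, so that $G_1\times G_2$ is a nondegenerate perturbation of $H_1\times H_2$ with $(G_1\times G_2)^{(p)} = G_1^{(p)}\times G_2^{(p)}$. Choose equivariant families ${\ms G}_i^{eq}$, ${\ms G}_\infty^{eq}$ and equivariant Hamiltonian connections $\sigma^{eq}_{X_i,v}$, $\sigma^{eq}_{X_1\times X_2, v}$ on $\tilde S$ defining the three local pants products $\wt{\wt{\fst}}{}^{\loc}_p$. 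Choose also the $Y$-perturbed tree data and ${\ms G}^{eq}_{\bullet,s}$ defining $\kappa^\loc_\zp$. On the non-equivariant side, $\kappa^\loc$ is the chain map $\ov{y}_1\otimes\ov{y}_2\mapsto \ov{y}_1\times\ov{y}_2$ for the split data $(G_1\times G_2, J_1\times J_2)$. With this split data, Floer cylinders in the product are exactly pairs of cylinders, by an index/energy count of the local rigid solutions.

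The key step is a one-parameter family of moduli spaces ${\mc M}_\rho$, $\rho\in{\mb R}$, of tuples $(u_1,u_2,\zeta)$. Here $\zeta=(\zeta_1,\zeta_2,\zeta_\infty)$ is a $Y$-perturbed flow tree in $S^\infty$ and $u_i:\tilde S\to X_i$ (or $(u_1,u_2):\tilde S\to X_1\times X_2$). For $\rho\le 0$ the pair is required to solve the \emph{decoupled} equations $(\nabla^{\sigma^{eq}_{X_i,\zeta_i(\rho)}}u_i)^{0,1}=0$. For $\rho\ge 0$ it solves a single equation for a family of product-space connections $\sigma^{eq}_{X_1\times X_2,\rho,\zeta_\infty(\rho)}$ interpolating to the chosen $\sigma^{eq}_{X_1\times X_2}$ as $\rho\to+\infty$, and agreeing with $\sigma^{eq}_{X_1}\times\sigma^{eq}_{X_2}$ at $\rho=0$. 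The positive ends are coupled to the output complex $CF^\loc_\zp(H_1^{(p)}\times H_2^{(p)})$ and the negative ends to the $p$-fold inputs. Equivariant transversality is achieved by generic $\zp$-invariant perturbation of the interpolating connections, using freeness of the $\zp\times{\mb Z}_{\geq 0}$-action exactly as in Section \ref{subsubsec:local-pants-perturb}.

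The ends of the $1$-dimensional spaces ${\mc M}_\pm=\bigsqcup_{\rho\in{\mb R}_\pm}{\mc M}_\rho$ are of two kinds. At $\rho\to-\infty$ the tree vertex separates from the pants, giving $\wt\kappa^\loc_\zp\circ(\wt{\wt{\fst}}{}^\loc_{p,x_1}\otimes\wt{\wt{\fst}}{}^\loc_{p,x_2})$. At $\rho\to+\infty$ we get $\wt{\wt{\fst}}{}^\loc_{p,x_1\times x_2}\circ(\wt\kappa\otimes m_{\rp})$, and here the $p$-fold product inputs arise from splitting along the decoupled negative ends. The remaining ends are single Floer/Morse breakings, which give the chain homotopy terms. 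Restricting to $(\zp\times\zp)$-invariant parts and precomposing with $qF\otimes qF$, Lemma \ref{Lemma_Kunneth_qF} (in its Floer version: $qF(\ov{y}_1)\otimes qF(\ov{y}_2)\mapsto\pm(\ov{y}_1\times\ov{y}_2)^{\otimes p}$, by decoupling as in Remark \ref{remark52}) identifies the right-hand composite with $\fst^\loc_p\circ\kappa^\loc$. The Koszul sign $(-1)^{|a_1||a_2|p(p-1)/2}$ comes from reordering the $2p$ tensor factors.

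The main obstacle is compactness in the local setting. All configurations must remain in a small isolating neighborhood of $x_1^{(p)}\times x_2^{(p)}$ and $x_i$, and no sphere bubbling or breaking at non-local orbits may occur uniformly in $\rho$ and in $v\in S^\infty$. We would handle this as in the definition of $\fst^\loc_{p,x}$: take all perturbations $C^2$-small relative to $H_i$, so that every solution has energy below the minimal sphere energy $\hbar$ and below the gap separating local from non-local actions. A standard Gromov–Floer argument then forces solutions to stay near the orbits. The monotonicity needed in Theorem \ref{thm: Kunneth} is replaced here by this smallness.
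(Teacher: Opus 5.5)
Your proposal is correct and follows essentially the same route as the paper. Both use a $\rho$-parametrized family of equivariant pants coupled to $Y$-shaped flow trees in $S^\infty$, decoupled for $\rho\le 0$ and governed by a product-space family for $\rho\ge 0$, whose ends at $\rho=\pm\infty$ give the two composites, followed by restriction to $\zp\times\zp$-invariant parts and the Floer analogue of Lemma \ref{Lemma_Kunneth_qF} for the quasi-Frobenius maps. The only minor difference is that the paper lets the interpolating family $\tilde\sigma^{eq}_{\rho,v_1,v_2}$ depend on $\rho$ for $\rho\le 0$ as well, coinciding with $\sigma^{eq}_{1,v_1}\times\sigma^{eq}_{2,v_2}$ only for $\rho\ll 0$, which leaves room to make ${\mc M}_-$ transverse by perturbing it; with your frozen choice, transversality of ${\mc M}_-$ must instead come from genericity of the already chosen families $\sigma^{eq}_{X_i,v}$.
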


\begin{proof}
The proof is similar to that of Theorem \ref{thm: Kunneth}. The idea is illustrated in Figure \ref{figure_Kunneth_fst}. 
\begin{figure}[h]
    \centering
    \includegraphics[width=1.1\linewidth]{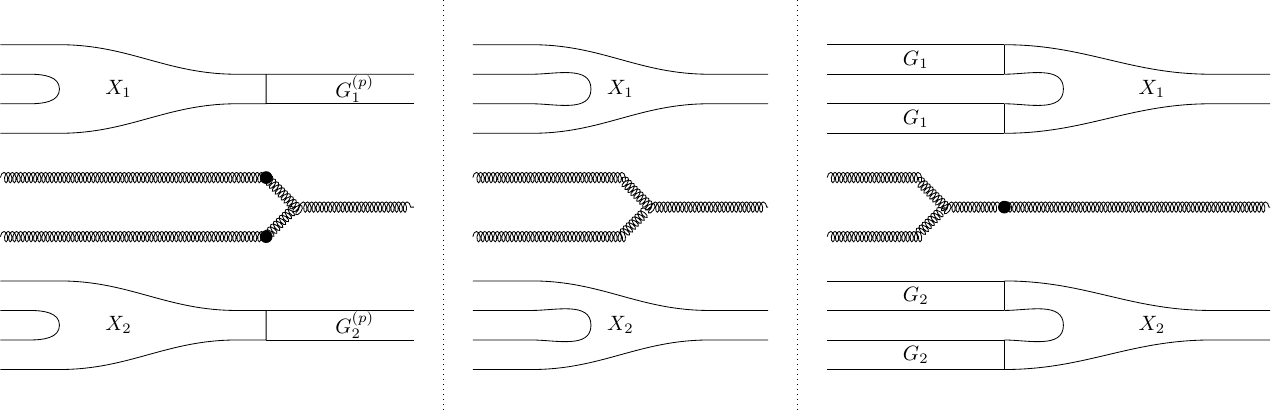}
    \caption{Kunneth property of the local equivariant Floer cohomology. The figure on the left corresponds to the composition $\kappa_{\zp}^{\rm loc} \circ ({FSt}_p^{\rm loc} \otimes {FSt}_p^{\rm loc})$ while the figure on the right corresponds to the composition ${FSt}_p^{\rm loc} \circ \kappa^{\rm loc}$.}
    \label{figure_Kunneth_fst}
\end{figure}
We first choose data so that all objects and arrows appearing in the commutative diagram can be defined on the chain level.

\begin{enumerate}

\item Choose nondegenerate perturbations $G_i$ of $H_i$ and an almost complex structure $J_i$ for which the local Floer cochain complexes $CF^{\rm loc}(H_i, G_i, \ov{x}_i)$ are defined. In this way, the local Floer cochain complex $CF^{\rm loc}(H_1 \times H_2, G_1 \times G_2, \ov{x}_1\times \ov{x}_2)$ is also defined using the product data, giving
\beqn
CF^{\rm loc}(H_1\times H_2, G_1 \times G_2, \ov{x}_1 \times \ov{x}_2) \cong CF^{\rm loc}(H_1, G_1, \ov{x}_1) \otimes CF^{\rm loc}(H_2, G_2, \ov{x}_2).
\eeqn
This isomorphism coincides with the chain-level Kunneth map $\kappa^{\rm loc}$, which can be defined following the line in Section \ref{subsec:equiv-kunneth}. From now on, the almost complex structures will be fixed and we omit the discussion on the dependence on them.

\item To define the local equivariant complexes $CF_{\zp}^{\rm loc}(H_i^{(p)}, G_i^{(p)}, \ov{x}_i^{(p)})$, we also choose a family of perturbations
\beqn
{\ms G}_{i,v}^{eq}: S^1 \times X_i \to {\mb R}, \ v \in S^\infty
\eeqn
of $G_i^{(p)}$ so the complex is defined. 

\item From now on, we drop from the notations the dependence on $H_1$ and $H_2$ as all further constructions only depend on $G_1$ and $G_2$. To define the local equivariant complex for the product $G_1^{(p)} \times G_2^{(p)}$, we choose another perturbation
\beqn
{\ms G}_{\infty,v}^{eq}: S^1 \times X_1 \times X_2 \to {\mb R}, \ v \in S^\infty
\eeqn
so the complex $CF_{\zp}^{\rm loc}(G_1^{(p)} \times G_2^{(p)}, \ov{x}_1^{(p)} \times \ov{x}_2^{(p)})$ is defined.

\item Choose a family of vector fields $Y_s$ on $S^\infty$ parametrized by $s \in T_{2, 1}$ so that the multiplicative structure $\rp \otimes \rp \to \rp$ is defined. Then choose a family 
\beqn
\tilde {\ms G}_{s, v}^{eq}: \tilde S \times X_1 \times X_2 \to {\mb R},\ s \in T_{2, 1},\ v \in S^\infty
\eeqn
which connects ${\ms G}_{1,v}^{eq} \times {\ms G}_{2, v}^{eq}$ and ${\ms G}_{\infty, v}^{eq}$ so that the chain-level equivariant Kunneth map
\beqn
\kappa^{\rm loc}_{\zp}: CF_{\zp}^{\rm loc} (G_1^{(p)}, \ov{x}_1^{(p)}) \otimes CF_{\zp}^{\rm loc} (G_2^{(p)}, \ov{x}_2^{(p)} ) \to CF_{\zp}^{\rm loc} (G_1^{(p)}\times G_2^{(p)}, \ov{x}_1^{(p)} \times \ov{x}_2^{(p)})
\eeqn
is defined.

\item Choose $\zp$-equivariant family of connections on the trivial ${\rm Ham}(X_i)$-bundle on $\tilde S$ denoted by
\beqn
\sigma_{i, v}^{eq},\ v \in S^\infty
\eeqn
which is equal to $G_i dt$ on each of the incoming cylindrical ends and which is equal to ${\ms G}_i^{eq}$ on the positive cylindrical end, to define the chain level equivariant pair-of-pants product
\beqn
\wt{FSt}{}_p^{\loc}: CF^{\rm loc}(G_i, \ov{x}_i)^{\otimes p}_{\zp} \to CF^{\rm loc}_{\zp}(G_i^{(p)}, \ov{x}_i^{(p)}).
\eeqn

\item Choose a $\zp$-equivariant family of connections on the trivial ${\rm Ham}(X_1\times X_2)$-bundle over $\tilde S$ denoted by 
\beqn
\sigma_{\infty, v}^{eq},\ v \in S^\infty
\eeqn
which is equal to $(G_1 \times G_2) dt$ on each of the incoming cylindrical ends and is equal to ${\ms G}_\infty^{eq} dt$ on the positive cylindrical end, to define the chain-level equivariant pair-of-pants product
\beqn
\wt{FSt}{}_p^{X_1\times X_2}
\eeqn

\end{enumerate}

Now we would like to choose a family of connections on the trivial ${\rm Ham}(X_1)\times {\rm Ham}(X_2)$-bundle over $\tilde S$, denoted by 
\beqn
\tilde \sigma_{\rho, v_1, v_2}^{eq},\ \rho \in {\mb R}, v_1, v_2 \in S^\infty
\eeqn
satisfying
\begin{enumerate}
    \item $\tilde \sigma_{\rho, v_1, v_2}^{eq}$ is $\zp$-equivariant with respect to the diagonal action on $S^\infty \times S^\infty \times \tilde S$.

    \item For all $\rho$, on each of the negative cylindrical ends of $\tilde S$, $\tilde \sigma_{\rho, v_1, v_2}^{eq}$ coincides with $(G_1 \times G_2) dt$. 

    \item For all $\rho$, on the positive cylindrical end of $\tilde S$, $\tilde \sigma_{\rho, v_1, v_2}^{eq}$, when restricted to the diagonal of $S^\infty$, coincides with ${\ms G}_{\infty, v}^{eq} dt$.

    \item When $\rho \ll 0$, $\tilde \sigma_{\rho, v_1, v_2}^{eq}$ coincides with $\sigma_{1, v_1}^{eq}\times \sigma_{2, v_2}^{eq}$.

    \item When $\rho \gg 0$, $\tilde \sigma_{\rho, v_1, v_2}^{eq}$, when restricted to the diagonal $v = v_1 = v_2$ of $S^\infty$, coincides with $\sigma_{\infty, v}^{eq}$.

\end{enumerate}

We still use the canonical map $T_{2,1} \to {\mb R}$. For $i = 1, 2$, choose $\ov{x}_{i, 1}, \ldots, \ov{x}_{i, p} \in \tilde {\mc O}(G_i, \ov{x}_i)$, $\ov{y}_i \in \tilde {\mc O}(G_i^{(p)}, \ov{x}_i^{(p)})$. Choose $w_1, w_2, w_\infty \in {\rm crit}(g)$. Then for each fixed $\rho$, define the moduli space ${\mc M}_\rho$ as follows. When $\rho \leq 0$ resp. $\rho \geq 0$, ${\mc M}_\rho$ consists of pairs $(u, \zeta)$ where $u: \tilde S \to X_1 \times X_2$ is a smooth map, $\zeta = (\zeta_1, \zeta_2, \zeta_\infty): T_{2,1} \to S^\infty$ is a $Y$-perturbed gradient tree, such that 
    \beqn
    (\nabla^{\tilde \sigma_{\rho, \zeta_1(\rho), \zeta_2(\rho)}^{eq}} u)^{0,1} = 0\ {\rm resp.}\ (\nabla^{\tilde \sigma_{\rho, \zeta_\infty(\rho), \zeta_\infty(\rho)}^{eq}} u)^{0,1} = 0
    \eeqn
    and such that $u$ is asymptotic to $\ov{x}_{1, k} \times \ov{x}_{2, k}$ along the $k$-th negative end and asymptotic to $\ov{y}_1 \times \ov{y}_2$ along the positive end, while $\zeta$ is asymptotic to $w_1, w_2, w_\infty$ respectively along the corresponding edges. Define
    \beqn
    {\mc M}_\pm:= \bigsqcup_{\pm \rho \geq 0} {\mc M}_\rho.
    \eeqn
    One can see that each ${\mc M}_\rho$ is the zero locus of a smooth Fredholm section of a certain Banach vector bundle. Moreover, by choosing $\tilde \sigma_{\rho, v_1, v_2}^{eq}$ generically, one can make ${\mc M}_\pm$ together with ${\mc M}_0$ regular. One can also compactify the moduli space ${\mc M}_\pm$ by suitably adding broken configurations (notice that in this local situation, sphere bubbling won't happen). Then when the index of an individual moduli space ${\mc M}_\rho$ is zero, the compactifications $\ov{\mc M}_\pm$ are compact 1-dimensional topological manifolds with boundary. Then the union of $\ov{\mc M}_\pm$ along ${\mc M}_0$ defines a chain homotopy between the two cochain maps. The first one, corresponding to the boundary $\rho = -\infty$, is the composition 
    \begin{multline*}
    \wt\kappa_{\zp}^{\rm loc} \circ (\wt{\wt{FSt}}{}_p^{\rm loc} \otimes \wt{\wt{FSt}}{}_p^{\rm loc}): \\
    \Big( CF^{\rm loc}(H_1, \ov{x}_1)^{\otimes p} \otimes CM(g) \Big) \otimes  \Big( CF^{\rm loc}(H_2, \ov{x}_2)^{\otimes p} \otimes CM(g) \Big) \\
    \to CF^{\rm loc}( H_1^{(p)}\times H_2^{(p)}, \ov{x}_1^{(p)}\times \ov{x}_2^{(p)}) \otimes CM(g).
    \end{multline*}
    The second one, corresponding to the boundary $\rho = +\infty$, is the composition
    \beqn
    \wt{\wt{FSt}}{}_p^{\rm loc} \circ \wt\kappa^{\otimes p}
    \eeqn
    between the same pair of cochain groups. The chain homotopy is $\zp$-equivariant. Hence we can restrict to the $\zp\times \zp$-fixed subspace, which gives a chain homotopy between
    \beqn
    \wt{\kappa}{}_{\zp}^{\rm loc} \circ ( \wt{FSt}{}_p^{\rm loc}\otimes \wt{FSt}{}_p^{\rm loc}): CF^{\rm loc}(H_1, \ov{x}_1)^{\otimes p}_{\zp} \otimes CF^{\rm loc}(H_2, \ov{x}_2)^{\otimes p}_{\zp} \to CF^{\rm loc}_{\zp}(H_1^{(p)}\times H_2^{(p)}, \ov{x}_1^{(p)}\times \ov{x}_2^{(p)}).
    \eeqn
    and $\wt{FSt}{}_p^{\rm loc} \circ \wt\kappa{}_{\zp}^{\otimes p}$ between the same pair of groups. Hence as maps between cohomology, one has the equality
    \beqn
    \kappa_{\zp}^{\rm loc} \circ ({FSt}_p^{\rm loc} \otimes {FSt}_p^{\rm loc}) = {FSt}_p^{\rm loc} \circ \kappa_{\zp}^{\otimes p}.
    \eeqn
Lastly, one has the following commutative diagram (which can be proved exactly the same as Lemma \ref{Lemma_Kunneth_qF}).
\beqn
\xymatrix{  HF^{\rm loc}(H_1, \ov{x}_1) \otimes HF^{\rm loc}(H_2, \ov{x}_2) \ar[rr]^-{qF \otimes qF} \ar[d]_{\kappa^{\rm loc}} & & H ( CF^{\rm loc}(H_1, \ov{x}_1)^{\otimes p}_{\zp}) \otimes H( CF^{\rm loc}(H_2, \ov{x}_2)^{\otimes p}_{\zp}) \ar[d]^{\kappa_{\zp}}\\
HF^{\rm loc}(H_1 \times H_2, \ov{x}_1 \times \ov{x}_2) \ar[rr]_-{qF}  &   &     H(CF^{\rm loc}(H_1\times H_2, \ov{x}_1\times \ov{x}_2)^{\otimes p}_{\zp} )   }
\eeqn
Then the proof this proposition is finished. 
\end{proof}

\appendix


\section{Technical discussions on the quantum Steenrod operation} \label{appendixa}
\label{sec:alternative-qst}


In this appendix, we compare the definition of the quantum Steenrod power operation $\qst_p$ in \cite{covariant-constant} and the one recalled in Section \ref{subsec:quantumStpower} restricted to the monotone situation. Throughout this section, all cohomology is taken with $\F_p$-coefficients unless otherwise stated. We first notice that by Theorem \ref{thm52}, the latter agrees with the one used in Section \ref{section5}. Hence we will not use abstract perturbation but only the inhomogeneous perturbation incorporated with the Morse model. 

The idea is as follows. To define each individual count which leads to the definition of the quantum Steenrod operation, one considers the moduli spaces of $J$-holomorphic spheres perturbed by an inhomogeneous term $\nu_{v, z, x}^{eq}$ parametrized by $v$ contained in a certain chain in $S^\infty$. In Subsection \ref{subsection52}, the parameter $\nu$ is contained in some Morse chain associated to the $\zp$-invariant Morse function $g: S^\infty \to {\mb R}$. In \cite{covariant-constant}, the parameter $\nu$ is chosen to be contained in a singular chain. The comparison is basically reduced to the comparison between corresponding chains inside $S^\infty$.

\subsection{Setup}

Now we start the comparison. 
Let $M$ be a monotone symplectic manifold. In the version of \cite{covariant-constant}, it was not required that the inhomogeneous term $\nu^{eq}$ is invariant under the ${\mb Z}_{\geq 0}$-action; however, imposing this condition in the setup of \cite{covariant-constant} does not affect that construction. Hence we can choose such a symmetric inhomogeneous term $\nu^{eq}$. We also choose a generic almost complex structure $J$ to ensure that the counts are well-defined. To temporarily distinguish the two definitions, let $\qst^{\rm Morse}_p$ be the quantum Steenrod operation defined in Subsection \ref{subsection52} and let $\qst^{\rm cell}_p$ be the one defined in \cite{covariant-constant}.


The comparison is done on the level of moduli spaces. Choose a Morse function $f$ together with a Morse--Smale metric. For any subset $Q \subset S^\infty$, denote by
\beqn
{\mc M}_Q(A; x_1, \ldots, x_p, y)
\eeqn
the set of pairs $(u, v)$ where $v \in Q$ and $u: \Sigma_p \to M$ solves the equation
\beqn
\ov\partial_J u(z) = \nu_{v, z, u(z)}^{eq}
\eeqn
and satisfies the constraints at the $p+1$ marked points imposed by the unstable/stable manifolds of $x_1, \ldots, x_p, y$.

The cochain groups used in the two approaches are also different. We choose the following identification. Define 
\beqn
\begin{split}
    \Psi: CM(f) \otimes \rp \to & CM(f) \otimes CM(g)\\
    x \otimes t^k \mapsto &\ \sum_{l=1}^p x  \otimes w_{2k}^l,\\
    x \otimes t^k \theta \mapsto &\ \sum_{l=1}^p x \otimes w_{2k+1}^l,
\end{split}
\eeqn
cf. Equations \eqref{critical_points_1} and \eqref{critical_point_2}. Then the image of $\Psi$ is contained in the $\zp$-invariant part and defines an isomorphism onto its image.

\subsection{The comparison}

We first briefly recall the cells in $S^{\infty}$ used in the paper \cite{covariant-constant}. In each nonnegative degree $i$, we specify a cell $\Delta_i \subset S^\infty$ which projects down to a singular cycle in $B\zp$ representing a generator of $H_i (B\zp; \fp)$. 
For an even degree $i = 2k$, it is
\beqn
\Delta_{2k} = \Big\{ v = (v_j)_{j \geq 0} \in S^\infty \ |\ v_k \geq 0,\ v_{k+1} = v_{k+2} = \cdots = 0\Big\};
\eeqn
for an odd degree $i = 2k+1$, it is 
\beqn
\Delta_{2k+1} = \Big\{ v = (v_j)_{j \geq 0} \in S^\infty\ |\ {\rm arg}(v_k) \in  [0,2\pi/p],  v_{k+1} = v_{k+2} = \cdots = 0\Big\}.
\eeqn
We have 
\begin{align}\label{equation:d-delta} 
&\ \partial \Delta_{2k} = \Delta_{2k-1} + \tau \Delta_{2k-1} + \dots + \tau^{p-1} \Delta_{2k-1},\ &\ \partial \Delta_{2k+1} = \tau \Delta_{2k} - \Delta_{2k}.
\end{align}

We use the notations in \cite{covariant-constant} to define the chain-level maps. To this end, for each $i\geq 0$, $A \in H_2(M;{\mb Z})$, and critical points $x_1, \ldots, x_p, y$ of $f$, one has a well-defined mod $p$ count
\beqn
\#_p {\mc M}_{\Delta_i}(A; x_1, \ldots, x_p, y) \in \fp
\eeqn
(which is zero by definition if the expected dimension is nonzero). These counts define linear maps
\beqn
\qst^{\rm cell}_{p,i,A}: CM(f)^{\otimes p} \to CM(f).
\eeqn
Maps of this form assemble to define the quantum Steenrod operation $\qst^{\rm cell}_p$. If $\alpha \in CM(f)$ is a Morse cocycle, we define
\beqn
\qst^{\rm cell}_p([\alpha]) = \sum_{A \in H_2(M; {\mb Z})} q^A QSt^{\rm cell}_{p,A}([\alpha])
\eeqn
while 
\beqn
\qst^{\rm cell}_{p,A} ([\alpha]) = \sum_{k=0}^\infty \Big( \qst^{\rm cell}_{p,2k, A} (\alpha \otimes \cdots \otimes \alpha ) \otimes t^k + \qst^{\rm cell}_{p,2k+1, A}(\alpha \otimes \cdots \otimes \alpha) \otimes t^k \theta \Big)
\eeqn
which is, by \cite[Lemma 4.4]{covariant-constant}, a cocycle whose cohomology class only depends on the class $[\alpha]$.

Now we describe the cellular decomposition of $S^{\infty}$ used in this paper, which arises from Morse theory. Recall the function $g: S^\infty \to {\mb R}$ has critical points given in \eqref{critical_points_1} and \eqref{critical_point_2}. Let $W^u(w_i^l)$ resp. $W^s(w_i^l)$ be the unstable resp. stable manifolds of the critical point $w_i^l$ with respect to the ascending flow of $g$. Then define
\beqn
P_i^l:= W^u(w_0^l) \cap W^s(w_i^1) \subset S^\infty
\eeqn
which is an $i$-dimensional cell. 
Define
\beqn
P_i:=\bigcup_l P_i^l \subset S^\infty.
\eeqn
By counting zero-dimensional moduli spaces, one obtains a set of mod $p$ counts
\beqn
\#_p {\mc M}_{P_i}(A; x_1, \ldots, x_p, y).
\eeqn
Then define a linear map
\beqn
\qst^{\rm Morse}_{p,i,A}: CM(f)^{\otimes p} \to CM(f)
\eeqn
using these as coefficients.

\begin{lemma}
The quantum Steenrod operation $\qst^{\rm Morse}_p$ defined in Subsection \ref{subsection52} is determined by $\qst^{\rm Morse}_{p,i,A}$ in the same way as $QSt^{\rm cell}_{p,i,A}$ determines $QSt^{\rm cell}_p$. More precisely, for any Morse cocycle $\alpha\in CM(f)$, one has 
\beqn
QSt^{\rm Morse}_{p, A} ([\alpha]) = \big[\Psi \left(  \sum_{k=0}^\infty \Big( QSt^{\rm Morse}_{p,2k, A}  (\alpha \otimes \cdots \otimes \alpha ) \otimes t^k + QSt^{\rm Morse}_{p,2k+1, A}  (\alpha \otimes \cdots \otimes \alpha) \otimes t^k \theta \Big) \right)\big].
\eeqn
\end{lemma}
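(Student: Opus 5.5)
The plan is to compare, generator by generator, the Morse-model chain map $\wt{\wt{QSt}}{}_{p,A}$ of Subsection \ref{subsection52}, restricted to the invariant input $\alpha^{\otimes p}\otimes(w_0^1+\cdots+w_0^p)$ (which represents $qF([\alpha])$ under the identification of Section \ref{subsubsec:borel-construction}), with the counts over the cells $P_i$. By \eqref{equation:qst-tilde}, the coefficient of $y\otimes w_+$ in $\wt{\wt{QSt}}{}_{p,A}(\alpha^{\otimes p}\otimes w_0^l)$ is $\#_p{\mc M}_{w_0^l,w_+}(A;\alpha,\ldots,\alpha,y)$. So the first step is to identify ${\mc M}_{w_-,w_+}(A;x_1,\ldots,x_p,y)$ with ${\mc M}_{Q}(A;x_1,\ldots,x_p,y)$ for $Q=W^u(w_-)\cap W^s(w_+)$, the space of gradient lines from $w_-$ to $w_+$ modulo translation. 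This works because the perturbation only depends on $\zeta(0)$, and evaluating at $s=0$ embeds the (unparametrized, then normalized) flow lines as the points of $W^u(w_-)\cap W^s(w_+)$; one should check that the trivialization by $\zeta(0)$ is a diffeomorphism onto this intersection and that orientations match the conventions used for $P_i^l$.

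Summing over $l$ and using $\zp$-equivariance \eqref{QST_equivariance} together with the symmetry of $\alpha^{\otimes p}$, I would show that the coefficient of $y\otimes w_i^{m}$ is independent of $m$. Equivariance maps $W^u(w_0^l)\cap W^s(w_i^m)$ to $W^u(w_0^{l+1})\cap W^s(w_i^{m+1})$, so
\beqn
\sum_l \#_p{\mc M}_{w_0^l,w_i^m}(A;\alpha,\ldots,\alpha,y)=\sum_l\#_p{\mc M}_{P_i^{l}}(A;\alpha,\ldots,\alpha,y)=\#_p{\mc M}_{P_i}(A;\alpha,\ldots,\alpha,y),
\eeqn
after reindexing $P_i^l=W^u(w_0^l)\cap W^s(w_i^1)$. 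The output then has the form $\sum_i QSt^{\rm Morse}_{p,i,A}(\alpha^{\otimes p})\otimes(w_i^1+\cdots+w_i^p)$, which is exactly $\Psi$ applied to the claimed $(t,\theta)$-series, with $i=2k$ going to $t^k$ and $i=2k+1$ going to $t^k\theta$.

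The final step is to note that only the index-$0$ critical points appear in the input, since $qF$ places $\alpha^{\otimes p}$ in $t$-degree $0$, and that the ${\mb Z}_{\geq 0}$-invariance of $\nu^{eq}$ ensures the identification extends compatibly to all $i$. Passing to cohomology then gives the stated formula.

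I expect the main obstacle to be sign and orientation bookkeeping. This includes the orientation of $P_i^l$ versus that of the moduli of flow lines modulo translation, Koszul signs from permuting $\alpha$'s under the cyclic action (which are trivial for $p$ odd only when $|\alpha|$ is even or after using the quasi-Frobenius sign conventions), and checking that the $\Psi$-identification matches the correspondence used for the Morse complex $CM(g)^{\zp}\cong\rp$.
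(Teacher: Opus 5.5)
Your proposal is correct and follows the paper's proof essentially step for step: represent $qF([\alpha])$ by $\sum_l \alpha^{\otimes p}\otimes w_0^l$, apply $\wt{\wt{QSt}}{}_{p,A}$, use the $\mathbb{Z}/p$-equivariance of the counts to see that the coefficient of $y\otimes w_i^m$ is independent of $m$ and equals $\#_p{\mc M}_{P_i}(A;\alpha,\ldots,\alpha,y)$, and then recognize the output as $\Psi$ of the stated series. Two of your stated worries are unnecessary: the Koszul sign of a cyclic shift of $\alpha^{\otimes p}$ is $(-1)^{|\alpha|^2(p-1)}=1$ for every odd $p$, so no condition on $|\alpha|$ is needed (and for $p=2$ it is trivial mod $2$), while the $\mathbb{Z}_{\geq 0}$-invariance of $\nu^{eq}$ plays no role because only index-$0$ critical points of $g$ appear in the input.
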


\begin{proof}
To simplify notations, we omit the superscript ``Morse" in this proof. We may assume that $\alpha$ is a single critical point; the case when $\alpha$ is a linear combination of critical points only adds notational complexities. Consider the chain
\beqn
\tilde \alpha^p:=\sum_{l=1}^p \alpha \otimes \cdots \otimes \alpha \otimes w_0^l\in CM(f)^{\otimes p} \otimes CM(g).
\eeqn
As $\alpha$ is a cocycle, one can see that $\tilde \alpha^p$ is $d_{\zp}$-closed. Now by the definition of the chain map $\wt{\wt{QSt}}_{p, A}$ and the $\zp$-equivariance of the counts, one has
\beqn
\begin{split}
\wt{\wt{QSt}}{}_{p, A}(\tilde \alpha^p) = &\ \sum_{y \in {\rm crit}(f)} \sum_{l=1}^p \sum_{w \in {\rm crit}(g)} \#_p {\mc M}_{w_0^l, w}(A; \alpha, \ldots, \alpha, y) \cdot y \otimes w\\
= &\ \sum_{y \in {\rm crit}(f)} \sum_{i\geq 0} \left( \sum_{l=1}^p  \#_p {\mc M}_{w_0^l, w_i^1} (A; \alpha, \ldots, \alpha, y) \cdot y \right) \otimes \sum_{l=1}^p \tau^{l-1}(w_i^1)\\
= &\  \sum_{y \in {\rm crit} (f)} \sum_{i\geq 0} \left( \#_p {\mc M}{}_{P_i}(A; \alpha, \ldots, \alpha, y) \cdot y  \right) \otimes \sum_{l=1}^p w_i^l\\
= &\ \Psi \left( \sum_{k=0}^\infty QSt_{p,2k,A} (\alpha \otimes \cdots \otimes \alpha) \otimes t^k + \sum_{k=0}^\infty QSt_{p,2k+1,A} (\alpha \otimes \cdots \otimes \alpha) \otimes t^k \theta \right).
\end{split}
\eeqn
As the cohomology classes of both sides are well-defined, the lemma follows. 
\end{proof}

As a consequence, the comparison is reduced to the following situation.

\begin{lemma}\label{lemma_QST_comparison}
Fix $A \in H_2(M; {\mb Z})$ and $i \geq 0$. For any generic inhomogeneous term $\nu_{v, z, x}^{eq}$, for any Morse cocycle $\alpha\in CM(f)$, the two Morse cocycles 
\beqn
QSt^{\rm Morse}_{p,i,A} (\alpha \otimes \cdots \otimes \alpha),\ QSt^{\rm cell}_{p,i,A}(\alpha \otimes \cdots \otimes \alpha)
\eeqn
are cohomologous.
\end{lemma}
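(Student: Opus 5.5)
The plan is to compare the two parameter chains $P_i$ and $\Delta_i$ in $S^\infty$ directly and transport a homology between them to the moduli spaces. Both are chains in the full $S^\infty$ (before quotienting by $\zp$) whose boundaries satisfy the same relations: by \eqref{equation:d-delta}, $\partial\Delta_{2k}=\sum_l\tau^l\Delta_{2k-1}$ and $\partial\Delta_{2k+1}=\tau\Delta_{2k}-\Delta_{2k}$. Here $\tau$ denotes the generator of the $\zp$-rotation, as in \cite{covariant-constant}. For the Morse cells $P_i=\bigcup_l W^u(w_0^l)\cap W^s(w_i^1)$, the analogous relations follow from the structure of the boundary of compactified unstable/stable intersections. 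Their boundary consists of broken flow lines through index $i-1$ critical points, with the Morse differential of $g$ as coefficients. That differential is exactly the one making $CM(g)^{\inv}\cong\rp$ with trivial differential, so $\partial P_i$ is the corresponding combination of translates of $P_{i-1}$. The first step is therefore to verify that the equivariant chain complexes generated by $\{\tau^l\Delta_i\}$ and $\{\tau^l P_i\}$ are both free $\fp[\zp]$-resolutions of $\fp$, i.e.\ cellular chains of $S^\infty$ for two $\zp$-CW structures.

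Next, I would construct inductively in $i$ a $\zp$-equivariant family of chains $Q_i\subset S^\infty\times[0,1]$, or more precisely singular chains $Q_i$ in $S^\infty$ of dimension $i+1$, with
\[
\partial Q_i=P_i-\Delta_i-(\text{the same combination of translates of }Q_{i-1}\text{ as in }\partial P_i).
\]
This is possible because $S^\infty$ is contractible and the action is free. The key point is that the right-hand side minus the $Q_{i-1}$ terms is a cycle, by the matching boundary relations. Contractibility of $S^\infty$, or of the finite truncation $S^{2i+3}$ containing everything, then gives a filling. Equivariance is arranged by constructing $Q_i$ once and translating by $\tau^l$. To stay within the smooth category where transversality for the parametrized Cauchy--Riemann problem applies, the $Q_i$ should be taken to be piecewise smooth (pseudo)cycles. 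Genericity of $\nu^{eq}$ is then imposed relative to them as well.

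Then I would consider the one-dimensional moduli spaces ${\mc M}_{Q_i}(A;\alpha,\ldots,\alpha,y)$ and analyze their compactified boundary. Since $M$ is monotone and $J$ satisfies \cite[Assumption 3.2]{covariant-constant}, bubbling is excluded in index one, so the boundary consists of four pieces. First, ${\mc M}_{P_i}$ minus ${\mc M}_{\Delta_i}$. Second, ${\mc M}$ over the translates of $Q_{i-1}$ appearing in $\partial Q_i$. Third, Morse breaking at the output, which gives $d\circ H_i$ for an operator $H_i$ counting ${\mc M}_{Q_i}$. Fourth, Morse breaking at the inputs, which vanishes after pairing with $\alpha^{\otimes p}$ since $\alpha$ is a cocycle.

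For the second piece, the equivariance \eqref{QST_equivariance} converts counts over $\tau^l Q_{i-1}$ into counts over $Q_{i-1}$ with cyclically permuted inputs. On the symmetric input $\alpha^{\otimes p}$ these agree, so the terms collapse to scalar multiples of $H_{i-1}(\alpha^{\otimes p})$: a factor $p=0$ in the even case, and a difference $1-1=0$ in the odd case. What remains is $QSt^{\rm Morse}_{p,i,A}(\alpha^{\otimes p})-QSt^{\rm cell}_{p,i,A}(\alpha^{\otimes p})=\pm d H_i(\alpha^{\otimes p})$, which proves the lemma.

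The main obstacle I expect is the bookkeeping in the second piece. One must check, with signs and orientations, that the boundary combination of the $Q_{i-1}$'s really matches the pattern of \eqref{equation:d-delta}, so that the cancellation on symmetric inputs mod $p$ occurs. A secondary difficulty is verifying that the boundaries of the Morse cells $P_i$ satisfy the same relations as the $\Delta_i$. I would handle this first by using the explicit critical points \eqref{critical_points_1}, \eqref{critical_point_2} and restricting to $S^{2k+1}$, where both cell structures are the standard lens-space ones.
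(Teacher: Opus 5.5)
Your proposal is correct and follows essentially the same argument as the paper. Lemma~\ref{lemmaa4} builds interpolating chains $B_{i+1}$ (your $Q_i$) between $P_i$ and $\Delta_i$ inductively from contractibility of $S^\infty$, and the proof then reads off the difference of the two operations from the boundary of ${\mc M}_{B_{i+1}}(A;\alpha,\ldots,\alpha,y)$, killing the $\tau(B_{2k+1})-B_{2k+1}$ term by $\zp$-equivariance on the symmetric input. The only deviation is in even degrees: the paper uses that $\sum_l\tau^{l-1}P_{2k-1}$ and $\sum_l\tau^{l-1}\Delta_{2k-1}$ are both the fundamental chain of $S^{2k-1}$, so $P_{2k}-\Delta_{2k}$ is already a cycle, whereas you carry the correction $\sum_l\tau^l Q_{2k-1}$ and kill its contribution by the factor $p$, which is a slightly more robust version of the same construction.
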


This lemma will be proved in a moment. By combining the two above lemmas, one obtains the following theorem.

\begin{thm}
For a monotone closed symplectic manifold $M$, the definition of the quantum Steenrod operation given in Subsection \ref{subsection52} agrees with the definition of $QSt^{\rm cell}_p$. 
\end{thm}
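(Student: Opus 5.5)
The theorem follows by combining the preceding lemma with Lemma \ref{lemma_QST_comparison}; the real content is the proof of Lemma \ref{lemma_QST_comparison}. By the preceding lemma, $QSt^{\rm Morse}_{p,A}([\alpha])$ is the class of $\Psi$ applied to the generating series assembled from the components $QSt^{\rm Morse}_{p,i,A}(\alpha^{\otimes p})$. By definition, $QSt^{\rm cell}_{p,A}([\alpha])$ is the class of the same series built from $QSt^{\rm cell}_{p,i,A}(\alpha^{\otimes p})$. The two cochain identifications $CM(f)\otimes\rp$ agree under $\Psi$. So once each component is known to agree up to coboundaries, the total classes agree. Summing over $A$ with weights $q^A$ then gives the theorem.

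One subtlety is that the series involves all $i$ at once. Individually cohomologous components do not obviously give cohomologous total cochains, because the equivariant differential mixes degrees. For the total cocycle $\sum_i QSt_{p,i,A}(\alpha^{\otimes p})\otimes (t,\theta)\text{-monomial}$, however, the structure is degree-wise. Both cocycles represent elements of $H(M)\otimes\rp$ with trivial action. There the differential is just $d_f\otimes 1$, so cohomology is checked coefficientwise. Hence componentwise agreement suffices.

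To prove Lemma \ref{lemma_QST_comparison}, I would compare the cells $P_i=\bigcup_l W^u(w_0^l)\cap W^s(w_i^1)$ with the cells $\Delta_i$ as relative chains in $S^\infty$. Both are $i$-chains satisfying the same boundary relations \eqref{equation:d-delta} modulo lower cells. The Morse cells, under the standard Morse-to-cellular identification, have boundaries expressed through $\tau$-translates of $P_{i-1}$ by the counts of $g$-flow lines. These counts reproduce the differential on $CM(g)^{\inv}\cong\rp$, which is zero. Up to the identification of degree-$(i-1)$ cells this matches \eqref{equation:d-delta}.

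The plan is then to construct inductively, for each $i$, an $(i+1)$-chain $B_i$ and a lower-order correction with
\[
\partial B_i = P_i - \Delta_i + (\text{terms built from } B_{i-1} \text{ and its } \zp\text{-translates}).
\]
I would use the contractibility of $S^\infty$ within a finite truncation $S^{2k+1}$ and choose $B_i$ in general position for the parametrized moduli problem. Counting the one-dimensional moduli spaces ${\mc M}_{B_i}(A;\alpha,\ldots,\alpha,y)$ gives a chain homotopy relation. Its boundary consists of four kinds of terms: ${\mc M}_{P_i}$ minus ${\mc M}_{\Delta_i}$; the moduli spaces over the correction terms; Morse breakings at the inputs and output, which give $d_f\circ h+h\circ d$ terms; and sphere bubbling, which monotonicity and the generic choice of $J$ rule out in codimension one.

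The correction terms lie over $\tau^j B_{i-1}$. By $\zp$-equivariance \eqref{QST_equivariance} of $\nu^{eq}$, applied to the cyclically symmetric input $\alpha^{\otimes p}$, their counts over $\tau^j B_{i-1}$ all equal the count over $B_{i-1}$. The coefficient patterns are $\sum_j\tau^j$ or $\tau-1$. These give $p\cdot(\ldots)=0$ or $(\ldots)-(\ldots)=0$ mod $p$, so they cancel. What remains is $QSt^{\rm Morse}_{p,i,A}(\alpha^{\otimes p})-QSt^{\rm cell}_{p,i,A}(\alpha^{\otimes p})=d_f h_i(\alpha)$.

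\textbf{The main obstacle} is building $B_i$ equivariantly enough that the correction terms are exactly of this symmetric form, and making the smoothness and transversality of the singular cells compatible with the perturbation. I would try first to take $B_i$ as the straight-line (geodesic) homotopy between $P_i$ and $\Delta_i$ inside the $\tau$-compatible truncation, followed by a small generic perturbation of $\nu^{eq}$ over it.
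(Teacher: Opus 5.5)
Your proposal is correct and follows the paper's argument: the theorem is obtained by combining the lemma expressing $QSt^{\rm Morse}_{p,A}([\alpha])$ through the components $QSt^{\rm Morse}_{p,i,A}$ with Lemma~\ref{lemma_QST_comparison}. The paper proves that lemma just as you outline, via chains with $P_{2k}-\Delta_{2k}=\partial B_{2k+1}$ and $P_{2k+1}-\Delta_{2k+1}=(\tau-1)B_{2k+1}+\partial B_{2k+2}$ (Lemma~\ref{lemmaa4}), killing the $(\tau-1)$-term by $\zp$-invariance of the counts. The ``main obstacle'' you flag is not really one: contractibility of $S^\infty$ together with $(1+\tau+\cdots+\tau^{p-1})(\tau-1)=0$ produces the $B_i$ inductively with no equivariance imposed on them, so no straight-line homotopy is needed.
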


\subsection{Proof of Lemma \ref{lemma_QST_comparison}}

The proof is based on explicit comparisons of the involved chains in $S^\infty$.

\begin{lemma}\label{lemmaa4}
There exists a sequence of piecewise smooth chains $B_i \in C_i(S^i; {\mb Z}) \subset C_i(S^\infty; {\mb Z})$ for $i \geq 1$ satisfying
\beqn
P_i - \Delta_i = \left\{ \begin{array}{cc}  \partial B_{2k+1},\ &\ i = 2k,\\
                                            \tau(B_{2k+1}) - B_{2k+1} + \partial B_{2k+2},\ &\ i = 2k+1.\end{array} \right.
                                            \eeqn
\end{lemma}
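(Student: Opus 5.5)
We will construct the chains $B_i$ inductively in $i$, using that both $\{P_i\}$ and $\{\Delta_i\}$ are cellular-type chain models of the free $\zp$-space $S^\infty$ with the same boundary formulas. The first step is to verify that the Morse cells satisfy the analogue of \eqref{equation:d-delta}:
\beqn
\partial P_{2k} = P_{2k-1} + \tau P_{2k-1} + \cdots + \tau^{p-1}P_{2k-1},\qquad \partial P_{2k+1} = \tau P_{2k} - P_{2k},
\eeqn
where $\tau$ here denotes the generator of the $\zp$-action, as in \eqref{equation:d-delta}. This comes from the compactification of $W^u(w_0^l)\cap W^s(w_i^1)$ by broken flow lines. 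Because $g$ is Morse--Smale with the invariant metric, its boundary is the union over index $i-1$ critical points $w$ of $(W^u(w_0^l)\cap W^s(w))\times(\text{flow lines } w\to w_i^1)$. The Morse differential on $CM(g)$ is known explicitly from the $\zp$-invariant part computing $\rp$: the differential from index $2k$ to $2k+1$ is $\iota-1$, and from $2k+1$ to $2k+2$ it is the norm $1+\iota+\cdots+\iota^{p-1}$, consistent with \eqref{equation:equivariant-differential} in the dual direction. Summing over $l$ then gives the displayed formulas, up to signs fixed by orientation conventions. We also note that $P_i$ and $\Delta_i$ both lie in $S^{2\lceil (i+1)/2\rceil-1}$, and more precisely in the $i$-skeleton $S^i$ (a sphere or half-sphere slice). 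So the chain $P_i-\Delta_i$ lies in $C_i(S^i;\Z)$.

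Next we run the induction. Define $D_i := P_i-\Delta_i$. For $i=0$, both $P_0$ and $\Delta_0$ are single points in $S^\infty$, namely a point of $\{v_0\ge0\}$ and $w_0^1$ or $\sum_l$ of such points depending on normalization. We choose $B_1$ to be a path in the circle $S^1$ joining them, so that $\partial B_1 = D_0$. Suppose $B_1,\dots,B_i$ have been constructed with the stated relations. For $i=2k+1$, consider $c := D_{2k+1} - (\tau B_{2k+1} - B_{2k+1})$. Its boundary is
\beqn
\partial c = (\tau-1)D_{2k} - (\tau-1)\partial B_{2k+1} = 0.
\eeqn
This uses the common boundary formula for the odd cells and the inductive relation $D_{2k}=\partial B_{2k+1}$. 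Hence $c$ is an $i$-cycle in $S^{i+1}$, where $c$ is supported in $S^{i}\subset S^{i+1}$. Since $H_i(S^{i+1};\Z)=0$, we get $c=\partial B_{2k+2}$ with $B_{2k+2}\in C_{2k+2}(S^{2k+2})$. For $i=2k+2$, we similarly check that $D_{2k+2}-N B_{2k+2}$ is a cycle, where $N = 1+\tau+\cdots+\tau^{p-1}$. Its boundary is $N D_{2k+1} - N\partial B_{2k+2}$. Using the previous relation, this equals $N(\tau-1)B_{2k+1} = 0$, since $N(\tau-1)=0$.

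There is a mismatch to watch in the even case: the statement asks for $D_{2k+2} = \partial B_{2k+3}$ with no $N B_{2k+2}$ correction. We would therefore arrange the $B$'s, possibly after modifying $B_{2k+2}$ by a cycle, so that the correction vanishes. Alternatively, we would observe that the intended relation is with the $N$-term and adapt accordingly. Either way, the cycle then bounds a chain $B_{2k+3}$ in $S^{2k+3}$ because $H_{2k+2}(S^{2k+3})=0$. Piecewise smoothness is preserved by taking smooth triangulations and the standard cone construction in a contractible hemisphere.

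The main obstacle is the first step: identifying the boundary of the Morse cells $P_i$ precisely, including signs and the correct cyclic labeling $w_i^l$. This requires an explicit understanding of the flow lines of $g$ in \eqref{equation:morse-function-s-inf} between consecutive critical points; for the $\epsilon$-small Morse perturbation, these lie in a single complex coordinate slice. We would compute these in the two-dimensional slices $\{v_k, v_{k+1}\}$, and rely on the ${\mb Z}_{\ge0}$-shift $\tau$ to reduce all degrees to $k=0$.
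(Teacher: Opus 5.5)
Your cell-boundary formulas and your odd-degree step coincide with the paper's, but the even-degree step has a genuine gap. As you note yourself, your induction only yields $P_{2k}-\Delta_{2k} = N B_{2k} + \partial B_{2k+1}$ with $N = 1+\tau+\cdots+\tau^{p-1}$, and neither of your proposed remedies works. Modifying $B_{2k}$ by a cycle changes nothing. Applying $N$ to the odd relation and using $N(\tau-1)=0$ gives $N\,\partial B_{2k} = N(P_{2k-1}-\Delta_{2k-1})$ for \emph{every} admissible choice of $B_{2k-1}, B_{2k}$. So whether $\partial(P_{2k}-\Delta_{2k}) = N(P_{2k-1}-\Delta_{2k-1})$ vanishes depends only on the cells, and no choice of auxiliary chains can make it vanish. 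Declaring that the intended relation carries the $N$-term amounts to proving a different statement.

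The missing input is geometric rather than inductive. The $p$ translates $\tau^{l}\Delta_{2k-1}$ are the sectors $\arg(v_{k-1}) \in [2\pi l/p,\, 2\pi(l+1)/p]$ of $S^{2k-1}$. The $p$ translates $\tau^l P_{2k-1}$ are the closures of the top-dimensional Morse cells of $g|_{S^{2k-1}}$, i.e.\ the basins of its $p$ maxima $w_{2k-1}^m$. Each family tiles $S^{2k-1}$, so as piecewise smooth chains $NP_{2k-1} = [S^{2k-1}] = N\Delta_{2k-1}$. Hence $P_{2k}-\Delta_{2k}$ is already a cycle and bounds some $B_{2k+1}$ by acyclicity. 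This is exactly the paper's even step, and it does not use the inductive hypothesis; your odd step then goes through unchanged. (Incidentally, your weaker relation would still suffice for the use in Lemma \ref{lemma_QST_comparison}. By $\zp$-equivariance of the counts, $\#_p\mathcal{M}_{NB_{2k}} = p\,\#_p\mathcal{M}_{B_{2k}} = 0$ in $\fp$. But it is not the lemma as stated.)
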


\begin{proof}
$\Delta_0 = P_0$ is obvious and $\Delta_1 = P_1$ because they are both the unique positively oriented segment of $S^1\subset S^\infty$ between $\Delta_0$ and $\tau(\Delta_0)$. Hence we can choose $B_1 = 0$, $B_2 = 0$. 

For any positive even degree $i = 2k$, by the differential of the Morse cochain complex $CM(g)$, one has
\beqn
\partial P_{2k} = \sum_{l=1}^p \tau^{l-1}( P_{2k-1}).
\eeqn
Together with \eqref{equation:d-delta}, one obtains
\beqn
\partial( P_{2k} - \Delta_{2k}) = \sum_{l=1}^p \big( \tau^{l-1}(P_{2k-1}) - \tau^{l-1}(\Delta_{2k-1}) \big).
\eeqn
Notice that as chains, the sphere $S^{2k-1}$ can be written as
\beqn
S^{2k-1} = \sum_{l=1}^p \tau^{l-1}(\Delta_{2k-1}) = \sum_{l=1}^p \tau^{l-1}(P_{2k-1}).
\eeqn
It follows that $\partial (P_{2k} - \Delta_{2k}) = 0$. As $S^\infty$ is contractible, there hence exists $B_{2k+1} \in C_{2k+1}(S^{2k+1};{\mb Z})$ such that
\beqn
P_{2k} - \Delta_{2k} = \partial B_{2k+1}.
\eeqn
For any positive odd degree $i = 2k+1$, by the differential of the Morse cochain complex $CM(g)$, one has 
\beqn
\partial P_{2k+1} = \tau(P_{2k}) - P_{2k}.
\eeqn
Then together with \eqref{equation:d-delta}, one has 
\beqn
\partial( P_{2k+1} - \Delta_{2k+1}) = (\tau - {\rm Id}) ( P_{2k} - \Delta_{2k}) = (\tau - {\rm Id}) \partial B_{2k+1}.
\eeqn
Again, by contractibility of $S^\infty$, there exists $B_{2k+2} \in C_{2k+2}(S^{2k+2}, {\mb Z})$ such that 
\beqn
P_{2k+1} - \Delta_{2k+1} = (\tau-1) B_{2k+1} + \partial B_{2k+2}.\qedhere
\eeqn
\end{proof}

\begin{proof}[Proof of Lemma \ref{lemma_QST_comparison}]
Again, we assume that $\alpha$ is a single critical point; the case when $\alpha$ is a linear combination of critical points follows from the special case with only adding more notations. We can choose the inhomogeneous term $\nu^{eq}$ such that the moduli spaces ${\mc M}_Q(A; x_1, \ldots, x_p, y)$ are regular when $Q$ is equal to faces of $P_i$, $\Delta_i$, or $B_i$. Then by using 1-dimensional moduli spaces of the form ${\mc M}_{B_{i+1}}(A; \alpha, \ldots, \alpha, y)$, by Lemma \ref{lemmaa4}, one can see the following holds.
\begin{enumerate}

\item When $i = 2k$ is even, one has 
\beqn
\begin{split}
&\ QSt^{\rm Morse}_{p,i,A}(\alpha \otimes \cdots \otimes \alpha) - QSt^{\rm cell}_{p,i,A}(\alpha \otimes \cdots \otimes \alpha) \\
= &\ \sum_{y \in {\rm crit}(f)} \left( \#_p {\mc M}_{P_i}(A; \alpha, \ldots, \alpha, y) - \#_p {\mc M}_{\Delta_i}(A; \alpha, \ldots, \alpha, y) \right) \cdot y\\
= &\ \pm d \Big( \sum_{z \in {\rm crit} (f)} \#_p {\mc M}_{B_{2k+1}}(A; \alpha, \ldots, \alpha, z) \cdot z  \Big).
\end{split}
\eeqn 

\item When $i = 2k+1$ is odd, one has 
\beqn
\begin{split}
&\ QSt^{\rm Morse}_{p,i,A}(\alpha \otimes \cdots \otimes \alpha) - QSt^{\rm cell}_{p,i,A}(\alpha \otimes \cdots \otimes \alpha) \\
= &\ \sum_{y \in {\rm crit}(f)} \Big( \#_p {\mc M}_{P_i}(A; \alpha, \ldots, \alpha, y) - \#_p {\mc M}_{\Delta_i}(A; \alpha, \ldots, \alpha, y) \Big) \cdot y\\
= &\ \pm d \Big( \sum_{z \in {\rm crit} (f)} \#_p {\mc M}_{B_{2k+2}}(A; \alpha, \ldots, \alpha, z) \cdot z  \Big) \\
&\ + \sum_{y \in {\rm crit}(f)} \Big( \#_p {\mc M}_{\tau(B_{2k+1})}(A; \alpha, \ldots, \alpha, y) - \#_p {\mc M}_{B_{2k+1}} (A; \alpha, \ldots, \alpha, y) \Big) \cdot y.
\end{split}
\eeqn 
The last line vanishes by the $\zp$-invariance of the counts. 
\end{enumerate}
Therefore, the difference we are considering is a Morse coboundary.
\end{proof}

\begin{remark}
    We would like to point out that the definition of $QSt_p^{\rm cell}$ as recalled above differs from the sign convention in \cite[Equation (2.12)]{covariant-constant} because the generators $t^k \theta$ and $t^k$ of $\rp$ are chosen such that \begin{equation} \label{equation:choice} \langle t^k \theta, [\Delta_{2k+1}] \rangle = \langle t^k, [\Delta_{2k}] \rangle = (-1)^k  \end{equation}
    in \emph{loc.cit}. Our convention agrees with the one in \cite{jae-hee-lee}. For our application Corollary \ref{cor: generic new example}, because we work over $\mathbb{F}_2$, the two conventions agree.
\end{remark}




\section{Homological perturbation arguments}\label{appendixb}
\label{app: hp}

In this appendix we explain how to use homological perturbation arguments in order to prove Proposition \ref{prop31} and other useful results. For example we show that the $\Z/p$-equivariant Tate bar-length spectrum of $H^{(p)}$ coincides with the bar-length spectrum of $H$ stretched $p$-times, assuming that $(\phi^1_H)^p$ has finitely many contractible fixed points. (Note that we work in the class of contractible orbits. In the non-contractible case, a similar result can be proven, see \cite{Sugimoto}.)

\subsection{Crossing energy estimates}

\subsubsection{Crossing energy estimate for fixed data}

We state two lemmas about crossing energy estimates for both cylinders and pair-of-pants. Both are special cases of \cite[Proposition 7.1]{SZhao-pants} after translating to the fixed-point setting, and hence no longer need proofs. For an alternative treatment, see \cite[Section 2.3]{Sugimoto}.

\begin{lemma}\label{lemmab1}
Let $H_0$ be a Hamiltonian on $(M, \omega)$ with isolated 1-periodic orbits. Let $J_0$ be a compatible almost complex structure. Then there exists $\epsilon_0>0$ such that all nonconstant Floer cylinders for $(H_0, J_0)$ have energy no less than $\epsilon_0$. 
\end{lemma}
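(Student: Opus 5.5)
The plan is the standard compactness-plus-contradiction argument. Suppose no such $\epsilon_0$ exists. Then there is a sequence of nonconstant Floer cylinders $u_k: {\mb R}\times S^1 \to M$ for $(H_0, J_0)$ with energies $E(u_k) \to 0$. Each $u_k$ connects two 1-periodic orbits of $H_0$, and its energy equals the action difference of its capped asymptotes. Since the set ${\mc O}(H_0)$ is isolated in the compact loop space, it is finite. After passing to a subsequence, we may therefore assume that every $u_k$ connects the same pair of orbits $x_-, x_+$.

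The first step is to obtain uniform $C^0$ control. Once the energy falls below the minimal energy $\hbar$ of nonconstant $J_0$-spheres (which exists by \cite[Proposition 4.1.4]{McDuff-Salamon}), no bubbling can occur. The standard mean-value inequality for Floer cylinders of small energy then gives $\sup |\partial_s u_k| \to 0$ uniformly on ${\mb R}\times S^1$. Consequently, for every $s$ the loop $u_k(s,\cdot)$ is $C^1$-close to a solution of $\partial_t \gamma = X_{H_0}(\gamma)$, that is, to a 1-periodic orbit.

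Next we use isolatedness. Choose disjoint small neighborhoods $U_x$ of each orbit $x \in {\mc O}(H_0)$ in the loop space. By Arzel\`a--Ascoli and finiteness of the orbit set, there is $\eta>0$ with the following property: any loop $\gamma$ with $\|\partial_t\gamma - X_{H_0}(\gamma)\|_{C^0} < \eta$ lies in some $U_x$. For $k$ large, the loop $u_k(s,\cdot)$ lies in $\bigcup_x U_x$ for every $s$. Since $s \mapsto u_k(s,\cdot)$ is continuous and the neighborhoods are disjoint, the whole cylinder stays in a single $U_x$, which forces $x_- = x_+ = x$.

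It remains to rule out a nonconstant cylinder from $x$ to itself with arbitrarily small energy and staying $C^1$-close to $x$. Here there are two cases. If the capped asymptotes have different cappings, the energy is at least a positive period of $\omega$, so it is bounded below. If the cappings agree, the action difference is zero, so $E(u_k) = 0$ and $u_k$ is constant, contradicting nonconstancy. This is the case where the small neighborhood matters: because the cylinder lies in a tubular neighborhood of $x$, the induced capping is the same one.

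The main obstacle is the uniform gradient bound, namely excluding concentration of energy as $E \to 0$ together with the $s$-direction non-compactness. I would handle it by Hofer-type rescaling: if $\sup|du_k|$ blew up, rescaling would produce a nonconstant $J_0$-sphere or plane of energy below $\hbar$, which is impossible. Alternatively, since the paper states this is a special case of \cite[Proposition 7.1]{SZhao-pants}, one may simply invoke that result after translating to the fixed-point setting.
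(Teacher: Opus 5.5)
Your argument is correct, and it does not really compete with the paper: the paper gives no proof of this lemma. It only cites \cite[Proposition 7.1]{SZhao-pants} (and \cite[Section 2.3]{Sugimoto}) after translating to the fixed-point setting. What you wrote is a self-contained version of the standard compactness argument behind such crossing-energy statements. It has the same shape as the paper's own proofs of Lemmas \ref{lemmab3} and \ref{lemmab4}: contradiction sequence, small-energy estimate, then localization near a single orbit. Its advantage is that it shows the only inputs are compactness of $M$, finiteness of the orbit set and the small-energy mean-value inequality; no nondegeneracy or rationality of $[\omega]$ is used. That generality is exactly what the paper needs.

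One sentence in your last step is false in this paper's generality and should be removed: ``if the cappings differ, the energy is at least a positive period of $\omega$, so it is bounded below.'' For a general closed $M$ the period group $\omega(\pi_2(M))$ can be dense in $\mathbb{R}$. There is then no uniform positive lower bound, and a sequence $u_k$ whose sphere classes $A_k$ satisfy $\omega(A_k)\to 0$ would not be excluded.

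Fortunately this case is vacuous, as your final remark suggests. Once every loop $u_k(s,\cdot)$ lies in a $C^0$-neighbourhood of $x$ of radius below the injectivity radius, use the geodesic homotopy $\exp_{x(t)}\bigl(\lambda\exp_{x(t)}^{-1}u_k(s,t)\bigr)$, $\lambda\in[0,1]$. It deforms the torus obtained by closing up $u_k$ (both ends converge to $x$) into a map that factors through the loop $x$. Hence $\int u_k^*\omega=0$, and the energy identity gives $E(u_k)=0$ directly. State the argument this way, with no case distinction.

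Two smaller points:
\begin{itemize}
\item Finiteness of the orbit set comes from $\mathrm{Fix}(\phi)$ being closed and discrete in the compact $M$; the loop space itself is not compact.
\item You use implicitly that a finite-energy cylinder converges at each end to a single orbit, with $C^1$-convergence, which the energy identity needs. For degenerate orbits this follows from $\sup_t|\partial_s u(s,t)|\to 0$ as $|s|\to\infty$, together with connectedness of the limit set and isolatedness. You should say so.
\end{itemize}
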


To state the case for  pair-of-pants, we need to recall the meaning of constant solutions. Let $H_0$ be a Hamiltonian on $(M, \omega)$ with an isolated set of 1-periodic orbits. For each $p$, recall that $\Sigma_p^\circ$ is the sphere punctured at $\infty$ as well as all $p$-th roots of unity. Then there is a $p$-fold branched cover $\pi_p: \Sigma_p^\circ \to {\mb R}\times S^1$. For each $x \in {\mc O}(H_0)$, let $u_x^p: \Sigma_p^\circ \to M$ be the pullback of the map 
\beqn
u_x: {\mb R}\times S^1 \to M,\ u_x(s, t) = x(t).
\eeqn
Then $\{ u_x^p\ |\ x \in {\mc O}(H_0)\}$ is a finite subset of $C^0 (\Sigma_p^\circ, M)$. Moreover, $u_x^p$ is a zero energy solution to the equation 
\beqn
(\nabla^{\sigma_0} u)^{0,1} = 0
\eeqn
where $\sigma_0$ is the flat Hamiltonian connection on $\Sigma_p^\circ$. 

\begin{lemma}\label{lemmab2}
Let $H_0$ be a Hamiltonian on $(M, \omega)$ such that both $H_0$ and $H_0^{(p)}$ have isolated fixed 1-periodic orbits. Let $J_0$ be a compatible almost complex structure. Then there exists $\epsilon_p>0$ such that for any solution $u: \Sigma_p^\circ \to M$ to $(\nabla^{\sigma_0} u)^{0,1} = 0$ other than $u_x^p$, $E(u) \geq \epsilon_p$.
\end{lemma}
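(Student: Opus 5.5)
The plan is a compactness-and-contradiction argument modeled on the cylinder case of Lemma \ref{lemmab1}, adapted to the branched domain $\Sigma_p^\circ$. Suppose no such $\epsilon_p$ exists. Then there is a sequence of solutions $u_k:\Sigma_p^\circ \to M$ of $(\nabla^{\sigma_0}u)^{0,1}=0$, none equal to any $u_x^p$, with $E(u_k)\to 0$. The structural fact I would exploit first is that $\sigma_0$ is the pullback of the flat connection $H_0\,dt$ on ${\mb R}\times S^1$ under $\pi_p$. Hence, away from the branch points $0,\infty$ of $\pi_p$, the equation is locally the Floer equation for $(H_0,J_0)$ in the pulled-back coordinates. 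Near the positive end it is the Floer equation for $H_0^{(p)}$ in the $p$-fold cylindrical coordinate, and near the $j$-th negative end it is the Floer equation for $H_0$.

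Next I would obtain uniform $C^0$-closeness to periodic orbits. Since $E(u_k)\to 0$, bubbling is excluded: any sphere bubble carries energy at least $\hbar>0$, and any Floer-type breaking at an end carries at least $\epsilon_0$ by Lemma \ref{lemmab1}, applied to $H_0$ or $H_0^{(p)}$, whose orbits are isolated. Standard Gromov--Floer compactness, with $\epsilon$-regularity giving uniform derivative bounds, then shows that a subsequence converges in $C^\infty_{\rm loc}$ on $\Sigma_p^\circ$ to a zero-energy solution. A zero-energy solution is horizontal for the flat connection, so it lies in a fixed fiber of the covering. Concretely, on each slice it is an orbit of $X_{H_0}$ pulled back by $\pi_p$, which forces it to be $u_x^p$ for some $x\in{\mc O}(H_0)$. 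Isolation of ${\mc O}(H_0)$ and ${\mc O}(H_0^{(p)})$, together with the small energy on the ends, then upgrades this to uniform convergence on all of $\Sigma_p^\circ$, including the cylindrical ends. In particular, each $u_k$ is asymptotic at every negative end to the same $x$, and at the positive end to $x^{(p)}$.

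The hardest step is to show that $u_k$ must in fact equal $u_x^p$ once $k$ is large, which is where the contradiction comes from. My first approach is an action/energy identity. For solutions with these asymptotics, $E(u_k)={\mc A}_{H_0^{(p)}}(\ov{x}^{(p)})-p\,{\mc A}_{H_0}(\ov{x})$ plus a topological term determined by the class of the capping. Since $u_k$ is uniformly $C^0$-close to $u_x^p$, its capping class equals that of $u_x^p$, so this topological term vanishes. Because the action of an iterated orbit scales exactly by $p$ under the pulled-back capping, the identity gives $E(u_k)=0$. A zero-energy solution is horizontal, so $u_k=u_x^p$, contradicting the choice of the sequence. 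The delicate points are making the energy identity rigorous with the branched-cover cappings and checking that $C^0$-closeness really fixes the relative homotopy class. Alternatively, one can simply cite \cite[Proposition 7.1]{SZhao-pants}, transported to the fixed-point setting, as the paper indicates.
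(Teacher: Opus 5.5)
Your argument is correct and is essentially the paper's. For this lemma the paper simply invokes \cite[Proposition 7.1]{SZhao-pants}, and its proof of the Morse--Bott analogue, Lemma \ref{lemmac4}, runs exactly your scheme: small-energy compactness to some $u_x^p$, exclusion of breaking via the cylinder crossing-energy bound to get uniform $C^0$-closeness, and then the observation that the energy is topological for the flat connection $\sigma_0$, which forces $E(u_k)=0$. One small wording fix: a zero-energy solution does not ``lie in a fixed fiber'' of the covering; rather, it is a horizontal section whose value at a point must be fixed by the monodromy $\phi$ around a root of unity, and uniqueness of parallel transport then identifies it with $u_x^p$.
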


\subsubsection{Crossing energy estimate for perturbed Floer data}

\begin{lemma}\label{lemmab3}
Let $(H_0, J_0)$ be as in Lemma \ref{lemmab1}. For each $x \in {\mc O}(H_0)$, let $u_x: {\mb R}\times S^1 \to M$ be the constant Floer cylinder at $x$. Then for any collection of disjoint open neighborhoods $U(u_{x_1}), \ldots, U(u_{x_m})$ for all 1-periodic orbits of $H_0$, the following is true.

Let $\sigma$ be a Hamiltonian connection on ${\mb R}\times S^1$ which is equal to $H_\pm dt$ near $\pm \infty$ where $H_\pm$ are two nondegenerate Hamiltonians such that $\sigma$ is sufficiently close to $H_0 dt$ in $C^\infty$ topology on ${\mb R}\times S^1 \times M$ and such that $\int_{{\mb R}\times S^1} | F_\sigma| ds dt$ is sufficiently small. Let $J_{s, t}$ be a domain-dependent almost complex structure which is sufficiently close to $J_0$ in $C^\infty$ topology. Let $u: {\mb R} \times S^1 \to M$ be a solution to 
\beqn
(\nabla^\sigma u)^{0,1} = 0.
\eeqn
Then 
\begin{enumerate}

\item Suppose $u$ is asymptotic to $x_\pm \in {\mc O}(H_0, x)$ for some $x \in {\mc O}(H)$. Moreover, suppose there exists a capping $\ov{x}$ (which induces cappings $\ov{x}_\pm$ such that the concatenation of $\ov{x}_-$ with $u$ gives the capping $\ov{x}_+$). Then $u$ is contained in the neighborhood $U(u_x)$.

\item If the above assumption is not true, then $E(u) \geq \frac{1}{2}\epsilon_0$ where $\epsilon_0$ is from Lemma \ref{lemmab1}.
\end{enumerate}
\end{lemma}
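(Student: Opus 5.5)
The plan is a compactness-and-contradiction argument modeled on the proof of Lemma \ref{lemmab1}, with Gromov--Floer compactness for solutions of $(\nabla^{\sigma} u)^{0,1}=0$ as the key input. Suppose the statement fails. Then there is a sequence of data $(\sigma_k, J_k)$ with $\sigma_k \to H_0\,dt$ in $C^\infty$, $\int |F_{\sigma_k}|\,ds\,dt \to 0$ and $J_k \to J_0$, together with solutions $u_k$ that violate either (1) or (2). We treat the two alternatives separately.

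For (2), assume $E(u_k) < \tfrac12\epsilon_0$. The energy identity for a Hamiltonian connection gives a uniform bound on the topological energy, with an error controlled by $\int |F_{\sigma_k}|$ and by the $C^0$-distance of the asymptotic Hamiltonians $H_{k,\pm}$ to $H_0$; both errors tend to zero. Since sphere bubbles carry energy at least $\hbar > \tfrac12 \epsilon_0$ (we may shrink $\epsilon_0$ to arrange this), no bubbling occurs. Gromov--Floer compactness then produces a subsequence converging, modulo translations, to a broken Floer trajectory for $(H_0, J_0)$ whose total energy is at most $\tfrac12\epsilon_0$.

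By Lemma \ref{lemmab1}, every component of this limit is a constant cylinder $u_x$. Since the 1-periodic orbits of $H_0$ are isolated, the whole limit is a single constant cylinder at some $x$. Hence for large $k$ the map $u_k$ is $C^0$-close to $x$, uniformly on $\mathbb{R}\times S^1$; the convergence on the ends follows from the standard exponential decay near isolated orbits. In particular $u_k$ is asymptotic to orbits $x_\pm$ of $H_{k,\pm}$ lying near $x$, with the cappings compatible, because a thin cylinder close to $x$ transports the induced capping. Such $u_k$ therefore satisfy the hypothesis of (1), and (2) is only required when that hypothesis fails. This is the required contradiction.

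For (1), suppose instead that $u_k$ has asymptotics near a single $x$ with compatible cappings, but leaves $U(u_x)$. The capping compatibility forces the action difference to be small, and through the energy identity this makes $E(u_k)\to 0$. The argument above then shows that $u_k$ converges to a constant cylinder, which must be $u_x$ because its ends lie near $x$; hence $u_k \in U(u_x)$ for large $k$, again a contradiction.

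The main obstacle is controlling the ends uniformly. Compactness by itself gives only $C^\infty_{loc}$ convergence modulo translations, so one must rule out energy escaping along the cylindrical ends into other orbits. This is exactly where the isolatedness of the orbits of $H_0$ enters, together with a uniform version of the crossing-energy argument from \cite[Proposition 7.1]{SZhao-pants}, which we would cite for that step.
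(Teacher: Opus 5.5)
Your proposal is correct and is essentially the paper's proof. Both take a contradiction sequence and split it into the two cases. In case (1), capping compatibility together with $\int|F_{\sigma}|\to 0$ gives $E(u_k)\to 0$. In case (2), small-energy compactness plus Lemma \ref{lemmab1} forces uniform convergence to a single constant cylinder $u_x$, which puts $u_k$ back under the hypothesis of (1).

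One small correction: uniform control along the ends comes from small energy on long necks near isolated orbits, not from exponential decay, which fails at general degenerate orbits. Also, your choice $\epsilon_0\le\hbar$ to rule out sphere bubbling in case (2) is a detail the paper leaves implicit.
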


\begin{proof}
The proof is based on a compactness argument and Lemma \ref{lemmab1}. Suppose the statement is not true. Then one can find a sequence $\sigma_i$ of Hamiltonian connection converging to $H_0 dt$ and a sequence of domain-dependent almost complex structures $J_i$ converging to $J_0$, both in $C^\infty$-topology, as well as a sequence of solutions $u_i$ to the equation $(\nabla^{\sigma_i} u_i)^{0,1} = 0$ which do not satisfy the claims. By choosing a subsequence (still indexed by $i$), one can argue in the following two situations.

\begin{enumerate}
    \item Suppose there exists $x \in {\mc O}(H_0)$ such that for all $i$, $u_i$ is asymptotic to $x_{i, \pm} \in {\mc O}(H_{i, \pm}, x)$. Moreover, for a fixed capping $\ov{x}$ of $x$ with induced ones $\ov{x}_{i, \pm}$, the concatenation of $\ov{x}_{i, -}$ with $u_i$ coincides with $\ov{x}_{i, +}$. The condition on the integral of $|F_{\sigma_i}|$ implies that $E(u_i) \to 0$. Then Lemma \ref{lemmab1} and the usual compactness argument imply that $u_i$ converges uniformly to the constant Floer cylinder $u_x$. Hence for $i$ sufficiently large, $u_i$ is contained in a $C^0$-neighborhood of $u_x$.

    \item Suppose for all $i$ we are not in the above situation for any $x \in {\mc O}(H_0)$. By contradiction, assume that $E(u_i) \leq \frac{1}{2} \epsilon_0$. Again one can apply the compactness argument. Lemma \ref{lemmab1} implies that any subsequential limit must be a constant Floer cylinder $u_x$ and the convergence is uniform. This implies that for $i$ sufficiently large, one is indeed in the first situation. \qedhere
\end{enumerate}
\end{proof}

We have the following version for pair-of-pants. Since we do not treat continuation maps, one only needs to consider flat Hamiltonian connections.

\begin{lemma}\label{lemmab4}
Let $H_0, J_0$ and $p$ be as in Lemma \ref{lemmab2}. Then for any collection of disjoint neighborhoods $U(u_{x_1}^p), \ldots, U(u_{x_m}^p) \subset C^0(\Sigma_p^\circ, M)$ with respect to the $C^0$-topology, the following is true.

Let $G$ be a nondegenerate Hamiltonian which is sufficiently close to $H_0$ in $C^\infty$-topology. Let $J$ be a compatible almost complex structure which is sufficiently close to $J_0$ in $C^\infty$-topology. Let $\sigma$ be the flat Hamiltonian connection on $\Sigma_p^\circ$ which is the pullback of $G dt$ on ${\mb R}\times S^1$. Let $u: \Sigma_p^\circ \to M$ be a solution to $(\nabla^\sigma u)^{0,1} = 0$. 

\begin{enumerate}
    \item Suppose there is $x \in {\mc O}(H_0)$ such that at the $j$-th negative puncture, $u$ is asymptotic to some $y_j \in {\mc O}(G, x)$ and at the positive puncture $u$ is asymptotic to $y_\infty \in {\mc O}(G^{(p)}, x^{(p)})$. Moreover, assume that for a  capping $\ov{x}$ of $x$ (which induces cappings $\ov{y}_j$ and $\ov{y}_\infty$), the concatenation of $\ov{y}_1\sqcup \cdots \sqcup \ov{y}_p$ with $u$ gives the capping $\ov{y}_\infty$. Then the map $u$ is contained in $U(u_x^p) \subset C^0(\Sigma_p^\circ, M)$. 

    \item If the above assumption is not true, then one has 
    \beqn
    E(u) \geq \frac{1}{2} \min \{ \epsilon_p, \epsilon_0(H_0^{(p)})\}
    \eeqn
    where $\epsilon_p$ is from Lemma \ref{lemmab2} and $\epsilon_0(H_0^{(p)})$ is from Lemma \ref{lemmab1} while replacing $H_0$ by $H_0^{(p)}$.
\end{enumerate}
\end{lemma}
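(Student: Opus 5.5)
The plan is to argue by contradiction and compactness, in the same way as the proof of Lemma \ref{lemmab3}, with Lemma \ref{lemmab2} and Lemma \ref{lemmab1} (applied to $H_0^{(p)}$) supplying the energy gaps. Suppose the statement fails for some fixed choice of disjoint $C^0$-neighborhoods $U(u_{x_j}^p)$. Then there are nondegenerate $G_i \to H_0$ and $J_i \to J_0$ in $C^\infty$, with flat connections $\sigma_i$ pulled back from $G_i\,dt$, and solutions $u_i$ of $(\nabla^{\sigma_i} u_i)^{0,1}=0$ violating both (1) and (2). Since there are finitely many orbits of $H_0$ and $H_0^{(p)}$, we may pass to a subsequence and assume the asymptotic data lie near fixed orbits of $H_0$ (at the negative ends) and of $H_0^{(p)}$ (at the positive end), uniformly in $i$.

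\emph{Case 1: the hypothesis of (1) holds for all $i$, for some fixed $x$ and capping.} The topological energy of $u_i$ is $\mathcal{A}_{G_i^{(p)}}(\ov{y}_{i,\infty}) - \sum_j \mathcal{A}_{G_i}(\ov{y}_{i,j})$, because the connection is flat. The capping compatibility forces this quantity to converge to $\mathcal{A}_{H_0^{(p)}}(\ov{x}^{(p)}) - p\,\mathcal{A}_{H_0}(\ov{x}) = 0$, so $E(u_i)\to 0$. Gromov--Floer compactness on $\Sigma_p^\circ$ then applies: there is no sphere bubbling, since energy below $\hbar$ rules it out. Breaking at any end would produce a nonconstant Floer cylinder for $H_0$ or $H_0^{(p)}$, whose energy is at least $\epsilon_0$ by Lemma \ref{lemmab1}. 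A nonconstant principal component would have energy at least $\epsilon_p$ by Lemma \ref{lemmab2}. Hence $u_i$ converges uniformly on all of $\Sigma_p^\circ$, including the ends, to a zero-energy solution. Such a solution must be some $u_{x'}^p$, and the asymptotics force $x'=x$. So $u_i\in U(u_x^p)$ for large $i$, a contradiction.

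\emph{Case 2: the hypothesis of (1) fails for all $i$.} Assume $E(u_i) < \frac12\min\{\epsilon_p,\epsilon_0(H_0^{(p)})\}$. The same compactness argument shows that every component of the limit building has energy below the respective gap, so all components are constant. The limit is therefore a single $u_x^p$, and the convergence is uniform. Then, for large $i$, the ends of $u_i$ lie in ${\mc O}(G_i,x)$ and ${\mc O}(G_i^{(p)},x^{(p)})$. Because $u_i$ is $C^0$-close to $u_x^p$, the concatenated cappings agree with those induced from a capping $\ov{x}$. This means the hypothesis of (1) does hold, contradicting the case assumption.

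The main obstacle is making the compactness step rigorous for a sequence of \emph{varying} data $(G_i,J_i)$. In particular, one must check that $C^0$ convergence holds up to the ends and is not merely $C^\infty_{\rm loc}$ convergence. The way to handle this is the standard fact that energy concentrating near an end produces, in the limit, a broken Floer trajectory of $H_0$ or $H_0^{(p)}$; since the total energy here is below $\epsilon_0$, this cannot happen, which rules out loss of energy at the ends. A second point to verify is that the action difference converges to zero. This uses nondegenerate orbits of $G_i$ near $x$ having actions close to that of $\ov{x}$, together with $\mathcal{A}_{H^{(p)}}(\ov{x}^{(p)})=p\,\mathcal{A}_H(\ov{x})$.
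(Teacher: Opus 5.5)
Your proposal is correct and follows the paper's own proof essentially step by step. The shared ingredients are:
\begin{itemize}
\item contradiction along sequences $G_i\to H_0$, $J_i\to J_0$, with the two-case split;
\item $E(u_i)\to 0$ in Case 1, from the flat-connection energy identity and capping compatibility;
\item Lemma~\ref{lemmab2} to identify the main component as $u_x^p$;
\item Lemma~\ref{lemmab1} to exclude breaking and so upgrade to uniform convergence;
\item in Case 2, the observation that uniform closeness to $u_x^p$ puts you back in the hypothesis of (1).
\end{itemize}

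One caveat, which the paper's proof shares, concerns Case 2. A breaking at a negative end is a Floer cylinder of $H_0$, so its energy gap is $\epsilon_0(H_0)$, and the bound $\frac12\min\{\epsilon_p,\epsilon_0(H_0^{(p)})\}$ does not control that quantity. Your claim that every component of the limit lies below its respective gap therefore needs $\epsilon_0(H_0)$ added to the minimum, as is done in Lemma~\ref{lemmac6}. This is harmless, since later uses of this lemma only need the constant to be positive.
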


\begin{proof}
We prove by contradiction. For simplicity assume $J_i = J_0$; the general case can be treated with only notational complexity. Let $G_i$ be a sequence of nondegenerate Hamiltonians converging to $H_0$ in $C^\infty$-topology. Let $u_i: \Sigma_p^\circ \to M$ be a sequence of solutions to the Floer equation $(\nabla^{\sigma_i} u)^{0,1} = 0$ for $\sigma_i$ being the Hamiltonian connection induced from $G_i$. We assume that we are in the situation of either (1) or (2) for all $i$. 

Suppose we are in situation (1). By choosing a subsequence, we may assume that for the same $x \in {\mc O}(H_0)$, $u_i$ is asymptotic to an orbit $y_{i, j} \in {\mc O}(G_i, x)$ for the $j$-th negative puncture and asymptotic to $y_{i, \infty} \in {\mc O}(G_i^{(p)}, x^{(p)})$ for the positive puncture. Then the condition together with the energy identity for solutions implies that $E(u_i) \to 0$. The Gromov compactness implies that a subsequence of $u_i$ (still indexed by $i$) converges to a solution to $(\nabla^{\sigma_0} u)^{0,1} = 0$ on $\Sigma_p^\circ$, up to breakings at cylindrical ends. By Lemma \ref{lemmab2}, the limit is one of $u_{x}^p$. Lemma \ref{lemmab1} (applied to both $H_0$ and $H_0^{(p)}$) also excludes breakings. Hence the convergence $u_i \to u_x^p$ is uniform, implying that for $i$ sufficiently large, $u_i\in U(u_x^p)$.

Suppose we are in situation (2) for the sequence. If the conclusion is false, then for a subsequence of $u_i$ (still indexed by $i$) one has 
\beqn
E(u_i) \leq \frac{1}{2} \min \{\epsilon_p,\ \epsilon_0(H_0^{(p)})\}.
\eeqn
Then by the same compactness argument, a subsequence of $u_i$ converges up to breaking to a $u_x^p$ on $\Sigma_p^\circ$. Lemma \ref{lemmab1} and the energy bound exclude breakings at both positive and negative ends. The convergence is uniform. Hence for $i$ being sufficiently large one is indeed in situation (1), which is a contradiction. 
\end{proof}

\subsection{Proof of Proposition \ref{prop31}}

Choose a sequence of nondegenerate Hamiltonians $H_i$ converging in $C^\infty$ topology to $H$. Then for $i$ sufficiently large, there is a canonical decomposition
\beqn
{\mc O}(H_i) = \bigsqcup_{x \in {\mc O}(H)} {\mc O}(H_i, x)
\eeqn
where each ${\mc O}(H_i, x)$ is the ``cluster'' of contractible orbits of $H_i$ which are close to $x$. One has a similar decomposition for capped orbits
\beqn
\tilde {\mc O}(H_i) = \bigsqcup_{\ov{x} \in \tilde {\mc O}(H)} \tilde {\mc O}(H_i, \ov{x}).
\eeqn

Fix an $\omega$-compatible almost complex structure $J$. Consider the Floer complex $(CF(H_i),d_{H_i})$ defined using $J$ and an  auxiliary datum (see Theorem \ref{thm:Floer-cochain}). While $\ol{x}_{ij}$ has action $\cl A_{H_i}(\ol{x}_{ij})$ in $CF(H_i)$, we define a new action map on $CF(H_i)$ by setting
$$
\cl A(\ol{x}_{ij}) = \cl A_H(\ol{x}).
$$ 
This modification does not change the convergence condition required for Floer cochains in $CF(H_i)$. Moreover, for each $x \in {\mc O}(H)$ resp. $\ov{x} \in \tilde {\mc O}(H)$ let
\beqn
CF(H_i, x) \subset CF(H_i)\ {\rm resp.}\ CF(H_i, \ov{x}) \subset CF(H_i)
\eeqn
be the $\Lambda_{\mb Z}^\Gamma$-submodule resp. subgroup generated by orbits in the cluster ${\mc O}(H_i, x)$ resp. capped orbits in $\tilde {\mc O}(H_i, \ov{x})$. Then one has the direct sum decomposition
\beqn
CF(H_i) = \bigoplus_{x \in {\mc O}(H)} CF(H_i, x) = \widehat{\bigoplus_{\ov{x}\in \tilde {\mc O}(H)}} CF(H_i, \ov{x}).
\eeqn

\begin{lemma}\label{lemmab5}
There exists $\epsilon_H >0$ such that for $i$ sufficiently large, there is a decomposition
\begin{equation}\label{eq: d split} d_{H_i} = d_i^\loc + D_i,
\end{equation} 
satisfying the following conditions.
\begin{enumerate}
    \item For each capped orbit $\ov{x} \in \tilde {\mc O}(H)$, $d_i^\loc$ restricts to a differential $d_{i, \ov{x}}^\loc:  CF(H_i, \ov{x}) \to CF(H_i, \ov{x})$ whose homology is (canonically) isomorphic to the local Floer cohomology $HF^\loc(H, \ov{x}; {\mb Z})$.

\item $d_i^\loc$ preserves the filtration ${\mc A}_H$; on the other hand, for all $z \in CF(H_i)$, 
\beq\label{eq: cross energy}\cl A_H (D_i(z)) \geq  \cl A_H (z) + \eps_H
\eeq
\end{enumerate}
\end{lemma}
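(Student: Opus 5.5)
The plan is to split the Floer differential $d_{H_i}$ by the \emph{energy} of the contributing moduli spaces, and to use the crossing-energy estimates Lemma \ref{lemmab1} and Lemma \ref{lemmab3} to show that this split is clean. Fix $J$. Let $\epsilon_0$ be the constant of Lemma \ref{lemmab1} for $(H, J)$. Choose disjoint $C^0$-neighborhoods $U(u_x)$ of the constant cylinders, small enough that they isolate the clusters ${\mc O}(H_i, x)$.

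By Lemma \ref{lemmab3}, applied with $\sigma = H_i\,dt$ (flat, so $F_\sigma = 0$), every Floer cylinder of $(H_i,J)$ for large $i$ falls into one of two classes. Either it connects $\ov{x}_{ij}$ to $\ov{x}_{ik}$ in the same capped cluster $\tilde{\mc O}(H_i,\ov{x})$ and lies in $U(u_x)$; or it has energy at least $\tfrac12\epsilon_0$.

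The key structural input is that the construction of Theorem \ref{thm:Floer-cochain} in \cite{Bai_Xu_2025} is built from a flow category whose morphism spaces are the compactified moduli spaces $\ov{\mc M}(\ov{y}_-,\ov{y}_+)$. These moduli spaces split as a disjoint union of open and closed pieces according to the above dichotomy. In the first class, sphere bubbling is excluded by energy, since $\hbar > \epsilon_0$ after shrinking. As explained in Section \ref{subsec:local-floer}, the FOP perturbation on this local piece can be taken to be the unperturbed transverse section for generic $H_i$, so the counts agree with the classical local counts.

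Define $d_i^\loc$ to be the contribution of the first class and $D_i$ the contribution of the rest. The step I expect to be the main obstacle is making sure the system of choices $\Xi$ respects this decomposition, so that $d_i^\loc$ alone squares to zero. Breaking a local trajectory produces only local pieces, because both pieces stay in $U(u_x)$ and remain in the same capped cluster. Conversely, a broken configuration with total energy $<\tfrac12\epsilon_0$ has local pieces only. Hence the boundary of the one-dimensional local moduli spaces consists of local breakings. This gives $(d_i^\loc)^2=0$, and hence $D_i d_i^\loc + d_i^\loc D_i + D_i^2 = 0$.

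With this in hand, conclusion (1) follows. $d^\loc_{i,\ov{x}}$ is exactly the differential defining $CF^\loc(H,H_i,\ov{x};{\mb Z})$, so its homology is $HF^\loc(H,\ov{x};{\mb Z})$. Independence of $i$ and of the choices is canonical via local continuation maps, which themselves satisfy the local case (1) of Lemma \ref{lemmab3}.

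For conclusion (2), $d_i^\loc$ preserves the capped cluster, so it preserves the modified filtration ${\mc A}_H$ by definition. For a term of $D_i$ from $\ov{y}_-\in\tilde{\mc O}(H_i,\ov{x})$ to $\ov{y}_+\in\tilde{\mc O}(H_i,\ov{x}')$, the energy identity gives
\[
{\mc A}_{H_i}(\ov{y}_+)-{\mc A}_{H_i}(\ov{y}_-) = E(u) \ge \tfrac12\epsilon_0 .
\]
Since $|{\mc A}_{H_i}(\ov{y}_\pm)-{\mc A}_H(\ov{x}^{(\prime)})|\to 0$ uniformly as $i\to\infty$ (there are finitely many clusters, and the action is $\Gamma$-equivariant), for large $i$ this yields ${\mc A}_H(\ov{x}')-{\mc A}_H(\ov{x})\ge \tfrac14\epsilon_0 =: \epsilon_H$.

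When the FOP perturbations are genuinely nontrivial, i.e.\ on components containing bubbles, \cite[Theorem K]{Bai_Xu_2025} guarantees that only moduli spaces whose energy equals the action difference contribute. So the same bound holds for all of $D_i$, extended to infinite sums by completeness, which gives \eqref{eq: cross energy}.
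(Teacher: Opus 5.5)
Your proposal is correct and follows essentially the same route as the paper. The dichotomy of Lemma~\ref{lemmab3} for Floer cylinders of $H_i$ splits $d_{H_i}$ into $d_i^{\loc}$ and $D_i$; the choice $\epsilon_H=\tfrac14\epsilon_0$, together with the uniform closeness of ${\mc A}_{H_i}$ and ${\mc A}_H$, gives \eqref{eq: cross energy}; the same dichotomy applied to index-one (broken) configurations gives $(d_i^{\loc})^2=0$; and the local homology is identified with $HF^{\loc}(H,\ov{x};{\mb Z})$ by viewing $H_i$ as a local perturbation near $x$. Your extra care about sphere bubbling and FOP perturbations only makes explicit what the paper's brief proof leaves implicit.
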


\begin{proof}
This lemma follows from Lemma \ref{lemmab3}. Let $\epsilon_H$ be $\frac{1}{4} \epsilon_0$ where $\epsilon_0$ is the one of Lemma \ref{lemmab1} with $H_0$ replaced by $H$. Choose a collection of sufficiently small $C^0$-neighborhoods $U(u_{x_j})$ for all $x_j \in {\mc O}(H)$. 

Then for each $i$, Lemma \ref{lemmab3} implies the dichotomy that each Floer cylinder of $H_i$ is either contained in $U(u_x)$ for some $x \in {\mc O}(H)$ or has energy at least $2\epsilon_H$. 
Hence one can define $d_i^\loc$ resp. $D_i$ to be the linear map coming from Floer cylinders belonging to the first resp. second category. By the definition of the modified action ${\mc A}_H$, one can see that $d_i^\loc$ preserves it; moreover, as ${\mc A}_H$ can be arbitrarily close to ${\mc A}_{H_i}$, one can see that when $i$ is large, $D_i$ increases the energy filtration by at least $\epsilon_H$. By this definition, one can see each $CF(H_i, \ov{x})$ is invariant under $d_i^\loc$. Applying the same dichotomy to Floer cylinders of index $1$, one can see that $d_i^\loc$ is a differential. Let $d_{i, \ov{x}}^\loc$ be the restriction of $d_i^\loc$ to $CF(H_i, \ov{x})$.

It remains to show that for each $\ov{x}$, the complex $(CF(H_i, \ov{x}), d_{i,\ov{x}}^\loc)$ is chain homotopic to any complex $CF^\loc (G, \ov{x}; {\mb Z})$ computing the local Floer cohomology $HF^\loc(H, \ov{x}; {\mb Z})$. Indeed, one can view $H_i$ as a local nondegenerate perturbation near the orbit $x$. Then the claim follows for the same argument as one shows that the local Floer cohomology is independent of the choices of local perturbations. 
\end{proof}


We proceed to the construction of $d_H$ in Proposition \ref{prop31}. It relies on the homological perturbation lemma. To start, we need a basic algebraic fact.

\begin{lemma}\label{lemmab6}
Let $R$ be a principal ideal domain and let $(V, d)$ be a ${\mb Z}$-graded cochain complex of free $R$-modules such that $H(V)$ is free. Then there exist cochain maps 
\begin{align*}
    &\ \sigma_V: H(V) \to V,\ &\ \pi_V: V \to H(V)
\end{align*}
and a chain homotopy
\beqn
\Theta_V: V \to V
\eeqn
such that 
\beqn
\begin{split}
\pi_V \circ \sigma_V = &\ {\rm Id}_{H(V)},\\
\sigma_V \circ \pi_V = &\ {\rm Id}_V + d \circ \Theta_V + \Theta_V \circ d,\\
\Theta_V^2 = &\ 0,\\
\Theta_V \circ \sigma_V = &\ 0,\\
\pi_V \circ \Theta_V = &\ 0.
\end{split}
\eeqn
\end{lemma}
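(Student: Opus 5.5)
The plan is to build the splitting by hand from a choice of complements, the standard argument over a PID. Write $Z = \ker d$ and $B = \operatorname{im} d$ in each degree. Since $V^k$ is free over the PID $R$, every submodule is free. In particular $B^{k+1} = d(V^k)$ is free, so the surjection $d: V^k \to B^{k+1}$ splits. This gives a decomposition $V^k = Z^k \oplus C^k$ with $d|_{C^k}: C^k \xrightarrow{\cong} B^{k+1}$.

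Next, the short exact sequence $0 \to B^k \to Z^k \to H^k(V) \to 0$ also splits, because $H^k(V)$ is free by hypothesis. So $Z^k = B^k \oplus \mathcal{H}^k$ with $\mathcal{H}^k \cong H^k(V)$, and altogether
\beqn
V^k = B^k \oplus \mathcal{H}^k \oplus C^k .
\eeqn
With respect to this decomposition, $d$ vanishes on $B^k \oplus \mathcal{H}^k$ and sends $C^k$ isomorphically onto $B^{k+1}$.

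Now define the three maps:
\begin{itemize}
\item $\sigma_V$ is the inclusion $H(V) \cong \mathcal{H} \hookrightarrow V$;
\item $\pi_V$ is the projection onto $\mathcal{H}$ along $B \oplus C$, followed by the identification $\mathcal{H} \cong H(V)$;
\item $\Theta_V: V^{k} \to V^{k-1}$ is $-(d|_{C^{k-1}})^{-1}$ on $B^k$ and zero on $\mathcal{H}^k \oplus C^k$.
\end{itemize}
Both $\sigma_V$ and $\pi_V$ are cochain maps, since $d$ kills $\mathcal{H}$ and has image in $B$, which $\pi_V$ kills. The identity $\pi_V\sigma_V = \mathrm{Id}$ is immediate.

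It remains to check the homotopy identity componentwise.
\begin{itemize}
\item On $\mathcal{H}$: both $d\Theta_V$ and $\Theta_V d$ vanish, and $\sigma_V\pi_V = \mathrm{Id}$, so the identity holds.
\item On $B$: $\Theta_V d = 0$ and $d\Theta_V = -\mathrm{Id}$, while $\sigma_V\pi_V = 0$, so $0 = \mathrm{Id} - \mathrm{Id}$.
\item On $C$: $d\Theta_V = 0$ and $\Theta_V d = -\mathrm{Id}$ (because $d(C) \subset B$ and $\Theta_V$ inverts $d$ there up to sign), while $\sigma_V\pi_V = 0$.
\end{itemize}
The side conditions follow from the same decomposition. $\Theta_V$ maps into $C$ and kills $C$, so $\Theta_V^2 = 0$. $\Theta_V$ kills $\mathcal{H} = \operatorname{im}\sigma_V$, so $\Theta_V\sigma_V = 0$. $\operatorname{im}\Theta_V \subset C \subset \ker \pi_V$, so $\pi_V\Theta_V = 0$.

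The only point needing care is the splitting step. There I use that submodules of free modules over a PID are free; for infinitely generated modules this is still true by the standard transfinite argument. I also use the hypothesis that $H(V)$ is free, which is exactly what makes $Z \to H$ split. Everything else is bookkeeping, and I expect no real obstacle beyond getting the sign of $\Theta_V$ consistent with the stated convention.
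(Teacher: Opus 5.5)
Your proof is correct and follows the paper's argument: split $V^k = B^k \oplus \mathcal{H}^k \oplus C^k$ using the freeness of $H(V)$ and of submodules of free modules over a PID, take $\sigma_V$ and $\pi_V$ to be the inclusion of and projection onto $\mathcal{H}$, and let $\Theta_V$ invert $d$ from $B^k$ back to $C^{k-1}$. You make explicit the splitting of $d\colon V^k \to B^{k+1}$ that the paper uses without comment, and your minus sign in $\Theta_V$ is exactly what the stated identity $\sigma_V\pi_V = \mathrm{Id}_V + d\Theta_V + \Theta_V d$ requires; the paper's proof literally takes $\Theta_V$ to be the plain, unsigned identification, which gives $\sigma_V\pi_V = \mathrm{Id}_V - (d\Theta_V + \Theta_V d)$ instead.
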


\begin{proof}
For each degree $n$, let $Z^n \subset V^n$ be the kernel of differential and $B^n = d(V^{n-1})$. As $H(V)$ is free, the exact sequence 
\beqn
\xymatrix{ 0 \ar[r] & B^n \ar[r] & Z^n \ar[r] & H^n \ar[r] & 0 }
\eeqn
splits. Choose a splitting
\beqn
V^n = Z^n \oplus B^{n+1} = B^n \oplus H^n \oplus B^{n+1}.
\eeqn
Then define $\sigma_V$ to be the inclusion of $H^n$ with respect to this splitting and $\pi_V$ to be the corresponding projection. Then define $\Theta_V$ to be the identification of $B^n \subset V^n$ with $B^n \subset V^{n-1}$, extended by $0$ on $H^n \oplus B^{n+1}$. It is then easy to see that all the conditions are satisfied.
\end{proof}

Now change coefficients to $\fp$. By Lemma \ref{lemmab6}, one can choose a collection of chain homotopy equivalences
\beqn
\sigma_{i, \ov{x}}: HF^{\rm loc}(H, \ov{x}) \to CF^{\rm loc}(H_i, \ov{x})
\eeqn
which is compatible with Novikov actions. Namely, for each $A \in \Gamma$, the diagram
\beqn
\xymatrix{  HF^{\rm loc}(H_i, \ov{x}) \ar[r] \ar[d]_{\sigma_{i, \ov{x}}}   &    HF^{\rm loc}(H_i, A \cdot \ov{x}) \ar[d]^{\sigma_{i, A \cdot \ov{x}}} \\
            CF^{\rm loc}(H_i, \ov{x}) \ar[r]   &    CF^{\rm loc}(H_i, A \cdot \ov{x}) }
\eeqn
commutes. Here the horizontal arrows are induced by recapping by class $A$. Then by taking direct sums, we obtain two $\Lambda_{{\mb F}_p}^\Gamma$-linear maps
\begin{align*}
&\ \sigma_i: C(H) \to CF(H_i),\ &\ \pi_i: CF(H_i) \to C(H),
\end{align*}
with $\Theta_i: CF(H_i) \to CF(H_i)$ such that the triple $(\sigma_i, \pi_i, \Theta_i)$ satisfies the relations listed in Lemma \ref{lemmab6}. Denote by $d_{H_i}: CF(H_i) \to CF(H_i)$ the Floer differential. Then following the homological perturbation formula (see \cite{Huebschman_Kadeishvili} and \cite{Markl_perturbation}) we define
\beq\label{hp_formula}
\begin{split}
d_H:= &\ \pi_i \left( \sum_{l=0}^\infty (d_{H_i} \Theta_i)^l \right) d_{H_i} \sigma_i,\\
\pi_H:= &\ \pi_i \left( \sum_{l=0}^\infty ( d_{H_i} \Theta_i)^l\right),\\
\sigma_H:= &\ \left( \sum_{l = 0}^\infty (\Theta_i d_{H_i} )^l \right) \sigma_i,\\
\Theta_H:= &\ \Theta_i \left( \sum_{l=0}^\infty ( d_{H_i} \Theta_i)^l \right).
\end{split}
\eeq
One can check that $d_H^2 = 0$, $\pi_H: CF(H_i) \to C(H)$ and $\sigma_H: C(H) \to CF(H_i)$ are cochain maps, and 
\begin{align*}
&\ \pi_H \circ \sigma_H = {\rm Id},\ &\ \sigma_H \circ \pi_H - {\rm Id} = d_H \circ \Theta_H + \Theta_H \circ d_H.
\end{align*}
Then conditions listed in Proposition \ref{prop31} are readily checked. Indeed, the transferred differential $d_H$ is necessarily induced by Floer differentials from Floer trajectories with energy $\geq \epsilon_H$. Moreover, as $\pi_H$ and $\sigma_H$ are inverses up to chain homotopy, invariance of Floer theory implies that $C(H)$ is chain homotopy equivalent to $CF(G)$ for any nondegenerate $G$. Therefore, (1) and (2) of Proposition \ref{prop31} are proved. 

Before we continue, we need the following Lemma.

\begin{lemma}\label{lemmab7}
Given any two such sequences $H_i$ and $H_i'$, for $i$ sufficiently large, there exists a chain homotopy equivalence between $CF(H_i)$ and $CF(H_i')$ such that the modified action filtration is preserved. 
\end{lemma}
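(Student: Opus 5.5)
The plan is to build the equivalence as a continuation map $\Phi_i: CF(H_i) \to CF(H_i')$ along a short monotone-free homotopy between $H_i$ and $H_i'$. Both sequences converge to $H$ in $C^\infty$, so for large $i$ the connection $\sigma = H_{i,s} dt$ (interpolating from $H_i$ to $H_i'$ via a cutoff in $s$) is $C^\infty$-close to $H\,dt$. Its curvature integral $\int |F_\sigma|\, ds\, dt \le \sup|H_i - H_i'|$ also tends to zero. Hence Lemma \ref{lemmab3} applies to every continuation solution, and it yields the same dichotomy as in the proof of Lemma \ref{lemmab5}.

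Under the first alternative of Lemma \ref{lemmab3}, a solution $u$ asymptotic to orbits in the cluster of a single $x \in \mathcal{O}(H)$ with compatible cappings lies in $U(u_x)$. It therefore connects $\tilde{\mathcal{O}}(H_i,\ov{x})$ to $\tilde{\mathcal{O}}(H_i',\ov{x})$ for the same capped orbit $\ov{x}$. Since the modified action is $\mathcal{A}_H(\ov{x})$ on both sides, this contribution preserves the modified filtration exactly. Under the second alternative, $E(u) \ge \tfrac12 \epsilon_0$. The energy identity for continuation maps gives
\[
\mathcal{A}_{H_i'}(\text{output}) - \mathcal{A}_{H_i}(\text{input}) \ge E(u) - \int|F_\sigma|.
\]
Moreover, $|\mathcal{A}_{H_i} - \mathcal{A}_H|$ and $|\mathcal{A}_{H_i'} - \mathcal{A}_H|$ on clusters tend to $0$. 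So for $i$ large the modified action increases by at least $\tfrac14\epsilon_0 > 0$. In both cases $\Phi_i$ does not decrease $\mathcal{A}_H$. In the virtual framework of \cite{Bai_Xu_2025} the abstract perturbations do not alter energies (Theorem K there), so these estimates pass to the chain-level continuation map.

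The same argument applies to the reverse continuation map $\Psi_i$, and to the homotopies $\Psi_i\Phi_i \simeq \mathrm{Id}$ and $\Phi_i\Psi_i \simeq \mathrm{Id}$. These homotopies come from one-parameter families of connections obtained by gluing and deforming to the trivial ones. All members of such a family remain $C^\infty$-close to $H\,dt$ with small curvature, so the dichotomy persists and the homotopy operators also preserve $\mathcal{A}_H$. This gives a filtered chain homotopy equivalence.

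I expect the main obstacle to be the uniformity of Lemma \ref{lemmab3} across the parametrized families used for the homotopies. The lemma is stated for a single connection, so the compactness argument has to be rerun with a parameter. It also needs the curvature bound to hold uniformly over the family, which forces the gluing lengths to be chosen so the total curvature stays small. I would handle this by taking the families to be concatenations of linear interpolations. Their total variation is then bounded by a fixed multiple of $\sup|H_i - H_i'|$ independent of the gluing parameter.
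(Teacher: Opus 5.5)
Your proposal is correct and is essentially the paper's argument. You take a continuation map $\Phi_i$ along a homotopy staying $C^\infty$-close to $H$ and split it via the dichotomy of Lemma \ref{lemmab3} into a part near the trivial cylinders, which maps the $\ov{x}$-cluster to the $\ov{x}$-cluster and so preserves $\mc A_H$ exactly, and a high-energy part raising $\mc A_H$ by a definite amount. The only difference is that the paper certifies the equivalence by noting that the low-energy part $\Phi_{\rm small}$ restricts to chain homotopy equivalences of the local complexes, whereas you run the same dichotomy on $\Psi_i$ and on the homotopy operators. This does not need the parametrized rerun you expected, because Lemma \ref{lemmab3} is proved along arbitrary sequences of connections, so its thresholds already hold uniformly over your families once the curvature is bounded independently of the gluing length, as you arrange.
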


\begin{proof}
When $i$ is sufficiently large, consider a continuation map 
\beqn
\Phi_i: CF(H_i) \to CF(H_i')
\eeqn
induced by a homotopy from $H_i$ to $H_i'$ which remains to be $C^\infty$-close to $H$. Then, using Lemma \ref{lemmab3}, we can decompose
\beqn
\Phi_i = \Phi_{\rm small} + \Phi_{\rm big},
\eeqn
where $\Phi_{\rm small}$ is the contribution from the continuation Floer cylinders whose energy is strictly less than $2 \epsilon_H$. One observes that, for each $\ov{x} \in \tilde {\mc O}(H)$, $\Phi_{\rm small}$ restricts to a chain homotopy equivalence between local Floer chain complexes. It follows from the construction of the modified action that $\Phi_i$ preserves the action filtration.
\end{proof}

Now we prove the last item of Proposition \ref{prop31}, namely, the filtered complex $C(H)^{(a, b)}$ calculates the filtered Floer homology of $HF(H)^{(a, b)}$ when $a, b \notin {\rm Spec}(H)$, defined by \eqref{eq: filtered-irrational}. Notice that it suffices to prove that 
\beqn
HF(C(H)^{>a}) \cong HF(H)^{>a}.
\eeqn
Recall that $HF(H)^{>a}$ is defined as the colimit 
\beqn
\varinjlim  HF(K)^{>a}
\eeqn
where the colimit is taken over all nondegenerate $K$ satisfying $K < H$ and $a \notin {\rm Spec}(K)$ with respect to the partial order defined by pointwise comparison. Denote this partially ordered set by $P_a (H)$. It is easy to see that one can choose a cofinal sequence $H_i \in P_a (H)$ with $H_i \nearrow H$ uniformly. Then for $i$ sufficiently large, the Floer cochain complex $CF(H_i)$ admits a modified energy filtration 
\beqn
{\mc A}_H: CF(H_i) \to {\mb R} \cup \{+\infty\}.
\eeqn
Moreover, there exists a sequence $\delta_i>0$ satisfying
\beqn
{\mc A}_{H_i}(x) - \delta_i \leq {\mc A}_H(x)\ \  \forall x \in CF(H_i).
\eeqn
Then there exists a natural inclusion map
\beqn
CF(H_i)^{{\mc A}_{H_i}>a + \delta_i} \hookrightarrow CF(H_i)^{{\mc A}_H>a}.
\eeqn
Denote $H_i' = H_i - \delta_i$, which still form a cofinal sequence in $P_a (H)$. Then the above map is the same as the map
\beqn
CF(H_i')^{{\mc A}_{H_i'}>a} \to CF(H_i)^{{\mc A}_H>a}
\eeqn
(Notice that $CF(H_i') = CF(H_i)$.) We can choose a subsequence such that 
\beqn
i<j \Longrightarrow H_i' < H_j'.
\eeqn
Then there are well-defined continuation maps induced by any monotone homotopy
\beqn
HF(H_i')^{{\mc A}_{H_i'}>a} \to HF(H_j')^{{\mc A}_{H_j'}>a}
\eeqn
On the other hand, for the modified action ${\mc A}_H$, by Lemma \ref{lemmab7}, for any $i<j$, there is also a well-defined continuation map
\beqn
HF(H_i)^{{\mc A}_H>a} \to HF(H_j)^{{\mc A}_H>a}.
\eeqn

\begin{lemma}
The direct limit map
\beqn
\varinjlim HF(H_i')^{{\mc A}_{H_i'}>a} \to \varinjlim HF(H_i)^{{\mc A}_H>a}
\eeqn
is an isomorphism.
\end{lemma}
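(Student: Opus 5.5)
We show the direct limit map is an isomorphism by interleaving the two directed systems. For each $i$, the identity inclusion $CF(H_i')^{\mathcal{A}_{H_i'}>a} \hookrightarrow CF(H_i)^{\mathcal{A}_H>a}$ is a chain map, since $CF(H_i')=CF(H_i)$ as complexes with the same differential. The inequality $\mathcal{A}_{H_i}-\delta_i\le \mathcal{A}_H$ makes it filtration-compatible. We first check that these inclusions commute with the continuation maps on both sides. This should follow by choosing the monotone homotopy from $H_i'$ to $H_j'$ to be the shift by constants of a homotopy from $H_i$ to $H_j$ that stays $C^\infty$-close to $H$. The resulting continuation map then serves for both systems: on the modified side it is the map of Lemma \ref{lemmab7}, and on the original side it is a monotone continuation map. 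The two coincide as chain maps up to the constant shift of actions. So we get a morphism of directed systems and hence the limit map.

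For the inverse direction, we need maps $CF(H_i)^{\mathcal{A}_H>a} \to CF(H_j')^{\mathcal{A}_{H_j'}>a}$ for $j\gg i$, compatible with the structure maps. The key estimate is a reverse comparison. Since $a\notin \mathrm{Spec}(H)$, there is $\eta>0$ with $(a-\eta,a+\eta)\cap \mathrm{Spec}(H)=\emptyset$. For $i$ large, every capped orbit $\ov{x}_{ij}$ of $H_i$ in the cluster of $\ov{x}$ has $|\mathcal{A}_{H_i}(\ov{x}_{ij})-\mathcal{A}_H(\ov{x})|<\delta_i'$ with $\delta_i'\to 0$. Hence, once $\delta_i'+\delta_i<\eta$, the condition $\mathcal{A}_H>a$ on generators is equivalent to $\mathcal{A}_{H_i'}>a$. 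In other words, for $i$ large the two filtered subcomplexes literally agree: $CF(H_i')^{\mathcal{A}_{H_i'}>a}=CF(H_i)^{\mathcal{A}_H>a}$ as subspaces, because no generator has action in the gap. Thus the inclusion is the identity at each sufficiently large stage, and by the compatibility above the induced map on colimits is an isomorphism. Equivalently, a cofinal subsystem has isomorphisms at every stage.

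The main obstacle is making the action comparison uniform. We need $\delta_i$ and the cluster action deviations to go to $0$ while the gap $\eta$ is fixed, which uses finiteness of $\mathrm{Fix}(\phi_H^1)$ and $C^\infty$ convergence $H_i\to H$. We must also choose $\delta_i$ as $\sup(H-H_i)$ plus the cluster deviation so that both inequalities hold. A further delicate point is that the continuation maps in Lemma \ref{lemmab7} (built from small and big parts) and the monotone continuation maps agree on homology. We handle this by using the same homotopy data, noting that a constant shift of Hamiltonians changes neither the Floer solutions nor the virtual counts, only the actions, which move by the same constant.
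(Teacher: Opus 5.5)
Your construction of the morphism of directed systems (a single homotopy, shifted by constants, serving both as the monotone continuation map and as the modified-action continuation map of Lemma~\ref{lemmab7}) is fine, and is more explicit than what the paper writes. The gap is the claim that $a\notin{\rm Spec}(H)$ yields some $\eta>0$ with $(a-\eta,a+\eta)\cap{\rm Spec}(H)=\emptyset$.

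Here ${\rm Spec}(H)=\bigcup_{x\in\mathcal{O}(H)}\big(\mathcal{A}_H(\overline{x})+\omega(\Gamma)\big)$ is a finite union of cosets of the period group $\omega(\Gamma)\subset\mathbb{R}$. It is discrete only when $(M,\omega)$ is rational. It is dense as soon as $\omega(\pi_2(M))$ has rank at least two, e.g.\ $S^2\times S^2$ with areas $1$ and $\lambda\notin\mathbb{Q}$.

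In the dense case your two subcomplexes never agree at any finite stage. Suppose some cluster orbit has $e:=\mathcal{A}_H(\overline{x})-\mathcal{A}_{H_i'}(\overline{x}_{ij})>0$, which is the typical situation when a degenerate orbit splits. Density gives $A\in\Gamma$ with $\mathcal{A}_H(A\cdot\overline{x})\in(a,a+e]$. Then $A\cdot\overline{x}_{ij}$ lies in $CF(H_i)^{\mathcal{A}_H>a}$ but not in $CF(H_i')^{\mathcal{A}_{H_i'}>a}$. So your argument only covers the rational case, which is exactly where the colimit definition is unnecessary. The lemma is needed precisely for irrational $(M,\omega)$.

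What is missing is to replace the uniform gap by a margin that depends on the individual class. The paper takes the long exact sequence of the inclusion, whose third term is the cohomology of the quotient $CF(H_i)^{\mathcal{A}_{H_i'}\le a<\mathcal{A}_H}$. It then uses exactness of direct limits; your compatibility of the two systems is what makes these sequences compatible across stages. So it suffices to show that the colimit of the quotient cohomology vanishes.

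A class there is represented by a finite sum $c_i$ of generators. Indeed, the quotient lives in the bounded window $a<\mathcal{A}_H\le a+\delta_i+\delta_i'$, and chains have actions tending to $+\infty$. Hence $\mathcal{A}_H(c_i)>a+\delta$ for some $\delta>0$ depending on $c_i$.

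The transition maps do not decrease $\mathcal{A}_H$, and on $CF(H_j)$ one has $\mathcal{A}_{H_j'}\ge\mathcal{A}_H-(\delta_j+\delta_j')$. Once $\delta_j+\delta_j'<\delta$, the image $c_j$ satisfies $\mathcal{A}_{H_j'}(c_j)>a$, so it is zero in the stage-$j$ quotient. Thus every class dies in the colimit, and exactness gives the isomorphism with no spectral gap needed. You can keep your morphism-of-systems step and replace the ``literal equality'' step with this argument.
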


\begin{proof}
The chain-level inclusion $CF(H_i')^{{\mc A}_{H_i'} >a} \to CF(H_i)^{{\mc A}_H>a}$ induces the long exact sequence
\beqn
\xymatrix{ HF(H_i)^{{\mc A}_{H_i'} \leq a < {\mc A}_H}   \ar[r] &    HF(H_i)^{{\mc A}_{H_i'} > a} \ar[r] & HF(H_i)^{{\mc A}_H>a} \ar[r] & HF(H_i)^{{\mc A}_{H_i'}\leq a < {\mc A}_H}}. 
\eeqn
By the exactness of the direct limit functor, it suffices to show that 
\beqn
\varinjlim HF(H_i)^{{\mc A}_{H_i'} \leq a < {\mc A}_H} = 0.
\eeqn
Indeed, given any $[c_i] \in HF(H_i)^{{\mc A}_{H_i'} \leq a < {\mc A}_H}$, there exists $\delta>0$ such that 
\beqn
{\mc A}_H([c_i]) > a + \delta.
\eeqn
Moreover $[c_i]$ can be represented by a cocycle 
\beqn
c_i \in CF(H_i)^{{\mc A}_H>a + \delta}.
\eeqn
Then for $j>i$ sufficiently large, the monotone continuation map from $CF(H_i)$ to $CF(H_j)$ sends $c_i$ to a cocycle $c_j$ with ${\mc A}_{H_j}(c_j) > a + \delta_j \Longrightarrow {\mc A}_{H_j'}(c_j) > a$. Hence $[c_i]$ cannot survive the direct limit. 
\end{proof}

However, notice that $H_i'$ is a cofinal sequence in $P_a (H)$, while $HF(H_i)^{{\mc A}_H>a}$ are all isomorphic to $H(C(H)^{>a})$. Hence
\beqn
HF(H)^{>a} \cong HF(C(H)^{>a})
\eeqn
for all $a \notin {\rm Spec}(H)$. The case of $HF(H)^{(a, b)}$ follows from a small modification of the above argument. Hence the third item of Proposition \ref{prop31} is proved. 

\begin{remark}
We note that the complex $(C(H), {\mc A}_H)$ is actually well-defined up to filtered chain homotopy equivalence despite the auxiliary choices made in the proof. Indeed, given two sequences $H_i, H_i'$, for sufficiently large $i$, one has filtered chain homotopy equivalences
\beqn
C(H, H_i) \simeq CF(H_i) \simeq CF(H_i') \simeq C(H, H_i'),
\eeqn
where the homotopy equivalence $CF(H_i) \simeq CF(H_i')$ is defined via a continuation map and they are equipped with the modified action filtration.
\end{remark}

\subsection{Proof of Proposition \ref{prop33b}}\label{subsectionb3}


The construction is similar to that of Proposition \ref{prop31}. Choose a sequence of nondegenerate Hamiltonians $H_i$ which converges to $H$ such that $H_i^{(p)}$ is also nondegenerate. As we know ${\rm Fix}_c(\phi) = {\rm Fix}_c(\phi^p)$, when $H_i$ is sufficiently close to $H$, all capped 1-periodic orbits of $H_i^{(p)}$ are close to $p$-th iterations of elements of $\tilde {\mc O}(H)$. Then one has the $\zp$-equivariant Floer cochain complex
\beqn
CF_{\zp} (H_i^{(p)}) = \Big( CF(H_i^{(p)}) \widehat{\otimes} CM(g) \Big)^\inv
\eeqn
which also depends on certain choices. Recall that this cochain complex has an energy filtration determined by the usual symplectic action ${\mc A}_{H_i^{(p)}}$. As each generator $\ov{y}$ of $CF(H_i^{(p)})$ is close to a unique iteration $\ov{x}^{(p)} \in \tilde{\mc O}(H^{(p)})$, we redefine the filtration by 
\beqn
{\mc A}_{H^{(p)}}( \ov{y} ) = {\mc A}_{H^{(p)}}(\ov{x}^{(p)}) = p{\mc A}_H(\ov{x}).
\eeqn
Once again this modification does not change the convergence requirement for cochains in $CF_{\zp}(H_i^{(p)})$. 

Next, by Lemma \ref{lemmab3} with $H_0$ replaced by $H^{(p)}$, for $i$ sufficiently large, the differential $d_{i, \zp}$ decomposes as 
\beqn
d_{i, \zp} = d_{i, \zp}^{\rm loc} + D_{i,\zp}
\eeqn
where $d_{i, \zp}^{\rm loc}$ preserves the modified action and $D_{i, \zp}$ increases the energy filtration. Then Lemma \ref{lemmab3} applied to index 1 moduli spaces implies that $d_{i, \zp}^\loc$ restricts to a differential on 
\beqn
\Big( CF(H_i^{(p)}, \ov{x}^{(p)}) \otimes CM(g) \Big)^\inv.
\eeqn
It is easy to see that the cohomology defined by $d_{i, \zp}^{\rm loc}$ is the local equivariant Floer cohomology $HF_\zp^{\rm loc}(H^{(p)}, \ov{x}^{(p)})$. Now define the new cochain group
\beqn
C_{\zp} (H^{(p)}):= \bigoplus_{x \in {\mc O}(H)} HF_{\zp}^{\rm loc} ( H^{(p)}, x^{(p)}).
\eeqn

To define the differential on the above cochain group, we need to apply the homological perturbation method. We assume that all fixed points of $\phi$ are $p$-admissible. Then by Proposition \ref{prop_local_eq_structure}, $HF_\zp^{\rm loc}(H^{(p)}, \ov{x}^{(p)})$ is a free $\kzp$-module. Then by Lemma \ref{lemmab6}, the complex $(CF_{\zp}(H_i^{(p)}), d_{i, \zp}^\loc)$ is chain homotopy equivalent to $(\oplus_{x \in {\mc O}(H)} HF_{\zp}^{\loc}(H^{(p)}, \ov{x}{}^{(p)}), 0)$. One can choose the chain homotopies such that they respect recapping of orbits. The collection of such chain homotopies can be labelled as in the following diagram
\beqn
\xymatrix{  & C_{\zp}(H_i^{(p)})   \ar@<.5ex>@{>}[r]^{\pi}  \ar@(ur,ul)[]_{\Theta}   &     C_{\zp}(H^{(p)}) \ar@<.5ex>@{>}[l]^{\sigma}   \ar@(ur,ul)[]_{0}   &  }
\eeqn

Now we apply the homological perturbation construction. Treating $D_{i, \zp}$ as a perturbation of $d_{i, \zp}^\loc$, the complex $CF_{\zp}(H_i^{(p)})$ is then chain homotopic to a complex 
\beqn
C_{\zp} (H^{(p)}) = \left( \bigoplus_{x \in {\mc O}(H)} HF_\zp^{\rm loc} (H^{(p)}, x^{(p)}), d_{\zp} \right)
\eeqn
where $d_{\zp}$ is linear over $\Lambda_{\kzp}^\Gamma$. By the homological perturbation formula \eqref{hp_formula}, $d_{\zp}$ increases the energy filtration by at least a positive constant $\epsilon_H$.

Properties listed in Proposition \ref{prop33b} follow immediately from the construction.

\begin{proof}[Proof of Lemma \ref{lemma37}]
We go through the homological perturbation in a slightly different way. Let $G$ be a nondegenerate perturbation of $H$ which is sufficiently close such that $G^{(p)}$ is also nondegenerate. Consider the equivariant Floer complex $CF_{\zp}(G^{(p)})$. By the discussion prior to Theorem \ref{thm213}, one can identify
\beqn
CF_{\zp}(G^{(p)}) \cong CF(G^{(p)}) \otimes \rp \cong \Big( CF(G^{(p)}) \oplus CF(G^{(p)}) \otimes \theta \Big) \otimes \kzp.
\eeqn
Moreover, by Theorem \ref{thm213}, one can write the equivariant differential as 
\beqn
d_{\zp} = \left[ \begin{array}{cc} d_{G^{(p)}} & 0 \\ S_p & d_{G^{(p)}} \end{array} \right] + t K.
\eeqn
If we ignore the terms $S_p$ and $tK$, then via the homological perturbation construction given in the proof of Proposition \ref{prop31}, with $H$ replaced by $H^{(p)}$, one obtains copies of the complex $C(H^{(p)})$ with a differential $d_{H^{(p)}}$ which strictly increases the energy. One could, in addition, view the nilpotent term $S_p$ and the term  $tK$ having positive $t$ degrees as perturbations. Then one obtains a complex, denoted temporarily by 
\beqn
\wt{C}_{\zp}(H^{(p)}) \cong \Big( C(H^{(p)}) \oplus C(H^{(p)}) \otimes \theta \Big) \otimes \kzp
\eeqn
whose differential has the form 
\beqn
\wt{d}_{\zp} = \left[ \begin{array}{cc} d_{H^{(p)}} & 0 \\ \wt S_p & d_{H^{(p)}}    \end{array} \right] + t \wt K.
\eeqn
Further, it has the local part and ``high energy'' part, while the local part computes the local equivariant Floer cohomology. As $d_{H^{(p)}}$ has vanishing local parts, the local part of $\wt{d}_{\zp}$ reads
\beqn
\wt d_{\zp}^\loc  = \left[ \begin{array}{cc} 0 &  0 \\ \wt S_p^\loc & 0  \end{array} \right] + t\wt K^\loc.
\eeqn
However, one knows that the local equivariant cohomology is isomorphic to $HF^\loc(H^{(p)}, x^{(p)}) \otimes \rp$. Hence $\wt S_p^\loc$ and $\wt K^\loc$ both vanish. This implies that the complex $\wt C_{\zp}(H^{(p)})$ is indeed a case of the complex $C_{\zp}(H^{(p)})$. Moreover, the difference $d_{\zp} - d_{H^{(p)}}$ strictly increases the energy filtration. Therefore, the bar-length spectrum of the complex $C_{\zp}(H^{(p)})$, if not empty, has no zero bars. Then by the forms of the ordinary and equivariant differentials, the assertion follows.
\end{proof}

\subsection{Proof of Proposition \ref{prop35}}\label{proof_prop35}

Choose a sequence of nondegenerate $H_i$ converging to $H$ as in the proofs of Proposition \ref{prop31} and Proposition \ref{prop33b}. For each $i$, by the general construction of \cite{Bai_Xu_2025}, one obtains the chain-level equivariant pair-of-pants product
\beqn
\wt P_i:= \wt\fst_p: CF(H_i)^{\otimes p}_{\zp} \to CF_{\zp}(H_i^{(p)}; \Lambda_{\kzp}^\Gamma)
\eeqn
while the base field is changed to $\fp$. 
Now by the homological perturbation construction for $C(H)$ and $C_{\zp}(H^{(p)})$, one has chain homotopy equivalences
\begin{align*}
&\ \wt \pi_i: C(H_i) \to C(H),\ &\ \wt\sigma_i: C(H) \to C(H_i)
\end{align*}
and
\begin{align*}
&\ \wt \pi_{i, \zp}: CF_{\zp}(H_i^{(p)}) \to C_{\zp}(H^{(p)}),\ &\ \wt\sigma_{i, \zp}: C_{\zp}(H^{(p)}) \to CF_{\zp}(H_i^{(p)}).
\end{align*}
The cochain maps $\wt\pi_i$ and $\wt\sigma_i$ induce cochain maps
\begin{align*}
&\ \wt\pi_i^{\otimes p}: C(H_i)^{\otimes p}_{\zp} \to C(H)^{\otimes p}_{\zp},\ &\ \wt\sigma_i^{\otimes p}: C(H)^{\otimes p}_{\zp} \to C(H_i)^{\otimes p}_{\zp}.
\end{align*}
Then we define $\wt P$ by the diagram
\beqn
\vcenter{ \xymatrix{   C(H)^{\otimes p}_{\zp} \ar[r]^{\wt P} \ar[d]_{\wt\sigma_i^{\otimes p}}   &  C_{\zp}(H^{(p)})   \\
            CF(H_i)^{\otimes p}_{\zp} \ar[r]_{\wt P_i}   &    CF_{\zp}(H_i^{(p)})  \ar[u]_{\wt\pi_{i,{\zp}}} } }.
\eeqn
We claim that for $i$ sufficiently large, $\wt P$ defined in this way satisfies the conditions stated in Proposition \ref{prop35}. 

First, because $\wt P$ is transfered from $\wt P_i$ via the chain homotopy equivalences produced by homological perturbation, and because $\wt P_i$ induces the equivariant pair-of-pants product on the homology level, the first assertion of Proposition \ref{prop35} holds. 

Now we describe the decomposition $\wt P = \wt P^\loc + \wt P^{\rm big}$. For each $\ov{x} \in \tilde {\mc O}(H)$, recall one has an $\fp$-summand  
\beqn
CF^\loc (H_i, \ov{x}) \subset CF(H_i)
\eeqn
generated by $\tilde {\mc O}(H_i, \ov{x})$ and an $\fp$-summand
\beqn
CF_\zp^\loc (H_i^{(p)}, \ov{x}^{(p)}):= \Big( CF(H_i^{(p)}, \ov{x}^{(p)}) \otimes CM(g) \Big)^\inv  \subset \Big( CF(H_i^{(p)}) \otimes CM(g)\Big)^\inv = CF_\zp(H_i^{(p)})
\eeqn
generated by $\tilde {\mc O}(H_i^{(p)}, \ov{x}^{(p)})$. Then by the crossing energy lower bound (Lemma \ref{lemmab4}), when $i$ is sufficiently large, one can decompose the map
\beqn
\wt{P}_i: CF(H_i)^{\otimes p}_\zp \to CF_\zp (H_i^{(p)})
\eeqn
as $\wt P_i = \wt{P}_i^\loc + \wt{P}_i^{\rm big}$ where $\wt P_i^\loc$ restricts to a map
\beqn
\wt P_{i, \ov{x}}^\loc: CF^\loc (H_i, \ov{x})^{\otimes p}_{\zp} \to CF_\zp^\loc (H_i^{(p)}, \ov{x}^{(p)})
\eeqn
which preserves the modified energy filtration, while $\wt P_i^{\rm big}$ increases the energy filtration by at least a certain positive constant which is independent of sufficiently large $i$ and other choices. As $\wt\sigma_i$ and $\wt\pi_{i,{\zp}}$ strictly preserve the energy filtration and are defined near each individual orbit $\ov{x}^{(p)}$, we define
\begin{align*}
&\ \wt P^\loc:= \wt \pi_{i,\zp} \circ \wt P_i^\loc \circ \wt \sigma_i^{\otimes p},\ &\ \wt P^{\rm big}:= \wt \pi_{i,\zp} \circ \wt P_i^{\rm big}  \circ \wt \sigma_i^{\otimes p}.
\end{align*}
Then the second assertion of Proposition \ref{prop35} holds. The third assertion follows from the construction and the fact that $\wt\pi_{i, \zp}$ and $\wt \sigma_i^{\otimes p}$ both preserve the energy filtration.

\section{Floer theory for Morse--Bott Hamiltonians and Hamiltonian torsion}\label{app:MB}

Hamiltonian Floer cohomology for degenerate Hamiltonians of Morse--Bott type has been constructed in \cite{Ruan_Tian_Bott}, while a Lagrangian formulation was provided in \cite{Pozniak}. The paper \cite{AS-torsion} discussed certain features of local Floer cohomology of such Hamiltonians while assuming that the symplectic form is rational. In this paper, in order to establish Theorem \ref{thm_torsion} in full generality, we provide a comprehensive discussion without any topological assumptions. 

\begin{defn}\cite[Definition 1.1]{Ruan_Tian_Bott}
Let $(M, \omega)$ be a closed symplectic manifold. Consider a Hamiltonian diffeomorphism $\phi\in \Ham(M, \omega)$ and a connected component ${\mc F}$ of ${\rm Fix}_c(\phi)$. The connected component ${\mc F}$ is said to be of {\bf Morse--Bott type}\footnote{Notice that this condition is stronger than ${\rm graph}(\phi)$ intersects cleanly with the diagonal. The latter does not imply the equality between the eigenspace and generalized eigenspace of the eigenvalue $1$.} if ${\mc F}$ is a closed  submanifold and for each $x \in {\mc F}$
\beqn
T_x {\mc F} = {\rm Ker} (d\phi_x - {\rm Id}) = {\rm Ker} \big( (d\phi_x - {\rm Id})^{{\rm dim}_{\mb R} M} \big) \subset T_x M.
\eeqn
\end{defn}

Notice that if $\phi \in {\rm Ham}(M, \omega)$ has finite order, then any connected component ${\mc F} \subset {\rm Fix}_c(\phi)$ is necessarily of Morse--Bott type. 

In this appendix, we would like to do the following.
\begin{enumerate}

    \item Construct the local Hamiltonian Floer cohomology $HF(\phi, {\mc F})$.

    \item When $\phi$ is a finite-order element of ${\rm Ham}(M, \omega)$ whose order is not divisible by a prime $p$, construct and compute the local equivariant Floer cohomology  $HF_{\zp}^{\loc}(\phi^{(p)}, {\mc F})$.

    \item Under the above setting, construct the local equivariant pair-of-pants product 
    \beqn
    \fst_{p, {\mc F}}^\loc: HF^{\loc}(\phi, {\mc F}) \to HF^{\loc}_{\zp}(\phi^{(p)}, {\mc F})
    \eeqn
    and identify it with the classical Steenrod operation on ${\mc F}$.

    \item Establish analogues of homological perturbation construction in Section \ref{section3}.
\end{enumerate}

\subsection{Crossing energy estimates in the Morse--Bott case}

We establish a few important estimates which allow us to define corresponding algebraic objects. One first has an exponential decay estimate, which does not hold for general degenerate Hamiltonians.

For simplicity, our estimates are always referring to a fixed compatible almost complex structure $J$. It is not hard to remove the dependence by considering 1-parameter families of almost complex structures. The following statement is standard.

\begin{lemma}\label{lemmac2}
Let $H$ be a Hamiltonian on $(M, \omega)$ generating a diffeomorphism $\phi$ with Morse--Bott fixed points. Then for each $\omega$-compatible almost complex structure $J$, there exists $\epsilon_0 (H, J) >0$ and $\delta_0 >0$ satisfying the following conditions. Let $u: [0, +\infty) \times S^1 \to M$ be a finite energy solution to the Floer equation for $(H, J)$. Suppose $E(u) \leq \epsilon_0(H, J)$, then one has 
\beqn
\limsup_{s \to +\infty} e^{\delta_0 s} | \partial_s u(s, t)| < +\infty.
\eeqn
\end{lemma}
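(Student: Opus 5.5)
The plan is the standard exponential-decay argument for Floer half-cylinders near a Morse--Bott critical manifold of the action functional. It combines a small-energy localization step with a spectral gap for the asymptotic operator in the directions normal to the fixed-point set.

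\emph{Step 1 (localization).} Because ${\rm Fix}_c(\phi)$ consists of finitely many closed Morse--Bott components, the set of 1-periodic orbits of $H$ is a finite union of compact manifolds $\widetilde{\mc F}$. We would choose tubular neighborhoods of these in the loop space (in $C^1$, say). The claim to establish is that there is $\epsilon_0(H,J)>0$ such that any finite-energy half-cylinder with $E(u)\le \epsilon_0$ has $u(s,\cdot)$ inside one such neighborhood for all $s\ge 1$. This comes from a compactness argument, as in Lemma~\ref{lemmab3}. Suppose not: translates $u(s_k+\cdot,\cdot)$ with small energy would converge, by elliptic bootstrapping and the energy bound $E\le\epsilon_0<\hbar$ (which rules out bubbling), to a zero-energy solution. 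A zero-energy solution is $s$-independent, hence a 1-periodic orbit, which is a contradiction. Finite energy also gives that $\int_{[s,\infty)\times S^1}|\partial_su|^2\to0$, and hence, via interior $C^k$ estimates, $\|\partial_s u(s,\cdot)\|_{C^k}\to 0$.

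\emph{Step 2 (linear analysis).} Near a component $\widetilde{\mc F}$, we would write $u(s,t)=\exp_{x_s(t)}\xi(s,t)$, where $x_s\in\widetilde{\mc F}$ is a suitable projection and $\xi$ is normal to $T\widetilde{\mc F}$. Linearizing at $x\in\widetilde{\mc F}$ gives the asymptotic operator $A_x=-J(\partial_t-\nabla X_H)$. This operator is self-adjoint on $L^2(S^1)$, and the Morse--Bott condition as stated, namely $T_x\mc F=\ker(d\phi_x-{\rm Id})=$ generalized eigenspace, says precisely that $\ker A_x=T_x\widetilde{\mc F}$. By compactness of $\widetilde{\mc F}$ there is a uniform spectral gap $\delta_1>0$ on the normal part. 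The equality with the generalized eigenspace is what guarantees $\ker A_x$ has constant rank and no Jordan-block degeneracy. The Floer equation then becomes $\partial_s\xi+A_{x_s}\xi=N(\xi,\partial_s x_s)$, with quadratically small nonlinearity. The standard convexity argument for $f(s)=\tfrac12\|\xi(s)\|_{L^2}^2$, namely $f''\ge \delta_1^2 f$ up to small errors, gives $\|\xi(s)\|\le Ce^{-\delta s}$. The tangential motion satisfies $|\partial_sx_s|\lesssim\|\xi\|\,\|\partial_s\xi\|$, so it is integrable and $x_s$ converges exponentially as well.

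\emph{Step 3 (upgrade).} Interior elliptic estimates on $[s-1,s+1]\times S^1$ convert the $L^2$ decay of $\xi$ and $\partial_s x_s$ into pointwise decay of $\partial_su$. This yields the bound with $\delta_0<\delta_1$, uniform in $u$.

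The main obstacle is Step 2, specifically controlling the coupling between the tangential drift $x_s$ along $\widetilde{\mc F}$ and the normal component. I would handle it as in Robbin--Salamon/Bourgeois: choose $x_s$ so that $\xi(s)\perp\ker A_{x_s}$, and absorb the drift term using the smallness from Step 1.
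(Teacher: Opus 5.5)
Your proposal is correct. It is the standard Robbin--Salamon/Bourgeois exponential-decay argument (small-energy localization, a uniform spectral gap of the asymptotic operator normal to the Morse--Bott manifold of orbits, convexity of $\|\xi(s)\|_{L^2}^2$, then an elliptic upgrade), which is what the paper means when it calls the lemma standard and omits the proof. One small correction: only the clean-intersection part $T_x\mathcal{F}=\ker(d\phi_x-{\rm Id})$ is used, and it already gives $\ker A_x=T_x\widetilde{\mathcal{F}}$ with constant rank because $\mathcal{F}$ is a manifold; the generalized-eigenspace condition plays no role here, since $A_x$ is self-adjoint and therefore has no Jordan blocks in any case.
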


The exponential decay estimate implies the lower bound of energy of nontrivial solutions.

\begin{cor}\label{corc3}
Let $H$ and $J$ be as above. Then there exists $\hbar_0(H, J) >0$ such that for any solution $u: {\mb R}\times S^1 \to M$ to the Floer equation for $(H, J)$, one has 
\beqn
E(u) > 0 \Longrightarrow E(u) \geq \hbar_0(H, J).
\eeqn
\end{cor}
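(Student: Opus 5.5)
The statement to prove is Corollary \ref{corc3}: a uniform energy gap $\hbar_0(H,J)>0$ for nonconstant Floer cylinders of a Hamiltonian with Morse--Bott fixed points. I would argue by contradiction, using Gromov--Floer compactness together with the exponential decay estimate of Lemma \ref{lemmac2}. Suppose $u_k:\mathbb{R}\times S^1\to M$ are Floer cylinders for $(H,J)$ with $0<E(u_k)\to 0$. Once $E(u_k)<\epsilon_0(H,J)$, Lemma \ref{lemmac2} applied to both ends (the negative end via $s\mapsto -s$) shows that each $u_k$ converges exponentially to a $1$-periodic orbit at $\pm\infty$. These limiting orbits lie in Morse--Bott components of the set of $1$-periodic orbits.

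The first step is to show that $u_k$ is uniformly $C^0$-close to a single component, in fact to a single orbit. Since $E(u_k)\to 0$, there is no bubbling, because sphere bubbles carry energy at least the minimal sphere energy $\hbar$. The gradient $|\partial_s u_k|$ is uniformly bounded and tends to zero uniformly, by the standard small-energy mean value inequality. Hence for each fixed $s$ the loop $u_k(s,\cdot)$ is $C^1$-close to some $1$-periodic orbit. Because the set of $1$-periodic orbits is a finite disjoint union of compact manifolds and the path $s\mapsto u_k(s,\cdot)$ is continuous, the whole cylinder stays in a tubular neighbourhood of one component $\mathcal{F}$.

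The second step is the key one, and I expect it to be the main obstacle: ruling out cylinders of tiny positive energy that drift along $\mathcal{F}$. Near $\mathcal{F}$ I would use the Morse--Bott nondegeneracy, $T_x\mathcal{F}=\ker(d\phi_x-\mathrm{Id})$ together with the equality of eigenspace and generalized eigenspace, to get a linear isoperimetric-type inequality. Concretely, for a loop $\gamma$ that is $C^1$-close to $\mathcal{F}$ with nearest orbit $x$, I would aim to show that the action difference satisfies $|\mathcal{A}_H(\gamma)-\mathcal{A}_H(x)|\le C\|\partial_t\gamma-X_H(\gamma)\|_{L^2}^2$. Here the action is taken with the capping induced by a short path, and $\mathcal{A}_H$ is constant on $\mathcal{F}$. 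This is the standard estimate, because the Hessian of the action functional is invertible on the normal bundle of the critical manifold $\mathcal{F}$. The same invertibility is what underlies the exponential decay in Lemma \ref{lemmac2}.

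With this estimate in hand, define $a(s)=\mathcal{A}_H(u_k(s,\cdot))-\mathcal{A}_H(\mathcal{F})$. Then $a'(s)=\|\partial_s u_k\|_{L^2}^2\ge a(s)/C$, which forces exponential growth unless $a\equiv 0$. On the other hand, $a(\pm\infty)=0$, since both asymptotic limits lie in $\mathcal{F}$, where the action is constant. Therefore $a\equiv 0$, so $E(u_k)=a(+\infty)-a(-\infty)=0$, contradicting $E(u_k)>0$. This proves the corollary, with $\hbar_0$ taken as the minimum of $\epsilon_0(H,J)$, $\hbar$, and the threshold below which the first two steps apply.
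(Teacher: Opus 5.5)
Your argument is correct, but it is not how the paper proceeds. The paper writes no proof and simply records the gap as a consequence of the exponential decay estimate, Lemma \ref{lemmac2}. To make that one-liner precise, one needs decay constants uniform over all solutions of energy at most $\epsilon_0(H,J)$; applying them to the translates $u(\cdot+T,\cdot)$ with $T\to\infty$ then forces $\partial_s u\equiv 0$.

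You instead use Lemma \ref{lemmac2} only for asymptotic convergence, and get the gap from compactness plus the action identity. This has the merit of not needing that uniformity: Lemma \ref{lemmac2} as stated only gives a finite $\limsup$ for each individual $u$.

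Your second step, which you single out as the main obstacle, is superfluous. Since $a'(s)=\|\partial_s u_k\|_{L^2}^2\ge 0$, the function $a$ is nondecreasing. So $a(\pm\infty)=0$ already gives $E(u_k)=a(+\infty)-a(-\infty)=0$, with no isoperimetric inequality. This is the same ``energy is topological'' argument the paper uses in the proof of Lemma \ref{lemmac4}.

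The real content of your proof is therefore the first step: confinement of every loop $u_k(s,\cdot)$ near one component $\mathcal F$. Your aside ``in fact to a single orbit'' is not justified at that stage, and it is not needed. What is needed, together with confinement, is:
\begin{itemize}
\item the locally induced capping varies continuously in $s$;
\item the action is constant on the lifted component. This holds because transporting a capping around a loop in $\mathcal F$ changes it by a torus of zero $\omega$-area, the action being locally constant along a connected family of critical points.
\end{itemize}
The isoperimetric inequality is essentially the mechanism behind the decay estimate itself, so including it amounts to partially reproving Lemma \ref{lemmac2}.
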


Then one can obtain the energy lower bound for nontrivial pair-of-pants. Recall that for each 1-periodic orbit $x \in {\mc O}(H)$ and a prime $p$, there is a solution $u_x^p: \Sigma_p^\circ \to M$ to the equation 
\beqn
(\nabla^\sigma u)^{0,1} = 0
\eeqn
for any $J$ which has zero energy. 

\begin{lemma}\label{lemmac4}
Let $H, J$ be as above and $p$ be a prime. Assume that $H^{(p)}$ also generates a Hamiltonian diffeomorphism with Morse--Bott fixed points. Let $\sigma$ be the flat Hamiltonian connection on $\Sigma_p^\circ$ which is the pullback of $H dt$ on ${\mb R}\times S^1$. Then there exists $\hbar_p(H, J)>0$ such that for any solution $u: \Sigma_p^\circ \to M$ to $(\nabla^{\sigma} u)^{0,1} = 0$ which is not $u_x^p$ for any $x \in {\mc O}(H)$, then $E(u) \geq \hbar_p(H, J)$.
\end{lemma}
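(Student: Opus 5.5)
The plan is a compactness-by-contradiction argument, modeled on the proof of Lemma \ref{lemmab2} and Lemma \ref{lemmab4}. The extra input is Corollary \ref{corc3}, applied both to $H$ and to $H^{(p)}$; it rules out small-energy breaking at the cylindrical ends. The only non-formal ingredient is the classification of zero-energy solutions on $\Sigma_p^\circ$, and this is exactly where the branched-cover structure enters.

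\emph{Step 1: zero-energy solutions are the $u_x^p$.} Let $u$ solve $(\nabla^\sigma u)^{0,1}=0$ with $E(u)=0$. Since $\sigma$ is flat, the energy identity gives $\partial_s u = 0$ in the local cylindrical coordinates pulled back from ${\mb R}\times S^1$ via $\pi_p$. Hence, away from the branch points, $u(z)=\phi_H^{t}(u_0)$ along lifts of the loops $\{s\}\times S^1$. Near the two branch points (over $0$ and $\infty$) the monodromy of $\pi_p$ identifies the value after one lap with $\phi^p$ of the starting point, rather than $\phi$. Continuity of $u$ across the branch locus then forces $u$ to descend to a map ${\mb R}\times S^1\to M$ that is constant in $s$, i.e. to a constant Floer cylinder $u_x$ for some $x \in \mathcal{O}(H)$. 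So $u = u_x^p$. This step uses only flatness and is the same reasoning as in Lemma \ref{lemmab2}.

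\emph{Step 2: contradiction by compactness.} Suppose there are solutions $u_i$, none equal to any $u_x^p$, with $E(u_i)\to 0$. Gromov--Floer compactness for the fixed pair $(\sigma,J)$ gives a subsequence converging modulo breaking and bubbling. Bubbling is excluded because sphere bubbles carry energy at least $\hbar>0$. Breaking off a nonconstant Floer cylinder at a negative end costs at least $\hbar_0(H,J)$, and at the positive end at least $\hbar_0(H^{(p)},J)$, by Corollary \ref{corc3}; this is where the Morse--Bott hypothesis on $H^{(p)}$ is used. So the limit is an unbroken zero-energy solution, hence some $u_x^p$ by Step 1, and the convergence is in $C^\infty_{loc}$.

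\emph{Step 3: upgrading to a contradiction (main obstacle).} In the isolated case, uniform convergence already gives the contradiction. In the Morse--Bott case, however, $\{u_x^p\}$ is a manifold parametrized by a fixed component $\mathcal{F}$, so convergence to it does not contradict $u_i\neq u_x^p$. I would first try the following. By Lemma \ref{lemmac2}, each $u_i$ (for $i$ large, so $E(u_i)\le\epsilon_0$) decays exponentially at every end, uniformly in $i$, and so converges to orbits in $\mathcal{F}$ and $\mathcal{F}^{(p)}$ with uniform exponential weights. In weighted Sobolev spaces the linearized operator at $u_x^p$ has kernel exactly $T_x\mathcal{F}$; this is the Morse--Bott condition that eigenspace equals generalized eigenspace for both $d\phi$ and $d\phi^p$. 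Near this Morse--Bott family of solutions an implicit-function argument should then show that every solution sufficiently close in the weighted norm lies in the family, i.e. equals some $u_y^p$. That contradicts the choice of $u_i$, and so yields $\hbar_p(H,J)>0$. The delicate point is the uniformity of the decay estimate; establishing it is the part I expect to require the most care.
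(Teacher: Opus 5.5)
Your Steps 1 and 2 match the paper's proof: a sequence $u_i$ with $0\neq E(u_i)\to 0$ subconverges to a zero-energy solution, hence to some $u_x^p$. Corollary~\ref{corc3}, applied to $H$ and $H^{(p)}$, excludes breaking, so $u_i$ is eventually $C^0$-close to $u_x^p$.

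The gap is in Step 3. You are right that closeness to a positive-dimensional family is not yet a contradiction, but what you propose there is only a program. A Lyapunov--Schmidt/implicit-function argument needs a Fredholm setup with Morse--Bott asymptotic parameters at all $p+1$ ends. It also needs a decay estimate for the $u_i$ that is uniform in $i$, whereas Lemma~\ref{lemmac2} only gives a qualitative $\limsup$ for each individual solution. As written, the argument does not close. It could probably be completed with standard Morse--Bott technology, but it is far heavier than needed.

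The missing idea, which the paper uses, is that flatness of $\sigma$ makes the energy topological:
\[
E(u)=\mathcal{A}_{H^{(p)}}(\ov{y}_\infty)-\sum_{j=1}^{p}\mathcal{A}_H(\ov{y}_j).
\]
Here the capping of the positive asymptotic orbit $\ov{y}_\infty$ is the one induced from the cappings of the negative ones $\ov{y}_j$ together with $u$.

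Once $u_i$ is $C^0$-close to $u_x^p$, its asymptotics sit in fixed components, with compatible cappings:
\begin{itemize}
\item its negative asymptotics lie in the connected component of capped $1$-periodic orbits of $H$ through $\ov{x}$;
\item its positive asymptotic lies in the component of capped orbits of $H^{(p)}$ through $\ov{x}^{(p)}$.
\end{itemize}
The action functional is constant on each connected component of its critical set. Hence $E(u_i)=p\,\mathcal{A}_H(\ov{x})-p\,\mathcal{A}_H(\ov{x})=0$, contradicting $E(u_i)\neq 0$; equivalently, $u_i$ is some $u_y^p$ by Step 1.

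This energy identity is also what actually yields the contradiction in the isolated case you invoke; uniform convergence alone does not. It goes through verbatim in the Morse--Bott case, with no weighted analysis and no uniform decay estimate.

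A minor correction to Step 1: what forces a zero-energy solution to be trivial is the monodromy $\phi$ around each negative puncture, which puts the base point in $\mathrm{Fix}(\phi)$. A small loop around the interior branch point has trivial monodromy, not $\phi^p$.
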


\begin{proof}
We prove by contradiction. Suppose there exists a sequence of solutions $u_i: \Sigma_p^\circ \to M$ with $0 \neq E(u_i) \to 0$, then one can apply the compactness argument. First, the energy bound implies that a subsequence of $u_i$ (still indexed by $i$) converges in $C_\loc^\infty$ to an energy zero solution $u_\infty: \Sigma_p^\circ \to M$, hence must coincide with some $u_x^p$. Corollary \ref{corc3} implies that the convergence is uniform on $\Sigma_p^\circ$ without breaking. Hence for $i$ sufficiently large, $u_i$ is in a $C^0$-neighborhood of $u_x^p$. However, as the Hamiltonian connection is flat, the energy $E(u_i)$ is topological. Hence $E(u_i) = 0$ which is a contradiction.
\end{proof}

Then one can derive analogues of Lemma \ref{lemmab3} and Lemma \ref{lemmab4} in the Morse--Bott case. We first need some notations. Let $H$ generate a Hamiltonian diffeomorphism $\phi$ with Morse--Bott fixed points. Then for any nondegenerate $G$ which is sufficiently $C^2$-close to $H$, one has well-defined partitions
\begin{align*}
    &\ {\mc O}(G) = \bigsqcup_{{\mc F}\in \pi_0({\mc O}(H))} {\mc O}(G, {\mc F}),\ &\ \tilde {\mc O}(G) = \bigsqcup_{\ov{\mc F} \in \pi_0(\tilde {\mc O}(H))} \tilde {\mc O}(G, \ov{\mc F})
\end{align*}
where ${\mc O}(G, {\mc F})$ resp. $\tilde {\mc O}(G, \ov{\mc F})$ is the ``cluster'' of 1-periodic orbits resp. capped 1-periodic orbits of $G$ which is close to ${\mc F}$ resp. $\ov{\mc F}$. Moreover, if $x \in {\mc O}(G, {\mc F})$, then for any capping $\ov{\mc F}$, there is a canonically induced capping $\ov{x} \in \tilde {\mc O}(G, \ov{\mc F})$.

\begin{lemma}\label{lemmac5}
Let $H$ and $J$ be as in Lemma \ref{lemmac2}. Then for any open neighborhoods ${\mc U} \subset C^0({\mb R}\times S^1, M)$ of 
\beqn
\big\{ u_x\ |\ x \in {\mc O}(H)  \big\} \subset C^0( {\mb R}\times S^1, M)
\eeqn
the following is true.

Let $\sigma$ be a Hamiltonian connection on ${\mb R}\times S^1$ which is equal to $G_\pm dt$ near $\pm \infty$ where $G_\pm$ are two nondegenerate Hamiltonians which are sufficiently close to $H$ in the $C^\infty$-topology such that $\int_{{\mb R}\times S^1} |F_\sigma| ds dt$ is sufficiently small. Let $u: {\mb R}\times S^1 \to M$ be a solution to $(\nabla^\sigma u)^{0,1} = 0$ with respect to $J$. Then the following is true.
\begin{enumerate}
    \item Suppose $u$ is asymptotic to $x_\pm \in {\mc O}(G_\pm, {\mc F})$ for some component ${\mc F}$. Moreover, suppose there exists a capping $\ov{\mc F}$ (which induces cappings $\ov{x}_\pm$ such that the concatenation of $\ov{x}_-$ with $u$ gives the capping $\ov{x}_+$). Then $u$ is contained in the neighborhood ${\mc U}$.

    \item If the above assumption is not true, then $E(u_i) \geq \frac{1}{2}\hbar_0(H, J)$ where $\hbar_0(H, J)$ is from Corollary \ref{corc3}.
\end{enumerate}
\end{lemma}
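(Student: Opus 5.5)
The plan is to argue by contradiction and compactness, in the same way as Lemma \ref{lemmab3}, but with Corollary \ref{corc3} replacing Lemma \ref{lemmab1}. The essential new input is that in the Morse--Bott setting the zero-energy Floer cylinders of $(H,J)$ are exactly the constant cylinders $u_x$, $x \in {\mc O}(H)$. These form the compact set ${\mc O}(H)$, a disjoint union of the components ${\mc F}$. Corollary \ref{corc3} also gives a uniform gap $\hbar_0(H,J)$ separating nonconstant solutions from the constant ones.

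Suppose the statement fails for some neighborhood ${\mc U}$. Then there are connections $\sigma_i \to H\,dt$ in $C^\infty$ with ends $G_{i,\pm} \to H$ and $\int |F_{\sigma_i}| \to 0$, together with solutions $u_i$ that violate the claim. After passing to a subsequence, we may assume that either (1) fails for every $i$ or (2) fails for every $i$.

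\textbf{Case (1).} All $u_i$ connect clusters of the same component ${\mc F}$, and their cappings match through a fixed $\ov{\mc F}$. By the energy identity, $E(u_i)$ is bounded by the action difference plus $\int|F_{\sigma_i}|$. The actions of $\ov{x}_{i,\pm}$ both converge to ${\mc A}_H(\ov{\mc F})$, since the action is constant on a connected capped component. Hence $E(u_i) \to 0$. Gromov--Floer compactness then gives $C^\infty_{\loc}$ convergence, after translations, to broken configurations of zero-energy $(H,J)$-cylinders, possibly with sphere bubbles. Sphere bubbles are ruled out because their energy is at least $\hbar>0$, by \cite[Proposition 4.1.4]{McDuff-Salamon}. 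Every limit component has zero energy and so is a constant cylinder $u_x$.

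\textbf{Case (2).} If $E(u_i) < \frac12\hbar_0$, the same compactness argument shows that every limit component has energy below $\hbar_0$. By Corollary \ref{corc3} each such component is constant. Because the components of ${\mc O}(H)$ are disjoint and the limit chain is connected, the asymptotics of $u_i$ must eventually lie in clusters of a single ${\mc F}$. The concatenated cappings then agree, since the limit carries no homology class. So for large $i$ we are in situation (1), a contradiction.

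The main obstacle is upgrading $C^\infty_{\loc}$ convergence to uniform $C^0$ convergence on all of ${\mb R}\times S^1$, so that $u_i$ eventually lies in ${\mc U}$. In Case (1) the limit may a priori be a chain of constant cylinders at possibly different points $x, x' \in {\mc F}$, separated by long necks along which $u_i$ drifts within a neighborhood of ${\mc F}$. The plan for this step is to use the exponential decay estimate of Lemma \ref{lemmac2}, which holds uniformly for small energy and for nearby data. On each neck it bounds the $C^0$-distance traveled by $\int |\partial_s u_i| \lesssim \sqrt{E}$-type quantities tending to $0$. This forces $u_i$ to lie uniformly $C^0$-close to the set $\{u_x\}$. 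I expect to interpret ${\mc U}$ as a neighborhood of the whole compact family $\{u_x : x\in{\mc F}\}$, which is what makes this argument sufficient. The remaining point is to check that the constants $\epsilon_0$ and $\delta_0$ of Lemma \ref{lemmac2} stay uniform under small perturbations of the Floer data, by the usual Morse--Bott linear analysis near ${\mc F}$.
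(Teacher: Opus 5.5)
Your overall scheme (contradiction plus Gromov--Floer compactness, with Corollary~\ref{corc3} in place of Lemma~\ref{lemmab1}) is what the paper means by ``verbatim as Lemma~\ref{lemmab3}''. You also correctly isolate where the Morse--Bott case differs. In Lemma~\ref{lemmab3}, uniform convergence to a single $u_x$ is automatic: every loop $u_i(s,\cdot)$ is close to some orbit, and the orbits are isolated. Here the loops may slide along ${\mc F}$; the paper's one-line proof glosses over this too.

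\textbf{Why your fix fails.} The constants of Lemma~\ref{lemmac2} are uniform only for the Morse--Bott data $(H,J)$. For a nondegenerate perturbation $G$, the asymptotic operators at orbits in ${\mc O}(G,{\mc F})$ have eigenvalues of size $O(\|G-H\|)$ in the directions tangent to ${\mc F}$. So the decay rate tends to $0$, and there is no bound $\int|\partial_s u_i|\,ds\lesssim\sqrt{E(u_i)}$.

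\textbf{The intended conclusion is false.} Take $M=S^2\times S^2$ with product $J$, and $H=\epsilon h(b)$ with $h$ a height function whose minimum is at $b_0$. Then ${\mc F}=S^2\times\{b_0\}$ is a Morse--Bott component; for $\epsilon=2\pi/m$ the map $\phi$ even has finite order. Set $G_\pm=G=H+\epsilon' f(a)$ with $f$ a height function, and $\sigma=G\,dt$. Let $u(s,t)=(\gamma(s),b_0)$, with $\gamma$ a gradient line of $\epsilon' f$ joining the two poles of the first factor.
\begin{itemize}
\item $u$ is a Floer cylinder satisfying the hypotheses of (1), with trivial cappings.
\item Its energy $2\epsilon'$ tends to $0$.
\item Its sup-distance from every constant cylinder $u_x$ is at least half the distance between the poles.
\end{itemize}
So for a small neighborhood ${\mc U}$ of $\{u_x\}$ in the uniform $C^0$ topology, (1) fails for $G$ arbitrarily close to $H$. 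These sliding cylinders are exactly the ones that make the local complex compute $H^*({\mc F})$.

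\textbf{What is true and suffices.} The subsequent localization $d_G=d_G^{\rm loc}+d_G^{\rm big}$ (and the local PSS and continuation maps) needs only the energy dichotomy plus a loopwise version of (1). Namely, for every $s$ the loop $u(s,\cdot)$ is $C^0$-close to some orbit in ${\mc F}$, and that orbit may depend on $s$. Your compactness argument proves this without any decay estimate:
\begin{itemize}
\item Apply it to the translates $u_i(\cdot+s_i,\cdot)$ for arbitrary $s_i$.
\item Every limit has energy $<\hbar_0$, so it is some $u_x$ by Corollary~\ref{corc3}. This gives $\sup_s\mathrm{dist}(u_i(s,\cdot),{\mc O}(H))\to0$.
\item Continuity in $s$ and the positive distance between distinct components of ${\mc O}(H)$ then confine all loops near a single ${\mc F}$.
\end{itemize}

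With this replacement your Case (2) goes through. One further correction: ``the limit carries no homology class'' does not by itself control the cappings, because $C^\infty_{\rm loc}$ limits see nothing on the necks. The right reason is the loopwise confinement near ${\mc F}$. Transporting a capping around a loop in ${\mc F}$ changes it only by a class in $\ker\omega\cap\ker c_1$, which is trivial in $\Gamma$. Hence the induced cappings agree.
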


\begin{proof}
The proof can be carried out  verbatim as that of Lemma \ref{lemmab3}.
\end{proof}

Now we state and prove the analogue of Lemma \ref{lemmab4}. Recall that for each $x \in {\mc O}(H)$, for each prime $p$, there is the ``trivial pair-of-pants'' $u_x^p: \Sigma_p^\circ \to M$ at $x$.

\begin{lemma}\label{lemmac6}
Let $H, J$ and $p$ be as in Lemma \ref{lemmac4}. Then for any neighborhood ${\mc U}_p \subset C^0(\Sigma_p^\circ, M)$ of 
\beqn
\{ u_x^p\ |\ x \in {\mc O}(H)\}
\eeqn
with respect to the $C^0$-topology, the following is true. 

Let $G$ be a nondegenerate Hamiltonian which is sufficiently close to $H$ in $C^\infty$-topology. Let $\sigma$ be the flat Hamiltonian connection on $\Sigma_p^\circ$ which is the pullback of $G dt$ on ${\mb R}\times S^1$. Let $u: \Sigma_p^\circ \to M$ be a solution to $(\nabla^\sigma u)^{0,1} = 0$ with respect to $J$. Then
\begin{enumerate}
    \item Suppose there is a component ${\mc F}\subset {\mc O}(H)$ such that at the $j$-th negative puncture $u$ is asymptotic to some $y_j \in {\mc O}(G, {\mc F})$ and at the positive puncture $u$ is asymptotic to some $y_\infty \in {\mc O}(G^{(p)}, {\mc F}^{(p)})$. Moreover, assume that for a capping $\ov{\mc F}$ of ${\mc F}$ (which induces cappings $\ov{y}_j$ and $\ov{y}_\infty$), the concatenation of $\ov{y}_1 \sqcup \cdots \sqcup \ov{y}_p$ with $u$ gives the capping $\ov{y}_\infty$. Then the map $u$ is contained in ${\mc U}_p$.

    \item If the above assumption is not true, then one has 
    \beqn
    E(u) \geq \frac{1}{2} \min \Big\{ \hbar_0(H, J), \hbar_0(H^{(p)}, J), \hbar_p(H, J) \Big\}
    \eeqn
    where $\hbar_0$ is from Corollary \ref{corc3} and $\hbar_p$ is from Lemma \ref{lemmac4}.

\end{enumerate}
\end{lemma}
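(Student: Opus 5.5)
The plan is to argue by contradiction with Gromov compactness, exactly parallel to the proof of Lemma \ref{lemmab4}, using Lemma \ref{lemmac4} and Corollary \ref{corc3} in place of Lemmas \ref{lemmab2} and \ref{lemmab1}. Fix $J$ (for simplicity, as in the proof of Lemma \ref{lemmab4}). Suppose the statement fails. Then there is a sequence of nondegenerate Hamiltonians $G_i \to H$ in $C^\infty$, with flat pullback connections $\sigma_i$ on $\Sigma_p^\circ$, and solutions $u_i$ of $(\nabla^{\sigma_i}u_i)^{0,1}=0$ such that for every $i$ one of two things happens. Either $u_i$ satisfies the hypothesis of (1) but $u_i \notin {\mc U}_p$, or it does not satisfy that hypothesis and
\[
E(u_i) < \tfrac12 \min\{\hbar_0(H,J), \hbar_0(H^{(p)},J), \hbar_p(H,J)\}.
\]
Passing to a subsequence, we may assume that all $u_i$ fall into the same case, and in case (1) that the component ${\mc F}$ and the capping $\ov{\mc F}$ are fixed.

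\emph{Case (1).} Here the orbits $y_{i,j}$ and $y_{i,\infty}$ are clustered near ${\mc F}$ and ${\mc F}^{(p)}$, and the cappings match. Since $\sigma_i$ is flat, the energy identity expresses $E(u_i)$ as a difference of actions:
\[
E(u_i) = {\mc A}_{G_i^{(p)}}(\ov{y}_{i,\infty}) - \sum_j {\mc A}_{G_i}(\ov{y}_{i,j}).
\]
On the Morse--Bott component ${\mc F}$ the action ${\mc A}_H$ is constant (the component is connected and carries a fixed capping), and ${\mc A}_{H^{(p)}}(\ov{\mc F}^{(p)}) = p\,{\mc A}_H(\ov{\mc F})$. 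As $G_i \to H$, the cluster actions converge to these values, so $E(u_i)\to 0$.

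Gromov compactness on $\Sigma_p^\circ$ then gives a subsequence converging in $C^\infty_{\rm loc}$, modulo breaking at the cylindrical ends, to a zero-energy solution for the flat connection pulled back from $H\,dt$. By Lemma \ref{lemmac4} this limit is some $u_x^p$ with $x\in{\mc F}$. Any breaking would produce a nonconstant Floer cylinder for $H$ or for $H^{(p)}$ carrying energy tending to $0$, which Corollary \ref{corc3} forbids, because nonconstant cylinders have energy at least $\hbar_0$. Hence the convergence is uniform on all of $\Sigma_p^\circ$, and $u_i\in{\mc U}_p$ for large $i$, a contradiction.

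\emph{Case (2).} The energies are uniformly small, so again a subsequence converges, up to breaking, to a limit configuration whose main component has energy below $\hbar_p$. By Lemma \ref{lemmac4} that component is some $u_x^p$. Any broken cylinders would have energy below $\hbar_0(H,J)$ or $\hbar_0(H^{(p)},J)$, so by Corollary \ref{corc3} they are constant, and the convergence is uniform.

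Uniform $C^0$-closeness to $u_x^p$ forces the asymptotic orbits of $u_i$ into the clusters ${\mc O}(G_i,{\mc F})$ and ${\mc O}(G_i^{(p)},{\mc F}^{(p)})$, where ${\mc F}$ is the component containing $x$. It also forces the homotopy class of $u_i$ relative to its ends to be that of $u_x^p$, since $C^0$-close maps with nearby ends are homotopic. So the capping of $u_x^p$ induced by $\ov{\mc F}$ transports to the cappings $\ov{y}_{i,j}$, and concatenation gives $\ov{y}_{i,\infty}$. Thus $u_i$ satisfies the hypothesis of (1), contradicting the assumption of case (2).

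\textbf{Main obstacle.} The delicate step is excluding breaking and controlling convergence near the ends, because the orbits are degenerate and only Morse--Bott. This is exactly where the exponential decay of Lemma \ref{lemmac2}, through Corollary \ref{corc3}, is needed. One also has to check that the cluster orbits of $G_i$ near ${\mc F}$ carry induced cappings with action close to ${\mc A}_H(\ov{\mc F})$, uniformly along ${\mc F}$; this uses that ${\mc F}$ is compact and that ${\mc A}_H$ is locally constant on critical sets.
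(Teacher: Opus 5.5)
You follow the paper's route: its proof of this lemma is just ``verbatim as Lemma~\ref{lemmab4}''. The step that does not transfer is the passage from ``every limit piece is constant'' to ``$u_i\to u_x^p$ uniformly''.

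In the isolated case this is automatic, because slices move continuously and the orbits are isolated. Here ${\mc F}$ is positive-dimensional. The cluster orbits of $G_i$ have asymptotic operators with eigenvalues of size $O(\|G_i-H\|_{C^2})$ in the directions tangent to ${\mc F}$. So $u_i$ can contain long necks on which the slices drift along ${\mc F}$ at an energy cost tending to $0$. Every $C^\infty_{\rm loc}$-limit of translates is then constant, so nothing contradicts Corollary~\ref{corc3}. The decay in Lemma~\ref{lemmac2} concerns the fixed $H$ and is not uniform in the $G_i$. Yet there is no uniform limit: the Gromov--Floer limit is a cascade, not a broken trajectory.

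Concretely, take $H\equiv 0$ (so ${\mc F}=M$ is Morse--Bott) and $G=\epsilon f$ with $f$ Morse. Let $u=v\circ\pi_p$ be the pullback of the Floer cylinder $v(s,t)=\gamma(s)$ given by a gradient line of $\epsilon f$ from $a$ to $b$. It satisfies the hypothesis of (1) with trivial cappings and has $E(u)=O(\epsilon)$. But $\sup_z d(u(z),u_x^p(z))\geq d(a,b)/2$ for every $x$. So (1) is false for sup-norm neighbourhoods; your argument only yields compact-open closeness. Your Case~(2), which reads off the location of the ends and the cappings from uniform closeness to a single $u_x^p$, is therefore unsupported.

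What the compactness argument does give, and what the local/big splitting actually uses, is slice-wise control. Apply Corollary~\ref{corc3} and Lemma~\ref{lemmac4} to $C^\infty_{\rm loc}$-limits of arbitrary translates. This shows that for large $i$ the core of $u_i$ is $C^0$-close to some $u_x^p$, and every slice $u_i(s,\cdot)$ on a negative (resp.\ positive) end is $C^0$-close to some orbit of $H$ (resp.\ $H^{(p)}$). Slices vary continuously in $s$, and distinct components of ${\mc O}(H)$ are a positive distance apart. Hence all slices lie near the component ${\mc F}\ni x$ (resp.\ ${\mc F}^{(p)}$), which puts every asymptotic orbit in the clusters of ${\mc F}$.

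For the cappings, project slices to nearby orbits. This homotopes $u_i$ to $u_x^p$ with orbit families over paths in ${\mc F}$ attached along the ends. Since $\ov{\mc F}$ is a continuous choice of cappings over ${\mc F}$, and $p$ copies of $\ov x$ glued to $u_x^p$ give $\ov x^{(p)}$, the concatenated capping is $\ov y_\infty$. This proves (2) and a slice-wise version of (1).

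Finally, in Case~(2) the energy does not tend to zero, so sphere bubbling must also be excluded. This is harmless once the constants $\hbar$ are taken below the minimal energy of nonconstant $J$-spheres. You may assume this, since Corollary~\ref{corc3} and Lemma~\ref{lemmac4} only assert existence of some positive constant.
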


\begin{proof}
The proof can be carried out  verbatim as that of Lemma \ref{lemmab4}.
\end{proof}

\subsection{Local Floer cohomology in the Morse--Bott case}

Start with $\phi \in {\rm Ham}(M,\omega)$ with a Morse--Bott component ${\mc F} \subset {\rm Fix}_c(\phi)$. Let $H$ be a Hamiltonian generating $\phi$. Then ${\mc F}$ can be identified with a component of ${\mc O}(H)$. Choose a $C^\infty$-close nondegenerate perturbation $G$ of $H$. Choose a capping $\ov{\mc F}$. Then one can define the cochain group
\beqn
CF^\loc(G, \ov{\mc F}; {\mb Z})
\eeqn
freely generated by elements in the cluster $\tilde {\mc O}(G, \ov{\mc F})$. To define the differential, choose an $\omega$-compatible almost complex structure $J$; we will not prove the independence of the local Floer cohomology from the choice of $J$. Then by Theorem \ref{thm:Floer-cochain}, upon making additional choices, one has the complex $CF(G; \Lambda_{\mb Z}^\Gamma)$ with differential $d_G$. Moreover, by the crossing energy estimate for cylinders, i.e. Lemma \ref{lemmac5}, one can decompose
\beqn
d_G = d_G^\loc + d_G^{\rm big}
\eeqn
where $d_G^\loc$ is defined via counting ``local'' Floer cylinders whose energy is arbitrarily small and $d_G^{\rm big}$ is defined via counting ``big'' energy cylinders. By using Lemma \ref{lemmac5} for index one Floer cylinders, one can see that $d_G^\loc$ is a differential and restricts to a differential
\beqn
d_{G,{\ov{\mc F}}}^\loc: CF^\loc (G, \ov{\mc F}; {\mb Z}) \to CF^\loc (G, \ov{\mc F}; {\mb Z}).
\eeqn
As $d_G$ is linear over $\Lambda^\Gamma$, one also obtains the differential
\beqn
d_{G, {\mc F}}^\loc: CF^\loc (G, {\mc F}; \Lambda_{\mb Z}^\Gamma ) \to CF^\loc (G, {\mc F}; \Lambda_{\mb Z}^\Gamma)
\eeqn
where $CF^\loc (G, {\mc F}; \Lambda_{\mb Z}^\Gamma) \subset CF(G; \Lambda_{\mb Z}^\Gamma)$ is the $\Lambda^\Gamma_{\mb Z}$-submodule generated by orbits near ${\mc F}$. 

One can use a continuation map to prove the independence on the nondegenerate perturbation $H$. If $G'$ is another (small) nondegenerate perturbation of $H$ and $CF(G'; \Lambda_{\mb Z}^\Gamma)$ is the Floer cochain complex with differential $d_{G'}$, which contains the ``local'' differential $d_{G'}^\loc$, then choose a Hamiltonian connection $\sigma$ on ${\mb R}\times S^1$ interpolating $G dt$ and $G' dt$. Upon other choices, one obtains the continuation map associated to $\sigma$
\beqn
\Phi: CF(G; \Lambda_{\mb Z}^\Gamma) \to CF(G'; \Lambda_{\mb Z}^\Gamma).
\eeqn
By using the crossing energy estimate, i.e. Lemma \ref{lemmac5}, again, one can decompose
\beqn
\Phi = \Phi^{\rm loc} + \Phi^{\rm big}
\eeqn
where $\Phi^{\rm loc}$ is defined via counting those ``local'' solutions to the continuation map equation. Similar to proving that $d_H^\loc$ is a differential, one can see that $\Phi^{\rm loc}$ is a cochain map from $CF^\loc(G, \ov{\mc F})$ to $CF^\loc(G', \ov{\mc F})$ for each $\ov{\mc F}$. Indeed, all the global constructions of \cite{Bai_Xu_2025}, combined with the crossing energy estimate, can be localized. One then can prove that $CF^\loc(H, \ov{\mc F})$ is well-defined up to chain homotopy. Let the well-defined cohomology groups be $HF^\loc(H, \ov{\mc F})$ resp. $HF^\loc(H, {\mc F})$. If one forgets the ${\mb Z}$-grading, then $HF^\loc(H, \ov{\mc F})$ only depends on the diffeomorphism $\phi$ and the component ${\mc F}$, hence is denoted by $HF^\loc(\phi, {\mc F})$.

\begin{remark}\label{remarkc7}
Alternatively, one can construct the local Floer cohomology via classical perturbation method as done in \cite{AS-torsion}. We omit the details of such construction and the comparison with the current approach. However, to fit into our framework, especially to prove the corresponding homological perturbation results, we need a version which uses the general framework of \cite{Bai_Xu_2025} and extract the local differential from the total differential.
\end{remark}

\subsubsection{Local PSS map}

One uses a local PSS map to calculate the local Floer cohomology defined above. 

Choose a capping $\ov{\mc F}$ of the component ${\mc F} \subset {\mc O}(H_0)$. Let $H$ be a nondegenerate Hamiltonian which is $C^\infty$-close to $H_0$, from which one obtains the complex $CF^\loc(H, \ov{\mc F}; {\mb Z})$ generated by capped 1-periodic orbits of $H$ which are close to ${\mc F}$ whose cappings are induced from the capping $\ov{\mc F}$. On the other hand, choose a Morse--Smale pair $(f_{\mc F}, h_{\mc F})$ on ${\mc F}$ with associated Morse complex $CM({\mc F}; {\mb Z})$. To define the local PSS map, choose a Hamiltonian connection $\sigma$ on ${\mb R}\times S^1$ interpolating from $H_0 dt$ to $H dt$. Then for each $y \in {\rm crit} f_{\mc F}$ and a generator $\ov{x} \in \tilde {\mc O}(H)$ of $CF^\loc(H, \ov{\mc F}; {\mb Z})$, consider moduli spaces of pairs $(y, u)$ where
\begin{enumerate}

\item $y: (-\infty, 0] \to {\mc F}$ is a solution to $y'(s) = \nabla f_{{\mc F}} (y(s))$ such that $y(-\infty) = y$.

\item $u: {\mb R}\times S^1 \to M$ is a solution to $(\nabla^\sigma u)^{0,1} = 0$, such that $u$ converges at $+\infty$ to the orbit of $H$ underlying $\ov{x}$ and $u$ converges at $-\infty$ (necessarily) to an orbit of $H_0$ in ${\mc F}$, such that the capping $\ov{\mc F}$ together with $u$ induces the capping $\ov{x}$.

\item $y(0)\in {\mc F}$ coincides with $u(-\infty, 0) \in {\mc F}$ (which is a fixed point of $\phi$).
\end{enumerate}
Denote this moduli space by ${\mc M}_{y;\ov{x}}^\loc$.

One can compactify this space by allowing (global) breakings and {\it a priori} bubblings. However, by choosing $H$ sufficiently close to $H_0$ and choosing $\sigma$ appropriately, elements of ${\mc M}_{y; \ov{x}}^\loc$ have arbitrarily small energy. Hence bubbling does not appear in the compactification. Moreover, Corollary \ref{corc3} and Lemma \ref{lemmac5} imply that broken trajectories appearing in the compactification can only be the ``local'' ones of $H$.  

From the above features of the compactification of ${\mc M}_{y; \ov{x}}^\loc$, one can use the classical method to achieve transversality, for example, by perturbing $J$. Such perturbations must be compatible with the one used to define the local Floer cohomology via classical perturbation method (see Remark \ref{remarkc7}). We omit the details. 

There is an additional necessity of discussing orientations. In general, the linearized operator in the normal direction of ${\mc F}$ induces a possibly nontrivial local system, denoted by ${\mathfrak o}$. To obtain an isomorphism over ${\mb Z}$, one needs to replace $CM({\mc F}; {\mb Z})$ by the twisted Morse complex $CM({\mc F}; {\mathfrak o})$; when $\phi$ has finite order, ${\mathfrak o}$ is trivial (see the related discussion in \cite{AS-torsion}). Then, by counting solutions, one defines a chain map
\beqn
\pss_{\mc F}^\loc: CM({\mc F}; {\mathfrak o}) \to CF^\loc(H, \ov{\mc F}; {\mb Z}).
\eeqn
One can similarly define the local SSP map
\beqn
\ssp_{\mc F}^\loc: CF^\loc(H, \ov{\mc F}; {\mb Z}) \to CM({\mc F}; {\mathfrak o} ).
\eeqn

\begin{remark}
The local PSS and SSP maps can be extracted in a different way which fits in the general framework of \cite{Bai_Xu_2025}. Indeed, one can define a ``pearly Floer cohomology'' using the Morse--Bott Hamiltonian and Morse complexes on critical submanifolds. The construction for  $H \equiv 0$ is included in \cite{Bai_Xu_2025}. Then a global PSS/SSP map can be defined, while the local one can be extracted by considering the low-energy part.
\end{remark}

\begin{prop}\label{propc9}
The local PSS and local SSP maps are chain homotopy inverses to each other. As a consequence, one has a canonical isomorphism
\beqn
H({\mc F}; {\mathfrak o}) \cong HF^\loc(\phi, \ov{\mc F}; {\mb Z}).
\eeqn
When $\phi$ has finite order, the local system ${\mathfrak o}$ is trivial.
\end{prop}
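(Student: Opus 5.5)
The plan is to follow the classical Morse--Bott PSS argument, localized using the crossing energy estimates of Lemmas \ref{lemmac5} and \ref{lemmac6}. First I check that $\ssp_{\mc F}^\loc \circ \pss_{\mc F}^\loc$ is chain homotopic to the identity on $CM({\mc F}; {\mathfrak o})$. To do this I glue the two cylinders along the $H$-end and obtain configurations consisting of a half gradient line in ${\mc F}$, a cylinder for a Hamiltonian connection running from $H_0 dt$ to $H dt$ and back to $H_0 dt$, and an outgoing half gradient line in ${\mc F}$. I then deform the connection through a one-parameter family to the constant connection $H_0 dt$, keeping it $C^\infty$-close to $H_0 dt$ and keeping $\int |F_\sigma|$ small. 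Because the energy stays small throughout, Corollary \ref{corc3} and the compactness argument of Lemma \ref{lemmac5} rule out bubbling and any non-local breaking. The parametrized one-dimensional moduli spaces therefore produce a chain homotopy.

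At the endpoint where the connection is $H_0 dt$, solutions of energy below $\hbar_0(H_0,J)$ are exactly the constant cylinders $u_x$ with $x\in{\mc F}$. The moduli space then reduces to pairs of half gradient lines meeting at a point of ${\mc F}$, that is, to the Morse continuation or identity map. For this to hold, the linearized operator at a constant cylinder must be surjective with kernel equal to $T_x{\mc F}$. This is exactly the Morse--Bott condition $T_x{\mc F} = \ker(d\phi_x - \mathrm{Id})$, with generalized and ordinary eigenspaces equal, so the constant solutions are transversally cut out as a clean family. The orientation of the cokernel-free normal operator along ${\mc F}$ gives the local system ${\mathfrak o}$ and the count is $\pm1$. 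Hence the composition is homotopic to the identity.

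Second, I show $\pss_{\mc F}^\loc\circ\ssp_{\mc F}^\loc \simeq \mathrm{Id}$ on $CF^\loc(H,\ov{\mc F};{\mb Z})$. Gluing the two maps at the Morse--Bott end gives cylinders from $H$ to $H_0$ to $H$, joined by a finite gradient segment in ${\mc F}$. I shrink the segment length to zero and then homotope the concatenated connection to a continuation connection from $H$ to $H$ that stays close to $H_0$. The resulting local continuation map is homotopic to the identity by the localized invariance argument already used to define $HF^\loc(H,\ov{\mc F})$. Again the crossing-energy dichotomy of Lemma \ref{lemmac5} confines all breakings to local ones.

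The main obstacle is the gluing and compactness at the degenerate Morse--Bott end when the segment length goes to zero, where cylinders asymptotic to $H_0$ must be glued to points of ${\mc F}$. Lemma \ref{lemmac2} supplies the needed exponential decay for low-energy solutions, so I would invoke the standard cascade/Morse--Bott gluing of \cite{Ruan_Tian_Bott} in this low-energy regime.

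Finally, the claim that ${\mathfrak o}$ is trivial for finite-order $\phi$ uses the linearization. Choosing a $\phi$-invariant metric, $d\phi$ restricted to the normal bundle $N{\mc F}$ is a unitary-type map with no eigenvalue $1$. The normal operator is therefore homotopic, through operators with no eigenvalue $1$, to a complex-linear model, and complex-linear operators carry canonical orientations. The details follow \cite{AS-torsion}.
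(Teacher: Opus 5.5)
Your proposal is correct and follows the paper's route. The paper also treats $\ssp_{\mc F}^\loc\circ\pss_{\mc F}^\loc$ by gluing at the nondegenerate orbits of $H$ and using the small-energy and crossing-energy dichotomy (Corollary \ref{corc3}, Lemma \ref{lemmac5}) to reduce to a Morse continuation map on ${\mc F}$, and treats $\pss_{\mc F}^\loc\circ\ssp_{\mc F}^\loc$ by standard Morse--Bott gluing at the orbits of $H_0$; your sketch for the triviality of ${\mathfrak o}$, using a $\phi$-invariant $J$ to make the normal data complex linear, is a reasonable unpacking of the reference \cite{AS-torsion}, which the paper simply cites.
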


\begin{proof}
We prove that the two-way compositions of $\pss_{\mc F}^\loc$ and $\ssp_{\mc F}^\loc$ are both identities up to homotopy. The composition $\ssp_{\mc F}^\loc \circ \pss_{\mc F}^\loc$ is about gluing at periodic orbits of the nondegenerate $H$. The smallness of the energy together with the crossing energy lemma for the cylinder (Lemma \ref{lemmac5}) implies that this composition is homotopic to the map on $CM({\mc F}; {\mathfrak o})$ induced by a Morse continuation map. For the composition $\pss_{\mc F}^\loc \circ \ssp_{\mc F}^\loc$, the argument contains gluing at the Morse--Bott periodic orbits of $H_0$. This can also be done by the standard method. The triviality of ${\mathfrak o}$ was established in \cite[Section 5.2]{AS-torsion}.
\end{proof}

\subsection{Local equivariant pair-of-pants product}

\subsubsection{Local equivariant Floer cohomology}

Let $\phi\in {\rm Ham}(M, \omega)$ have finite order which is not divisible by a prime $p$. Then there is a bijection between the set of connected components of ${\rm Fix}_c(\phi)$ and the set of connected components of ${\rm Fix}_c(\phi^p)$. We view ${\mc F} \subset {\rm Fix}_c(\phi)$ as a component of ${\rm Fix}_c(\phi^p)$. We would like to define and identify the local equivariant cohomology
\beqn
HF_{\zp}^{\loc}(\phi^{(p)}, {\mc F}).
\eeqn
Let $H_0$ be a 1-periodic Hamiltonian generating $\phi$. Then for any generic small perturbation $H$, the iteration $H^{(p)}$ is still nondegenerate. Choose a capping $\ov{\mc F}$ of ${\mc F}$, which induces a capping $\ov{\mc F}{}^{(p)}$ of ${\mc F}^{(p)}$. Then there is a well-defined subset 
\beqn
\tilde {\mc O}(H^{(p)}, \ov{\mc F}{}^{(p)}) \subset \tilde {\mc O}(H^{(p)})
\eeqn
which generates a cochain group $CF^\loc( H^{(p)}, \ov{\mc F}{}^{(p)})$. Consider the ${\mb Z}/p$-invariant part of the tensor product
\beqn
CF^\loc_\zp(H^{(p)}, \ov{\mc F}{}^{(p)}):= \big( CF^\loc(H^{(p)}, \ov{\mc F}{}^{(p)}) \otimes CM(g) \big)^{\inv}
\eeqn
where $g: S^\infty \to {\mb R}$ is the Morse function we have been using for the Morse-theoretic Borel construction. Then there is a ${\mb Z}_{\geq 0}\times \zp$-action on this cochain group. We can then define an equivariant differential
\beqn
d_{\zp}^\loc: CF^\loc_\zp(H^{(p)}, \ov{\mc F}{}^{(p)}) \to CF^\loc_\zp(H^{(p)}, \ov{\mc F}{}^{(p)})
\eeqn
following the same approach as the isolated case. Notice that one can still use the same almost complex structure $J_0$, hence the previous crossing energy estimate still applies here. The resulting cohomology, denoted by 
\beqn
HF^\loc_\zp(H_0^{(p)}, \ov{\mc F}{}^{(p)})
\eeqn
can be proved to be well-defined and independent of the perturbation $H$. By forgetting the ${\mb Z}$-grading, this cohomology only depends on $\phi$ and ${\mc F}$, hence denoted by 
\beqn
HF_\zp^\loc(\phi^{(p)}, {\mc F}).
\eeqn

\begin{remark}
Analogous to Remark \ref{remarkc7}, one can define the local Floer cohomology by using domain-dependent perturbations while utilizing the free $\zp$-action on the moduli spaces (see \cite{AS-torsion}). Such a construction provides homotopy equivalent complexes and isomorphic equivariant local Floer cohomology.   
\end{remark}

Now we prove the structural result on the local equivariant Floer cohomology. To be able to compute the local equivariant pants product, one uses the PSS map approach.

\begin{prop}\label{propc11}
Let $\phi$ be a finite-order Hamiltonian diffeomorphism on $(M, \omega)$ generated by a Hamiltonian $H$ and $p$ be a prime which does not divide the order of $\phi$. Let ${\mc F}$ be a connected component of ${\rm Fix}_c(\phi)$. Then up to a global degree shift there is an isomorphism of $\rp$-modules
\beqn
HF_{\zp}^{\loc}(H^{(p)}, \ov{\mc F}{}^{(p)}) \cong H({\mc F}; \fp) \otimes \rp.
\eeqn
\end{prop}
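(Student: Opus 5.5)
The plan is to build an equivariant local PSS map, in the spirit of Proposition \ref{propc9}, and to show that it becomes an isomorphism once the $\zp$-action has been trivialized. Since $p$ does not divide the order $m$ of $\phi$, the subgroup generated by $\phi^p$ equals the subgroup generated by $\phi$. Hence $\phi^p$ is again of finite order, ${\mc F}$ is a Morse--Bott component of ${\rm Fix}_c(\phi^p)$, and $d\phi^p_x$ has no eigenvalue $\lambda\neq 1$ with $\lambda^p=1$. The latter holds because the eigenvalues are $m$-th roots of unity and $\gcd(m,p)=1$. So ${\mc F}$ is "$p$-admissible" in the Morse--Bott sense.

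First I would construct
\[
\pss^{\loc}_{\zp,{\mc F}}: CM({\mc F};\fp)\otimes CM(g)\to \big(CF^\loc(H^{(p)},\ov{\mc F}{}^{(p)})\otimes CM(g)\big)
\]
by coupling the moduli spaces ${\mc M}^\loc_{y;\ov{x}}$ for the pair $(H_0^{(p)},H^{(p)})$ with gradient lines in $S^\infty$. The interpolating connection $\sigma_v$ is allowed to depend $\zp\times{\mb Z}_{\geq 0}$-equivariantly on $v\in S^\infty$. Here $\zp$ acts on ${\mc F}$ trivially, because ${\mc F}$ consists of fixed points, and it acts on loops by $t\mapsto t+1/p$. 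The constant loops of $H_0^{(p)}$ at points of ${\mc F}$ are the $p$-fold iterates of orbits of $H_0$, and these are invariant under the rotation. So the Morse side carries the trivial action and its equivariant complex is $CM({\mc F})\otimes\rp$. I would define $\ssp^\loc_{\zp,{\mc F}}$ analogously.

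The crossing energy estimates (Corollary \ref{corc3}, Lemma \ref{lemmac5} applied to $H_0^{(p)}$) show the following: for $H$ close to $H_0$ and $\sigma_v$ of small curvature, all relevant solutions have small energy, there is no bubbling, and the only breakings are local ones. Since the $\zp\times{\mb Z}_{\geq 0}$-action on the parametrized moduli spaces is free, equivariant transversality can be achieved by generic choice of $\sigma_v$ (or $J_v$). These maps are then $\kzp$-linear chain maps on invariant parts.

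Next I would show that the compositions are chain homotopic to the identity up to $t$-filtration. The key check is that the leading $(t,\theta)$-degree-zero parts of $\pss^\loc_\zp$ and $\ssp^\loc_\zp$ are the non-equivariant $\pss^\loc_{\mc F}$ and $\ssp^\loc_{\mc F}$. By Proposition \ref{propc9} these are homotopy inverses, and the local system ${\mathfrak o}$ is trivial since $\phi^p$ has finite order. A standard filtration argument on the complete $t$-adic filtration (a spectral sequence or a perturbation of isomorphisms) then gives that $\pss^\loc_\zp$ is a quasi-isomorphism of $\kzp$-modules. This yields $HF^\loc_\zp\cong H({\mc F};\fp)\otimes\rp$ up to the global degree shift coming from the Conley--Zehnder normalization of ${\mc F}^{(p)}$.

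Finally, $\rp$-linearity follows from a cobordism argument. I would couple $Y$-shaped flow trees in $S^\infty$ with the PSS configurations, moving the junction point from the Morse side to the Floer side; this shows that $\pss^\loc_\zp$ intertwines the $\tilde\theta$-actions on cohomology.

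I expect the main obstacle to be the equivariant transversality and compactness for the Morse--Bott ends. Gluing at the degenerate orbits of $H_0^{(p)}$ in the composition $\pss\circ\ssp$ must be done compatibly with the $S^\infty$-parameters. I would handle this by perturbing only the interior of the cylinder, which keeps the asymptotic Morse--Bott analysis identical to the non-equivariant case and uses the exponential decay of Lemma \ref{lemmac2}. Alternatively, I could reduce via an equivariant version of the normal-form deformation in Proposition \ref{prop_local_eq_structure}: a finite-order $\phi$ is locally linearizable near ${\mc F}$, namely the normal rotation, which makes the moduli spaces $\zp$-trivially transverse.
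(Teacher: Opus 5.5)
Your proposal is correct and follows essentially the same route as the paper. Both build an equivariant local PSS map (and an SSP counterpart) by coupling the Morse--Bott PSS moduli spaces for the $p$-th iterate with gradient lines of $g$ in $S^\infty$, localized by the crossing energy estimates (no bubbling, only local breakings); the $(t,\theta)$-degree-zero parts are the non-equivariant maps of Proposition \ref{propc9}, so the equivariant compositions differ from the identity only in positive $(t,\theta)$-degree, and $\rp$-linearity comes from the standard cobordism argument. The only difference is technical: the paper gets ${\mb Z}_{\geq 0}\times\zp$-equivariant transversality from symmetric single-valued virtual perturbations (possible since there are no sphere bubbles), whereas you use generic $v$-dependent $\sigma_v$ or $J_v$, the classical alternative the paper itself notes gives isomorphic local equivariant cohomology.
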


\begin{proof}
The proof is similar to that of Proposition \ref{propc9}. Let $(f_{\mc F}, h_{\mc F})$ be a Morse--Smale pair on ${\mc F}$ which induces a Morse complex $CM({\mc F}; \fp)$. Let $G$ be a $C^\infty$-close nondegenerate perturbation of $H$ such that $G^{(p)}$ is also nondegenerate. Let $\sigma$ be a Hamiltonian connection on ${\mb R}\times S^1$ interpolating from $H$ to $G$, which induces a Hamiltonian connection $\sigma^{(p)}$ interpolating from $H^{(p)}$ to  $G^{(p)}$.  Then for each $y \in {\rm crit}(f_{\mc F})\subset {\mc F}$ and $\ov{x} \in \tilde {\mc O}(G^{(p)})$, one can consider a similar moduli space as the one used in the proof of Proposition \ref{propc9}, with $\sigma$ replaced by $\sigma^{(p)}$.

Moreover, to count solutions equivariantly, couple the equation by parametrized gradient flow lines of $g$ in $S^\infty$. Then for $w_-, w_+ \in {\rm crit}(g)$ and $y \in {\rm crit}(f_{\mc F})$, $\ov{x}\in \tilde {\mc O}(G^{(p)}, \ov{\mc F}{}^{(p)})$, one has a moduli space
\beqn
{\mc M}_{w_-, w_+}(y, \ov{x})
\eeqn
of pairs. As $\sigma^{(p)}$ has small curvature, the usual energy identity together with Lemma \ref{lemmac5} implies that the moduli space can be compactified by only adding local broken configurations without sphere bubbles. Still, one does not have automatic ${\mb Z}_{\geq 0} \times \zp$-equivariant transversality. One could, however, achieve transversality by virtual perturbations which respect the symmetry. As there are no sphere bubbles, one can use single-valued perturbations and obtain integral counts. Then one obtains a $\zp$-equivariant cochain map
\beqn
CM({\mc F};{\mb F}_p) \otimes CM(g) \to CF^{\loc}(G^{(p)}, \ov{\mc F}{}^{(p)}) \otimes CM(g).
\eeqn
Its restriction to the $\zp$-invariant part is 
\beqn
CM({\mc F}; \rp) \to CF_\zp^\loc (G^{(p)}, \ov{\mc F}{}^{(p)})
\eeqn
and hence induces the PSS map
\beqn
\pss_{p, {\mc F}}^\loc: H({\mc F}; \rp) \to HF_\zp^\loc( H^{(p)}, \ov{\mc F}{}^{(p)}).
\eeqn
The standard method proves that this map is $\rp$-linear.

One can also define a similar SSP map
\beqn
\ssp_{p, {\mc F}}^\loc: HF_\zp^\loc( H^{(p)}, \ov{\mc F}{}^{(p)} ) \to H({\mc F}; \rp).
\eeqn
To prove that the PSS and SSP maps have composition in either direction being invertible, one considers the two different gluings. As it is true for the non-equivariant versions, the equivariant version only differs by terms with positive $(t, \theta)$-degrees. Therefore, the claimed isomorphism can be induced by $\pss_{p, {\mc F}}^\loc$.
\end{proof}

\subsubsection{Local equivariant pair-of-pants product}

Previously we have constructed the local Floer cochain complex $CF^{\rm loc}(G, \ov{\mc F})$ for any sufficiently close nondegenerate perturbation $G$ of the Morse--Bott Hamiltonian $H$. Let $p$ be a prime and suppose ${\mc F}^{(p)}$ is still a Morse--Bott fixed submanifold of $\phi^p$. We may choose $G$ such that $G^{(p)}$ is still nondegenerate and can be used to define the local Floer cochain complex $CF^{\rm loc}(G^{(p)}, \ov{\mc F}{}^{(p)})$. Then one can consider the flat Hamiltonian connection on the surface $\Sigma_p^\circ$ which is the pullback of $G dt$ from the cylinder to define the equivariant pair-of-pants product, which firstly provides a $\zp$-equivariant cochain map
\beqn
\wt{\wt{\fst}}_p: CF(G)^{\otimes p} \widehat{\otimes} CM(g) \to CF(G^{(p)}) \widehat{\otimes} CM(g).
\eeqn

Now by the crossing energy estimate for pair-of-pants (Lemma \ref{lemmac6}), when $G$ is sufficiently close to $H$, one has a decomposition
\beqn
\wt{\wt{\fst}}_p = \wt{\wt{\fst}}{}_p^\loc + \wt{\wt{\fst}}{}_p^{\rm big}
\eeqn
corresponding to those ``local'' pair-of-pants and ``big'' energy pair-of-pants. One can see that the local one restricts to maps
\beqn
\wt{\wt{\fst}}{}_{p, \ov{\mc F}}^\loc: CF^\loc(G, \ov{\mc F})^{\otimes p} \otimes CM(g) \to CF^\loc( G^{(p)}, \ov{\mc F}{}^{(p)}) \otimes CM(g).
\eeqn
Using the crossing energy estimates again, one can see that this restriction is a cochain map with respect to the local differentials. Moreover, it is still $\zp$-equivariant. Hence one obtains a cochain map
\beqn
\wt{\fst}{}_{p, \ov{\mc F}}^\loc: CF^\loc(G, \ov{\mc F})^{\otimes p}_\zp \to CF_\zp^\loc(G^{(p)}, \ov{\mc F}{}^{(p)}).
\eeqn
The induced map on cohomology, composed with the quasi-Frobenius map, gives a map
\beqn
\fst_{p, \ov{\mc F}}^\loc: HF^\loc(H, \ov{\mc F}) \to HF^{\loc}_{\zp}(H^{(p)}, \ov{\mc F}{}^{(p)}).
\eeqn
We call this map the {\bf local Floer-theoretic Steenrod operation}.

\begin{remark}
Again, if $HF^\loc(H, \ov{\mc F})$ and $HF_\zp^\loc(H^{(p)}, \ov{\mc F}{}^{(p)})$ are defined via classical perturbation method, then one can define the product $\wt{\fst}{}_{p, \ov{\mc F}}^\loc$ by extending such perturbations. The crossing energy estimate for pair-of-pants still plays a role in such a construction.
\end{remark}

Lastly we prove that the local Floer-theoretic Steenrod operation agrees with the classical Steenrod operation on the component ${\mc F}$.

\begin{prop}\label{propc13}
The following diagram commutes. 
\beqn
\xymatrix{ HF^{\loc}(H, \ov{\mc F}) \ar[rr]^-{\fst_{p, \ov{\mc F}}^\loc}   &  &  HF^{\loc}_{\zp}(H^{(p)}, \ov{\mc F}{}^{(p)}) \ar[d]^{\ssp_{\zp, \ov{\mc F}}^\loc}  \\
        H({\mc F}) \ar[rr]_-{St_p} \ar[u]^{\pss_{\mc F}^\loc} &  &    H({\mc F}; \fp) \otimes \rp  }
\eeqn
\end{prop}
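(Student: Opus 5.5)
The plan is a low-energy degeneration argument. First I express the composition $\ssp^\loc_{\zp,\ov{\mc F}}\circ\fst^\loc_{p,\ov{\mc F}}\circ\pss^\loc_{\mc F}$ as a count of glued configurations. Then I deform these configurations, by a cobordism, into a purely Morse-theoretic model of $St_p$ on ${\mc F}$. This parallels the proof of Theorem \ref{thm214}, which compares $\qst_p$ and $\fst_p$ globally, and the comparison between the cellular and Morse models in Appendix \ref{appendixa}. The difference is that the whole construction stays in the local, low-energy regime, where no sphere bubbling occurs.

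\textbf{Step 1.} On the chain level, glue the local PSS cylinders at each of the $p$ negative ends of $\Sigma_p^\circ$, and the local equivariant SSP cylinder at the positive end. After this gluing the $G$- and $G^{(p)}$-orbits disappear. What remains is a moduli space of maps $u:\Sigma_p\to M$ with $p+1$ marked points, equipped with the Hamiltonian connection obtained by pulling back $H\,dt$ (or its perturbation) through the $p$-fold branched cover. Each $u$ carries Morse flow rays in ${\mc F}$ for $f_{\mc F}$ at the marked points and is coupled to a gradient line $w$ in $S^\infty$. Standard gluing, together with the crossing energy estimates (Lemma \ref{lemmac5} and Lemma \ref{lemmac6}), shows that this glued count is chain homotopic to the composition, and that every solution of small energy stays $C^0$-close to the trivial configurations $u_x^p$ with $x\in{\mc F}$. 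Because the energy is small, bubbling is excluded, so integer counts with single-valued equivariant perturbations are legitimate. This is the same mechanism used in Proposition \ref{propc11}.

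\textbf{Step 2.} Next, deform the Hamiltonian connection on $\Sigma_p$ to the flat pullback of $H\,dt$. Since $\phi$ has finite order coprime to $p$, ${\mc F}$ is also a Morse--Bott component of ${\rm Fix}(\phi^p)$, and Lemma \ref{lemmac4} identifies the zero-energy solutions of the unperturbed equation with the maps $u_x^p$, $x\in{\mc F}$. The local moduli space is therefore identified with ${\mc F}$ itself. Evaluating at the $p+1$ marked points gives the diagonal embedding ${\mc F}\to{\mc F}^{p+1}$ (the evaluation at $\infty$ lands on the iterate, which is the same point). The remaining perturbations have to be chosen equivariantly in the parameter $v\in S^\infty$, and they reduce to perturbing the Morse data on ${\mc F}$. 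The resulting count is exactly the Morse model of the equivariant diagonal, namely flow trees $p$-to-$1$ in ${\mc F}$ coupled with $S^\infty$. As in \cite[Section 3.2]{wilk} and the claim in the proof of Lemma \ref{lma:div}, this model computes the classical $St_p$ on $H({\mc F};\fp)$.

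\textbf{Main obstacle.} I expect the hardest step to be the obstruction bundle that appears in the Morse--Bott limit. When the connection becomes flat, the moduli space of maps $u_x^p$ is ${\mc F}$, but it is not cut out transversely. Its linearized cokernel is governed by the normal eigenvalue data of $d\phi$ along ${\mc F}$, which form a $\zp$-equivariant bundle, and one must show that its equivariant Euler class contributes only a unit. I would try first to exploit the hypothesis that $p$ does not divide the order of $\phi$. Then $d\phi^p$ has no new eigenvalue $1$ in the normal directions, so the normal part of the linearized operator on $\Sigma_p^\circ$ is an isomorphism: the same admissibility mechanism as in Proposition \ref{prop_local_eq_structure}. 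The cokernel should therefore vanish, the moduli space is ${\mc F}$ cut out transversely, and no Euler class factor arises. The remaining work is to check orientations, using the triviality of the local system ${\mathfrak o}$ from Proposition \ref{propc9}, and the $\rp$-linearity of the identifications.
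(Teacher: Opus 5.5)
Your outline is the paper's own argument. You glue the local PSS cylinders at the $p$ negative ends and the equivariant local SSP at the positive end, use Lemmas~\ref{lemmac5} and~\ref{lemmac6} to confine all solutions near the constant pants $u_x^p$ with $x\in{\mc F}$, and read off $p$-to-$1$ Morse trees on ${\mc F}$ coupled to $S^\infty$. The paper simply asserts that the resulting configurations are the trees computing $St_p$. You rightly single out transversality at $u_x^p$ as the crux, but your resolution of it is false.

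The hypothesis $p\nmid{\rm ord}(\phi)$ only makes the normal asymptotic operators at all ends nondegenerate. That makes the normal part of the linearization at $u_x^p$ Fredholm; it does not make it invertible. The comparison with Proposition~\ref{prop_local_eq_structure} breaks down because there the relevant solutions are ${\mb R}$-invariant cylinders, where $\partial_s+A$ with $A$ nondegenerate is automatically an isomorphism. On the branched cover $\Sigma_p^\circ$ the normal index is $-2\alpha$, where $2\alpha:=p s_1-s_p$ and $s_1$, $s_p$ are the degree shifts of $\pss^\loc_{\mc F}$ and $\ssp^\loc_{\zp}$. This is typically positive. Indeed, the composite $\ssp^\loc_{\zp}\circ\fst^\loc_{p,\ov{\mc F}}\circ\pss^\loc_{\mc F}$ raises total degree by $2\alpha$ relative to $St_p$, so the untwisted identity cannot hold when $\alpha\neq0$.

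Here is a concrete case. Let $\phi$ be the rotation of $S^2$ by $2\pi/m$, and let $p<m$ be an odd prime not dividing $m$. At the pole $x$ where the generating Hamiltonian is maximal, both rotation angles $2\pi/m$ and $2\pi p/m$ are less than $2\pi$, so with constant cappings $|\ov x|=|\ov x^{(p)}|=2$. Theorem~\ref{thm_local_pants} then gives $\fst^\loc_{p,x}(\ov x)=c\,t^{p-1}\,\ov x^{(p)}$ with $c\neq0$. So the composite sends $1\in H^0(\{x\})$ to $c\,t^{p-1}$, whereas $St_p(1)=1$.

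So the constant solutions carry a $\zp$-equivariant obstruction bundle ${\rm Ob}\to{\mc F}$ of real rank $2\alpha$. What the gluing argument (yours or the paper's) actually yields is
\[
\ssp^\loc_{\zp}\circ\fst^\loc_{p,\ov{\mc F}}\circ\pss^\loc_{\mc F}(a)=e_{\zp}({\rm Ob})\cdot St_p(a),
\]
where the equivariant Euler class $e_{\zp}({\rm Ob})$ restricts at each point of ${\mc F}$ to $c\,t^{\alpha}$ with $c\in\fp^\times$. Equivalently, $St_p$ has to be taken on the Thom space of the normal index data. This is the same way the proof of Lemma~\ref{lma:div} uses the relative group $H^*(B,N)$ rather than the cohomology of the degenerate critical set. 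The diagram as drawn can only commute when $\alpha=0$, for instance at the minimum pole in the example above.

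The twist is harmless for the use in Theorem~\ref{thm_torsion}. That argument only needs the leading $(t,\theta)$-degree to be at most $2n(p-1)$, and $\alpha\le\tfrac12{\rm codim}_{\mb R}({\mc F})\,(p-1)$. But your conclusion that ``no Euler class factor arises'' is wrong. The missing ingredient is precisely the computation of $e_{\zp}({\rm Ob})$.
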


\begin{proof}[Sketch of proof]
The proof is based on the crossing energy estimate and gluing (at nondegenerate Hamiltonian orbits). Let $G$ be a nondegenerate perturbation of $H$. On the cochain level, one needs to verify the commutativity of 
\beq\label{eqn51}
\vcenter{ \xymatrix{
CF(G, \ov{\mc F})^{\otimes p} \otimes CM(g)  \ar[rr]^-{\wt{\wt{\fst}}{}_{p, \ov{\mc F}}^\loc} & &   CF(G^{(p)}, \ov{\mc F}{}^{(p)}) \otimes CM(g) \ar[d]^{\wt{\wt{\ssp}}{}_{\zp, {\mc F}}^\loc}\\
CM({\mc F})^{\otimes p} \otimes CM(g)  \ar[rr] \ar[u]^{(\pss_{{\mc F}}^\loc)^{\otimes p} \otimes {\rm Id}}  & &  CM({\mc F}) \otimes CM(g).
}}
\eeq
Here the unlabelled arrow, which is supposed to induce the classical Steenrod operation on ${\mc F}$, has not been explicitly defined yet.

One uses a typical TQFT+gluing argument to prove the commutativity. The triple composition
\beqn
\wt{\wt{\ssp}}{}_{\zp, {\mc F}}^\loc \circ \wt{\wt{\fst}}^{\loc}_{p, \ov{\mc F}} \circ ( ( \pss_{\mc F}^\loc)^{\otimes p} \otimes {\rm Id})
\eeqn
can be identified, via gluings at $p$ negative ends resp. one positive end and at (nondegenerate) orbits of $G$ resp. $G^{(p)}$ as well as critical points of $g$, to the map defined via the count of configurations where one has $p$ incoming gradient rays and one outgoing gradient ray on ${\mc F}$, between which is a solution to the Floer equation on $\Sigma$ being ${\mb C}$ removing $p$-th roots of unity with flat Hamiltonian connection equal to the pullback of $H dt$ from ${\mb R}\times S^1$. The crossing energy estimate can guarantee the gluing happens ``locally'' and the Floer equation on $\Sigma$ must be a constant one. Therefore, the configurations one finally obtains are those Morse gradient trees responsible for the classical Steenrod operation on ${\mc F}$. The original commutative diagram is then obtained by 1) restricting \eqref{eqn51} to the $\zp$-invariant part, 2) reducing to cohomology, and 3) composing with the quasi-Frobenius maps.
\end{proof}

\subsection{Homological perturbation}

In the situation of Theorem \ref{thm_torsion}, one needs to consider a similar homological perturbation construction for Hamiltonian diffeomorphisms with Morse--Bott fixed point sets. Below is an analogue of Proposition \ref{prop31} for such situations.

\subsubsection{Non-equivariant Floer cohomology}

\begin{prop}\label{homological_perturbation_Morse_Bott}
Let $H$ be a Hamiltonian generating $\phi \in {\rm Ham}(M, \omega)$ with Morse--Bott contractible fixed points. Then there exist $\epsilon(H) > 0$ and, for each ground field ${\mb K}$, a complex of ${\mb Z}$-graded $\Lambda_{\mb K}^\Gamma$-modules
\beqn
C(H )= \left( \bigoplus_{{\mc F} \in \pi_0( {\mc O}(H))} HF^{\rm loc}(H, {\mc F}; \Lambda_{\mb K}^\Gamma), d_H \right)
\eeqn
satisfying the following conditions.

\begin{enumerate}

    \item Let ${\mc A}_H$ be the energy filtration on the local Floer cohomology groups defined by setting it to be the constant ${\mc A}_H(\ov{\mc F})$ on $HF^\loc(H, \ov{\mc F}; {\mb K}) \subset HF^\loc(H, {\mc F}; \Lambda_{\mb K}^\Gamma)$. Then $d_H$ increases the energy filtration by at least $\epsilon(H)$.

    \item $C(H)$ is chain homotopic to the Floer cochain complex associated to any nondegenerate Hamiltonian (given by Theorem \ref{thm:Floer-cochain}). 

    \item For any $a<b$, $a, b \notin {\rm Spec}(H)$, the cohomology of the complex $C(H)^{(a, b)}$ agrees with $HF(H)^{(a, b)}$ which was defined by \eqref{eq: filtered-irrational}.
\end{enumerate}
\end{prop}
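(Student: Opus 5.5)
The plan is to mirror the proof of Proposition \ref{prop31} given in Appendix \ref{app: hp}, replacing the isolated-orbit crossing energy lemmas by their Morse--Bott analogues, Corollary \ref{corc3} and Lemma \ref{lemmac5}.

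First, choose a sequence of nondegenerate Hamiltonians $H_i \to H$ in $C^\infty$. For $i$ large we have the cluster decomposition $\tilde{\mc O}(H_i) = \bigsqcup_{\ov{\mc F}} \tilde{\mc O}(H_i, \ov{\mc F})$. Equip $CF(H_i)$ with the modified action ${\mc A}_H(\ov{x}) := {\mc A}_H(\ov{\mc F})$ for $\ov{x} \in \tilde{\mc O}(H_i,\ov{\mc F})$; this is well defined because the action is constant on a connected component of capped Morse--Bott orbits. Lemma \ref{lemmac5}, applied with $\sigma = H_i\,dt$ and a small $C^0$-neighborhood of the constant cylinders, gives the dichotomy that every Floer cylinder of $H_i$ is either local or has energy at least $\tfrac12\hbar_0(H,J)$. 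This yields $d_{H_i} = d_i^\loc + D_i$, in analogy with Lemma \ref{lemmab5}. Setting $\epsilon(H) = \tfrac14\hbar_0(H,J)$, the part $D_i$ raises ${\mc A}_H$ by at least $\epsilon(H)$ for $i$ large, since $|{\mc A}_H - {\mc A}_{H_i}| \to 0$. Applying the same dichotomy to index-one moduli spaces shows that $d_i^\loc$ is a differential preserving each summand $CF^\loc(H_i,\ov{\mc F})$. Its cohomology is $HF^\loc(H,\ov{\mc F})$ by the construction of the preceding subsection, whose independence of the perturbation comes from the local continuation maps $\Phi^\loc$.

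Next, pass to a field ${\mb K}$ and apply Lemma \ref{lemmab6} summand by summand. This produces deformation retract data $(\sigma_i,\pi_i,\Theta_i)$ between $(CF(H_i), d_i^\loc)$ and $\bigoplus HF^\loc(H,{\mc F};\Lambda_{\mb K}^\Gamma)$ with zero differential. These data are chosen compatibly with recapping by $\Gamma$, so that they are $\Lambda^\Gamma$-linear. All maps preserve ${\mc A}_H$, because they act within single clusters.

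Then treat $D_i$ as a perturbation and apply the formulas \eqref{hp_formula} with $d_{H_i}$ replaced by the perturbation $D_i$ (note $\Theta_i d_i^\loc$-terms are absorbed by the retract relations). Every term of the transferred differential $d_H$ contains at least one factor of $D_i$, so $d_H$ raises the filtration by at least $\epsilon(H)$. The geometric series converge in the Novikov completion for the same reason. This gives (1). The transferred maps $\sigma_H, \pi_H$ are chain homotopy equivalences, so Floer invariance (Theorem \ref{thm:Floer-cochain} together with continuation maps) gives (2).

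For (3), repeat verbatim the cofinal-sequence argument from the proof of Proposition \ref{prop31}. Take $H_i \nearrow H$ in $P_{a,b}(H)$ and set $H_i' = H_i - \delta_i$. Use the analogue of Lemma \ref{lemmab7}: a continuation map between two perturbations splits into $\Phi^\loc + \Phi^{\rm big}$ by Lemma \ref{lemmac5}, so it preserves ${\mc A}_H$. Then run the direct limit comparison; the cone term vanishes in the limit, as before.

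The main obstacle I expect is the step establishing that $d_i^\loc$ is a genuine differential with the right cohomology within the FOP framework of \cite{Bai_Xu_2025}. One must check that the abstract perturbations on the flow category can be chosen so that the low-energy part forms a closed subsystem, that is, so that boundaries of one-dimensional local moduli spaces consist only of local breakings. I would first try to argue this by choosing the perturbations small enough that energy separation by $\tfrac12\hbar_0$ persists in each Kuranishi chart. That is precisely the localization step asserted in the previous subsection, and I would take it as established there.
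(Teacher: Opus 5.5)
Your proposal is correct and is essentially the paper's argument. Like the paper, you split the Floer differential of a nearby nondegenerate Hamiltonian into a local part and a big-energy part via Lemma \ref{lemmac5}, transfer to $\bigoplus_{\mc F} HF^\loc(H,{\mc F};\Lambda_{\mb K}^\Gamma)$ by homological perturbation, and for (3) rerun the cofinal-sequence argument from the proof of Proposition \ref{prop31}.

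Two small remarks. First, taking $D_i$ rather than the full $d_{H_i}$ as the perturbation is the right way to read \eqref{hp_formula}; the two choices are not interchangeable, so drop the parenthetical about absorbing $\Theta_i d_i^\loc$-terms. Second, the step you single out as the main obstacle is purely algebraic: once $D_i$ is known to raise ${\mc A}_H$ by at least $\epsilon(H)$, the identity $(d_i^\loc)^2=0$ is exactly the ${\mc A}_H$-preserving part of $d_{H_i}^2=0$.
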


\begin{proof}
Choose a nondegenerate Hamiltonian $G$ satisfying the assumption of Lemma \ref{lemmac2}. Then the chain complex $CF(G; \Lambda_{\mb K}^\Gamma)$ can be defined with a differential $d_G$, which has a decomposition 
\beqn
d_G = d_G^\loc + d_G^{\rm big}
\eeqn
into ``local'' part $d_G^\loc$ and the big-energy part $d_G^{\rm big}$. Then the homological perturbation argument provides a construction of the desired complex $C(H)$. The first two properties both follow from the construction directly. For the third one, the proof is exactly the same as the corresponding one of Proposition \ref{prop31}. 
\end{proof}

\subsubsection{Equivariant Floer cohomology}

Now we state and prove the analogue of Proposition \ref{prop33b} and Proposition \ref{prop35}.

\begin{prop}\label{propc14}
Let $H$ be a 1-periodic Hamiltonian on $(M, \omega)$ whose time-1 map $\phi$ generates a nontrivial ${\mb Z}/p'$-action for a certain prime $p'$. Let $p\neq p'$ be a different odd prime. Then there exists a cochain complex 
\beqn
C_{\zp}(H^{(p)}) = \left( \bigoplus_{{\mc F} \in \pi_0({\mc O}(H))} HF^{\rm loc}(H^{(p)}, {\mc F}^{(p)}; \Lambda^\Gamma) \underset{\Lambda^\Gamma}{\otimes} \Lambda_\rp^\Gamma,\  d_\zp \right)
\eeqn
satisfying the following conditions.
\begin{enumerate}
\item There is a chain homotopy equivalence (which is well-defined up to homotopy) from $C_{\zp}(H^{(p)})$ to the $\zp$-equivariant Floer cochain complex for any Hamiltonian $G$ with $G^{(p)}$ nondegenerate. In particular, there is a canonical isomorphism
\beqn
HF_\zp(M) \cong H(C_\zp(H^{(p)})).
\eeqn

\item There exists $\epsilon(H) >0$ such that $d_\zp$ increases the energy filtration by at least $\epsilon(H)$.

\end{enumerate}
Moreover, there exists a cochain map
\beqn
\wt P: C(H)^{\otimes p}_\zp  \to C_{\zp}(H^{(p)})
\eeqn
and a decomposition 
\beqn
\wt P = \wt P^\loc + \wt P^{\rm big}
\eeqn
satisfying the following conditions.
\begin{enumerate}
    \item Let $P: H(C(H)) \to HF_\zp(H^{(p)})$ be the composition of $\wt P$ with the quasi-Frobenius map on the cohomology level. Then the following diagram commutes.
    \beqn
    \xymatrix{  H(C(H)) \ar[r]^-P \ar[d]_{\cong}  &   H( C_{\zp}(H^{(p)})) \ar[d]^{\cong} \\
        HF(M; \Lambda^\Gamma) \ar[r]_-{\fst_p}  & HF_{\zp}(M; \Lambda_{\kzp}^\Gamma)    }
        \eeqn
        Here the vertical isomorphisms are provided by Proposition \ref{homological_perturbation_Morse_Bott} and above.

        \item On the component summand $HF^\loc(H^{(p)}, {\mc F}^{(p)}; \Lambda^\Gamma) \otimes_{\Lambda^\Gamma} \Lambda_\rp^\Gamma$, define the action filtration ${\mc A}_{H^{(p)}}$, which induces via direct sum an action filtration ${\mc A}_{H^{(p)}}$ on $C_\zp(H^{(p)})$. Then $\wt P^{\rm big}$ increases the energy filtration by at least a certain constant $\epsilon > 0$.
        
        \item For each $\ov{\mc F} \in \pi_0(\tilde {\mc O}(H))$, $\wt P^\loc$ restricts to a linear map
        \beqn
        HF^{\rm loc}(H, \ov{\mc F})^{\otimes p}_\zp \to  HF^{\rm loc}(H^{(p)}, \ov{\mc F}{}^{(p)}) \otimes \rp
        \eeqn
        and the following diagram commutes: here $\delta_{{\mb Z}/p}$ is defines similarly as in 
        \beqn
        \xymatrix{  HF^{\rm loc}(H, \ov{\mc F})^{\otimes p}_\zp \ar[d]^{=} \ar[rr]^-{\wt P^\loc} & & {\rm ker}(d_{\zp}^{\loc}) \cap \big( HF^{\rm loc}(H^{(p)}, \ov{\mc F}{}^{(p)}) \otimes \rp \big) \ar[d]\\
            HF^{\loc}(H, \ov{\mc F})^{\otimes p}_\zp  \ar[rr]_-{\wt{\fst}{}_{p, \ov{\mc F}}^\loc} & & HF^{\loc}_{\zp}(H^{(p)}, \ov{\mc F}^{(p)})}
        \eeqn

\end{enumerate}

\end{prop}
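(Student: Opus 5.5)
The plan is to follow the template of Proposition \ref{prop33b} and Proposition \ref{prop35}, replacing isolated fixed points with Morse--Bott components and the isolated crossing energy lemmas with their Morse--Bott analogues.

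First, the cochain group. Fix a nondegenerate $C^\infty$-small perturbation $G$ of $H$ with $G^{(p)}$ nondegenerate. Since $p \nmid p'$, components of ${\rm Fix}_c(\phi)$ and ${\rm Fix}_c(\phi^p)$ correspond bijectively, and each ${\mc F}^{(p)}$ is Morse--Bott for $\phi^p$. Modify the action filtration on $CF_{\zp}(G^{(p)})$ so that every generator near $\ov{\mc F}{}^{(p)}$ has action $p\,{\mc A}_H(\ov{\mc F})$. Lemma \ref{lemmac5}, applied to $H^{(p)}$ together with the equivariant coupling to $S^\infty$ (which uses the same $J$), splits the equivariant differential as $d_{\zp} = d_{\zp}^{\rm loc} + D_{\zp}$. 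Here $d_{\zp}^{\rm loc}$ preserves the modified action and restricts to each cluster, while $D_{\zp}$ raises action by at least $\tfrac12\hbar_0(H^{(p)},J)$. Applying the same lemma to index-one moduli spaces shows that $d_{\zp}^{\rm loc}$ squares to zero and computes $HF_{\zp}^{\rm loc}(H^{(p)},\ov{\mc F}{}^{(p)})$.

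By Proposition \ref{propc11} and Proposition \ref{propc9}, this local group is isomorphic to $H({\mc F};\fp)\otimes\rp \cong HF^{\rm loc}(H^{(p)},{\mc F}^{(p)})\otimes_{\Lambda^\Gamma}\Lambda^\Gamma_\rp$, which is free over $\kzp$. We use the non-equivariant local PSS for $\phi^p$, which has the same fixed component. Lemma \ref{lemmab6} then gives deformation retract data $(\sigma,\pi,\Theta)$ between the local complex and its cohomology, chosen compatibly with recapping. Applying the homological perturbation formula \eqref{hp_formula} with $D_{\zp}$ as the perturbation produces $d_{\zp}$ on the claimed group. Every term of this $d_{\zp}$ contains at least one factor $D_{\zp}$, so $d_{\zp}$ raises action by $\epsilon(H)>0$. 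The formula also gives $t$-linear chain homotopy equivalences to $CF_{\zp}(G^{(p)})$, and composing with Theorem \ref{thm211}(2) yields property (1).

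For $\wt P$, I would define it exactly as in the proof of Proposition \ref{prop35}: $\wt P := \wt\pi_{\zp}\circ \wt P_G \circ \wt\sigma^{\otimes p}$. Here $\wt P_G$ is the chain-level equivariant pants product of Theorem \ref{thm_FSt}, and $\wt\sigma$ comes from Proposition \ref{homological_perturbation_Morse_Bott}. Commutativity of the diagram in (1) holds because $\wt P_G$ induces $\fst_p$ and the transfer maps are homotopy inverses compatible with the identifications. Lemma \ref{lemmac6} splits $\wt P_G = \wt P_G^{\rm loc}+\wt P_G^{\rm big}$ into local pants, which stay near $u_x^p$, preserve the modified filtration and restrict to each cluster, and pants of energy at least $\tfrac12\min\{\hbar_0(H),\hbar_0(H^{(p)}),\hbar_p(H)\}$. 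Set
\[
\wt P^{\rm loc} := \wt\pi_{\zp}^{0}\circ\wt P^{\rm loc}_G\circ\wt\sigma_0^{\otimes p},
\]
where $\wt\pi_{\zp}^{0},\wt\sigma_0$ denote the unperturbed, purely local retraction maps, and let $\wt P^{\rm big}$ be the remainder. Every term of $\wt P^{\rm big}$ involves either $\wt P^{\rm big}_G$ or a correction term of the perturbed transfer maps containing $D$ or $D_{\zp}$, so it raises action by a uniform $\epsilon$. This proves (2).

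For (3), $\wt P^{\rm loc}$ on a cluster is the local pants product transported by the local retraction data. Since $\pi\circ\sigma={\rm Id}$ and the local maps are chain maps for $d^{\rm loc}$, its image lies in $\ker d^{\rm loc}_{\zp}$. Projecting to cohomology recovers $\wt{\fst}{}^{\rm loc}_{p,\ov{\mc F}}$, which is the square in (3).

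The main obstacle I expect is (3): checking that $\wt P^{\rm loc}$ defined via the unperturbed retraction is well-posed, i.e. that its image lies in the cycles of the local equivariant differential, and that it does not mix with $\wt P^{\rm big}$. Both facts depend on the local/big splitting being compatible with the homological perturbation series, and the filtration bounds from Lemmas \ref{lemmac5} and \ref{lemmac6} must hold uniformly in $G$.
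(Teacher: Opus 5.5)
Your proposal is correct and follows essentially the paper's route. The paper's proof only says to repeat the isolated-case constructions of Propositions \ref{prop33b} and \ref{prop35}, with Proposition \ref{propc11} giving the component-wise structure and Lemma \ref{lemmab6} over $\kzp$ giving the retraction data; this is what you spell out via Lemmas \ref{lemmac5} and \ref{lemmac6}. Defining $\wt P^{\rm loc}$ through the unperturbed cluster-wise retraction maps, with all correction terms of the perturbed transfer maps placed in $\wt P^{\rm big}$, is a harmless refinement that makes the cluster-wise restriction in (3) literally true, a point the paper's isolated-case argument glosses over.
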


\begin{proof}
The construction of the differential $d_{\zp}$ is very similar to the isolated case given in the proof of Proposition \ref{prop33b}. The component-wise decomposition of $C_\zp(H^{(p)})$ follows from Proposition \ref{propc11}. Lemma \ref{lemmab6} allows us to construct the chain homotopy equivalence between the local equivariant Floer complex and the local equivariant Floer cohomology, where the coefficient ring is $\kzp$. Then the first half of this proposition, i.e., the canonical isomorphism $H(C_\zp(H^{(p)})) \cong HF_\zp(M)$ and the claim that $d_{\zp}$ strictly increases the action by a positive constant, follows from the same argument as the isolated case.

\end{proof}

One can also use the Tate version of the above construction to obtain an analogue of Corollary \ref{cor35}. As $C(H)$ is a Floer-type complex, one has a well-defined bar-length spectrum 
\beqn
0 < \beta_1(H) \leq \cdots \leq \beta_N(H).
\eeqn
One also considers the Tate version of the homologically perturbed equivariant Floer complex. Define
\beqn
\wh C_{\zp}(H^{(p)}):= C_\zp(H^{(p)}) \underset{\Lambda_{\kzp}^\Gamma}{\otimes} \Lambda_\kp^\Gamma
\eeqn
which is a Floer-type complex over the Novikov field $\Lambda_{\kp}^\Gamma$. Hence it has the Tate bar-length spectrum
\beqn
0 < \wh \beta_1(H^{(p)}) \leq \cdots \leq \wh \beta_{N^{(p)}}(H^{(p)}).
\eeqn

\begin{cor}\label{corc16}
Under the assumption of Proposition \ref{propc14}, for $p$ being an odd prime, one has $N^{(p)} = 2N$ and for each $i = 1, \ldots, N$, there holds
\beqn
\wh \beta_{2i-1}( \phi^p) = \wh \beta_{2i}(\phi^p) = p \beta_i(\phi).
\eeqn
\end{cor}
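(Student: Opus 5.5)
The plan is to mirror the proof of Corollary \ref{cor35}, replacing Propositions \ref{prop31}, \ref{prop33b}, \ref{prop35} by their Morse--Bott analogues Proposition \ref{homological_perturbation_Morse_Bott} and Proposition \ref{propc14}. First I pass to the universal Novikov ring $\Lambda_0^{\rm univ}$. For each component $\mathcal F$ I choose a capping $\ov{\mathcal F}$ and form the lattice
\[
C(H;\Lambda_0^{\rm univ}) = \bigoplus_{\mathcal F} q^{-\mathcal A_H(\ov{\mathcal F})}\bigl(HF^{\rm loc}(H,\ov{\mathcal F})\otimes \Lambda_0^{\rm univ}\bigr).
\]
Item (1) of Proposition \ref{homological_perturbation_Morse_Bott} says $d_H$ does not decrease the filtration, so $d_H$ preserves this lattice. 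The same construction applies to $\wh C_{\zp}(H^{(p)};\Lambda_{0,\kp}^{\rm univ})$, using item (2) of Proposition \ref{propc14}. The bar-length spectra are then read off, via \cite[Theorem 4.13]{usher-zhang}, from the torsion exponents of the cohomology of these lattices. Tensoring the Tate map $\wh P$ with $\Lambda^{\rm univ}_{\kp}$, the filtration properties of $\wt P^{\rm loc}$ (it preserves the modified action, rescaled by $p$) and of $\wt P^{\rm big}$ (it strictly increases it) show that $\wh P$ restricts to a $\Lambda_{0,\kp}^{\rm univ}$-linear cochain map
\[
\wh P_0:\wh{C(H;\Lambda_0^{\rm univ})^{\otimes p}_{\zp}}\to \wh C_{\zp}(H^{(p)};\Lambda_{0,\kp}^{\rm univ}),
\]
once cappings are compatible, i.e. $\ov{\mathcal F}^{(p)}$ has action $p\mathcal A_H(\ov{\mathcal F})$.

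The key step is to show $\wh P_0$ is an isomorphism on cohomology over $\Lambda_{0,\kp}^{\rm univ}$, and not merely after inverting $q$. I would filter both sides by the modified action and look at the associated graded, which is the direct sum over $\ov{\mathcal F}$ of the local pieces. On the associated graded, $\wt P^{\rm big}$ vanishes and $\wh P_0$ reduces to the Tate localization of $\wt P^{\rm loc}$. By Proposition \ref{propc14}(3) this is $\wt\fst^{\rm loc}_{p,\ov{\mathcal F}}$, which Proposition \ref{propc13} identifies (via local PSS/SSP) with the classical Steenrod construction on $H(\mathcal F)$. Classical Tate localization for the trivial $\zp$-action on $\mathcal F$ shows that $x^{\otimes p}\mapsto St_p(x)$ is an isomorphism after inverting $t$, so the associated graded map is an isomorphism. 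This argument needs to be run at chain level, through the chain-level diagram \eqref{eqn51}. Because the filtrations are discrete with gaps bounded below by $\epsilon(H)$, a spectral-sequence or completeness argument then upgrades this to a filtered quasi-isomorphism $\wh P_0$ over $\Lambda_{0,\kp}^{\rm univ}$.

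Given this, the torsion parts of the two cohomologies agree. Lemma \ref{lemma:quasi-Fr}, applied to $C=C(H;\Lambda_0^{\rm univ})$, identifies the torsion of the source with two copies of each summand $\Lambda_0^{\rm univ}/q^{\beta_i}$ after base change by $r_p$ (one copy for $1$, one for $\theta$, since $p$ is odd). That base change turns $q^{\beta_i}$ into $q^{p\beta_i}$. Therefore $N^{(p)}=2N$ and $\wh\beta_{2i-1}(\phi^p)=\wh\beta_{2i}(\phi^p)=p\beta_i(\phi)$.

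I expect the main obstacle to be the integral (over $\Lambda_0^{\rm univ}$) isomorphism of $\wh P_0$. This requires the local equivariant pants product to be a Tate quasi-isomorphism at chain level with filtration exactly rescaled by $p$, which in the Morse--Bott case rests on Proposition \ref{propc13} together with invertibility of the classical Tate Steenrod map. The remaining steps are bookkeeping identical to Corollary \ref{cor35}.
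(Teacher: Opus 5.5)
Your proposal is correct and follows the paper's route, which is to mirror Corollary \ref{cor35}. You pass to the $\Lambda_0^{\rm univ}$-lattices, show that the Tate-localized pants product $\wh P_0$ induces an isomorphism over $\Lambda_{0,\kp}^{\rm univ}$, and read off the torsion exponents via Lemma \ref{lemma:quasi-Fr}. Your associated-graded argument spells out the isomorphism of $\wh P_0$, which the paper only asserts: the local pieces are identified with the classical Steenrod operation via Proposition \ref{propc13}, and you should also note explicitly that mixed-component tensors form free $\zp$-orbits and are therefore Tate-acyclic.
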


\begin{proof}
The proof is very similar to that of Corollary \ref{cor35}. One can change the coefficient to the universal Novikov rings/fields. Then the equivariant pair-of-pants product induces an isomorphism of $\Lambda_{0, \kp}^{\rm univ}$-modules
\beqn
\wh P_0: H( \wh{C(H; \Lambda_0^{\rm univ})^{\otimes p}_\zp}) \to H(\wh C_{\zp}(H^{(p)}; \Lambda_{0, \kp}^{\rm univ}))
\eeqn
where the bar-lengths under consideration label the torsion parts. 
\end{proof}

\begin{prop}\label{propc17}
Let $H$ be a 1-periodic Hamiltonian whose time one map $\phi \in {\rm Ham}(M, \omega)$ generates a nontrivial ${\mb Z}/p'$-action on $M$ for a certain prime $p'$. Then for any odd prime $p \neq p'$, the following are true.
\begin{enumerate}

\item For any $k \geq 0$, the differential $d_{H^{(p^k)}}$ of the complex $C(H^{(p^k)})$ given by Proposition \ref{homological_perturbation_Morse_Bott} vanishes. Hence 
\beqn
HF(M; \Lambda^\Gamma) \cong \bigoplus_{{\mc F} \in \pi_0({\mc O}(H))} HF^\loc( H^{(p^k)}, {\mc F}^{(p^k)}; \Lambda^\Gamma).
\eeqn

\item For any $k \geq 1$, the differential $d_{\zp}$ of the complex $C_{\zp}(H^{(p^k)})$ given by Proposition \ref{propc14} vanishes. Hence
\beqn
HF_\zp(M; \Lambda_{\kzp}^\Gamma) \cong \bigoplus_{{\mc F} \in \pi_0({\mc O}(H))} HF_\zp^\loc(H^{(p^k)}, {\mc F}^{(p^k)}; \Lambda_{\kzp}^\Gamma).
\eeqn
\end{enumerate}
\end{prop}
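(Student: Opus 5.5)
The plan is to argue by dimension counting, as in Corollary \ref{cor32} and Corollary \ref{cor36}, using the Morse--Bott identification of local Floer cohomology together with the classical Smith inequality.

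\textbf{Part (1).} Since $\phi$ has order $p'$ and $p\neq p'$, every iterate $\phi^{p^k}$ generates the same cyclic group as $\phi$. Hence ${\rm Fix}(\phi^{p^k})={\rm Fix}(\phi)$, and the fixed components ${\mc F}^{(p^k)}$ are again of Morse--Bott type. Proposition \ref{homological_perturbation_Morse_Bott} therefore applies to $H^{(p^k)}$. By Proposition \ref{propc9}, with trivial local system because $\phi^{p^k}$ has finite order,
\[
{\rm dim}_{\Lambda^\Gamma} C(H^{(p^k)}) = \sum_{{\mc F}} {\rm dim}_{\fp} H({\mc F};\fp),
\]
where the sum runs over the contractible components. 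The classical Smith inequality for the $\mathbb{Z}/p'$-action gives an upper bound, provided we count over $\F_{p'}$:
\[
\sum_{\mc F} {\rm dim} H({\mc F};\F_{p'}) \le {\rm dim} H(M;\F_{p'}).
\]
This is where the coefficient field has to be handled. I would follow \cite[Section 5]{AS-torsion} and pass to characteristic zero. For a finite-order diffeomorphism, averaging shows that $H({\rm Fix};\Q)$ is bounded by the invariant part, so the inequality $\sum {\rm dim} H({\mc F};\Q)\le {\rm dim} H(M;\Q)$ holds. The remaining issue is to relate $\fp$ and $\Q$ Betti numbers. The main obstacle is exactly this torsion comparison; I would try the equivariant cohomology argument of \cite{AS-torsion}, which yields the inequality with $\fp$-coefficients when $p\nmid p'$. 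On the other hand, Proposition \ref{homological_perturbation_Morse_Bott}(2) identifies the cohomology of $C(H^{(p^k)})$ with $HF(M;\Lambda^\Gamma)$, of dimension ${\rm dim} H(M;\fp)$, which gives the reverse inequality. Equality then forces the differential $d_{H^{(p^k)}}$ to vanish, and the stated decomposition follows.

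\textbf{Part (2).} Here I would mimic Corollary \ref{cor36}. Suppose $d_{\zp}\neq 0$ on $C_\zp(H^{(p^k)})$. Since $d_{\zp}$ strictly increases the filtration by Proposition \ref{propc14}(2), the Tate complex has a positive finite bar. Apply Corollary \ref{corc16} to $H^{(p^{k-1})}$, which is legitimate because it again generates a $\mathbb{Z}/p'$-action. This gives $\wh\beta_1(\phi^{p^k}) = p\,\beta_1(\phi^{p^{k-1}})$, and the left side is positive. So $C(H^{(p^{k-1})})$ has a nonzero bar, contradicting part (1). Hence $d_{\zp}=0$, and the decomposition follows from Proposition \ref{propc14}(1).
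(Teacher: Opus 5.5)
Your part (2) is the paper's argument: a nonzero $d_{\zp}$ gives a positive Tate bar, and Corollary \ref{corc16} turns it into a positive bar of $C(H^{(p^{k-1})})$, contradicting (1). The problem is part (1).

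Your dimension count is a valid reduction. It reduces (1) to the inequality
\[
\sum_{\mathcal{F}\ \mathrm{contractible}} \dim H(\mathcal{F};\mathbb{F}_p) \le \dim H(M;\mathbb{F}_p), \qquad p\neq p',
\]
but you have no proof of it, and none of your proposed routes gives one. Smith theory for a $\mathbb{Z}/p'$-action only works with $\mathbb{F}_{p'}$-coefficients. In characteristic prime to $p'$, including $\mathbb{Q}$, the inequality is false for general smooth actions. For example, the involution $[x:y:z]\mapsto[-x:y:z]$ of $\mathbb{RP}^2$ has fixed set $S^1\sqcup\{\mathrm{pt}\}$, with total $\mathbb{F}_3$- or $\mathbb{Q}$-Betti number $3$, while $\mathbb{RP}^2$ has $1$.

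The specific steps you suggest also fail:
\begin{itemize}
\item Averaging only computes $H(M/G;\mathbb{Q})\cong H(M;\mathbb{Q})^G$; it says nothing about $H(\mathrm{Fix};\mathbb{Q})$.
\item Borel cohomology of a $\mathbb{Z}/p'$-space with $\mathbb{F}_p$-coefficients, $p\nmid p'$, is just $H(M;\mathbb{F}_p)^G$. There is no localization theorem to invoke, and this is not what \cite{AS-torsion} does.
\item Even a rational inequality would not help, since $\dim H(\mathcal{F};\mathbb{F}_p)\ge\dim H(\mathcal{F};\mathbb{Q})$ goes in the wrong direction.
\end{itemize}
Worse, by Proposition \ref{propc9} the inequality you need is equivalent to (1) itself. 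So any proof of it must use that $\phi$ is Hamiltonian, and it cannot be taken as input.

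The missing idea is to use the prime $p$ dynamically rather than topologically. Use the quantitative Smith inequality (Theorem \ref{thm219}), extended to the degenerate Morse--Bott case by continuity of barcodes. Iterated, it gives $\beta_{\rm tot}(\phi^{ip^k})\ge p^k\,\beta_{\rm tot}(\phi^i)$. Since $\phi$ has order $p'$, the map $\phi^{ip^k}$ ranges over the finite set $\{\phi^j\}_{0\le j<p'}$, so the left side is bounded independently of $k$. Letting $k\to\infty$ forces $\beta_{\rm tot}(\phi^i)=0$ for every $i$. Finally, $d_{H^{(p^k)}}$ raises the filtration by at least some $\epsilon>0$, so any nonzero differential would create a finite bar of positive length. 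Hence $d_{H^{(p^k)}}=0$, and your part (2) then goes through as written.
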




\begin{proof}
We follow the argument of \cite[Section 6.3]{AS-torsion}. First, because $\phi$ has order $p'$, there holds
\beqn
\max_{1 \leq j \leq p'-1}\beta_{\mathrm{tot}}(\phi^j) \geq \beta_{\mathrm{tot}}(\phi^{i \cdot p^k}).
\eeqn
Now we make use of the quantitative Smith inequality (Theorem \ref{thm219}), which is established for nondegenerate Hamiltonians. As the barcodes vary continuously with the Hamiltonian diffeomorphism, together with (2) of Proposition \ref{propc13}, for the degenerate Hamiltonian diffeomorphism $\phi^i$, one has
\beqn
\beta_{\rm tot}(\phi^{i \cdot p^k}) \geq p^k \beta_{\rm tot}(\phi^i).
\eeqn 
As $k$ is arbitrary, we see that $\beta_{\mathrm{tot}}(\phi^i) = 0$ for all $ i \in \{1, \ldots, p'\}$. In particular, the cochain complex $C(H^{(p^k)})$ does not admit any nontrivial differential.

For the complex $C_{\zp}(H^{(p^k)})$, suppose on the contrary it has a nontrivial differential $d_{\zp}$. Then the corresponding Tate complex $\wh C_{\zp}(H^{(p^k)})$ also admits a nontrivial differential. As shown in Proposition \ref{propc14}, $d_{\zp}$ strictly increases the filtration. Then the Tate bar-length spectrum has a nonzero shortest bar-length $\wh \beta_1(H^{(p^k)})$. It follows from Corollary \ref{corc16} that $\wh \beta_1(H^{(p^k)}) = p \beta_1( H^{(p^{k-1})}) \neq 0$, which contradicts the first assertion we just proved. 
\end{proof}

\bibliographystyle{amsalpha}
\bibliography{bibliors}

\end{document}

\subsection{More general crossing energy lower bound}

\begin{lemma}
\gx{GX: A more general version} 
Let $\Sigma$ be a genus zero curve with $k\geq 1$ negative cylindrical ends and $1$ positive cylindrical end. Let $H$ be a 1-periodic Hamiltonian on $M$ with time-1 map $\phi$ such that $\phi^k$ has finitely many fixed points. Let $\sigma_0$ be a flat Hamiltonian connection on $\Sigma$ whose restriction to the $\alpha$-th negative cylindrical end is $H dt$ and whose restriction to the positive cylindrical end is $H^{(k)} dt$. Let $J$ be an $\omega$-compatible almost complex structure on $M$. For each $\delta>0$ and each $x \in {\mc O}(H)$, let $U_\delta(x) \subset M$ be the $\delta$-neighborhood of the image of $x$.

Then for each $\delta>0$, there exist $\epsilon_0 > 0$ and $\epsilon_1>0$ satisfy the following conditions. Let $\sigma$ be a Hamiltonian connections on $\Sigma$ satisfying the following conditions.
\begin{enumerate}
    \item Let $F_\sigma = F_\sigma(s, t) ds dt$ be the curvature of $\sigma$. Then
    \beqn
    \| F_\Sigma\|_{L^\infty(\Sigma \times M)} \leq \epsilon_1,\ \int_{\Sigma} \sup_{M} |F_\sigma (s, t)| ds dt \leq \epsilon_1.
    \eeqn

    \item Over the $\alpha$-th negative cylindrical end $\Sigma_\alpha \subset \Sigma$ with cylindrical coordinates $(s, t)$, $\sigma = H_\alpha dt$ for a nondegenerate Hamiltonian $H_\alpha$ such that $ \| H_\alpha - H \|_{C^2(S^1 \times M)} \leq \epsilon_1$. 

    \item Over the positive cylindrical end $\Sigma_\infty \subset \Sigma$ with cylindrical coordinates $(s, t)$, $\sigma = H_\infty dt$ for a nondegenerate 1-periodic Hamiltonian $H_\infty$ such that $ \| H_\infty - H^{(k)} \|_{C^2(S^1 \times M)} \leq \epsilon_1$.
\end{enumerate}
Then for any solution $u: \Sigma \to M$ to the $\sigma$-perturbed $J$-holomorphic map equation. If 
\beqn
E(u):= \int_\Sigma |\nabla^{\sigma} u|^2 \leq \epsilon_0,
\eeqn
then there exists $x \in {\mc O}(H)$ such that the image of $u$ is contained in the neighborhood $U_\delta(x)$. In particular, along the $\alpha$-th cylindrical end, $u$ converges to a 1-periodic orbit $x_\alpha$ of $H_\alpha$ near $x$ and along the positive cylindrical end, $u$ converges to a 1-periodic orbit $x_\infty$ of $H_\infty$ near $x^{(k)}$. Moreover, if we choose a capping $\ov{x}$ of $x$ (which induces a capping $\ov{x}_\alpha$ of $x_\alpha$, a capping $\ov{x}_\infty$ of $x_\infty$), then $\ov{x}_\infty$ agrees with the capping induced from $\ov{x}_\alpha$ for all negative ends and the map $u$.
\end{lemma}

\begin{proof}
This is based on a compactness argument. Suppose the statement is not true. Then there exist $\delta>0$, a sequence of Hamiltonian connections $\sigma_i$ on $\Sigma$ with curvature $F_{\sigma_i}$ converging to zero in both of the two norms given in XXXXX whose restrictions to the negative resp. positive cylindrical ends converging to $H dt$ resp. $H^{(k)} dt$ in $C^2$-topology, and a sequence of solutions $u_i$ to the $\sigma_i$-perturbed equation with energy $E(u_i)$ converging to zero but images not contained in any of the $\delta$-neigborhoods $U_\delta(x)$. Now we can run the compactness argument. As the Hamiltonian connection $\sigma_i$ converges to $\sigma$ and the energy is uniformly bounded, there is a subsequence (still indexed by $i$) which converges to a possibly broken solution to the $\sigma$-perturbed equation. Here the small energy rules out the possibility of sphere bubbles. Moreover, as the energy converges to zero. Moreover, notice that the compactness up to breaking result does not require the nondegeneracy of the Hamiltonian; the discreteness of 1-periodic orbits is enough. 
\end{proof}